\crefname{subsection}{Subsection}{Subsections}
\title[Differential operators of low order]{Resolutions of differential operators of low order for an isolated hypersurface singularity}
\author[R.~Diethorn]{Rachel N. Diethorn}
\address{Department of Mathematics,
 Oberlin College, Oberlin, OH 44074 U.S.A.}
\email{rdiethor@oberlin.edu}
\author[J.~Jeffries]{Jack Jeffries}
\address{Department of Mathematics,
University of Nebraska-Lincoln,
Lincoln, NE 68588 U.S.A.}
\email{jack.jeffries@unl.edu}
\author[C.~Miller]{Claudia Miller}
\address{Mathematics Department, Syracuse University, Syracuse, NY 13244 U.S.A.}
\email{clamille@syr.edu}
\author[N.~Packauskas]{Nicholas Packauskas}
\address{Department of Mathematics,
University of Kansas, Lawrence, KS 66045 U.S.A.}
\email{packauskas@ku.edu}
\author[J.~Pollitz]{Josh Pollitz}
\address{Mathematics Department, 
Syracuse University, 
Syracuse, NY 13244 U.S.A.}
\email{jhpollit@syr.edu}
\author[H.~Rahmati]{Hamidreza Rahmati}
\address{Mathematics Department, Syracuse University, Syracuse, NY 13244 U.S.A.}
\email{hrahmati@syr.edu}
\author[S.~Vassiliadou]{Sophia Vassiliadou}
\address{Department of Mathematics and Statistics,	Georgetown University, Washington, DC 20057 U.S.A}
\email{sv46@georgetown.edu
}
\newcommand{\dsum}[2]{\begin{matrix}{#1}\\ \oplus \\ {#2}\end{matrix}}
\DeclareMathOperator{\cone}{cone}
\DeclareMathOperator{\rank}{rank}
\newcommand{\R}{\mathbb{R}}
\newcommand{\x}{{\bm{x}}}
\newcommand{\del}{\partial}
\newcommand{\f}{{\bm{f}}}
\newcommand{\y}{{\bm{y}}}
\newcommand{\m}{\mathfrak{m}}
\newcommand{\p}{\mathfrak{p}}
\renewcommand{\H}{\mathcal{H}}
\newcommand{\E}{\mathcal{E}}
\newcommand{\A}{\mathcal{A}}
\newcommand{\Z}{\mathcal{Z}}
\newcommand{\D}{\mathcal{D}}
\DeclareMathOperator{\im}{im}
\DeclareMathOperator{\coker}{coker}
\DeclareMathOperator{\id}{id}
\DeclareMathOperator{\Hom}{Hom}
\DeclareMathOperator{\thick}{\mathsf{thick}}
\DeclareMathOperator{\Tor}{Tor}
\DeclareMathOperator{\Kos}{Kos}
\newcommand{\shift}{{\mathsf{\Sigma}}}
\newcommand{\level}{{\mathsf{level}}}
\newcommand{\Dsg}{\mathsf{D_{sg}}}
\newcommand{\Db}{\mathsf{D^{b}}}
\newcommand{\xla}{\xleftarrow}
\newcommand{\xra}{\xrightarrow}
\newcommand{\fold}{{\mathsf{Fold}}}
\newcommand{\Der}{\hbox{\rm Der}}
\newcommand{\cE}{\mathcal{E}}
\newcommand{\surjdown}{\raisebox{.5pt}{$\downarrow$}\kern -5pt \raisebox{-1pt}{$\downarrow$}}
\newcommand{\mx}{\begin{pmatrix}}
	\newcommand{\emx}{\end{pmatrix}}
\def\todo#1
\def\forth#1
\def\fifth#1
\definecolor{grn}{rgb}{0.0, 0.5, 0.0}
\definecolor{prp}{rgb}{0.4, 0.0, 0.3}
\definecolor{ora}{rgb}{0.8, 0.4, 0.0}
\newtheorem{theorem}{Theorem}[subsection]
\newtheorem{proposition}[theorem]{Proposition}
\newcounter{intro}
\newtheorem{introthm}[intro]{Theorem}
\newtheorem{introcor}[intro]{Corollary}
\newtheorem{lemma}[theorem]{Lemma}
\newtheorem{corollary}[theorem]{Corollary}
\newtheorem{question}[theorem]{Question}
\theoremstyle{definition}
\newtheorem{remark}[theorem]{Remark}
\newtheorem{notation}[theorem]{Notation}
\newtheorem{definition}[theorem]{Definition}
\newtheorem{chunk}[theorem]{}
\newtheorem*{ack}{Acknowledgements}
\begin{document}

\subjclass[2020]{13D02, 13N05, 13N15, 14B05, 14M10}
\keywords{differential operators, hypersurface ring, Koszul complex, matrix factorization, singularity category,\\ \indent free resolution}
\thanks{CM was supported by NSF Grant DMS-1802207 and NSF Grant DMS-2302198.\\ 
\indent JJ was supported by NSF CAREER Award DMS-2044833.\\
\indent JP was supported by NSF RTG Grant DMS-1840190, NSF MSPRF Grant DMS-2002173, and NSF Grant DMS-2302567.}
\maketitle
\pagestyle{myheadings}
\markboth{}{}

\begin{abstract}
In this paper we develop a new approach for studying differential operators of an isolated singularity graded hypersurface ring $R$ defining a surface in affine three-space over a field of characteristic zero.  With this method, we construct an explicit minimal generating set for the modules of differential operators of order two and three, as well as their minimal free resolutions; this expands results of Bernstein, Gel'fand, and Gel'fand and of Vigu\'e. 
Our construction relies, in part, on a description of these modules
that we derive in the singularity category of $R$. 
Namely, we build explicit matrix factorizations starting from that of the residue field.
\end{abstract}

\section{Introduction}

For an algebra $R$ over a field $k$, its module of derivations and its module of K\"ahler differentials are fundamental classical objects that inform on the singularities of $R$. The derivations are part of the ring of $k$-linear differential operators on $R$, denoted $D_{R|k}$; this ring is filtered by the submodules $D_{R|k}^i$, the differential operators of order at most $i$, for $i\geqslant 0$. The ring of differential operators has proved invaluable in both algebra and geometry \cite{Bjork:1979,Hotta/Takeuchi/Tanisaki:2008,Lyubeznik:1993,Malgrange:1968}.
 
Throughout this article, $k$ is of characteristic zero. 
In this setting, the ring of differential operators of a polynomial ring $R=k[x_1,\dots,x_n]$ is the Weyl algebra
$D_{R|k}=R\langle \partial_1,\dots,\partial_n \rangle$
where the differential operators of order $i$ are the $R$-linear combinations of partial derivatives of order, in the familiar sense, at most $i$; see \cref{d:diffops} for a precise definition. 
More generally, Grothendieck showed that $D_{R|k}$ is generated as an $R$-algebra by the derivations under composition whenever $R$ is smooth. 
In 1961, Nakai~\cite{Nakai} conjectured that the converse should hold; this is now known as \emph{Nakai's Conjecture} and implies the well-known \emph{Lipman-Zariski Conjecture}~\cite{Lip-65}. Nakai's conjecture is still wide open outside of a handful of cases~\cite{Bro-74,Ish-85,MouVil-73,Sch-94,Sin-86,Tra-99}.

When $R$ is singular, even in specific examples, it is extremely difficult to determine the differential operators of each order that are not compositions of lower order operators; the behavior is radically different from the smooth case. Nevertheless, studying $D_{R|k}$ when $R$ is singular is an old and interesting problem that has seen a revival of interest lately, especially with its connections with simplicity of $D$-modules~\cite{BarDua-20,BGG-72,BJN-19,DDGHN-18,DesGriJef-20,Kan-71,Mal-21,Tra-99,Tri-97,Vig-74}. Many approaches for studying differential operators are either algebraic or via sheaf cohomology.

The singular rings under consideration in the present article are those isolated singularity hypersurface rings considered by Vigu\'e \cite{Vig-74}. He established that $D_{R|k}$ is not generated by the operators of any bounded order and has no differential operators of negative degree when $R=k[x,y,z]/(f)$ is an isolated singularity hypersurface with $f$ homogeneous of degree at least 3; this generalizes the work of Bernstein, Gel'fand, and Gel'fand on the cubic cone $k[x,y,z]/(x^3+y^3+z^3)$ in \cite{Bernstein/Gelfand/Gelfand}. Moreover, Vigu\'e showed that in each order $i$, the module $D^i_{R|k}$ has {at least 3} generators that are not in the $R$-subalgebra of $D_{R|k}$ generated by lower order operators. These operators were identified abstractly by an analysis of sheaf cohomology.

In this paper we develop a method for gleaning new insights on these objects, including the explicit operators of a fixed order. 
Our approach, which is via the {\it homological algebra} of their resolutions, is in some ways similar in spirit to Herzog--Martsinkovsky's work on modules of derivations \cite{Herzog/Martsinkovsky:1993}.
Surprisingly, a crucial ingredient for discovering the 
differential operators of low orders, which can be described as syzygies (cf.\@ \cref{subsec:diff-conc}), was formulating a hypothesized structure for their resolutions, rather than calculating generators as a usual first step in building the resolutions. 
Our method is a way to work forward in the resolution to discover the generators. 
In fact, neither previous work 
in the literature, nor our extensive {\tt Macaulay2} \cite{M2} experiments, identified viable candidates for a possible set of generators. 
We describe this method in \cref{subsubsection:forward} but omit the extensive work involved and concentrate on proving that these are indeed the generators and resolutions by localization and depth counting methods. 

Indeed our methods reveal an unexpectedly beautiful structure to the resolutions: They come from matrix factorizations built in a very simple way from several copies of two different Koszul complexes on the ambient polynomial ring $Q$, namely $\Kos^Q(x,y,z)$ on the variables and  $\Kos^Q(f_x,f_y,f_z)$ on the partial derivatives of $f$, connected by natural maps constructed from the Hessian matrix of second derivatives of $f$ and its exterior powers. See \cref{subsubsection:matrix-factorizations} for details. 

It should be stressed that the computations involved (although they look complex) stem from the familiar Euler identity, and it is really the homological underpinnings that allow for success.

The generating operators can be expressed in terms of foundational derivations. Namely, the generators can be expressed in terms of the Euler and Hamiltonian derivations
\begin{equation*}
 \E \coloneqq x \del_x + y \del_y + z \del_z\,,\quad 
 \H_{yz} \coloneqq f_z \del_y - f_y \del_z \,, \quad
 \H_{zx} \coloneqq f_x \del_z - f_z \del_x\,, \quad \text{and} \quad
 \H_{xy} \coloneqq f_y \del_x - f_x \del_y\,.
 \end{equation*}
 These four operators form a minimal generating set for the module of derivations; its minimal resolution is recalled in \cref{c:d1andB}.
 Our main result is the following, contained in \cref{resolutionD2,genops2,resolutionD3,genops3}.

\begin{introthm}
\label{intro:main}
Assume $R=k[x,y,z]/(f)$ is an isolated hypersurface singularity where $f$ is homogeneous of degree $d\geqslant 3$ and $k$ is a field of characteristic zero.
A minimal set of generators for $D_{R|k}^2$ is given by \[\{1 , \, \E,\H_{yz},\, \H_{zx},\, \H_{xy},\, \E^2,\, \E\H_{yz},\, \E\H_{zx},\, \E\H_{xy},\, \A_x,\, \A_y,\, \A_z\}\,,\]
with 
\begin{align*}
\A_x &= \frac{1}{x} 
\left[
\H_{yz}^2+\frac{1}{(d-1)^2}\Delta_{xx}\E^2+\frac{d-2}{(d-1)^2}\Delta_{xx}\E
\right]\,,
\end{align*}
where $\Delta_{xx}$ is the 2$\times$2-minor obtained by deleting the 1st row and 1st column of the Hessian matrix of $f$, and similar formulas hold for $\A_y $, $\A_z $ as described in \cref{genops2}.

A minimal set of generators for $D^3_{R|k}$ is the union of the generating set above with
\[
\{\E^3,\,\E^2\H_{yz},\,\E^2\H_{zx},\,\E^2\H_{xy},\, \E\A_x,\, \E\A_y,\, \E\A_z,\,\Z_x,\,\Z_y,\,\Z_z\}\,,\]
where $\Z_x$, $\Z_y$, and $\Z_z$ are defined in \cref{genops3}.

Furthermore, the augmented minimal $R$-free resolutions of $D_{R|k}^2$ and $D^3_{R|k}$ have the forms
\[
\cdots \xra{\psi} R^{11} \xra{\varphi} R^{11} \xra{\psi} R^{11} \xra{\varphi} R^{11} \xra{\tiny \begin{bmatrix} \psi \\ 0\end{bmatrix}} R^{12} \to D_{R|k}^2 \to 0
\]
and
\[
\cdots \xra{\psi'} R^{21} \xra{\varphi'} 
R^{21} \xra{\psi'} R^{21} \xra{\varphi'} R^{21} \xra{\tiny \begin{bmatrix} \psi' \\ 0\end{bmatrix}} R^{22} \xra{} D_{R|k}^3 \to 0\,,
\]
where the differentials are described explicitly in \cref{resolutionD2,resolutionD3}, respectively. 
\\
\end{introthm}

The resolutions in \cref{intro:main} are minimal graded free resolutions, where $R$ is viewed as a standard graded $k$-algebra. We record the graded Betti numbers of $D_{R|k}^2$ and $D_{R|k}^3$ in \cref{cor:d2graded,cor:d3graded}, respectively.

In the process of proving \cref{intro:main} we also establish, in \cref{cor:exactsequenceD2,cor:exactsequenced3}, the following result.

\begin{introcor}
\label{intro_cor_exact}
For $i=2,3$, there are short exact sequences of $R$-modules 
\[
0\to D^{i-1}_{R|k}\to D^i_{R|k}\to \Hom_R(\mathrm{Sym}^i (\Omega_{R|k}), R) \to 0\,,
\]
where $D^{i-1}_{R|k}\to D^{i}_{R|k}$ is the inclusion; in particular, $D^i_{R|k}/D^{i-1}_{R|k}$ is maximal Cohen-Macaulay.
\end{introcor}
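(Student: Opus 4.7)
The plan is to extract the stated short exact sequence from the mapping cone construction underlying the main theorem, and to deduce maximal Cohen--Macaulayness via general reflexivity considerations.

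I would first recall the classical principal symbol map $\sigma^i\colon D^i_{R|k}\to \Hom_R(\mathrm{Sym}^i\Omega_{R|k},R)$, defined via iterated commutators by $\sigma^i(\delta)(dg_1\cdots dg_i)=[[\cdots[\delta,g_1],g_2],\ldots,g_i]$. This map is $R$-linear with kernel precisely $D^{i-1}_{R|k}$, which is standard in the theory of differential operators. It automatically yields a natural injection $D^i_{R|k}/D^{i-1}_{R|k}\hookrightarrow \Hom_R(\mathrm{Sym}^i\Omega_{R|k},R)$. What the corollary asserts beyond this is that the injection is surjective in our setting and that the quotient is maximal Cohen--Macaulay.

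For surjectivity, the route is the mapping cone construction developed for the main theorem in \cref{resolutionD2,resolutionD3}. Let $F_\bullet$ be the minimal $R$-free resolution of $\Hom_R(\mathrm{Sym}^i\Omega_{R|k},R)$ and let $G_\bullet$ be the minimal $R$-free resolution of $D^{i-1}_{R|k}$; the main theorem asserts that the mapping cone of an appropriate chain map $F_\bullet\to G_\bullet$ of degree one is a minimal free resolution of $D^i_{R|k}$. The short exact sequence of complexes
\[
0\to G_\bullet\longrightarrow \mathrm{Cone}\longrightarrow F_\bullet \to 0
\]
produces a long exact sequence in homology, and all terms outside degree zero vanish since the three complexes are resolutions. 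This collapses to the short exact sequence
\[
0\to D^{i-1}_{R|k}\to D^i_{R|k}\to \Hom_R(\mathrm{Sym}^i\Omega_{R|k},R)\to 0.
\]
The map $D^{i-1}_{R|k}\to D^i_{R|k}$ is the inclusion at the level of degree-zero generators by the setup of the horseshoe-style construction, which forces the surjection to agree with $\sigma^i$.

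For the maximal Cohen--Macaulay conclusion, I would appeal to general reflexivity: for any finitely generated $R$-module $N$, the dual $\Hom_R(N,R)$ is reflexive, since the natural map from a module to its double dual always induces an isomorphism on triple duals. The ring $R$ is a two-dimensional Gorenstein hypersurface whose singular locus is the origin, so $R$ is normal by Serre's criterion ($R_1$ from the isolated singularity hypothesis and $S_2$ from Cohen--Macaulayness). Over a two-dimensional Gorenstein normal domain, every reflexive module is maximal Cohen--Macaulay; applying this to $\Hom_R(\mathrm{Sym}^i\Omega_{R|k},R)$ and invoking the isomorphism $D^i_{R|k}/D^{i-1}_{R|k}\cong \Hom_R(\mathrm{Sym}^i\Omega_{R|k},R)$ from the short exact sequence completes the argument. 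I expect the main obstacle to be the explicit identification of the map induced on $H_0$ by the horseshoe-style mapping cone with the canonical symbol map $\sigma^i$; once the chain map is chosen to lift $\sigma^i$ in the natural way, the rest of the argument is formal from the long exact sequence in homology and standard facts about hypersurface rings.
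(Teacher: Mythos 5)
Your derivation of the short exact sequence follows the paper's own route: \cref{cor:exactsequenceD2} and \cref{cor:exactsequenced3} are extracted from the mapping-cone resolutions of \cref{resolutionD2,resolutionD3} exactly as you describe, and the identification of the induced surjection with the symbol map is already built into the construction, since $\partial_1^C$ is the matrix $P_i$ of \cref{c:presentationS2} and projection onto the order-$i$ block is $\nu_i$ from \cref{eq:LES-ring} (your $\sigma^i$ up to the identification in \cref{r:Jnotation}). Where you genuinely diverge is the maximal Cohen--Macaulay conclusion: the paper gets this in the proofs of \cref{gensS2,gensS3} by realizing $\ker J_{i,i-1}\cong\Hom_R(\mathrm{Sym}^i\Omega_{R|k},R)$ as a syzygy inside the two-periodic matrix-factorization complex, hence an infinite syzygy, hence maximal Cohen--Macaulay. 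Your reflexivity argument is a valid alternative, but two of its steps are stated more broadly than they hold. Triple dualization does not \emph{always} produce an isomorphism over general Noetherian rings---one only has that $\eta_{M^*}\colon M^*\to M^{***}$ is split injective with retraction $\eta_M^*$; over the domain $R$ it does hold, because the complementary summand $\ker(\eta_M^*)$ is torsion yet embeds in the torsion-free module $M^{***}$, hence vanishes, so you should record that $R$ is a domain here. Similarly, ``reflexive implies maximal Cohen--Macaulay'' needs neither normality nor Gorensteinness: over any two-dimensional Cohen--Macaulay local ring it follows from the depth lemma applied to the left-exact sequence $0\to M^{**}\to F_0^*\to F_1^*$ obtained by dualizing a free presentation of $M^*$. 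In fact the leanest route bypasses reflexivity altogether: $\Hom_R(\mathrm{Sym}^i\Omega_{R|k},R)=\ker J_{i,i-1}$ is the kernel of a map between finite free modules, i.e., a second syzygy, and every second syzygy over the two-dimensional hypersurface $R$ is maximal Cohen--Macaulay by the same depth count.
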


We note that there are always left exact sequences of the form above; surjectivity on the right is the novel content.
Some of the results above are naturally stated in terms of the 
 the singularity category $\Dsg(R)$ of $R$ introduced by Buchweitz~\cite{Buchweitz}; see \cref{c:equivalence}. The singularity category is a triangulated category that records the tails of resolutions over $R$. As $R$ is an isolated singularity, it well-known that every object in $\Dsg(R)$ can be built from $k$ in finitely many steps using shifts, mapping cones and retracts. The number of mapping cones $(+1)$ it takes to build $M$ from $k$ is called the level of $M$ with respect to $k$, denoted $\level_{\Dsg(R)}^kM$; this invariant was introduced in \cite{Avramov/Buchweitz/Iyengar/Miller:2010a}. Since $R$ is a hypersurface, the tail of any $R$-resolution is necessarily given by a matrix factorization, in the sense of \cite{Eisenbud:1980}, of $f$ over $k[x,y,z]$. 
In constructing the resolutions of $\Hom_R(\mathrm{Sym}^i(\Omega_{R|k}),R)$, for $i=2,3$, we describe a procedure to build its corresponding matrix factorization starting from the matrix factorization of $k$; see \cref{subsec:mfS2,subsec:mfS3}. As a consequence, we obtain the following; see \cref{cor:levels2,cor:levels3}.
\begin{introcor}
For $i=2,3$,
\[
\level_{\Dsg(R)}^k (\Hom_R(\mathrm{Sym}^i(\Omega_{R|k}),R))\leqslant i\quad \text{and}\quad\level_{\Dsg(R)}^k (D_{R|k}^i)\leqslant i(i+1)/2\,.
\]
\end{introcor}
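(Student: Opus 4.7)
The plan is to establish the two bounds independently, extracting the first directly from the matrix factorization constructions recalled earlier in the paper and deducing the second by induction from the short exact sequences furnished by the preceding corollary.

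For the bound $\level_{\Dsg(R)}^k(\Hom_R(\mathrm{Sym}^i(\Omega_{R|k}), R)) \le i$, I would work through the inductive matrix factorization constructions carried out in \cref{subsec:mfS2,subsec:mfS3}. At each of the $i$ stages, the matrix factorization of $\Hom_R(\mathrm{Sym}^i(\Omega_{R|k}), R)$ is obtained by stacking a copy of the matrix factorization of $k$ onto the one produced at the previous stage; the resulting block structure is upper-triangular with the new block sitting in $\mathrm{thick}^1(k)$. In $\Dsg(R)$ this corresponds to an exact triangle whose third term is built from $k$ in a single step, so each stage increases the level by at most one. Starting from $k$ itself, which has level $1$ with respect to $k$, this yields level at most $i$.

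For the bound $\level_{\Dsg(R)}^k(D_{R|k}^i) \le i(i+1)/2$, I would apply the subadditivity of level on exact triangles to the triangle
\[
D_{R|k}^{i-1} \to D_{R|k}^i \to \Hom_R(\mathrm{Sym}^i(\Omega_{R|k}), R) \to \Sigma D_{R|k}^{i-1}
\]
in $\Dsg(R)$ coming from the short exact sequence of the previous corollary. This yields the recursion
\[
\level_{\Dsg(R)}^k(D_{R|k}^i) \le \level_{\Dsg(R)}^k(D_{R|k}^{i-1}) + \level_{\Dsg(R)}^k(\Hom_R(\mathrm{Sym}^i(\Omega_{R|k}), R)).
\]
The base cases are $\level_{\Dsg(R)}^k(D_{R|k}^0)=0$, since $D_{R|k}^0 = R$ is zero in $\Dsg(R)$, and $\level_{\Dsg(R)}^k(D_{R|k}^1)\le 1$, which is read off the resolution recalled in \cref{c:d1andB}. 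Combined with the first bound, induction gives $\level_{\Dsg(R)}^k(D_{R|k}^i) \le \sum_{j=1}^i j = i(i+1)/2$ for $i=2,3$.

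The main obstacle is the first bound: one must confirm that each stage of the inductive matrix factorization construction for $\Hom_R(\mathrm{Sym}^i(\Omega_{R|k}),R)$ really does correspond to a single distinguished triangle in $\Dsg(R)$ against $k$ rather than against a higher-level object. This amounts to checking that the extension classes produced in the construction are realized by block upper-triangular matrix factorizations of the expected sizes, and it is precisely the content of the explicit calculations in \cref{subsec:mfS2,subsec:mfS3}; once that dictionary between block-triangular matrix factorizations and exact triangles in the singularity category is in place, the rest of the argument is formal bookkeeping.
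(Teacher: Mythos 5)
Your proposal is correct and follows essentially the same strategy as the paper. The paper proves these bounds in \cref{cor:levels2} and \cref{cor:levels3}: the bound $\level_{\Dsg(R)}^k(\ker J_{i,i-1})\leqslant i$ is read off the mapping-cone constructions of the dg $E$-modules $L$ and $G$ in \cref{c:alphamap} (respectively \cref{c:betamap}), whose folds give the matrix factorizations in \cref{c:explicitMF(2)} and \cref{c:explicitMF(3)}; and the bound on $\level_{\Dsg(R)}^k(D^i_{R|k})$ follows, exactly as you suggest, by subadditivity of level applied to the triangles coming from the short exact sequences in \cref{cor:exactsequenceD2} and \cref{cor:exactsequenced3}, together with $\level_{\Dsg(R)}^k(D^1_{R|k})=1$ from \cref{r:hz}. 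One small pointer correction: $\level_{\Dsg(R)}^k(D^1_{R|k})\leqslant 1$ is not simply ``read off'' the free resolution in \cref{c:d1andB}; it follows from the matrix-factorization identification $D^1_{R|k}\simeq \shift k$ in $\Dsg(R)$ established in \cref{c:koszulresolutions} and \cref{r:hz}. Also note that what you call the ``$i$-stage induction'' for the first bound is only carried out for $i=2,3$ in the paper; your phrasing suggests a general induction, but the explicit dg $E$-module maps $\alpha$ and $\beta$ that make the stages precise are constructed only in these two cases, as you implicitly acknowledge at the end.
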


\subsection{Methods}\label{sec:method}

Our general approach to \cref{intro:main} is as follows: Focusing on the order filtration on the ring of differential operators
\[
D_{R|k}^0 \subseteq D_{R|k}^1 \subseteq D_{R|k}^2 \subseteq \cdots,
\]
we proceed inductively. For $i>1$, we first find the minimal $R$-free resolution and a minimal set of generators of $\Hom_R(\mathrm{Sym}^i (\Omega_{R|k}), R)$, the $R$-linear dual of the $i$th symmetric power of the modules of K\"ahler differentials. Next, we build a degree one chain map from this resolution to our, inductively constructed, resolution of $D^{i-1}_{R|k}$. We then show its mapping cone is a minimal resolution of $D^i_{R|k}$ which yields \cref{intro_cor_exact}. 

We employ two key techniques in building the resolution of $\Hom_R(\mathrm{Sym}^i (\Omega_{R|k}), R)$.

\subsubsection{Matrix factorizations from diagrams of Koszul complexes}\label{subsubsection:matrix-factorizations}

Here we describe the structure of the matrix factorizations associated with the minimal resolutions of each $D^n/D^{n-1}$ for various orders $n$. They are obtained from the $\mathbb{Z}/2$-graded totalizations of the diagrams below by adding in nullhomotopies for multiplication by $f$. Equivalently to find homotopies, we instead describe a structure of dg module over the Koszul complex $\Kos(f)$ on these totalized complexes, as described in \cref{c:mf} and \cref{c:fold}. The diagram for $D^1/D^0$ is

\begin{figure}[htb]

\begin{equation*}
\begin{aligned} 
 \xymatrix@R+.5pc@C+.7pc{
 \Kos(x,y,z) && Q \ar[r]^{\partial_3} &Q^3 \ar[r]^{\partial_2}&Q^3 \ar[r]^{\partial_1}& Q \,.
}
\end{aligned} 
 \end{equation*}
 \caption{Diagram for matrix factorization of $D^1/D^0$}
\end{figure}
The diagram for $D^2/D^1$ follows below. Notice that it is the diagram for $D^1/D^0$ with a new first row glued in. The new maps are described in \cref{sec:glossary}.

\begin{figure}[htb]
\begin{equation*}
\begin{aligned} 
 \xymatrix@R+.5pc@C+.7pc{
 \Kos(f_x,f_y,f_z) \ar[d]^{\bigwedge \mathrm{Hessian}(f)} &&  Q \ar[r]^{D_3}\ar[d]^{\alpha_3}  &Q^3 \ar[r]^{D_2}  \ar[d]^{\alpha_2}   &Q^3 \ar[r]^{D_1}\ar[d]^{\alpha_1}& Q \ar@{=}[d]^{\alpha_0}\\
 \Kos(x,y,z) && Q \ar[r]^{\partial_3} &Q^3 \ar[r]^{\partial_2}&Q^3 \ar[r]^{\partial_1}& Q 
}
\end{aligned} 
 \end{equation*} 
 \caption{Diagram for matrix factorization of $D^2/D^1$}
\end{figure}

The diagram for $D^3/D^2$ is then obtained by gluing a new first row into the diagram for $D^2/D^1$. The diagonal maps are obtained from the Koszul complex on the partial derivatives of the Hessian determinant of $f$; these are described as $\sigma_i$ for $i =1,2,3$ in \cref{sec:glossary}.

\begin{figure}[htb]
\begin{equation*}
\begin{aligned} 
 \xymatrix@R+.5pc@C+.7pc{
\Kos(x,y,z) \ar[d]^{(\bigwedge \mathrm{Hessian}(f))^*} \ar@[violet]@/_3pc/@{-->}[dd]&&  Q \ar[r]^{\partial_3}\ar@{=}[d]^{\alpha_0} &Q^3  \ar@[violet]@/^/[ldd]  \ar[r]^{\partial_2} \ar[d]^{-\alpha_1}&Q^3  \ar@[violet]@/^/[ldd]  \ar[r]^{\partial_1}\ar[d]^{\alpha_2}& Q \ar[d]^{-\alpha_3} \ar@[violet]@/^/[ldd] \\
 \Kos(f_x,f_y,f_z) \ar[d]^{\bigwedge \mathrm{Hessian}(f)} &&  Q \ar[r]^{D_3}\ar[d]^{\alpha_3} &Q^3 \ar[r]^{D_2} \ar[d]^{\alpha_2}&Q^3 \ar[r]^{D_1}\ar[d]^{\alpha_1}& Q \ar@{=}[d]^{\alpha_0}\\
 \Kos(x,y,z) && Q \ar[r]^{\partial_3} &Q^3 \ar[r]^{\partial_2}&Q^3 \ar[r]^{\partial_1}& Q 
}
\end{aligned} 
 \end{equation*}
  \caption{Diagram for matrix factorization of $D^3/D^2$}
\end{figure}

For order 4 we conjecture that the diagram on the right in Figure \ref{figure_conjecture}, together with additional maps from the top row to the bottom that are as yet undetermined (the orange maps indicated on the left), supports the matrix factorization for the resolution. 
The Betti numbers and the displayed maps certainly agree with our computations on \texttt{Macaulay2}. 
Similarly, we believe that analogous diagrams with $n$ rows support the factorizations for higher orders $n$. 

\begin{figure}[htb]
\begin{equation*}
\begin{aligned} 
 \xymatrix@R+.5pc@C+.7pc{
\Kos(f_x,f_y,f_z) \ar[d]^{\bigwedge \mathrm{Hessian}(f)} \ar@[teal]@/_3pc/@{-->}[dd] \ar@[orange]@/_4pc/@{-->}[ddd]_{{  \color{orange} \textbf{\Large ?}}}
&&  Q \ar[r]^{D_3}\ar[d]^{\alpha_3} \ar@[teal]@/^/[ddr] &Q^3 \ar[r]^{D_2} \ar[d]^{\alpha_2} \ar@[teal]@/^/[ddr] &Q^3 \ar[r]^{D_1}\ar[d]^{\alpha_1} \ar@[teal]@/^/[ddr]& Q \ar@{=}[d]^{\alpha_0}\\
\Kos(x,y,z) \ar[d]^{(\bigwedge \mathrm{Hessian}(f))^*}\ar@[violet]@/_3pc/@{-->}[dd] && Q \ar[r]_{\partial_3}\ar@{=}[d]^{\alpha_0} &Q^3  \ar@[violet]@/^/[ldd]  \ar[r]_{\partial_2} \ar[d]^{-\alpha_1}&Q^3  \ar@[violet]@/^/[ldd]  \ar[r]_{\partial_1}\ar[d]^{\alpha_2}& Q \ar[d]^{-\alpha_3} \ar@[violet]@/^/[ldd] \\
\Kos(f_x,f_y,f_z) \ar[d]^{\bigwedge \mathrm{Hessian}(f)} && Q \ar[r]^{D_3}\ar[d]^{\alpha_3} &Q^3 \ar[r]^{D_2} \ar[d]^{\alpha_2}&Q^3 \ar[r]^{D_1}\ar[d]^{\alpha_1}& Q \ar@{=}[d]^{\alpha_0}\\
\Kos(x,y,z) &&   Q \ar[r]^{\partial_3} &Q^3 \ar[r]^{\partial_2}&Q^3 \ar[r]^{\partial_1}& Q 
}
\end{aligned} 
 \end{equation*}
   \caption{Conjectured diagram for matrix factorization of $D^4/D^3$}
   \label{figure_conjecture}
\end{figure}

\subsubsection{Working forward from matrix factorizations to relations and generators}\label{subsubsection:forward}

We briefly describe our method for discovering the generators of $D^i/D^{i-1}$ for $i=2,3$. 
However, the details of these calculations do not appear in this paper; instead we concentrate only on proving exactness of the augmented complexes that arise from the method.

We realize $\Hom_R(\mathrm{Sym}^i(\Omega_{R|k}),R)$ as the kernel of a matrix $J_{i, i-1}$ obtained from the Jacobian matrix; see \cref{r:Jnotation}.
Its minimal resolution fits into a diagram as follows.
\begin{equation}
\label{desired_res}
F=\qquad
\xymatrixrowsep{1pc}
\xymatrixcolsep{1pc}
\xymatrix{
\cdots 
\ar@{->}[rr]^{\partial_2}
&&F_1
\ar@{->}[rr]^-{\partial_1}
&&F_0
\ar@{->}[rr]^-{\partial_0}
\ar@{->>}[dr]
&&F_{-1}
\ar@{->}[rrr]^-{\partial_{-1}=J_{i, i-1}}
&&&F_{-2}
&
\\
&&&&&
\ker J_{i,i-1}
\ar@{->}[ur]
&&&&
}
\end{equation}
We conjecture that the form of $F_{\geq 0}$ is given by the complex coming from the 
matrix factorization described in \cref{subsubsection:matrix-factorizations} above. 
Then we work forward to discover a suitable map $\partial_0$ that completes the diagram. This gives the generators of $D^i/D^{i-1}$.

To discover the map $\partial_0$, first note that the dual of the complex $F$ in \cref{desired_res} is a complex.
Therefore, the image of $\partial_0^*$ is contained in the kernel of $\partial_1^*$. 
But, the dual $F_{\geq 0}^*$ of the 2-periodic portion of the complex $F_{\geq 0}$ is exact since it arises from the transposed matrix factorization. 
Therefore, the kernel of $\partial_1^*$ is the image of $\partial_0^*$. 
Choosing bases, this means that the rows of $\partial_0$ are combinations of the rows of $\partial_2$. 
We construct the rows of $\partial_0$ from suitable $R$-linear combinations of the rows of $\partial_2$ so that the columns of $\partial_0$ are in the kernel of $J_{i,i-1}$ 
and match the expected degrees of the operators.

\subsection{Convention}
Before sketching an outline for the paper, we wish to highlight an {important} bit of notation here.
In the body of the paper, given a differential operator $\mathcal{F}$ in $D_{R|k}^i$ we will write its images in $D^i_{R|k}/D_{R|k}^{i-1}$ and $\Hom_R(\mathrm{Sym}^i(\Omega_{R|k}),R)$ in roman font $F$.
In \cref{c:composition}, we also identify $D^i_{R|k}/D_{R|k}^{i-1}$ as a submodule of a free module and so $F$ will also be identified with its corresponding column vector. The basis of the column space is indexed by the divided power partial derivatives of order exactly $i$ (listed in lexicographic order); see \cref{c:basis} for further details.
For example, $EH_{xy}$ denotes the image of the differential operator $\E\circ \H_{xy}$ in $D_{R|k}^2/D_{R|k}^1$:
\[
EH_{xy}=xf_y\partial_x^2+(yf_y-xf_x)\partial_x\partial_y+zf_y\partial_x\partial_z-yf_x\partial_y^2-zf_x\partial_y\partial_z \mod D^1_{R|k}\,,
\] as well as the column vector 
\[
EH_{xy}=\begin{bmatrix}
2 x f_y\\ y f_y - x f_x \\ z f_y\\-2y f_x\\- z f_x\\0
\end{bmatrix} \,.
\]
It turns out that working in the quotients $D^i_{R|k}/D_{R|k}^{i-1}$, and this slight abuse of notation, simplifies several calculations. See \cref{rmk:lifting-gens}, \cref{c:composition}, \cref{subsec:gensD2}, and \cref{subsec:gensD3} for more details on these notational conventions.

\subsection{Outline}
Finally, we end the introduction with a brief outline of the paper. \cref{sec:background} contains background information on differential operators, homological constructions (such as matrix factorizations), and a minimal free resolution of $D^1_{R|k}$. \cref{sec:glossary} contains a glossary of matrices that are used throughout the remainder of the document. 

\cref{sec:D2,sec:D3} are the central parts of the paper. We briefly describe the former, and the latter follows the identical outline: In \cref{subsec:gensS2}, a set of generators for $\Hom_R(\mathrm{Sym}^2(\Omega_{R|k}),R)$ is proposed; a matrix factorization corresponding to this module is introduced in \cref{subsec:mfS2}; in \cref{subsec:exactnessS2}, the pieces from the previous subsections are stitched together to identify a minimal free resolution of $\Hom_R(\mathrm{Sym}^2(\Omega_{R|k}),R)$; it is in \cref{subsec:liftS2} that the degree one map from this resolution to the resolution of $D^1_{R|k}$ is constructed; as a consequence the minimal $R$-free resolution of $D^2_{R|k}$ is obtained in \cref{resolutionD2}; finally, in \cref{subsec:gensD2}, we identify a minimal set of generators for $D^2_{R|k}$ written as honest differential operators in $D_{R|k}$. 

The article is concluded by several appendices containing formulas and calculations that are utilized in the arguments discussed above.

\section{Background}\label{sec:background}

In this paper, we work in the following setting unless specified otherwise.

\begin{notation}\label{notation:main}
Throughout $Q$ is the polynomial ring $k[x,y,z]$ over a field $k$, let $f$ be a homogeneous polynomial in $Q$ of a fixed degree $d\geqslant 3,$ and set \[R\coloneqq Q/(f)\,.\]
We assume the characteristic of $k$ is zero.
As $f\in (x,y,z)^3$, it follows that $R$ is a singular hypersurface ring. Finally, we make the assumption that $R$ is an isolated singularity, meaning---in the usual sense---that $R_\p$ is a regular local ring for all primes $\p$ different from the maximal ideal $\m=(x,y,z)R$ of $R.$
This implies, since $R$ is therefore a normal graded ring, that $R$ is a domain. 
\end{notation}

\subsection{Generalities on differential operators}\label{subsec:diff-gen}

We recall some basics on Grothendieck's notion of differential operators; cf. \cite{Grothendieck:EGA}. We do not restrict ourselves to the setting of \cref{notation:main} in this subsection.

\begin{definition} 
\label{d:diffops}Let $\phi\colon k\to A$ be a homomorphism of commutative rings. The module of $k$-linear \emph{differential operators} on $A$ of order at most $i$, denoted $D^i_{A|k}$, is defined inductively as follows:
\begin{itemize}
\item $D^0_{A|k} = \mathrm{Hom}_A(A,A) \cong A$\,;
\item $D^i_{A|k} = \{ \delta\in \mathrm{Hom}_k(A,A) \mid \delta \circ \mu -\mu \circ \delta \in D^{i-1}_{A|k} \ \text{for all} \ \mu\in D^0_{A|k} \}\quad \text{for all }i>0$\,.
\end{itemize}
Each $D^i_{A|k}$ is an $A$-module where $A$ acts via postmultiplication.
The union of the modules $D^{i}_{A|k}$ over all natural numbers $i$ is the ring of $k$-linear differential operators on $A$, denoted $D_{A|k}$.
\end{definition}

\begin{chunk}
Given two differential operators $\alpha\in D^{i}_{A|k}$ and $\beta\in D^{j}_{A|k}$, the composition $\alpha\circ \beta$ is an element of $D^{i+j}_{A|k}$; the union $D_{A|k}$ obtains the structure of a noncommutative ring with composition as the multiplication operation. We write compositions with product notation in the sequel. As multiplication is compatible with the order filtration, $D_{A|k}$ equipped with the order filtration is a filtered ring. In general, the associated graded ring with respect to this filtration is a commutative ring, which we denote as $\mathrm{gr} (D_{A|k})$. We employ the tautological exact sequences
\[ 0 \to D^{i-1}_{A|k} \to D^i_{A|k} \to \mathrm{gr} (D_{A|k})_i \to 0 \]
below.
\end{chunk}

\begin{chunk}\label{c:splitting-order-ses} In general, the inclusion $D^{i-1}_{A|k} \subseteq D^i_{A|k}$ does not split as $A$-modules. However, the inclusion $D^0_{A|k} \subseteq D^i_{A|k}$ has an $A$-linear left inverse given by evaluation at $1\in A$; the kernel of this evaluation map is called the module of $i$-th order derivations. For $i=1$, this is the usual module of derivations $\mathrm{Der}_{A|k}$.
\end{chunk}

\begin{chunk} 
\label{c:basis}
If $k$ is a commutative ring, and $P=k[x_1,\dots,x_n]$ is a polynomial ring over $k$, then $D^i_{P|k}$ is a free $P$-module with basis
\[ \{ \partial^{(a_1)}_{x_1} \cdots \partial^{(a_n)}_{x_n} \mid a_1 + \cdots + a_n \leqslant i\}\,,\]
where $\partial^{(a_i)}_{x_i}$ is the $k$-linear map determined by
\[ \partial^{(a_i)}_{x_i} (x_1^{b_1} \cdots x_n^{b_n}) = \binom{b_i}{a_i} x_1^{b_1} \cdots x_i^{b_i-a_i} \cdots x_n^{b_n};\]
if $\mathbb{Q} \subseteq k$, then we can identify $\partial^{(a_i)}_{x_i}$ with $\frac{1}{a_i !}$ times the $a_i$-iterate of the derivation~$\frac{d}{d x_i}$. By convention, we set $\partial^{(a_i)}_{x_i}$ to be the zero map if $a_i<0$.

We also have that $\mathrm{gr} (D_{P|k})_i$ is a free $P$-module with basis
\[ \{ \overline{\partial^{(a_1)}_{x_1} \cdots \partial^{(a_n)}_{x_n}} \mid a_1 + \cdots + a_n = i\}\,,\]
where overline denotes congruence class modulo $D_{P|k}^{i-1}$. The ring $\mathrm{gr} (D_{P|k})$ is a divided power algebra over $B$ in $n$ divided power variables generated by the classes of the derivations $\frac{d}{d x_i}$; if $\mathbb{Q} \subseteq k$, it is isomorphic to a polynomial ring on these variables, or, equivalently, the symmetric algebra on $\Der_{P|k}$.
\end{chunk}

\begin{chunk}\label{chunk:DOps-quot-1} Let $A$ be a commutative ring, $P=k[x_1,\dots,x_n]$ be a polynomial ring over~$k$, and set $A=P/J$ with $J=(f_1,\dots,f_m)$ an ideal of $P$. There are isomorphisms
\[ D^i_{A|k} \cong \frac{ \{ \delta \in D^{i}_{P|k} \mid \delta(J)\subseteq J\}}{J D^{i}_{P|k} }\,.\]
To identify these elements in the numerator, we introduce the following notation: for a differential operator $\delta$ and sequence of elements $\mathbf{g}=(g_1,\dots,g_m)$, we set 
\[[\delta,\mathbf{g}]\coloneqq [ \cdots [[ \delta, g_1 ], g_2],\cdots ,g_m]\,.\]
Note that $[\delta,\mathbf{g}]$ is unaffected by permuting the elements of $\mathbf{g}$. If $\mathbf{g}$ is the empty sequence, we take $[\delta,\mathbf{g}]\coloneqq\delta$.
\end{chunk}

\begin{lemma}\label{lem:bracket-tuples-operators} In the notation of \ref{chunk:DOps-quot-1}, an element $\delta\in D^i_{P|k}$ maps $J$ into $J$ if and only if $[\delta, \mathbf{g}](f_j) \in J$
for all $j=1,\dots,m$ and all tuples $\mathbf{g}=(g_1,\dots,g_{\ell})$ such that $0 \leqslant \ell <i$ and $g_t \in \{x_1,\dots,x_n\}$ for all $t=1,\dots,\ell$.
\end{lemma}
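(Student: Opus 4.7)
The plan is to prove both directions by strong induction on $i$, the order of $\delta$. The base case $i=0$ is immediate: any $\delta \in D^0_{P|k} = P$ acts by multiplication and so preserves the ideal $J$ automatically, while the bracket condition is vacuous because no integer $\ell$ satisfies $0 \leqslant \ell < 0$.

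For the forward direction at order $i \geqslant 1$, assume $\delta(J)\subseteq J$. The case $\ell = 0$ reads $\delta(f_j)\in J$, which is immediate from $f_j \in J$. For $\ell \geqslant 1$, write $\mathbf{g}=(g_1,\dots,g_{\ell})$. The commutator $[\delta,g_1] \in D^{i-1}_{P|k}$ also maps $J$ into $J$, since $g_1\cdot \delta(J)\subseteq g_1 J \subseteq J$ and $\delta(g_1\cdot J)\subseteq \delta(J)\subseteq J$. Applying the forward direction of the inductive hypothesis to $[\delta,g_1]$ and the tuple $(g_2,\dots,g_{\ell})$, which has length $\ell-1 < i-1$, yields
\[ [\delta,\mathbf{g}](f_j) \;=\; [[\delta,g_1],(g_2,\dots,g_\ell)](f_j) \;\in\; J. \]

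For the backward direction, suppose the bracket conditions hold for $\delta$. For each variable $x_s$ the operator $[\delta,x_s] \in D^{i-1}_{P|k}$ inherits bracket conditions at order $i-1$: for any tuple $\mathbf{g}'$ of variables of length $< i-1$, the associativity identity $[[\delta,x_s],\mathbf{g}'] = [\delta,(x_s,\mathbf{g}')]$ gives $[[\delta,x_s],\mathbf{g}'](f_j) \in J$, because the prepended tuple still has length $< i$. By the inductive hypothesis, $[\delta,x_s](J)\subseteq J$ for every $s$. To conclude $\delta(J)\subseteq J$, I reduce by linearity to showing $\delta(h f_j) \in J$ for each monomial $h$ in $x_1,\dots,x_n$, and argue by a secondary induction on $\deg(h)$. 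The base case $h=1$ is the $\ell=0$ bracket condition; for $h = x_s h'$ the identity $\delta \circ x_s = x_s\circ \delta + [\delta,x_s]$ gives
\[ \delta(h f_j) \;=\; x_s\,\delta(h' f_j) \;+\; [\delta,x_s](h' f_j), \]
where the first summand lies in $J$ by the degree induction and the second by what was just established.

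The only real subtlety is keeping the two nested inductions straight, on $i$ for the main statement and on monomial degree inside the backward step; the pivotal identity throughout is $[\delta,(x_s,\mathbf{g}')] = [[\delta,x_s],\mathbf{g}']$, which transfers the bracket hypothesis from $\delta$ to each $[\delta,x_s]$ and hence makes the induction on $i$ close up.
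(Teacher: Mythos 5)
Your proof is correct and takes essentially the same route as the paper's: a double induction on the order $i$ and on monomial degree, using the bracket identity to transfer the hypothesis from $\delta$ to each $[\delta,x_s]$, and then splitting $\delta(x_s h' f_j) = x_s\,\delta(h' f_j) + [\delta,x_s](h' f_j)$. The only differences are cosmetic — you spell out the forward direction (which the paper dismisses as straightforward), you establish $[\delta,x_s](J)\subseteq J$ for all $s$ before the degree induction rather than picking a single variable WLOG, and you prepend the variable to the tuple so the identity falls straight out of the nested-bracket definition instead of invoking permutation invariance.
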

\begin{proof} The forward implication is straightforward. For the reverse implication, it suffices to show that if $\delta\in D^i_{P|k}$ satisfies the condition in the statement, then \[\delta(x_1^{a_1}\cdots x_n^{a_n} f_j) \in J \quad \text{for all} \ a_1,\dots,a_n \geqslant 0\,.\]
We proceed by induction on $i$. For the case $i=0$, an operator of order zero stabilizes every ideal, so the claim is clear. For the inductive step, we proceed by induction on $\ell=a_1+\cdots+a_n$. For the case $\ell=0$, this follows by the hypothesis $[\delta, \varnothing](f_j) \in J$. For $\ell > 0$, we have $a_k>0$ for some $k$; without loss of generality, we can take $k=1$. For any sequence $\mathbf{g}$ of variables of length at most $i-1$, let $\mathbf{g'}$ be the sequence obtained from $\mathbf{g}$ by appending $x_1$. Then $[\delta,x_1] \in D^{i-1}_{B|A}$ and $[[\delta,x_1], \mathbf{g}](f_j) = [\delta, \mathbf{g'}](f_j) \in J$ by hypothesis, so by the induction hypothesis on $i$, $[\delta,x_1]$ maps $J$ into $J$. By the induction hypothesis on $\ell$, $\delta(x_1^{a_1-1}\cdots x_n^{a_n} f_j) \in J$. Thus,
\[ \begin{aligned} \delta(x_1^{a_1}\cdots x_n^{a_n} f_j)&= \delta x_1 (x_1^{a_1-1} \cdots x_n^{a_n} f_j) \\ &= x_1 \delta(x_1^{a_1-1}\cdots x_n^{a_n} f_j) + [\delta,x_1](x_1^{a_1-1}\cdots x_n^{a_n} f_j) \in J\,,\end{aligned}\]
as required.
\end{proof}

We may identify $D^i_{A|k}$ with a submodule of the free $A$-module $A\otimes_P D^i_{P|k}$ via the isomorphism above, where an element of $A\otimes_P D^i_{P|k}$ corresponds to an operator taken modulo~$J$. Under this identification, we have:

\begin{proposition}\label{prop:kernel-operator}
In the notation of \cref{chunk:DOps-quot-1}, an element of the free $A$-module $A\otimes_P D^i_{P|k}$ corresponds to an element of $D^i_{A|k}$ if and only if it is in the kernel of the homomorphism of free $A$-modules $\phi\colon A\otimes_P D^i_{P|k} \to F_{< i}$ given by
\[ \phi_i(\partial^{(a_1)}_{x_1} \cdots \partial^{(a_n)}_{x_n}) = \sum_{\substack{0\leqslant b_1 + \cdots + b_n < i \\ 1\leqslant j \leqslant m}} \partial^{(a_1-b_1)}_{x_1} \cdots \partial^{(a_n-b_n)}_{x_n}(f_j) e_{(b_1,\dots,b_n),j}\,,\]
where $F_{< i}$ is the free $A$-module with basis 
\[ \{ 
e_{(b_1,\dots,b_n) , j} \mid 0\leqslant b_1 + \cdots + b_n < i , \, 1\leqslant j \leqslant m\}\,. \]
\end{proposition}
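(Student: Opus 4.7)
The proof plan is to combine the identification of $D^i_{A|k}$ inside $A\otimes_P D^i_{P|k}$ given in \cref{chunk:DOps-quot-1} with the bracket criterion in \cref{lem:bracket-tuples-operators}, and to execute a direct commutator computation matching the coefficients of $\phi_i(\delta)$ with the expressions $[\delta,\mathbf{g}](f_j)$.

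First, I would establish the following commutator identity: for any basis element $\partial^{(a_1)}_{x_1}\cdots\partial^{(a_n)}_{x_n}$ of $D^i_{P|k}$ and any tuple $\mathbf{g}=(x_{t_1},\ldots,x_{t_\ell})$ of variables with multiplicity vector $(b_1,\ldots,b_n)$ (so $b_s$ records the number of occurrences of $x_s$ in $\mathbf{g}$),
\[
[\partial^{(a_1)}_{x_1}\cdots\partial^{(a_n)}_{x_n},\,\mathbf{g}] \;=\; \partial^{(a_1-b_1)}_{x_1}\cdots\partial^{(a_n-b_n)}_{x_n}.
\]
This reduces, by nested iteration, to the two base identities $[\partial^{(a)}_{x_s},x_s]=\partial^{(a-1)}_{x_s}$ and $[\partial^{(a)}_{x_s},x_t]=0$ for $s\ne t$. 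The first is a one-variable Leibniz computation (valid because $\ch k=0$, so divided powers are genuine normalized iterates of $\partial_{x_s}$); the second is immediate since $\partial^{(a)}_{x_s}$ is $k[x_t]$-linear for $t\ne s$. Nested iteration then yields the formula above, and it also confirms that the result is independent of the ordering of the entries of $\mathbf{g}$, matching the convention noted in \cref{chunk:DOps-quot-1}.

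Next, extending by $P$-linearity, if $\delta=\sum_{\underline{a}} c_{\underline{a}}\,\partial^{(a_1)}_{x_1}\cdots\partial^{(a_n)}_{x_n}$ is any lift in $D^i_{P|k}$ of a class in $A\otimes_P D^i_{P|k}$, then the identity above gives
\[
[\delta,\mathbf{g}](f_j)\;=\;\sum_{\underline{a}} c_{\underline{a}}\,\partial^{(a_1-b_1)}_{x_1}\cdots\partial^{(a_n-b_n)}_{x_n}(f_j),
\]
which is precisely the coefficient of $e_{(b_1,\ldots,b_n),j}$ appearing in the formula defining $\phi_i(\delta)$. Since $\{e_{(b_1,\ldots,b_n),j}\}$ is a free $A$-basis of $F_{<i}$, the element $\phi_i(\delta)$ vanishes if and only if $[\delta,\mathbf{g}](f_j)\in J$ for every variable tuple $\mathbf{g}$ of length $<i$ and every $j=1,\ldots,m$. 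By \cref{lem:bracket-tuples-operators}, this is equivalent to $\delta(J)\subseteq J$, which by \cref{chunk:DOps-quot-1} characterizes the image of $D^i_{A|k}$ inside $A\otimes_P D^i_{P|k}$. Finally, one checks that $\phi_i$ is well-defined on the quotient: the formula is manifestly $P$-linear in $\delta$ and has target an $A$-module, so it vanishes on $J\cdot D^i_{P|k}$, and the condition $\delta(J)\subseteq J$ is insensitive to modifying $\delta$ by an element of $J\cdot D^i_{P|k}$.

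The only genuinely delicate step is the divided-power commutator identity; once that is in hand, the rest is bookkeeping against the basis of $F_{<i}$. If one wished to avoid the characteristic-zero hypothesis in this particular statement, the identity could equally be proved in arbitrary characteristic by a direct induction on $\ell$ using the divided-power analogue of the Leibniz rule, but I would defer that and simply use $\ch k=0$ in keeping with the standing assumption of the paper.
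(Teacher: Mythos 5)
Your proposal is correct and follows essentially the same approach as the paper's proof: both rest on \cref{lem:bracket-tuples-operators} together with the key commutator observation $[\partial^{(a_1)}_{x_1}\cdots\partial^{(a_n)}_{x_n},\mathbf{g}]=\partial^{(a_1-b_1)}_{x_1}\cdots\partial^{(a_n-b_n)}_{x_n}$, then match coefficients against the free basis of $F_{<i}$. You simply spell out the base cases of the commutator identity and the well-definedness check that the paper leaves implicit.
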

\begin{proof} This follows from Lemma~\ref{lem:bracket-tuples-operators} plus the observation that, for the sequence $\mathbf{g}$ in which each $x_i$ appears $b_i$ times, we have $[\partial^{(a_1)}_{x_1} \cdots \partial^{(a_n)}_{x_n},\mathbf{g}] = \partial^{(a_1-b_1)}_{x_1} \cdots \partial^{(a_n-b_n)}_{x_n}$.
\end{proof}

We note that this description also follows from the presentations of modules of principal parts in \cite{Barajas/Duarte} and~\cite{Brenner/Jeffries/Nunez}.

\begin{chunk}\label{chunk:diagram-dops-and-gr}
The maps $\phi_{i}$ from Proposition~\ref{prop:kernel-operator} fit into a commutative diagram
\begin{equation}\label{eq-2.1.7a} \xymatrix{ 0 \ar[r] & A\otimes_P D^{i-1}_{P|k} \ar[r] \ar[d]^{\phi_{i-1}} & A\otimes_P D^{i}_{P|k} \ar[r]\ar[d]^{\phi_i} & A\otimes_P \mathrm{gr}(D^{i}_{P|k})_i \ar[r]\ar[d]^{\psi_i} & 0 \\
0 \ar[r] & F_{< i-1} \ar[r] & F_{< i} \ar[r] & F_{= i-1} \ar[r] & 0} \end{equation}
with exact rows, where $F_{= i-1}$ is the free $A$-module with basis 
\[ \{ 
e_{(b_1,\dots,b_n) , j} \mid b_1 + \cdots + b_n =i-1 , \, 1\leqslant j \leqslant m\}\,, \] and
\[\psi_i(\overline{\partial^{(a_1)}_{x_1} \cdots \partial^{(a_n)}_{x_n}}) = \sum_{\substack{ b_1 + \cdots + b_n = i-1 \\ 1\leqslant j \leqslant m}} \partial^{(a_1-b_1)}_{x_1} \cdots \partial^{(a_n-b_n)}_{x_n}(f_j) e_{(b_1,\dots,b_n),j}\,.\]

Note that $\partial^{(a_1-b_1)}_{x_1} \cdots \partial^{(a_n-b_n)}_{x_n}(f_j)$ is nonzero only when, for some $\ell\in\{1,\dots,n\}$, we have $a_\ell=b_\ell+1$ and $a_m = b_m$ for $m\neq \ell$; for this tuple $b_1,\dots,b_n$, we have \[ \partial^{(a_1-b_1)}_{x_1} \cdots \partial^{(a_n-b_n)}_{x_n}(f_j) = \sum_j \partial_{x_\ell}(f_j) e_{(b_1,\dots,b_n),j}\,.\]
From this description, one sees that $\psi_i$ can be identified with the $i$th symmetric power of the map $\psi_1$, which is concretely given by
\[ \psi_1 (\partial_{x_i}) = \sum_j \partial_{x_i}(f_j) e_{(0,\dots,0),j}\,;\]
i.e., $\psi_1$ is the transpose of the Jacobian morphism that presents the module of Kahler differentials.
\end{chunk}

\begin{chunk}\label{c:compositions&kerS} By considering kernels in \eqref{eq-2.1.7a}, we obtain a left exact sequence
\begin{equation}\label{eq:left-exact-seq-diff} 0 \to D^{i-1}_{A|k} \to D^i_{A|k} \to \mathrm{ker}(\psi_i)\end{equation}
induced by the natural inclusion and projection maps.
In general, this does not need to be a short exact sequence---i.e., the last map need not be surjective---however, when $i=1$, this is the case, and the kernel of $\psi_1$ is the usual module of derivations.

From \eqref{eq:left-exact-seq-diff}, and the definition, we obtain injective morphisms
\begin{equation}\label{eq:induced-maps-gr} \mathrm{gr}(D_{A|k})_i \to \mathrm{ker}(\psi_i)\,,\end{equation}
which are isomorphisms when \eqref{eq:left-exact-seq-diff} forms a short exact sequence.

For an $A$-linear derivation $\eta \in D^1_{A|k}$ and $\mu\in D^i_{A|k}$, we have $\eta\circ\mu \in D^{i+1}_{A|k}$. Thus, composition with $\eta$ gives a well-defined map:
\begin{equation}\label{eq:induced-maps-gr'} \mathrm{gr}(D_{A|k})_i \xrightarrow{\eta} \mathrm{gr}(D_{A|k})_{i+1}\,.
\end{equation}

If the morphism \eqref{eq:induced-maps-gr} is an isomorphism for some $i$, then the morphism \eqref{eq:induced-maps-gr'} of composition by $\eta$ yields a morphism
\begin{equation}\label{eq:induced-maps-ker} \mathrm{ker}(\psi_i) \xrightarrow{\eta} \mathrm{ker}(\psi_{i+1})\,.\end{equation}
\end{chunk}

\subsection{Concrete considerations on differential operators}\label{subsec:diff-conc}

We now realize the objects and maps discussed in \cref{subsec:diff-gen} concretely in the setting of \cref{notation:main}.

\begin{chunk}\label{chunk:concrete1} We have that $D^i_{Q|k}$ is a free $Q$-module with basis
\[ \{ \partial_x^{(a)} \partial_y^{(b)} \partial_z^{(c)} \mid 0\leqslant a+b+c \leqslant i\}\,.\]
We order our bases with the graded lexicographic order throughout. In particular, the ordered basis for $F_{\leqslant 3}$ we use throughout is
\[ \begin{aligned} &\partial_x^{(3)}, \partial_x^{(2)} \partial_y, \partial_x^{(2)} \partial_z, \partial_x \partial_y^{(2)}, \partial_x \partial_y \partial_z, \partial_x \partial_z^{(2)}, \partial_y^{(3)}, \partial_y^{(2)} \partial_z, \partial_y \partial_z^{(2)}, \partial_z^{(3)}, \\
&\partial_x^{(2)}, \partial_x \partial_y, \partial_x \partial_z, \partial_y^{(2)}, \partial_y \partial_z, \partial_z^{(2)}, \\
&\partial_x, \partial_y,\partial_z,\\ 
&1;
\end{aligned}\]
likewise, the first row above gives the ordered basis of $\mathrm{gr}(D_{Q|k})_3$, the last three rows give the ordered basis of $D_{Q|k}^2$, and so on. We order the bases of $F_{<i}$ similarly.
\end{chunk}

\begin{chunk}\label{c:presentationS2} 
We now write the matrices corresponding to the maps $\phi_i$ and $\psi_i$ from \cref{prop:kernel-operator} and \cref{chunk:diagram-dops-and-gr} with respect to the bases in \cref{chunk:concrete1}. First, the matrices for $\psi_1$, $\psi_2$, and $\psi_3$, respectively, are
\[ J_{1,0} \coloneqq \kbordermatrix{
& \partial_x & \partial_y & \partial_z \\ e_1 & f_x & f_y & f_z } \]

\[ J_{2,1} \coloneqq \kbordermatrix{
& \partial_x^{(2)}& \partial_x \partial_y& \partial_x \partial_z& \partial_y^{(2)}& \partial_y \partial_z& \partial_z^{(2)} \\ 
e_x & f_x & f_y & f_z & 0 & 0 & 0 \\
e_y & 0 & f_x & 0 & f_y & f_z & 0 \\
e_z & 0 & 0 & f_x & 0 & f_y & f_z } \]

\[ J_{3,2} \coloneqq { \kbordermatrix{
& \partial_x^{(3)}& \partial_x^{(2)} \partial_y& \partial_x^{(2)} \partial_z& \partial_x \partial_y^{(2)}& \partial_x \partial_y \partial_z&
\partial_x \partial_z^{(2)}& \partial_y^{(3)}& \partial_y^{(2)} \partial_z& \partial_y \partial_z^{(2)}& \partial_z^{(3)} \\ 
e_{xx} & f_x & f_y & f_z & 0 & 0 & 0 & 0 & 0 & 0 & 0 \\
e_{xy} & 0 & f_x & 0 & f_y & f_z & 0 & 0 & 0 & 0 & 0 \\
e_{xz} & 0 & 0 & f_x & 0 & f_y & f_z & 0 & 0 & 0 & 0 \\
e_{yy} & 0 & 0 & 0 & f_x & 0 & 0 & f_y & f_z & 0 & 0 \\
e_{yz} & 0 & 0 & 0 & 0 & f_x & 0 & 0 & f_y & f_z & 0 \\
e_{zz} & 0 & 0 & 0 & 0 & 0 & f_x & 0 & 0 & f_y & f_z
} } \,.\]

Above, we have rewritten the target bases as monomials; for example, $e_{xy}$ corresponds to $e_{(1,1,0),1}$ in the notation of \cref{chunk:diagram-dops-and-gr}. 

In particular, for $i=2,3$, the left exact sequences \eqref{eq:left-exact-seq-diff} take the form
\begin{equation}\label{eq:LES-ring} 0 \to D^{i-1}_{R|k} \to D^{i}_{R|k} \xra{\nu_i} \ker(J_{i,i-1})\,,\end{equation}
inducing injective maps as in \eqref{eq:induced-maps-gr},
\begin{equation}\label{eq:inj-maps-from-les}
\mathrm{gr}(D_{R|k})_i \to \ker(J_{i,i-1})\,.\end{equation}
Under these identifications, the maps 
\[ D^i_{R|k} \to \ker(J_{i,i-1}) \quad \text{and} \quad \mathrm{gr}(D_{R|k})_i \to \ker(J_{i,i-1})\]
 as in \eqref{eq:left-exact-seq-diff} are simply given by restricting coordinates.

We describe the matrices for $\phi_i$ as block matrices via grouping the basis elements in the source and the target by the sums of their indices. Block components include the following:

\[ J_{2,0} \coloneqq \kbordermatrix{ & \partial_x^{(2)}& \partial_x \partial_y& \partial_x \partial_z& \partial_y^{(2)}& \partial_y \partial_z& \partial_z^{(2)} \\
e_1 & \frac 12 f_{xx}& f_{xy} & f_{xz}&\frac 12 f_{yy}&f_{yz}&\frac 12f_{zz}}\]

\[ J_{3,0} \coloneqq \kbordermatrix{ & \partial_x^{(3)}& \partial_x^{(2)} \partial_y& \partial_x^{(2)} \partial_z& \partial_x \partial_y^{(2)}& \partial_x \partial_y \partial_z&
\partial_x \partial_z^{(2)}& \partial_y^{(3)}& \partial_y^{(2)} \partial_z& \partial_y \partial_z^{(2)}& \partial_z^{(3)} \\
e_1 & \frac 16 f_{xxx}& \frac 12 f_{xxy} & \frac 12 f_{xxz}&\frac 12 f_{xyy}&f_{xyz}&\frac 12f_{xzz}&\frac 16 f_{yyy}&\frac 12 f_{yyz}&\frac 12 f_{yzz}&\frac 16 f_{zzz} } \]

\[ J_{3,1} \coloneqq \kbordermatrix{ & \partial_x^{(3)}& \partial_x^{(2)} \partial_y& \partial_x^{(2)} \partial_z& \partial_x \partial_y^{(2)}& \partial_x \partial_y \partial_z&
\partial_x \partial_z^{(2)}& \partial_y^{(3)}& \partial_y^{(2)} \partial_z& \partial_y \partial_z^{(2)}& \partial_z^{(3)} \\
e_x & \frac 12 f_{xx}& f_{xy} & f_{xz}&\frac 12 f_{yy}&f_{yz}&\frac 12f_{zz}&0&0&0&0\\
e_y & 0&\frac 12 f_{xx}&0&f_{xy}&f_{xz}&0&\frac 12 f_{yy}&f_{yz}&\frac 12f_{zz}&0\\
e_z & 0&0&\frac 12 f_{xx}&0&f_{xy}&f_{xz}&0&\frac 12 f_{yy}&f_{yz}&\frac 12f_{zz}}.\]

\medskip

Consider the following block matrices:\[ P_1\coloneqq \begin{bmatrix} J_{1,0} \end{bmatrix}, \quad P_2\coloneqq \begin{bmatrix} J_{1,0} & J_{2,0} \\ 0 & J_{2,1} \end{bmatrix}, \ \text{and} \ P_3\coloneqq \begin{bmatrix} J_{1,0} & J_{2,0} & J_{3,0} \\ 0 & J_{2,1} & J_{3,1} \\ 0 & 0 & J_{3,2} \end{bmatrix}.\]
Then for $i=1,2,3$, the matrix for $\phi_i$ is given by $P_i$ with an additional column of zeroes corresponding to the $R$-basis element $1 \in R\otimes_Q D^i_{Q|k}$. The column of zeroes corresponds to a free cyclic summand of $\phi_i$ with basis $1$, which corresponds to the image of $D^0_{R|k}$ in $D^i_{R|k}$.
\end{chunk}

\begin{remark}\label{r:Jnotation}
For a matrix $M$, we take $\mathrm{S}^i(M) = \mathrm{Sym}^i(M)$ to $i$th symmetric power of a matrix: i.e., the matrix minimally presenting the $i$th symmetric power of the cokernel of $M$.

Let $J$ be the usual Jacobian matrix. 
Observe that $J_{1,0} = J$, $J_{2,1} = \mathrm{S}^2(J^T)^T$, and $J_{3,2}= \mathrm{S}^3(J^T)^T$, and more generally $J_{i,i-1} = \mathrm{S}^i(J^T)^T$. In particular, we have
\[ \ker(J_{i,i-1}) = \ker(\mathrm{S}^i(J^T)^T) \cong \Hom_R(\coker \mathrm{S}^i(J^T),R) \cong \Hom_R(\mathrm{S}^i(\coker J^T),R)\cong \Hom_R(\mathrm{S}^i \Omega_R, R)\,.\]
\end{remark}
 
\begin{chunk}\label{c:composition} In light of the discussion above, we identify elements of $D^2_{R|k}$ as constant operators in $D^0_{R|k}$ plus the collection of vectors $v\in R^9$ such that $P_2 v=0$, that is, 
\[
D^2_{R|k} \cong R \oplus \ker P_2\,.
\]
Likewise, we identify elements of $D^3_{R|k}$ as constant operators in $D^0_{R|k}$ plus the collection of vectors $v\in R^{19}$ such that $P_3v=0$, that is, 
\[
D^3_{R|k} \cong R \oplus \ker P_3\,.
\]

As in \eqref{eq:induced-maps-ker}, for any derivation $\eta$, we obtain a well-defined map induced by composition with $\eta$:
\[ \ker(J) \to \ker(J_{2,1})\,.\]

 If the restriction map $D^2_{R|k} \to \ker(J_{2,1})$ is surjective, then we also obtain a well-defined map induced by composition with $\eta$: 
 \[ \ker(J_{2,1}) \to \ker(J_{3,2})\,.\]
 \end{chunk}

\subsection{Koszul complexes and matrix factorizations}\label{subsec:koszulmf}
In this subsection we make use of differential graded (henceforth abbreviated to dg) algebras and their dg modules. A suitable reference for background on these topics is \cite{Avramov:1999}.
\begin{chunk}
\label{c:koszulcx}
Let $\f=f_1,\ldots,f_n$ be a list of elements in $Q$. We write $\Kos^Q(\f)$ for the Koszul complex on $Q$ over $\f$, regarded as a dg $Q$-algebra in the usual way. That is, $\Kos^Q(\f)$ is the exterior algebra on the free module $\bigoplus_{i=1}^nQe_i$, where $e_i$ has homological degree one, and differential which is completely determined by $\del(e_i)=f_i$ and the Leibniz rule. 

Of particular interest will be the case in which $n=3,$ where we order the bases of homological degrees 1 and 2 according to 
\[\Kos^Q_1(\f)=Qe_1\oplus Qe_2\oplus Qe_3 \ \text{ and } \ \Kos^Q_2(\f)=Qe_3 e_2 \oplus Qe_1e_3\oplus Qe_2e_1\,.\]
With this convention the differentials in $\Kos^Q(\f)$ can be expressed using the following matrices:
\[
0\to Q\xra{\begin{bmatrix}
f_1 \\ f_2 \\f_3 
\end{bmatrix}}
Q^3\xra{\begin{bmatrix}
0 & -f_3 & f_2\\ f_3 & 0 & -f_1 \\-f_2 & f_1 & 0
\end{bmatrix}}
Q^3\xra{\begin{bmatrix}
f_1& f_2 & f_3
\end{bmatrix}}Q \to 0\,.
\] 
\end{chunk}

\begin{notation}
\label{notation:koszul}
Consider the Koszul complexes $\Kos^Q(x,y,z)$ and $\Kos^Q(f_x,f_y,f_z).$ The differential of the first Koszul complex is denoted by $\del$ and the differential of the second Koszul complex is written as $D.$
Explicitly, 
\begin{align*}
 \del_2&=\begin{bmatrix}
0 & -z & y\\ 
z & 0 & -x \\
-y & x & 0
\end{bmatrix} & &
\del_1=\del_3^T=\begin{bmatrix}
x& y & z
\end{bmatrix} \\
 D_2&=\begin{bmatrix}
0 & -f_z & f_y\\ 
f_z & 0 & -f_x \\
-f_y & f_x & 0
\end{bmatrix} & &
D_1=D_3^T=\begin{bmatrix}
f_x& f_y & f_z
\end{bmatrix}\,.
\end{align*}
\end{notation}

\begin{remark}
\label{r:koscxres}
The assumption that $R$ has an isolated singularity implies $f_x,f_y,f_z$ forms a $Q$-regular sequence; this is explained at the beginning of \cref{subsec:d1}. Hence, $\Kos^Q(f_x,f_y,f_z)$ is a dg $Q$-algebra resolution of $Q/(f_x,f_y,f_z)$. Furthermore, $\Kos^Q(x,y,z)$ is a dg $Q$-algebra resolution of $k$.
\end{remark}

\begin{chunk}
\label{c:Ekoszul}
Throughout we will consider the dg $Q$-algebra $E=\Kos^Q(d\cdot f)$, with $e$ the degree one generator.
Since $f$ is a $Q$-regular element, the augmentation map \[E\xra{\simeq} Q/(d\cdot f)=R\] is a quasi-isomorphism of dg $Q$-algebras. 

Next, the Euler identity $d\cdot f=xf_x+yf_y+zf_z$ in $Q$ defines dg $E$-module structures on $\Kos^Q(f_x,f_y,f_z)$ and $\Kos^Q(x,y,z)$. 
Left multiplication by $e$ on $\Kos^Q(f_x,f_y,f_z)$ is described by
the following matrices
\[
 0\leftarrow Q\xleftarrow{\del_1}
Q^3\xleftarrow{-\del_2 }
Q^3\xleftarrow{\del_3}Q \leftarrow 0\,;
\]
recall $e$ has homological degree 1 and so left multiplication by $e$ increases homological degree by 1. In fact, $e\cdot$ is a square-zero nullhomotopy for multiplication by $d\cdot f$ on $\id^{\Kos^Q(f_x,f_y,f_z)},$ which is exactly the data of a dg $E$-module; see \cite[Remark~2.2.1]{Avramov:1999}. 

Similarly, left multiplication by $e$ on $\Kos^Q(x,y,z)$ is described by
the following matrices
\[
 0\leftarrow Q\xleftarrow{D_1}
Q^3\xleftarrow{-D_2 }
Q^3\xleftarrow{D_3}Q \leftarrow 0\,.
\]
When we refer to $\Kos^Q(f_x,f_y,f_z)$ and $\Kos^Q(x,y,z)$ as dg $E$-modules it is with the structures prescribed above, respectively. The fact that left multiplication by $e$ on these complexes define square-zero nullhomotopies for multiplication by $d\cdot f$ can be checked from \cref{appendix}.
\end{chunk}

In \cite{Eisenbud:1980}, Eisenbud showed that any $R$-module's minimal free resolution is eventually two-periodic; this data can be described completely in terms of his theory of matrix factorizations as introduced in \emph{loc.\@ cit.} We briefly recall these points and their connections with certain triangulated categories associated to $R$. For ease of exposition we describe the former over regular rings; see \cite{Eisenbud:1980,Leuschke/Wiegand:2012,Yoshino:1990} for more general treatments of this topic. 

For the rest of the subsection, let $A$ be a regular ring and $a\in A$ be an $A$-regular element.

\begin{chunk}
\label{c:mf}
 A \emph{matrix factorization of $a$ (over $A$)} is a pair of maps between finite rank free $A$-modules
\[ F_0\xra{\alpha} F_1 \xra{\beta} F_0\]
such that $\alpha\beta = a\cdot \id_{F_1}$ and $\beta\alpha = a\cdot\id _{F_0}$. It follows that $F_0,F_1$ have the same rank and so $\alpha$ and $\beta$ can be represented by square matrices of the same size.

The collection of matrix factorizations of $a$ (over $A$), with morphisms defined in the obvious way, form a Frobenius exact category. Hence its stable category, denoted $[\mathsf{mf}(A,a)]$, is a triangulated category where the suspension functor is given by 
\[
\shift(F_0\xra{\alpha} F_1 \xra{\beta} F_0)\coloneqq F_1\xra{-\alpha} F_0 \xra{-\beta} F_1\,. 
\]
\end{chunk}

\begin{chunk}
\label{c:fold}
Set $K \coloneqq \Kos^A(a)$. Given a bounded complex $G$ of finite rank free $A$-modules 
\[ 
0\to G_n \to G_{n-1} \to \dots \to G_m \to 0
\]
with a dg $K$-module structure, one can define a matrix factorization of $a$ over $A$ as follows:
Set 
\[
F_0 \coloneqq \bigoplus_{i \text{ even}} G_i, \quad F_1 \coloneqq \bigoplus_{i \text{ odd}} G_i,
\]
and, slightly abusing notation, set $\alpha$ and $\beta$ to be $\del^G + \sigma$ where $\sigma$ denotes left multiplication by the exterior generator of $K$. This matrix factorization of $a$ is denoted by $\fold(G)$.
\end{chunk}

\begin{chunk}\label{c:hypersurface}
In \cite[Section~6]{Eisenbud:1980}, the machinery above is applied to show that every minimal free resolution over a hypersurface ring is eventually two-periodic. A fundamental, yet key idea behind this is that given a matrix factorization $F_0\xra{\alpha} F_1 \xra{\beta} F_0$ of $a$ the complex
\[
\cdots \to F_0\otimes_A A/(a)\xra{\alpha\otimes 1} F_1\otimes_A A/(a) \xra{\beta\otimes 1} F_0\otimes_A A/(a)\xra{\alpha\otimes 1}\cdots 
\]
is exact; see \cite[Proposition~5.1]{Eisenbud:1980} or \cite{Shamash:1969}.
\end{chunk}

\begin{chunk}
\label{c:equivalence}
For a commutative noetherian ring $B$, we let $\Db(B)$ denote the bounded derived category of finitely generated $B$-modules. The objects are those $B$-complexes whose homology is concentrated in finitely many degrees and each homology module is finitely generated over $B$. This is a triangulated category in the standard way; see \cite{Krause:2022} and details on thick subcategories. 

We let $\Dsg(B)$ denote the Verdier quotient of $\Db(B)$ by the perfect $B$-complexes. This was independently introduced by Buchweitz in \cite{Buchweitz} and Orlov in \cite{Orlov}. This is a triangulated category that records the singularity of $B$, and more generally the asymptotics of free resolutions over $B$. 
Assume $B=A/(a)$ a singular hypersurface ring. By \emph{loc.\@ cit.\@}, it follows that there is an equivalence of triangulated categories
\[
\Dsg(B)\equiv [\mathsf{mf}(A,a)]\,.
\]
\end{chunk}

\begin{chunk}
\label{c:level}
Let $\mathsf{T}$ be a triangulated category. Here we recall the notion of level from \cite{Avramov/Buchweitz/Iyengar/Miller:2010a}; this is an invariant that records how many ``steps" it takes to build on object in $\mathsf{T}$ from another only utilizing the triangulated structure of $\mathsf{T}.$ 

Recall that a thick subcategory of $\mathsf{T}$ is a triangulated subcategory that is closed under retracts. For an object $X$ in $\mathsf{T}$, we let $\thick_\mathsf{T}X$ denote the smallest thick subcategory of $\mathsf{T}$ containing $X$.
There is an inductive construction of $\thick_{\mathsf{T}} X$ from \cite{Bondal/VanDenBergh:2003}, see also \cite{Avramov/Buchweitz/Iyengar/Miller:2010a}.
 Set $\thick_{\mathsf{T}}^1 X$ to be the smallest full subcategory of $\mathsf{T}$ containing $X$ and closed under (de-)suspensions, finite sums, and retracts. Now inductively, $\thick_{\mathsf{T}}^n X$ is the smallest full subcategory of $\mathsf{T}$ containing objects $C$ fitting into an exact triangle 
\[
A\to B\to C\to \,
\] with $A$ in $\thick_{\mathsf{T}}^{n-1}X$ and $B$ in $\thick_{\mathsf{T}}^1X$, which is closed under (de-)suspensions, finite sums, and retracts.
By \cite[Section~2.2]{Avramov/Buchweitz/Iyengar/Miller:2010a}, 
\[
\thick_{\mathsf{T}} X=\cup_{n\geqslant 1}\thick_{\mathsf{T}}^n X\,.
\]
Following \cite{Avramov/Buchweitz/Iyengar/Miller:2010a}, the \emph{level of an object $Y$ with respect to $X$} is 
\[
\level_{\mathsf{T}}^XY=\inf\{n\geqslant 0: Y\text{ is in} \thick_\mathsf{T}^n X\}\,.
\]
Now assume $B$ is an isolated singularity hypersurface ring with residue field $k$. It is well known that $\level_{\Dsg(R)}^kM<\infty$ for any $M$ in $\Dsg(R)$; see, for example, \cite{Dyckerhoff:2011,Keller/Murfet/VandenBergh:2011}. We give bounds on $\level_{\Dsg(R)}^kD^i_{R|k}$ for $i=2,3$ in \cref{cor:levels2,cor:levels3}, respectively. See also \cref{q:levels} for a proposed upper bound on $\level_{\Dsg(R)}^kD^i_{R|k}<\infty$ for arbitrary $i$. 
\end{chunk}

\subsection{First order differential operators}\label{subsec:d1}

Let $P=k[x_1,\ldots,x_n]$ and $A=P/(f)$ where $k$ is a perfect field and $f$ is a homogeneous element of degree $d\geqslant 2$. We also impose that $A$ is an isolated singularity. Thus, $(f_{x_1},\ldots,f_{x_n},f) = (f_{x_1},\ldots,f_{x_n})$ has height $n$, so the partial derivatives form a $P$-regular sequence. Set $B\coloneqq A/(f_{x_1},\ldots,f_{x_n})$, which is an artinian complete intersection quotient of $A$. 
In this subsection we recall the relationship between $B$ and the first order differential operators on $A$, as well as the minimal $A$-free resolution of $B$. We refer the reader to \cref{resD1} for the 3-variable case that examined in this paper. 

\begin{chunk}
\label{c:d1andB}
We recall from \ref{c:splitting-order-ses} that $D^1_{A|k} = A\oplus \Der_{A|k}$, where the copy of $A$ is the set of constant operators, and $\Der_{A|k}$ is the collection of derivations; in particular $\Der_{A|k}\cong [\mathrm{gr}(D_{A|k})]_1$.

As in \ref{chunk:diagram-dops-and-gr}, we have 
\[ \Der_{A|k} \cong \ker(J) = \ker \begin{bmatrix} f_{x_1} & \cdots & f_{x_n} \end{bmatrix}.\]
To find the generators and minimal free resolution of this $R$-module, we construct a minimal free resolution of $B=\coker J$
\[
\cdots \to F_3\xra{\del_3^F} F_2\xra{\del_2^F} R^n\xra{J} R\to 0
\]
and adding a contractible subcomplex $R\xra{=} R$ in degrees 1 and 2 yields $\Der_{A|k}$ as the image of the second differential. That is, in
\begin{equation}
\label{e:cokernel}
\xymatrixrowsep{1pc}
\xymatrixcolsep{1pc}
\xymatrix{
\cdots 
\ar@{->}[rr]^{}
&&F_3
\ar@{->}[rr]^-{\begin{bmatrix}
\del_3^F\\ 0
\end{bmatrix}}
&&F_2\oplus R
\ar@{->}[rr]^-{\begin{bmatrix}
\del_2^F & 0\\
0& 1
\end{bmatrix}}
\ar@{->>}[dr]_-{\del}
&&R^n\oplus R
\ar@{->}[rr]^-{J}
&&R
\ar@{->>}[dr]_{\pi}
&
\\
&&&&&
\Der_{A|k}
\ar@{->}[ur]_-{i}
&&&&B
}
\end{equation}
we obtain the generators of $\Der_{A|k}$ from the image of $\begin{bmatrix}
\del_2^F & 0\\
0& 1
\end{bmatrix}$, and the minimal free resolution of $\Der_{A|k}$ is procured by truncating the complex in \cref{e:cokernel} at homological degree two and shifting by two.
\end{chunk}

Next we explain how to obtain the minimal $R$-free resolution of $B$, and hence, of $D^1_{A|k}$ in light of \cref{c:d1andB}.
\begin{chunk}
\label{c:koszulresolutions}
Let $C=\Kos^B(d\cdot f)$ be the Koszul complex on the homogeneous element $d\cdot f$ over $P$, where recall $d$ is the degree of $f$. 
Since $f_{x_1},\ldots,f_{x_n}$ is a $P$-regular sequence, 
\[
K\coloneqq \Kos^P(f_{x_1},\ldots,f_{x_n})
\]
is a $P$-free resolution of $B$. Also, set 
\[
K'\coloneqq \Kos^P(x_1,\ldots,x_n)
\]
which is a $P$-free resolution of the residue field $k$. In fact, $K$ and $K'$ are dg $C$-algebras. 
Indeed, let the standard bases of $K_1$ and $K'_1$ be denoted $\xi_1,\ldots,\xi_n$ and $\xi_1',\ldots,\xi_n'$, respectively. 
The Euler identity in $P$,
\[
d\cdot f=\sum_{i=1}^n x_if_{x_i}
\]
defines dg $C$-module structures on $K$ and $K'$: Namely, if $\xi$ is exterior variable of $C$ then left multiplication by $\xi$ is given by left multiplication by
\begin{equation}
 \label{e:tate}
\sum_{i=1}^n x_i\xi_i \text{ on }K\quad \text{and}\quad \sum_{i=1}^n f_{x_i}\xi_i' \text{ on }K'\,,
\end{equation}
respectively. As a consequence, using \cite[Theorem~4]{Tate:1957}, the minimal $A$-free resolutions of $B$ and $k$ are given by 
\[
A \otimes_P K\langle y\mid \del y=\sum_{i=1}^n x_i\otimes\xi_i\rangle\quad \text{and}\quad A \otimes_P K'\langle y\mid \del y=\sum_{i=1}^n f_{x_i}\otimes\xi_i'\rangle\,,
\]
respectively; as underlying graded $A$-modules these have the form
\[
A\otimes_P K\otimes_P \bigoplus_{i\geqslant 0} Py^{(i)}\quad \text{and} \quad A\otimes_P K'\otimes_P \bigoplus_{i\geqslant 0}Py^{(i)}\,,
\]
respectively, where $y^{(i)}$ is a basis element living in homological degree $2i$ and the differential on each complex is uniquely extended by the rule specified and the formula $\del(y^{(i)})=\del y \cdot y^{(i-1)}$. 
See \cite{Avramov:2010} for a detailed account of this construction. Each of these is also the Shamash resolution from \cite{Shamash:1969} (see also \cite{Eisenbud:1980}). Note that upon fixing the lexicographically ordered bases on $K$ and $K'$, respectively, left multiplication by $\sum_{i=1}^n x_i\xi_i$ on $K$ is given by the appropriate matrix $\del^{K'}$ representing the differential of $K'$. Similarly, 
left multiplication by $\sum_{i=1}^n f_{x_i}\xi_i'$ on $K'$ is given by the appropriate matrix $\del^{K}$ representing the differential of $K$.
\end{chunk}

\begin{chunk}
Continue with the notation set in \cref{c:koszulresolutions}. By inspection of the resolutions in \cref{e:tate}, minding that $y$ is a divided power variable of degree two, the matrix factorizations describing the minimal $A$-resolutions of $B$ and $k$, respectively, have the following forms:
\[
P^{2^{n-1}}\xra{\alpha}P^{2^{n-1}}\xra{\beta}P^{2^{n-1}}\quad\text{and}\quad 
P^{2^{n-1}}\xra{\beta}P^{2^{n-1}}\xra{\alpha}P^{2^{n-1}}\,,
\]
respectively, where 
\[
\alpha=
\begin{bmatrix}
 (\del_1^{K'})^T & \del_2^K & 0 & 0 & \ldots \\
0 & (\del_3^{K'})^T & \del_4^K & 0& \ldots \\
0& 0 & (\del_5^{K'})^T & \del_6^K & \ddots \\
\vdots & \ddots & \ddots & \ddots & \ddots 
\end{bmatrix}
\quad \text{and}\quad 
\beta=\begin{bmatrix}
 \del_1^K & 0& 0 & 0 & \ldots \\
(\del_2^{K'})^T & \del_3^K & 0& 0& \ldots \\
0& (\del_4^{K'})^T & \del_5^K &0 & \ddots \\
\vdots & \ddots & \ddots & \ddots & \ddots 
\end{bmatrix}\,;
\]
here the self-duality of the Koszul complex is being used. Therefore, there are the following isomorphisms in $\Dsg(A):$
\[
D^1_{A|k}\simeq B\simeq \shift k\,.
\]
\end{chunk}
\begin{remark}
\label{r:hz}
In \cite{Herzog/Martsinkovsky:1993}, Herzog and Martsinkovsky show that $D_{A|k}^1\simeq \shift k$ in $\Dsg(A)$ whenever $A$ is an isolated singularity complete intersection ring of positive Krull dimension. The strategy employed in \emph{loc.\@ cit.\@} is the so-called ``gluing" method to produce complete resolutions; this differs from the dg-method described above.  We opt for the latter as it keeps the present discussion mostly self-contained and in line with the strategy employed in \cref{sec:D2,sec:D3}. 

Note that in the case that $A$ is an isolated singularity complete intersection ring, we trivially have \[\level_{\Dsg(A)}^k (D_{A|k}^1)=1\] from the isomorphism above. This equality
should be compared with \cref{cor:levels2,cor:levels3} for $D^2_{A|k}$ and $D^3_{A|k}$ when $A$ is a hypersurface of Krull dimension two; cf.\@ \cref{q:levels} as well. 
\end{remark}

\begin{chunk}\label{resD1}
Now returning to \cref{notation:main}, following \cref{c:koszulresolutions}, we obtain the following minimal $R$-free resolution of $B=R/(f_x,f_y,f_z)$: 
\[
\cdots \xra{\small\begin{bmatrix}
\del_3 & D_2 \\
0 & \del_1
\end{bmatrix}}R^4\xra{\small\begin{bmatrix}
D_1 & 0 \\
-\del_2 & D_3
\end{bmatrix}}R^4\xra{\small\begin{bmatrix}
\del_3 & D_2 \\
0 & \del_1
\end{bmatrix}}R^4\xra{\small\begin{bmatrix}
D_1 & 0 \\
-\del_2 & D_3
\end{bmatrix}}R^4\xra{\small\begin{bmatrix} 
\del_3 & D_2
\end{bmatrix}}R^3\xra{\small\begin{bmatrix}
f_x & f_y & f_z
\end{bmatrix}}R\to 0\,.
\]
where the matrices are defined in \cref{notation:koszul}. 
Therefore, again using \cref{c:d1andB}, the (augmented) minimal $R$-free resolution of $D^1_{R|k}$ has the form 
\begin{equation*}
\label{e:cokernel3}
\xymatrixrowsep{1pc}
\xymatrixcolsep{1pc}
\xymatrix{
\cdots 
\ar@{->}[rrr]^{\small\begin{bmatrix}
\del_3 & D_2 \\
0 & \del_1
\end{bmatrix}}
&&&R^4
\ar@{->}[rrr]^{\small\begin{bmatrix}
D_1 & 0 \\
-\del_2 & D_3
\end{bmatrix}}
&&&R^4
\ar@{->}[rrr]^{\small\begin{bmatrix}
\del_3 & D_2 \\
0 & \del_1
\end{bmatrix}}
&&&R^4
\ar@{->}[rrr]^{\small\begin{bmatrix}
D_1 & 0 \\
-\del_2 & D_3\\
0 & 0
\end{bmatrix}}
&&&R^4\oplus R
\ar@{->}[rr]^{ \ \ \small\begin{bmatrix}
\del_3 & D_2 & 0 \\
0 & 0 & 1
\end{bmatrix}\ \ \ }
\ar@{->>}[dr]_{}
&&R^3\oplus R
\ar@{->}[rrr]^{\ \ \ \small\begin{bmatrix}
f_x \!&\! f_y \!&\! f_z \!&\!0
\end{bmatrix}}
&&&R
\ar@{->>}[dr]_{\pi}
&
\\
&&&&&&&&&&&&&
D^1_{R|k}
\ar@{->}[ur]^{i}
&&&&&B
}
\end{equation*}
and a minimal set of generators $D^1_{R|k}$ is given by the columns of 
$\begin{bmatrix}
\del_3 & D_2 & 0 \\
0 & 0 & 1
\end{bmatrix}$, where the rows correspond to the basis $\del_x,\del_y,\del_z,1$ as described in \cref{chunk:concrete1}. 
The following notation for the matrices above will be used in the sequel. 
\[
M_0(1)=\begin{bmatrix} 
\del_3 & D_2
\end{bmatrix}\,,\quad 
M_1(1)=\begin{bmatrix}
D_1 & 0 \\
-\del_2 & D_3
\end{bmatrix}\,,\quad 
M_2(1)=\begin{bmatrix}
\del_3 & D_2 \\
0 & \del_1
\end{bmatrix}\,.
\]

Recall that $D^1_{R|k} = R\oplus \Der_{R|k}$. 
In particular, a minimal set of generators of $\Der_{R|k}$ is given by the columns of $\begin{bmatrix} 
\del_3 & D_2 \end{bmatrix}$. We name these derivations:
\begin{equation}
\label{E}
\E \coloneqq x \del_x + y \del_y + z \del_z
\end{equation}
is the Euler derivation, and
\begin{equation}
\label{H}
 \H_{yz} \coloneqq f_z \del_y - f_y \del_z , \quad
 \H_{zx} \coloneqq f_x \del_z - f_z \del_x, \quad \text{and} \quad
 \H_{xy} \coloneqq f_y \del_x - f_x \del_y 
 \end{equation}
are the Hamiltonian derivations. The relations on these derivations are given by the columns of $\begin{bmatrix}
D_1 & 0 \\
-\del_2 & D_3
\end{bmatrix}$, or more explicitly as 
\begin{align}
\begin{aligned}
\label{e:basicrelations}
f_x \E + y \H_{xy} - z \H_{zx} &= 0 \\
f_y \E - x \H_{xy} + z \H_{yz} &= 0 \\
f_z \E + x \H_{zx} - y \H_{yz} &= 0\,.
\end{aligned}
 \end{align}
\end{chunk}

As previously mentioned, a similar method will be employed in \cref{sec:D2,sec:D3} for calculating the minimal free resolutions of $D^2_{R|k}$ and $D^3_{R|k}$. We end this subsection by recording the graded Betti numbers of $D^1_{R|k}$ regarded as a graded module over the standard $k$-algebra $R$. 

\begin{chunk}
Regarding $R$ as a standard graded $k$-algebra, it follows that each $D^i_{R|k}$ is a graded $R$-submodule of the graded $k$-linear endomorphisms of $R$. For a $k$-linear graded endomomorphism $g$ of $R$, we let $g$ denote the degree of $g.$ That is, $|g|$ is the unique integer satisfying $g(R_j)\subseteq R_{j+|g|}$ for each $j\in \mathbb{Z}.$ Under these conventions, 
\[
|\E|=0\quad \text{ and }\quad |\H_{yz}|=|\H_{zx}|=|\H_{xy}|=d-2
\]
using that $|\del_x|=|\del_y|=|\del_z|=-1$. 
\end{chunk}

\begin{chunk}
\label{c:d1graded}
Let $N$ be a finitely generated graded $R$-module. Its $i,j^{\text{th}}$-graded Betti number is 
\[
\beta_{ij}^R(N)\coloneqq \rank_k\Tor^R_i(N,k)_j\,;
\]
that is, $\beta_{i,j}^R(N)$ is the rank of the
free module in homological degree $i$ and basis in internal degree $-j$.

It is straightforward to check that the differentials in the free resolution of $D^1=D^1_{R|k}$, in \cref{resD1}, are homogeneous with respect to the internal grading of $R$. Therefore, the graded Betti numbers of $D^1$ are
\[
\beta_{0,j}^R(D^1)=\begin{cases}
2 & j=0\\
3 & j=d-2\\
0 & \text{otherwise}
\end{cases}\,,
\]
and for $n\geqslant 1$, 
\[
\beta_{2n-1,j}^R(D^1)=\begin{cases}
1 & j=nd-1\\
3 & j=nd+d-3\\
0 & \text{otherwise}
\end{cases}\quad \text{and}\quad \beta_{2n,j}^R(D^1)=\begin{cases}
3 & j=nd\\
1 & j=nd+d-2\\
0 & \text{otherwise}
\end{cases}\,.
\]
\end{chunk}

\section{Glossary of matrices}\label{sec:glossary}
All of the following are matrices with entries in $R$; see \cref{notation:main}. When writing a block matrix, $0_{m\times n}$ will denote the $m\times n$-matrix whose entries are all zero.
\subsection{Matrices needed in \cref{sec:D2}}
\[
\del_1=\begin{bmatrix}
x& y & z
\end{bmatrix} 
\quad \quad 
\del_2=\begin{bmatrix}
0 & -z & y\\ 
z & 0 & -x \\
-y & x & 0
\end{bmatrix} \quad\quad
\del_3=\begin{bmatrix}
x\\ y \\ z
\end{bmatrix} 
\]
\[
D_1=\begin{bmatrix}f_x& f_y & f_z
\end{bmatrix} 
\quad \quad 
D_2=\begin{bmatrix}
0 & -f_z & f_y\\ 
f_z & 0 & -f_x \\
-f_y & f_x & 0
\end{bmatrix} \quad\quad
D_3=\begin{bmatrix}
f_x\\ f_y \\ f_z
\end{bmatrix} 
\]
\[
q=\begin{bmatrix}
x^2 & xy & xz \\
xy &{y^2} & {yz} \\
{xz} & {yz} & {z^2} \\
\end{bmatrix}\quad \quad
\Delta=\begin{bmatrix}
f_{xx}& f_{xy} &f_{xz}\\ 
f_{xy} & f_{yy}& f_{yz}\\
f_{xz}&f_{yz}& f_{zz}
\end{bmatrix}\quad \quad \delta=\det(\Delta)
\]
\[
\Delta_{xx}=f_{yy} f_{zz}-f^2_{yz} \quad\quad\Delta_{yy}=f_{xx} f_{zz}-f^2_{xz}\quad\quad\Delta_{zz}=f_{xx} f_{yy}-f^2_{xy}
\]
\[
\Delta_{xy}=f_{xz}f_{yz}-f_{xy}f_{zz}\quad\quad\Delta_{xz}=f_{xy} f_{yz}-f_{xz}f_{yy}\quad\quad\Delta_{yz}=f_{xy}f_{xz}-f_{xx} f_{yz}
\]
\[ \delta_x = \partial_x(\delta) \qquad\qquad\qquad \delta_y = \partial_y(\delta)\qquad\qquad\qquad \delta_z = \partial_z(\delta)\]
\[
\alpha_1=\frac{1}{d-1}\Delta \quad \quad \alpha_2=\frac{1}{(d-1)^2}\begin{bmatrix}\Delta_{xx}& \Delta_{xy} &\Delta_{xz}\\ 
\Delta_{xy} &\Delta_{yy}& \Delta_{yz}\\
\Delta_{xz}&\Delta_{yz}& \Delta_{zz}
\end{bmatrix} \quad \quad
\alpha_3=\frac{1}{(d-1)^3}\delta \quad 
\]
\[
\theta_0(2)= J_{2,0}= \begin{bmatrix}
\frac 12 f_{xx}& f_{xy} & f_{xz}&\frac 12 f_{yy}&f_{yz}&\frac 12f_{zz}
\end{bmatrix} 
\quad \quad 
\theta_1(2) = (d-1)\begin{bmatrix} 0_{3\times 4}&\alpha_2 \end{bmatrix}
\]
\begin{align*}
\theta_{2i}(2)&= (d-1)\begin{bmatrix} 0_{1\times 3}&0_{1\times3}&-\alpha_3\\ 
0_{3\times 3}&\alpha_1&0_{3\times 1} \end{bmatrix}~\text{for}~ i \geqslant 1\\
\theta_{2i+1}(2)&= \frac{-(d-1)}{2}\begin{bmatrix} 0_{3\times 1}&0_{3\times 3}&-2\alpha_2\\ 
0&\partial_1&0_{1\times 3} \end{bmatrix}~\text{for}~ i \geqslant 1
\end{align*}
\subsection{Additional matrices needed in \cref{sec:D3}}\label{sec:glossary2}
\[\sigma_1=\sigma_3^T=\frac{1}{(d-1)^3(d-2)}\begin{bmatrix}
\delta_{x} & \delta_{y} & \delta_{z}
\end{bmatrix}\quad\quad
\sigma_2=\frac{1}{(d-1)^3(d-2)}\begin{bmatrix}
0 & -\delta_z & \delta_y\\
\delta_z &0 &- \delta_x \\
-\delta_y & \delta_x & 0 
\end{bmatrix}
\]

\[B_1=\frac{1}{d-1}
\begin{bmatrix} 
x\Delta_{xx}& y\Delta_{xx}& z\Delta_{xx}\\
x\Delta_{xy}& y\Delta_{xy}& z\Delta_{xy}\\
x\Delta_{xz}& y\Delta_{xz}& z\Delta_{xz}\\
x\Delta_{yy}& y\Delta_{yy}& z\Delta_{yy}\\
x\Delta_{yz}& y\Delta_{yz}& z\Delta_{yz}\\
x\Delta_{zz}& y\Delta_{zz}& z\Delta_{zz}
\end{bmatrix}
\]

\[B_2=\scriptsize{
\begin{bmatrix} H_{yz}(\Delta_{xx})& H_{yz}(\Delta_{xy})-\frac{x\delta_z}{d-1}&H_{yz}(\Delta_{xz})+\frac{x\delta_y}{d-1}\\ 
H_{zx}(\Delta_{xx})&H_{yz}(\Delta_{yy})&\frac{1}{2}\left( H_{yz}(\Delta_{yz})+H_{zx}(\Delta_{xz})+\frac{y\delta_y-x\delta_x}{d-1}\right)\\
H_{xy}(\Delta_{xx})&\frac{1}{2}\left( H_{yz}(\Delta_{yz})+H_{xy}(\Delta_{xy})+\frac{x\delta_x-z\delta_z}{d-1}\right)&H_{yz}(\Delta_{zz})\\
H_{zx}(\Delta_{xy})+\frac{y\delta_z}{d-1}&H_{zx}(\Delta_{yy}&H_{zx}(\Delta_{yz})-\frac{y\delta_x}{d-1}\\
\frac{1}{2}\left( H_{zx}(\Delta_{xz})+H_{xy}(\Delta_{xy})+\frac{z\delta_z-y\delta_y}{d-1}\right)&H_{xy}(\Delta_{yy})&H_{zx}(\Delta_{zz})\\
H_{xy}(\Delta_{xz})-\frac{z\delta_y}{d-1}&H_{xy}(\Delta_{yz})+\frac{z\delta_x}{d-1}&H_{xy}(\Delta_{zz}) 
\end{bmatrix}}
\]
\[
\theta_0(3)= \begin{bmatrix} J_{3,0} \\ J_{3,1} \end{bmatrix}
= \begin{bmatrix}
\frac 16 f_{xxx}& \frac 12 f_{xxy} & \frac 12 f_{xxz}&\frac 12 f_{xyy}&f_{xyz}&\frac 12f_{xzz}&\frac 16 f_{yyy}&\frac 12 f_{yyz}&\frac 12 f_{yzz}&\frac 16 f_{zzz}\\
\frac 12 f_{xx}& f_{xy} & f_{xz}&\frac 12 f_{yy}&f_{yz}&\frac 12f_{zz}&0&0&0&0\\
0&\frac 12 f_{xx}&0&f_{xy}&f_{xz}&0&\frac 12 f_{yy}&f_{yz}&\frac 12f_{zz}&0\\
0&0&\frac 12 f_{xx}&0&f_{xy}&f_{xz}&0&\frac 12 f_{yy}&f_{yz}&\frac 12f_{zz}
\end{bmatrix} 
\]
\[
\theta_1(3)=\begin{bmatrix}
0_{3\times 4 } & -\frac{d^2-1}{2}\alpha_2 & -(d-1)(d-2)\sigma_2\\
0_{6\times 4} & B_1 & \frac{-3}{(d-1)(d-2)}B_2
\end{bmatrix}
\]

\[
\theta_{2i}(3)=\begin{bmatrix}
0_{1\times 3} & 0_{1\times 3} & -(d-1)(d-2)\sigma_1 & 0 \\
0_{3\times 3} & \frac{-(d+1)}{2}\Delta & 0_{3\times 3} & -(d-1)(d-2)\sigma_3\\
0_{1\times 3} & 0_{1\times 3} & -3(d-1)\sigma_1 & 0 \\
0_{3\times 3} & 0_{3\times 3} & 0_{3\times 3} & -3(d-1)\sigma_3\\
0_{3\times 3} & 0_{3\times 3} & \frac{9}{2}\Delta & 0_{3\times 1}
\end{bmatrix}\quad\text{for}\quad i\geqslant 1
\]
\[
\theta_{2i+1}(3)=\begin{bmatrix}
0_{3\times 1} & 0_{3\times 3} & \frac{-(d^2-1)}{2}\alpha_2 & -(d-1)(d-2)\sigma_2 \\
0 & \frac{d^2-1}{4}\del_1 & 0_{1\times 3} & 0_{1\times 3} \\
0_{3\times 1} & 0_{3\times 3} & 0_{3\times 3} & -3(d-1) \sigma_2 \\
0_{3\times 1} & 0_{3\times 3} & 0_{3\times 3} & \frac{9(d-1)}{2}\alpha_2 \\
0 & 0_{1\times 3} & \frac{-3(d-1)}{2}\del_1 & 0_{1\times 3}
\end{bmatrix}\quad\text{for}\quad i\geqslant 1
\]

\section{Differential operators of order 2}\label{sec:D2}
In this subsection we adopt \cref{notation:main}, and construct the minimal $R$-free resolution of $D^2_{R|k}$; see \cref{resolutionD2}. The bulk of the work is in \cref{subsec:gensS2,subsec:mfS2,subsec:exactnessS2}, and it is assembled in \cref{subsec:liftS2}. 
\subsection{A set of generators of $\ker J_{2,1}$}\label{subsec:gensS2}
In this subsection we introduce what we later show, see \cref{gensS2}, to be the generators of $\ker J_{2,1}$.
We have $\Der_{R|K}=R \langle \E, \H_{yz},\H_{zx},\H_{xy}\rangle$, where $\E=x\partial_x+y\partial_y+z\partial_z$ is the Euler operator and $\H_{yz} = f_z \partial_y - f_y \partial_z$, $\H_{zx}=f_x \partial_z - f_z \partial_x$, and $\H_{xy}= f_y \partial_x - f_x \partial_y$ are the Hamiltonians.
Composing $\E$ with each of these to obtain the elements $\E \circ \E$, $\E\circ \H_{yz}$, $\E\circ \H_{zx}$, and $\E\circ\H_{xy}$ in $D^2_{R|k}$. Their images in $\ker(J_{2,1})\subseteq R^6$ under the map $\nu_2$ from \eqref{eq:LES-ring} are of particular interest and so we introduce the following notation. 
\begin{align*}
E^2&\coloneqq \nu_2(\E\circ \E) \\
&= x^2\partial_x^2+2xy\partial_x\partial_y+2xz\partial_x\partial_z+y^2\partial_y^2+2yz\partial_y\partial_z+z^2\partial_z^2\\
EH_{yz}&\coloneqq \nu_2(\E \circ\H_{yz}) \\
&= xf_z\partial_x\partial_y-xf_z\partial_x\partial_z+yf_z\partial_y^2+(zf_z-yf_y)\partial_y\partial_z-zf_y\partial_z^2 \\
EH_{zx}&\coloneqq \nu_2(\E\circ \H_{zx}) \\
&= -xf_z\partial_x^2-yf_z\partial_x\partial_y+(xf_x-zf_z)\partial_x\partial_z+yf_x\partial_y\partial_z+zf_x\partial_z^2\\
EH_{xy}&\coloneqq \nu_2(\E\circ \H_{xy}) \\
&= xf_y\partial_x^2+(yf_y-xf_x)\partial_x\partial_y+zf_y\partial_x\partial_z-yf_x\partial_y^2-zf_x\partial_y\partial_z \,;
\end{align*}
note that lower order terms are dropped since the map $\nu_2$ factors through $D^2_{R|k}/D^1_{R|k}$.
These correspond to vectors in $R^6$ with the basis $\{\partial_x^{(2)} , \partial_x\partial_y , \partial_x\partial_z , \partial_y^{(2)} , \partial_y\partial_z , \partial_z^{(2)}\}$:
\[
E^2=2\begin{bmatrix}
x^2\\xy\\xz\\ y^2\\yz\\z^2
\end{bmatrix},
\hspace{0.25cm}
EH_{yz}=\begin{bmatrix}
0 \\ x f_z \\ -x f_y\\2y f_z\\z f_z - y f_y\\-2z f_y
\end{bmatrix},
\hspace{0.25cm} 
EH_{zx}=\begin{bmatrix}
-2 x f_z\\ -y f_z \\ x f_x-z f_z\\ 0\\y f_x\\ 2z f_x
\end{bmatrix},
\hspace{0.25cm}
EH_{xy}=\begin{bmatrix}
2 x f_y\\ y f_y - x f_x \\ z f_y\\-2y f_x\\- z f_x\\0
\end{bmatrix} \,.
\]

Similarly by composing each Hamiltonian with itself and considering their images 
under $\nu_2$, we have the following elements in $R^6$:
\[
H_{yz}^2=2\begin{bmatrix}
0\\ 0 \\ 0\\ f_z^2\\-f_yf_z\\f_y^2
\end{bmatrix},
\hspace{0.25cm}
H_{zx}^2=2\begin{bmatrix}
f_z^2\\ 0 \\ -f_xf_z\\0\\0\\f_x^2
\end{bmatrix} ,
\hspace{0.25cm}
H_{xy}^2=2\begin{bmatrix}
f_y^2\\ -f_xf_y \\ 0\\f_x^2\\0\\0
\end{bmatrix} \,.
\]

\begin{remark}\label{rmk:lifting-gens}
 In \cref{subsec:gensD2}, we identify elements of $\ker(J_{2,1})$ with elements in $R\otimes_Q F_{\leq 2}$, the free module with basis
 \[ \partial_x^{(2)}, \partial_x \partial_y, \partial_x \partial_z, \partial_y^{(2)}, \partial_y \partial_z, \partial_z^{(2)}, \partial_x, \partial_y,\partial_z,1
\]
as in \cref{chunk:concrete1}, by taking the naive lift: the vector with zeroes in the last four coordinates.
\end{remark}

We now introduce the remaining three generators of a minimal generating set of $\ker J_{2,1}$; cf.\@ Proposition~\ref{localgensS3}.

\begin{lemma}
\label{newgensS2}
The elements
\begin{align*}
\alpha_x &= \frac{1}{x} 
\left[
H_{yz}^2+\frac{1}{(d-1)^2}\Delta_{xx}E^2
\right]
\\
\alpha_y &= \frac{1}{y} 
\left[
H_{zx}^2+\frac{1}{(d-1)^2}\Delta_{yy}E^2
\right]
\\
\alpha_z &= \frac{1}{z} 
\left[
H_{xy}^2+\frac{1}{(d-1)^2}\Delta_{zz}E^2
\right]
\end{align*}
are well-defined in $R^6$.
\end{lemma}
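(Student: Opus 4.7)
The plan is to verify by direct computation that each coordinate of the bracketed expression defining $\alpha_x$ (respectively $\alpha_y, \alpha_z$) is divisible by $x$ (respectively $y,z$) in $R$. By the cyclic symmetry of the three cases, it suffices to handle $\alpha_x$. The first three coordinates of $H_{yz}^2 + \frac{\Delta_{xx}}{(d-1)^2} E^2$ (those indexed by $\partial_x^{(2)}, \partial_x\partial_y, \partial_x\partial_z$) are manifestly multiples of $x$ by the explicit formulas for $E^2$ and $H_{yz}^2$, so the task collapses to showing that each of
\[
(d-1)^2 f_z^2 + \Delta_{xx}\, y^2,\qquad -(d-1)^2 f_y f_z + \Delta_{xx}\, yz,\qquad (d-1)^2 f_y^2 + \Delta_{xx}\, z^2
\]
belongs to the ideal $(x) + (f)$ in $Q$, equivalently represents a multiple of $x$ in $R$.

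The main tool is Euler's identity applied both to $f$ and to its homogeneous partial derivatives. In matrix form these identities read $\Delta\cdot (x,y,z)^{T} = (d-1)(f_x,f_y,f_z)^{T}$. Contracting with $(x,y,z)$ from the left produces the global identity
\[
x^2 f_{xx} + 2xy f_{xy} + 2xz f_{xz} + y^2 f_{yy} + 2yz f_{yz} + z^2 f_{zz} \;=\; d(d-1)f
\]
in $Q$, which I will call the \emph{Hessian--Euler identity}; rearranged, it yields the key relation $y^2 f_{yy} + 2yz f_{yz} + z^2 f_{zz} \in (x) + (f)$.

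To conclude, I reduce modulo $x$. Euler's identities for $f_y$ and $f_z$ give the congruences $(d-1)f_y \equiv y f_{yy} + z f_{yz}$ and $(d-1)f_z \equiv y f_{yz} + z f_{zz} \pmod{x}$. Substituting these into each of the three quantities displayed above and expanding using $\Delta_{xx} = f_{yy} f_{zz} - f_{yz}^2$, a short symbolic cancellation shows that each one equals the appropriate cofactor ($f_{zz}$, $-f_{yz}$, and $f_{yy}$, respectively) times the quantity $y^2 f_{yy} + 2yz f_{yz} + z^2 f_{zz}$ modulo $x$. Combined with the key relation from the previous paragraph, each lies in $(x) + (f)$, and hence represents a multiple of $x$ in $R$. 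The corresponding statements for $\alpha_y$ and $\alpha_z$ follow by cyclic permutation of the roles of $(x,y,z)$.

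The argument is elementary throughout; the only point requiring insight is recognizing that the Hessian--Euler contraction produces exactly the combination of second partials needed to absorb the cross terms that arise when one expands $f_y^2, f_y f_z, f_z^2$ after applying Euler modulo $x$. I therefore do not anticipate any genuine obstacle beyond keeping the bookkeeping tidy.
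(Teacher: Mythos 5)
Your proof is correct, and it follows a genuinely different route from the paper's.

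The paper works in $Q$ (up to a multiple of $f$) using the product identities \eqref{PD-prodaa} and \eqref{PD-prodab}, which express $f_a^2$ and $f_af_b$ as $\tfrac{1}{(d-1)^2}$ times a quadratic combination of cofactors of $\Delta$ plus $\tfrac{d}{d-1}f_{ab}\,f$. Substituting these into the three nontrivial coordinates of $H^2_{xy}+\tfrac{\Delta_{zz}}{(d-1)^2}E^2$ cancels the $\Delta_{zz}$ term and leaves an explicit $z$ times a cofactor expression; this yields not just divisibility but the exact quotient, which the paper then records as the matrix $A(2)$ in \cref{M0(2)}. Your argument instead reduces modulo $x$ in $R$, uses the Euler identities for $f_y, f_z$ to rewrite $(d-1)f_y \equiv yf_{yy}+zf_{yz}$ and $(d-1)f_z\equiv yf_{yz}+zf_{zz}\pmod{x}$, and observes (correctly, I checked) that each coordinate then factors as a cofactor of $\Delta_{xx}$ — namely $f_{zz}$, $-f_{yz}$, $f_{yy}$ — times the second-order Euler residue $y^2f_{yy}+2yzf_{yz}+z^2f_{zz}$, which lies in $(x)+(f)$ by \eqref{2nd order Euler id}. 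Since $x$ is a nonzerodivisor in $R$ (a domain), this suffices. Your route is lighter — it avoids developing \eqref{PD-prodaa}/\eqref{PD-prodab} and stays at the level of the Euler identities — but it proves only existence of the quotient, whereas the paper's computation also produces the explicit entries of $A(2)$ needed later. For the lemma as stated, both are complete proofs.
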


\begin{proof}
We verify the statement for $\alpha_z$ and the other two arguments are similar. Consider
\begin{align}\label{alphazmatrix}
 H_{xy}^2+\frac{1}{(d-1)^2}\Delta_{zz}E^2
=
\kbordermatrix{
& \\
\partial_x^{(2)} & 2f_y^2+\frac{2}{(d-1)^2}x^2\Delta_{zz} \\
\partial_x\partial_y & -2f_xf_y+\frac{2}{(d-1)^2}xy\Delta_{zz} \\
\partial_x\partial_z & \frac{2}{(d-1)^2}xz\Delta_{zz} \\
\partial_y^{(2)} & 2f_x^2+\frac{2}{(d-1)^2}y^2\Delta_{zz} \\
\partial_y\partial_z & \frac{2}{(d-1)^2}yz\Delta_{zz} \\
\partial_z^{(2)} & \frac{2}{(d-1)^2}z^2\Delta_{zz}
}\,,
\end{align}
which we claim is divisible by $z$. This is clear for the $\partial_x\partial_z$, $\partial_y\partial_z$, and $\partial_z^{(2)}$ entries. 

For the $\partial_x^{(2)}$ entry we have 
\begin{align*}
2f_y^2+\frac{2}{(d-1)^2}x^2\Delta_{zz}&=\frac{2}{(d-1)^2}\left(-x^2\Delta_{zz}-z^2\Delta_{xx}+2xz\Delta_{xz}+x^2\Delta_{zz}\right)\\
&=\frac{2z}{(d-1)^2}\left(2x\Delta_{xz}-z\Delta_{xx}\right)\,,
\end{align*}
where the first equality follows from the identity \cref{PD-prodaa}.

For the $\partial_x\partial_y$ entry we have
\begin{align*}
-2f_xf_y+\frac{2}{(d-1)^2}xy\Delta_{zz}&=\frac{2}{(d-1)^2}\left(-z^2\Delta_{xy}-xy\Delta_{zz}+xz\Delta_{yz}+yz\Delta_{xz}+xy\Delta_{zz}\right)\\
&=\frac{2z}{(d-1)^2}\left(x\Delta_{yz}+y\Delta_{xz}-z\Delta_{xy}\right),
\end{align*}
where the first equality follows from the identity \cref{PD-prodab}.

Similarly, applying the identity \cref{PD-prodaa} to the $\partial_y^{(2)}$ entry (or swapping $x$ and $y$ in the $\partial_x^{(2)}$ entry), we find that the vector \cref{alphazmatrix} is given by 
\begin{align}
\frac{2z}{(d-1)^2}\kbordermatrix{
& \\
\partial_x^{(2)} & 2x\Delta_{xz}-z\Delta_{xx} \\
\partial_x\partial_y & x\Delta_{yz}+y\Delta_{xz}-z\Delta_{xy} \\
\partial_x\partial_z & x\Delta_{zz} \\
\partial_y^{(2)} & 2y\Delta_{yz}-z\Delta_{yy} \\
\partial_y\partial_z & y\Delta_{zz} \\
\partial_z^{(2)} & z\Delta_{zz}
}
\end{align}
which is divisible by $z$, as desired.
\end{proof}

We will show that $\{E^2, EH_{yz}, EH_{zx}, EH_{xy}, \alpha_x, \alpha_y, \alpha_z\}$ is a minimal generating set for $\ker J_{2,1}$ in \cref{subsec:exactnessS2}.

\begin{chunk}\label{M0(2)}
From the computations above, writing these generators in terms of the basis $\{\partial_x^{(2)} , \partial_x\partial_y , \partial_x\partial_z , \partial_y^{(2)} , \partial_y\partial_z , \partial_z^{(2)}\}$ gives the columns of the following matrix 
\[
M_0(2)\coloneqq
\begin{bmatrix}
E^2 & EH_{yz} & EH_{zx} & EH_{xy} & \frac{2}{(d-1)^2}A(2)
\end{bmatrix}\,,
\] 
where
\begin{align*}
A(2)\coloneqq\kbordermatrix{ 
& \alpha_x & \alpha_y& \alpha_z \\
& x\Delta_{xx} & 2x\Delta_{xy} - y\Delta_{xx} & 2x\Delta_{xz} - z \Delta_{xx} \\
& y\Delta_{xx} & x\Delta_{yy} & x \Delta_{yz} + y \Delta_{xz} - z \Delta_{xy} \\
& z\Delta_{xx} & z \Delta_{xy} + x \Delta_{yz} - y \Delta_{xz} & x\Delta_{zz}\\
& 2y\Delta_{xy} - x \Delta_{yy} & y\Delta_{yy} & 2y\Delta_{yz} - z \Delta_{yy}\\
& y \Delta_{xz} + z \Delta_{xy} - x \Delta_{yz} & z\Delta_{yy} & y\Delta_{zz}\\
& 2z\Delta_{xz} - x \Delta_{zz} & 2z\Delta_{yz} - y \Delta_{zz} & z\Delta_{zz}
}\,.
\end{align*}
\end{chunk}

\subsection{Matrix factorization}\label{subsec:mfS2}

In this subsection we construct a matrix factorization that is associated to $\ker J_{2,1}$; cf. \cref{c:hypersurface}. Adopt the notation and conventions from \cref{subsec:koszulmf}. 

\begin{chunk}\label{c:alphamap}
We first define a dg $E$-module map $\alpha\colon \Kos^Q(f_x,f_y,f_z)\to \Kos^Q(x,y,z)$: 
\begin{equation*}
\begin{aligned} 
 \xymatrix@R+.5pc@C+.7pc{
 0 \ar[r] & Q \ar[r]^{D_3}\ar[d]^{\alpha_3} &Q^3 \ar[r]^{D_2} \ar[d]^{\alpha_2}&Q^3 \ar[r]^{D_1}\ar[d]^{\alpha_1}& Q \ar[r]\ar@{=}[d]^{\alpha_0}&0\\
 0 \ar[r] & Q \ar[r]^{\partial_3} &Q^3 \ar[r]^{\partial_2}&Q^3 \ar[r]^{\partial_1}& Q \ar[r]&0
}
\end{aligned} 
 \end{equation*} 
with $\alpha_i$ defined in \cref{sec:glossary}.
Using \cref{a:matrixidentities}, namely equations \cref{c:basic-Eul}, $\alpha$ is a map of complexes. Furthermore, combining the fact that each $\alpha_i$ is symmetric and the $E$-actions in \cref{c:Ekoszul}, it follows from \cref{c:basic-Eul} that $\alpha$ is a dg $E$-module map.

Next, since $\alpha$ is a dg $E$-module map its mapping cone $\cone(\alpha)$ is a dg $E$-module; see, for example, \cite[Section~1.1]{Avramov:2010}. Explicitly, $\cone(\alpha)$ is the complex of free $Q$-modules 
\[
0\to Q \xra{ \begin{bmatrix}
 -D_3 \\
 \alpha_3
\end{bmatrix}} \dsum{Q^3}{Q}\xra{\begin{bmatrix}
-D_2 & 0 \\
\alpha_2 & \del_3 
\end{bmatrix}} \dsum{Q^3}{Q^3}\xra{\begin{bmatrix}
-D_1 & 0 \\
\alpha_1 & \del_2
\end{bmatrix}} \dsum{Q}{Q^3} \xra{\begin{bmatrix}
\alpha_0 & \del_1
\end{bmatrix}
}Q\to 0
\]
with $e\cdot$ given by 
\[
0\leftarrow Q \xla{\begin{bmatrix}
-\del_1 & 0
\end{bmatrix}} \dsum{Q^3}{Q}\xla{\begin{bmatrix}
\del_2 & 0 \\
0 & D_1
\end{bmatrix}} \dsum{Q^3}{Q^3}\xla{\begin{bmatrix}
-\del_3 & 0 \\
0 & -D_2 
\end{bmatrix}} \dsum{Q}{Q^3} \xla{\begin{bmatrix}
0 \\ D_3
\end{bmatrix}
}Q\leftarrow 0\,.
\]
The $Q$-linear quotient complex $L$ of $\shift^{-1} \cone(\alpha)$, obtained by contracting away the copy of $Q\xra{=}Q$, given by 
\begin{align}\label{e:Lcomplex}
  L: \quad
0\to Q \xra{ \begin{bmatrix}
 -D_3 \\
 \alpha_3
\end{bmatrix}} \dsum{Q^3}{Q}\xra{\begin{bmatrix}
-D_2 & 0 \\
\alpha_2 & \del_3 
\end{bmatrix}} \dsum{Q^3}{Q^3}\xra{\begin{bmatrix}
\alpha_1 & \del_2
\end{bmatrix}} Q^3\to 0
\end{align}
can be equipped with a dg $E$-module structure by defining $e\cdot$ as 
\[
0\leftarrow Q \xla{\begin{bmatrix}
-\del_1 & 0
\end{bmatrix}} \dsum{Q^3}{Q}\xla{\begin{bmatrix}
\del_2 & 0 \\
0 & D_1
\end{bmatrix}} \dsum{Q^3}{Q^3}\xla{\begin{bmatrix}
 q \\
 -D_2 
\end{bmatrix}}Q^3\leftarrow 0\,.
\]
The dg $E$-module structure is obtained from the quasi-isomorphism $\iota\colon L\to \shift^{-1}\cone(\alpha)$ given by
\[
\iota_i=\begin{cases}\begin{bmatrix}
-\del_1 \\ 1
\end{bmatrix} &i=0 \\
\id & i=1,2,3\\
0 & \text{else}\,.
\end{cases}
\]
\end{chunk}

\begin{chunk}\label{c:explicitMF(2)}
Let $L$ be as constructed in \cref{c:alphamap}. 
Following \cref{c:fold}, from $L$ we obtain the matrix factorization $\fold(L)$ of $d\cdot f$ over $Q$: 
\[
Q^7 \xra{\begin{bmatrix}
q & -D_2 & 0 \\
-D_2 & \alpha_2 & \del_3\\
0 & -\del_1 & 0
\end{bmatrix}} Q^7 \xra{\begin{bmatrix}
\alpha_1 & \del_2 & 0 \\
\del_2 & 0 & -D_3\\
0 & D_1 & \alpha_3
\end{bmatrix}}Q^7\,.
\] 
Changing bases yields the equivalent matrix factorization:
\[
Q^7 \xra{\begin{bmatrix}
\del_3 & D_2 & 2\alpha_2 \\
0 & \frac{1}{2}q & D_2\\
0 & 0 & \del_1
\end{bmatrix}} Q^7 \xra{\begin{bmatrix}
D_1 & 0 & -2\alpha_3\\
-\del_2 &2\alpha_1 & 0\\
0 &-\del_2 & D_3
\end{bmatrix}}Q^7\,.
\] Finally, we set $M_2(2)$ and $M_1(2)$ to be these matrices tensored down to $R$, i.e., 
\[
M_2(2)\coloneqq \begin{bmatrix}
\del_3 & D_2 & 2\alpha_2 \\
0 & \frac{1}{2}q & D_2\\
0 & 0 & \del_1
\end{bmatrix}\text{ and }
M_1(2)\coloneqq\begin{bmatrix}
D_1 & 0 & -2\alpha_3\\
-\del_2 &2\alpha_1 & 0\\
0 &-\del_2 & D_3
\end{bmatrix}
\] where both matrices have entries in $R$. Recall from \cref{c:hypersurface} that the two-periodic complex 
\[
\cdots \to R^7\xra{M_2(2)}R^7\xra{M_1(2)} R^7\xra{M_2(2)}R^7\xra{M_1(2)}R^7\to\cdots 
\]
is exact. 
\end{chunk}

\subsection{Resolution of $\ker J_{2,1}$}\label{subsec:exactnessS2}

In this subsection we establish exactness of the following sequence of $R$-modules 
\[
\cdots
\xrightarrow{M_1(2)}
R^7
\xrightarrow{M_2(2)}
R^7
\xrightarrow{M_1(2)}
R^7
\xrightarrow{M_0(2)} 
R^6 
\xrightarrow{J_{2,1}}
R^3 
\]
where the definitions of the matrices $M_i(2)$ can be found in \ref{M0(2)} and \ref{c:explicitMF(2)}. 

\begin{lemma}
\label{localgensS2}
The $R_x$-module 
$(\ker_{R}(J_{2,1}))_x=\ker_{R_x}(J_{2,1})$ is freely generated by $E^2$, $EH_{yz}$, and $H_{yz}^2$.
\end{lemma}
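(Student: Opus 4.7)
The plan is to exploit smoothness of the localization $R_x$. Since $R$ has an isolated singularity at $\m=(x,y,z)R$ and $x\in\m$, inverting $x$ removes this singular point, so $R_x$ is a regular ring, and the Jacobian ideal $(f_x,f_y,f_z)R_x$ equals $R_x$. Consequently the left exact sequence \eqref{eq:left-exact-seq-diff} with $i=2$ is short exact over $R_x$, the inclusion \eqref{eq:inj-maps-from-les} becomes an isomorphism, and the standard principal symbol identification gives $\mathrm{gr}(D_{R_x|k})_2\cong \mathrm{Sym}^2_{R_x}(\Der_{R_x|k})$. Concatenating yields an isomorphism
\[\mathrm{Sym}^2_{R_x}(\Der_{R_x|k})\xra{\cong}\ker_{R_x}(J_{2,1})\,,\]
under which the monomials $\E^2$, $\E\cdot\H_{yz}$, $\H_{yz}^2$ correspond to $E^2$, $EH_{yz}$, $H_{yz}^2$, respectively (with no troublesome factors since $k$ has characteristic zero).

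It therefore suffices to prove that $\Der_{R_x|k}$ is a free $R_x$-module of rank $2$ with basis $\{\E,\H_{yz}\}$. For generation, we use the second and third relations of \cref{e:basicrelations} to solve, in $R_x$,
\[ \H_{xy}=\tfrac{1}{x}(f_y\E+z\H_{yz})\,, \qquad \H_{zx}=\tfrac{1}{x}(y\H_{yz}-f_z\E)\,, \]
so $\E$ and $\H_{yz}$ span the same $R_x$-module as $\{\E,\H_{yz},\H_{zx},\H_{xy}\}$, which generates $\Der_{R|k}$ by \cref{resD1}. For linear independence over the domain $R_x$, a relation $\alpha\E+\beta\H_{yz}=0$ forces $\alpha x=0$, hence $\alpha=0$ since $x$ is a unit; then $\beta f_y=0=\beta f_z$, so $\beta=0$ as long as, say, $f_y$ is nonzero in $R_x$. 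This follows from the isolated singularity hypothesis: if $f_y=0$ in $R$ then by degree considerations $f_y=0$ in $Q$, forcing $f\in k[x,z]$, which would yield a nonisolated singularity along the $y$-axis, a contradiction.

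The main subtlety lies in the identification $\mathrm{gr}(D_{R_x|k})_2\cong\mathrm{Sym}^2_{R_x}(\Der_{R_x|k})$, which is the classical principal symbol theorem for smooth algebras over a field of characteristic zero applied to $R_x$. Once this (standard) input is granted, the lemma follows immediately by combining it with the freeness of $\Der_{R_x|k}$ established above. Any more hands-on approach would amount to (i) exhibiting a $3\times 3$ minor of the matrix with columns $E^2, EH_{yz}, H_{yz}^2$ whose determinant is a unit in $R_x$ (for instance, the submatrix on rows $1,2,4$ has determinant $4x^3f_z^3$), and (ii) directly solving the defining equations of $\ker J_{2,1}$ in $R_x$ using the same derivation-level relations above; the smoothness route subsumes both steps simultaneously.
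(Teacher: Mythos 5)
Your proof is correct but takes a genuinely different, more conceptual route from the paper's. The paper argues by direct linear algebra over $R_x$: it picks an arbitrary element of $\ker_{R_x}(J_{2,1})$, substitutes $f_x=-(\tfrac{y}{x}f_y+\tfrac{z}{x}f_z)$ via the Euler relation, and repeatedly uses regularity of the sequence $f_y,f_z$ in $R_x$ to solve the three defining equations, producing scalars $a,b,c$ so that the element equals $aE^2+bEH_{yz}+cH_{yz}^2$; independence is then read off coordinatewise. You instead note that inverting $x$ removes the only singular point, so $R_x$ is smooth of characteristic zero, and then invoke the principal-symbol isomorphism $\mathrm{gr}(D_{R_x|k})_i\cong\mathrm{Sym}^i_{R_x}(\Der_{R_x|k})$ together with the short exactness of \eqref{eq:left-exact-seq-diff} over the smooth locus, reducing the lemma to the statement that $\Der_{R_x|k}$ is free with basis $\{\E,\H_{yz}\}$; this in turn you get cleanly from the relations \cref{e:basicrelations} and nonvanishing of $f_y$ in the domain $R_x$. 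Your reduction is in fact more uniform: it gives the analogous statement for every order $i$ at once (in particular \cref{localgensS3} with no new computation), whereas the paper redoes the linear algebra at each order. The trade-off is that you cite as black boxes the surjectivity of $D^2_{R_x|k}\to\ker(J_{2,1})_x$ and the identification $\mathrm{gr}(D_{R_x|k})_2\cong\Hom_{R_x}(\mathrm{Sym}^2\Omega_{R_x|k},R_x)$, facts from the smooth theory that the paper is at pains to avoid assuming — indeed short exactness of \eqref{eq:left-exact-seq-diff} over all of $R$ is precisely what \cref{cor:exactsequenceD2} establishes later via the explicit resolution. Both arguments ultimately rest on the same two small observations: that $\E,\H_{yz}$ generate $\Der_{R_x|k}$ via \cref{e:basicrelations}, and that they are independent there.
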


\begin{proof} 
We show that the given set generates $\ker J_{2,1}$. For this, recall that since $R$ has an isolated singularity, we have that $f_y,f_z$ forms a (possibly improper) regular sequence in $R_x$: indeed, $R_x$ is a domain so $f_y$ and $f_z$ are nonzerodivisors, and by the Euler relation, $f_x\in (f_y,f_z)R_x$, so $(f_y,f_z)R_x = (f_x,f_y,f_z)R_x = R_x$ by the isolated singularity hypothesis, and then $f_y,f_z$ is an (improper) regular sequence by the Chinese Remainder Theorem. To compute $\ker J_{2,1}$ we let $[a_{xx},a_{xy},a_{xz},a_{yy},a_{yz},a_{zz}]^T$ be an element in $\ker_{R_x}(J_{2,1})$. 

Rewriting each $f_x=-(\frac{y}{x}f_y+\frac{z}{x}f_z)$ in the $J_{2,1}$ matrix, and multiplying it by $[a_{xx},a_{xy},a_{xz},a_{yy},a_{yz},a_{zz}]^T$, we get the following equalities:
\begin{align*}0&= -\left(\frac{y}{x} f_y + \frac{z}{x} f_z\right)a_{xx} + f_y a_{xy} + f_z a_{xz} \\
&=f_y\left(a_{xy}-\frac{y}{x} a_{xx}\right) + f_z\left(a_{xz}-\frac{z}{x} a_{xx}\right) 
\end{align*}
and hence as $f_y,f_z$ is $R_x$-regular, there exists $b_1$ in $R_x$ such that
\begin{equation}
\label{axyaxz}
a_{xy}=b_1 f_z + \frac{y}{x} a_{xx} \qquad a_{xz}=-b_1 f_y + \frac{z}{x} a_{xx}\,.
\end{equation}
Similarly, from \[-(y/x f_y + z/x f_z)a_{xy} + f_y a_{yy} + f_z a_{yz} =0
\quad\text{and}\quad -(y/x f_y + z/x f_z)a_{xz} + f_y a_{yz} + f_z a_{zz} =0
\]we conclude that there exists $b_2\in R_x$, from the first equation, and $b_3\in R_x$, from the second equation, such that 
\begin{align*}
a_{yy}&=b_2 f_z + \frac{y}{x} a_{xy} & a_{yz}&=-b_2 f_y + \frac{z}{x} a_{xy} \\ 
a_{yz}&=b_3 f_z + \frac{y}{x} a_{xz} &a_{zz}&=-b_3 f_y + \frac{z}{x} a_{xz}\,.
\end{align*}
Substituting \cref{axyaxz} into these equations yield
\begin{align*}
a_{yy}&=b_2 f_z + b_1 \frac{y}{x} f_z + \frac{y^2}{x^2} a_{xx} & a_{yz}&=-b_2 f_y + b_1 \frac{z}{x} f_z + \frac{yz}{x^2} a_{xx}\\
a_{yz}&=b_3 f_z - b_1 \frac{y}{x} f_y + \frac{yz}{x^2} a_{xx} & a_{zz}&=-b_3 f_y - b_1 \frac{z}{x} f_y + \frac{z^2}{x^2} a_{xx}\,.
\end{align*}

Equating the expressions for $a_{yz}$ one can see that
\begin{align*} 
	f_y\left(b_2-\frac{y}{x} b_1 \right) + f_z\left(b_3-\frac{z}{x} b_1\right) =0
	\end{align*}
	and so finally there exists $c\in R_x$ such that 
	\begin{align*}
	b_2 = c f_z + \frac{y}{x} b_1 \qquad b_3 = -c f_y + \frac{z}{x} b_1\,.
\end{align*}
Therefore, from the equalities above and substituting in $a=a_{xx}/2x^2$, $b=b_1/x$, we get that
\[\begin{bmatrix} a_{xx} \\ a_{xy} \\a_{xz} \\ a_{yy} \\ a_{yz} \\ a_{zz}\end{bmatrix} = \begin{bmatrix} 2a x^2 \\ b x f_z + 2a xy \\ - b x f_y + 2a xz \\ c f_z^2 + 2 b y f_z + 2a y^2 \\ - c f_y f_z - b (z f_z - y f_y) + 2a yz \\ c f_y^2 -2 b z f_y + 2a z^2 \end{bmatrix} = a E^2 + b E H_{yz} + c H^2_{yz} \,.\]

To see linear independence over $R_x$, suppose that $a E^2 + b E H_{yz} + c H^2_{yz}=0$ with $a,b,c\in R_x$. The $\partial^{(2)}_x$-coordinate of the left-hand side is $2a x^2$, so $a=0$. After substituting $a=0$, the $\partial_x \partial_y$-coordinate of the left-hand side is $b x f_z$, so $b=0$. Then, substituting $c=0$, the $\partial_y^{(2)}$-coordinate is $cf_z^2$, so $c=0$, and the linear independence follows.
\end{proof}

\begin{lemma}\label{l:complexM0M1(2)}
We have $M_0(2) M_1(2)=0_{6\times 7}$.
\end{lemma}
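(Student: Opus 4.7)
The plan is to verify $M_0(2)\,M_1(2)=0$ by a direct block-matrix computation. Write $M_0(2)=[\,E^2 \mid H \mid V\,]$, where $H$ is the $6\times 3$ submatrix with columns $EH_{yz}, EH_{zx}, EH_{xy}$ and $V = \tfrac{2}{(d-1)^2}A(2)$ has columns $\alpha_x,\alpha_y,\alpha_z$, and partition the columns of $M_1(2)$ into blocks of widths $3$, $3$, $1$. The product decomposes as
\[
\bigl[\;E^2\,D_1 - H\,\partial_2 \;\big|\; 2\,H\,\alpha_1 - V\,\partial_2 \;\big|\; -2\,\alpha_3\,E^2 + V\,D_3\;\bigr],
\]
and I would verify each block vanishes in $R$ separately.

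For the first block, a short calculation shows each entry of $E^2\,D_1 - H\,\partial_2$ is, up to a sign and a factor of $2x$, $2y$, or $2z$, a multiple of $x f_x + y f_y + z f_z = d f$, which vanishes in $R$; conceptually, this block records the three basic relations in \cref{e:basicrelations} left-composed with the Euler derivation modulo $D^1_{R|k}$. For the third column, I would expand $A(2)\,D_3$ entrywise and use the Euler identity $y f_y + z f_z \equiv -x f_x$ in $R$ to reduce it to twice the entries of $\operatorname{adj}(\Delta)\,D_3$; by the cofactor identity $\operatorname{adj}(\Delta)\cdot\Delta=\delta\,I$ together with the Euler identity for the degree-$(d-1)$ partials $\Delta\,\partial_3=(d-1)\,D_3$, one has $\operatorname{adj}(\Delta)\,D_3=\tfrac{\delta}{d-1}\,\partial_3$, which after tracking constants matches $2\alpha_3\,E^2=\tfrac{2\delta}{(d-1)^3}E^2$ exactly.

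The main obstacle is the middle block, which reduces (after clearing the factor $\tfrac{2}{d-1}$) to the $6\times 3$ identity $(d-1)\,H\,\Delta = A(2)\,\partial_2$ over $R$. Here I would use the Euler-type identities $\partial_1\,\Delta=(d-1)\,D_1$ and $\Delta\,\partial_3=(d-1)\,D_3$, together with the quadratic identities from the appendix \cref{a:matrixidentities} (the same ones invoked in the proof of \cref{newgensS2}) that convert products $f_i f_j$ into combinations of $\Delta_{k\ell}$ with monomial coefficients in $x, y, z$. The left-hand side unfolds into entries of the form $f_i \cdot f_{k\ell}$, while the right-hand side unfolds into entries of the form $x_j \cdot \Delta_{k\ell}$, and the appendix identities are precisely what bridge these two forms. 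No new conceptual input is required, but the eighteen entries must be tracked carefully with the correct signs and symmetrizations.
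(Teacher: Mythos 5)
Your block decomposition mirrors the paper's proof exactly: the first block column collects the relations of \cref{e:basicrelations} composed with $\E$ (the paper argues this conceptually via $R$-linearity of ``compose with $\E$''; your direct check that the entries are multiples of $df$ is the same thing made explicit), the last column is the relation the paper closes with the Cramer identity \cref{eqCram}, and your reduction $\operatorname{adj}(\Delta)\,D_3=\tfrac{\delta}{d-1}\,\partial_3$ is precisely \cref{eqCram} read as a matrix identity. A small bookkeeping slip: the scalar to clear from the middle block is $\tfrac{2}{(d-1)^2}$, not $\tfrac{2}{d-1}$, but your resulting target $(d-1)H\Delta=A(2)\,\partial_2$ is the right one.

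The one place the plan would stall as written is your choice of identities for the middle block. You correctly observe that the entries of $(d-1)H\Delta$ are of shape (monomial)$\,\times f_a f_{bc}$ while those of $A(2)\,\partial_2$ are (degree-two monomial)$\,\times\Delta_{k\ell}$. But the identities you cite --- those invoked in \cref{newgensS2}, namely \cref{PD-prodaa,PD-prodab}, which in fact appear in \cref{a:eulerappl} rather than \cref{a:matrixidentities} --- express a product $f_a f_b$ of two \emph{first} partials in terms of $\Delta$'s; they have the wrong shape and do not match the $f_a f_{bc}$ products in $H\Delta$. What you actually need is the family \cref{eq:MinDiff}, e.g.\ $z\Delta_{xy}-y\Delta_{xz}=(d-1)(f_y f_{xz}-f_z f_{xy})$, which is exactly what the paper applies repeatedly in verifying \cref{sixthcolumn}. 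These follow from the differentiated Euler identities \cref{2-1deriv} (equivalently, your cited $\Delta\,\partial_3=(d-1)D_3$), so the primitive tool you list does suffice --- you just need this particular consequence of it, not the $f_af_b$ identities from \cref{newgensS2}. With that substitution your block-by-block computation agrees with the paper's column-by-column verification.
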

\begin{proof} We verify that each column of $M_1(2)$ yields a relation on the columns of $M_0(2)$, which correspond to the generators of $\ker(J_{2,1})$.

Composing the relations in \cref{e:basicrelations} with $\cE$ (cf.~\cref{c:composition}) yields relations
\[\begin{aligned} f_x E^2 + y EH_{xy} - z EH_{zx} &= 0 \\
f_y E^2 - x EH_{xy} + z EH_{yz} &= 0 \\
f_z E^2 + x EH_{zx} - y EH_{yz} &= 0\,. \end{aligned}\] 
which correspond to the first three columns of $M_1(2)$.

There is another relation of the form 
\begin{equation}
\label{sixthcolumn}
y \alpha_x - x\alpha_y =\frac{2}{d-1}(f_{xz} EH_{yz} + f_{yz} EH_{zx} + f_{zz} EH_{xy})\,.
\end{equation}
We check coordinate by coordinate.
In the $\partial_x^{(2)}$ coordinate, we have 
\begin{align*} \frac{2}{(d-1)^2}(xy \Delta_{xx} - 2x^2 \Delta_{xy} + xy \Delta_{xx}) &= \frac{4x}{(d-1)^2}(y \Delta_{xx} - x \Delta_{xy}) \\
&= \frac{2}{d-1} (-2xf_z f_{yz} + 2x f_y f_{zz})\,,
\end{align*}
where the second equality uses \eqref{eq:MinDiff}.
 In the $\partial_x\partial_y$ coordinate, we have the following, where \eqref{eq:MinDiff} is again applied for the second equality:
\begin{align*} \frac{2}{(d-1)^2}(y^2 \Delta_{xx} - x^2 \Delta_{yy}) 
&= \frac{2y}{(d-1)^2}(y\Delta_{xx} - x \Delta_{xy}) - \frac{2x}{(d-1)^2} (x\Delta_{yy} + y \Delta_{xy})\\
&= \frac{2}{d-1} (xf_z f_{xz} -y f_z f_{yz} + y f_y f_{zz} - xf_x f_{zz})\,.
\end{align*}

In the $\partial_x\partial_z$ coordinate, another application of \eqref{eq:MinDiff} yields the second equality below
\begin{align*} \frac{2}{(d-1)^2}(yz \Delta_{xx} - xz\Delta_{xy} -x^2 \Delta_{yz} + xy\Delta_{xz})
&= \frac{2z}{(d-1)^2}(y\Delta_{xx} - x \Delta_{xy}) - \frac{2x}{(d-1)^2} (x\Delta_{yz} - y \Delta_{xz})\\
&= \frac{2}{d-1}(-xf_y f_{xz} -z f_z f_{yz} + x f_x f_{yz} + zf_y f_{zz})\,.
\end{align*}
 The $\partial_y^{(2)}$ coordinate follows from the $\partial_x^{(2)}$ coordinate by symmetry, and likewise the $\partial_y\partial_z$ coordinate follows from the $\partial_x\partial_z$ coordinate.
 In the $\partial_z^{(2)}$ coordinate, a final application of \eqref{eq:MinDiff} establishes
\begin{align*} \frac{2}{(d-1)^2} (2 yz \Delta_{xz} - xy\Delta_{zz} - 2xz \Delta_{yz} + xy \Delta_{zz})
= \frac{2}{d-1}( -2zf_y f_{xz} + 2z f_x f_{yz})\,.
\end{align*}
Thus we have established the relation in \cref{sixthcolumn} that corresponds to the sixth column of $M_1(2)$; the relations coming from fourth and fifth columns of $M_1(2)$ follow from this one by symmetry.

The last relation, corresponding to the seventh column of $M_1(2)$ is 
\[-\frac{2\delta}{{(d-1)^3}} E^2 + f_x \alpha_x + f_y\alpha_y + f_z\alpha_z=0\,.\]
To see this, by symmetry, it suffices to check for the $\partial_x^{(2)}$ and $\partial_x\partial_y$ coordinates. In the $\partial_x^{(2)}$ coordinate, we have 
\begin{align*}
-\frac{2}{{(d-1)^3}}2x^2 &\delta + \frac{2}{(d-1)^2}(x f_x\Delta_{xx} + 2x f_y \Delta_{xy} - y f_y \Delta_{xx} + 2x f_z \Delta_{xz} -z f_z\Delta_{xx})\\ 
&=\frac{4x}{(d-1)^3}(-x \delta+ (d-1)(f_x \Delta_{xx} + f_y \Delta_{xy} + f_z\Delta_{xz}))\\
&=0\,,
\end{align*}
where the last equality is \eqref{eqCram}.

In the $\partial_x\partial_y$ coordinate, we have 
\begin{align*} -\frac{2}{{(d-1)^3}} 2xy &\delta + \frac{2}{(d-1)^2}(y f_x \Delta_{xx} + x f_y \Delta_{yy} + x f_z \Delta_{yz} + y f_z \Delta_{xz} - z f_z \Delta_{xy}) \\
&= \frac{-4xy}{(d-1)^3} \delta + \frac{2x}{(d-1)^2}(f_y \Delta_{yy} + f_z \Delta_{yz}) + \frac{2y}{(d-1)^2}(f_x \Delta_{xx} + f_z \Delta_{xz}) -\frac{2z}{(d-1)^2} f_z \Delta_{xy}\\
&=\frac{-4xy}{(d-1)^3} \delta + \frac{2x}{(d-1)^2}\left(\frac{y\delta}{d-1} -f_x \Delta_{xy}\right) + \frac{2y}{(d-1)^2}\left(\frac{x \delta}{d-1} - f_y \Delta_{xy}\right) -\frac{2z}{(d-1)^2} f_z \Delta_{xy}\\
&= \frac{2 \Delta_{xy}}{(d-1)^2}(-x f_x -y f_y - z f_z )=0\,,
\end{align*}
where the second equality uses \eqref{eqCram}.
\end{proof}

\begin{lemma}\label{l:localexactM0M1(2)}
We have $(\ker M_0(2))_x=(\im M_1(2))_x$.
\end{lemma}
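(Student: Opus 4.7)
The plan is to exploit the freeness of $\ker(J_{2,1})_x$ established in Lemma~\ref{localgensS2} and show that the quotient $R_x^7/(\im M_1(2))_x$ is generated by exactly three elements in a way that is compatible with this basis. First, by Lemma~\ref{l:complexM0M1(2)} we already have $(\im M_1(2))_x\subseteq (\ker M_0(2))_x$, so only the reverse inclusion is at issue. Composing the surjection $M_0(2)_x\colon R_x^7\twoheadrightarrow (\ker J_{2,1})_x$ with the isomorphism $(\ker J_{2,1})_x\cong R_x^{3}$ coming from the basis $\{E^2,EH_{yz},H_{yz}^2\}$ gives an $R_x$-linear surjection $\bar\phi\colon R_x^7\twoheadrightarrow R_x^3$ that factors through $R_x^7/(\im M_1(2))_x$; proving the desired equality is equivalent to showing the induced map $R_x^7/(\im M_1(2))_x \to R_x^3$ is injective.

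The key step is to read off from the seven columns of $M_1(2)$, whose relations are verified in the proof of Lemma~\ref{l:complexM0M1(2)}, that the quotient is three-generated over $R_x$. Column $3$ of $M_1(2)$ gives the relation $f_z E^2+xEH_{zx}-yEH_{yz}=0$, so that $EH_{zx}\equiv (y/x)EH_{yz}-(f_z/x)E^2$ modulo $(\im M_1(2))_x$; column $2$ similarly expresses $EH_{xy}$ in terms of $E^2,EH_{yz}$. Columns $5$ and $6$ then give $\alpha_z,\alpha_y$ as $R_x$-linear combinations of $\alpha_x,EH_{yz},EH_{zx},EH_{xy}$. Together these show that, modulo $(\im M_1(2))_x$, every standard generator lies in $R_x\{E^2,EH_{yz},\alpha_x\}$, so the classes of $E^2$, $EH_{yz}$, and $\alpha_x$ generate $R_x^7/(\im M_1(2))_x$.

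Now define $\pi\colon R_x^3\to R_x^7/(\im M_1(2))_x$ by sending the standard basis to these three classes; it is surjective by the previous step. Writing the composite $\bar\phi\circ\pi\colon R_x^3\to R_x^3$ in the target basis $\{E^2,EH_{yz},H_{yz}^2\}$, the first two generators hit $(1,0,0)$ and $(0,1,0)$ by definition, while the definition of $\alpha_x=\tfrac{1}{x}H_{yz}^2+\tfrac{\Delta_{xx}}{x(d-1)^2}E^2$ shows the third column of $\bar\phi\circ\pi$ is
\[
\Bigl(\tfrac{\Delta_{xx}}{x(d-1)^2},\,0,\,\tfrac{1}{x}\Bigr)^T.
\]
The resulting upper-triangular matrix has determinant $1/x$, which is a unit in $R_x$, so $\bar\phi\circ\pi$ is an isomorphism. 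Since $\pi$ is surjective and $\bar\phi\circ\pi$ is injective, $\pi$ is an isomorphism, forcing $\bar\phi$ to be an isomorphism as well; equivalently $(\ker M_0(2))_x=(\im M_1(2))_x$.

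The main obstacle is producing a three-element generating set of the quotient while making the resulting $3\times 3$ matrix visibly invertible after inverting $x$. The lucky structural point that makes this work is that $\alpha_x$ is defined so that its coefficient of $H_{yz}^2$ is exactly $1/x$; inverting $x$ thus lines up perfectly with the extra basis element $H_{yz}^2$ that does not appear among the columns of $M_0(2)$.
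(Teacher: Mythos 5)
Your proof is correct, and it gives a genuinely cleaner organization than the paper's. Both arguments take Lemma~\ref{localgensS2} as the key input, but they deploy it differently. The paper's proof starts with an arbitrary element $v=(a,b_x,b_y,b_z,c_x,c_y,c_z)\in(\ker M_0(2))_x$, rewrites each of the seven generators in a local free basis, extracts the constraints $xc_x+yc_y+zc_z=0$ and $xb_x+yb_y+zb_z=0$ from matching coefficients, and then subtracts off suitable combinations of columns $1$--$6$ of $M_1(2)$ until only $aE^2=0$ remains, whence $a=0$. Your argument instead passes to the quotient $R_x^7/(\im M_1(2))_x$: columns $2,3,5,6$ of $M_1(2)$ show the classes of the basis vectors corresponding to $E^2,EH_{yz},\alpha_x$ already generate, and the induced map to $R_x^3$ (in the free basis $\{E^2,EH_{yz},H_{yz}^2\}$) is represented by an upper-triangular $3\times3$ matrix with determinant $1/x$, a unit. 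This forces both $\pi$ and the induced map to be isomorphisms, so $(\ker M_0(2))_x=(\im M_1(2))_x$. One advantage of your packaging is that it uses the basis $\{E^2,EH_{yz},H_{yz}^2\}$ that Lemma~\ref{localgensS2} supplies over $R_x$ directly, sidestepping the paper's rewriting of $\alpha_x,\alpha_y,\alpha_z$ in the $\{E^2,EH_{xy},H_{xy}^2\}$ basis with $1/z$ and $1/z^2$ coefficients --- expressions that only make literal sense after also inverting $z$ and obscure which localization is in play. The only small expository point is that when you write congruences such as $EH_{zx}\equiv(y/x)EH_{yz}-(f_z/x)E^2$ modulo $(\im M_1(2))_x$, the operators are really standing in for the corresponding standard basis vectors $e_i\in R_x^7$, but the intent is clear and the determinant calculation is exactly right.
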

\begin{proof}
Let $v\in (\ker M_0(2))_x$; write $v=(a,b_x,b_y,b_z,c_x,c_y,c_z)\in R_x^7$ so that 
\begin{equation}\label{eq:rel:S2} a E^2 + b_x H_{yz} + b_y H_{zx} + b_z H_{xy} + c_x \alpha_x + c_y \alpha_y + c_z \alpha_z = 0\,.\end{equation}
From \cref{localgensS2} above and symmetry we have that $E^2$, $EH_{xy}$, and $H_{xy}^2$ freely generate 
\[R_x \langle E^2, EH_{yz}, EH_{zx}, EH_{xy}, \alpha_x, \alpha_y, \alpha_z\rangle\,.\]
Using the relations on the generators, we have that
\begin{align*} \alpha_x &= \frac{x}{z^2} H_{xy}^2 - 2 \frac{f_y}{z^2} E H_{xy} + \frac{1}{x}\left( \frac{f_y^2}{z^2} + \frac{\Delta_{xx}}{(d-1)^2}\right) E^2 \\
\alpha_y &= \frac{y}{z^2} H_{xy}^2 + 2 \frac{f_x}{z^2} E H_{xy} + \frac{1}{y}\left( \frac{f_x^2}{z^2} + \frac{\Delta_{yy}}{(d-1)^2}\right) E^2\\
\alpha_z &= \frac{1}{z} H_{xy}^2 + \frac{\Delta_{zz}}{(d-1)^2 z} E^2\,.\end{align*}
By considering the $H_{yz}^2$ coefficient on the relation \eqref{eq:rel:S2}, we get that $x c_x + y c_y+z c_z=0$. Observe that $D_3 = \partial_2 [ 0, f_z/x, f_y/x]^T$, so $\ker \begin{bmatrix} x & y & z\end{bmatrix} R_x$ is generated by the image of $\partial_2^T$. Thus, using columns 4--6 of $M_1(2)$ there exists a relation $v'\in (\im M_1(2))_x$ such that the $c_x,c_y,c_z$ coordinates of $v'$ agree with $v$. Replacing $v$ with $v-v'$, we may assume that $c_x=c_y=c_z=0$.

Now, we have
\[ E H_{yz} = \frac{x}{z} E H_{xy} - \frac{f_y}{z^2} E^2\,,\quad \quad
E H_{zx} = \frac{y}{z} E H_{xy} + \frac{f_x}{z^2} E^2\,,\quad \quad
E H_{xy} = E H_{xy}\,, 
\]
and by considering the $E H_{yz}$ coefficient on the relation \eqref{eq:rel:S2}, we get that $x b_x + y b_y + z b_z=0$. Using columns 1--3 of $M_1(2)$, we can find a relation $v''\in (\im M_1(2))_x$ such that the $b_x,b_y,b_z$-coordinates of $v''$ agree with those of $v$ and the $c_x,c_y,c_z$ coordinates are zero. Replacing $v$ with $v-v''$ we can assume that $b_x=b_y=b_z=c_x=c_y=c_z=0$. But $a E^2=0$ implies $a=0$, and the assertion follows.
\end{proof}

We are now ready to prove the main result of this subsection. 

\begin{proposition}
\label{gensS2}
The sequence of $R$-modules 
\[
\cdots
\xrightarrow{M_1(2)}
R^7
\xrightarrow{M_2(2)}
R^7
\xrightarrow{M_1(2)}
R^7
\xrightarrow{M_0(2)} 
R^6 
\xrightarrow{J_{2,1}}
R^3 
\]
is exact. 
In particular, the $R$-module 
$\ker_{R}J_{2,1}$ is generated by the operators 
\[
\{E^2, EH_{yz}, EH_{zx}, EH_{xy}, \alpha_x, \alpha_y, \alpha_z\}\,.
\]
\end{proposition}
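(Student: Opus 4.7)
The plan is to splice the matrix factorization tail from \cref{c:explicitMF(2)} with the map $M_0(2)$ and verify exactness at the two new positions. First, I would check that the displayed sequence is a complex: $M_0(2)M_1(2)=0$ is \cref{l:complexM0M1(2)}; $J_{2,1}M_0(2)=0$ because the columns of $M_0(2)$ were defined in \cref{M0(2)} as elements of $\ker J_{2,1}$; and the two-periodic tail
\[
\cdots \xrightarrow{M_2(2)} R^7 \xrightarrow{M_1(2)} R^7 \xrightarrow{M_2(2)} R^7 \xrightarrow{M_1(2)} R^7
\]
is exact because $M_1(2),M_2(2)$ is a matrix factorization of $d\cdot f$ over $Q$; see \cref{c:hypersurface,c:explicitMF(2)}.

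Next I would show that the two potentially nonzero homology modules
\[
C_1 \coloneqq \ker M_0(2)/\im M_1(2) \quad\text{and}\quad C_0 \coloneqq \ker J_{2,1}/\im M_0(2)
\]
are supported only at $\m$. Exactness at $R^7$ after localizing at $x$ is precisely \cref{l:localexactM0M1(2)}. For exactness at $R^6$ after localizing at $x$, \cref{localgensS2} supplies free generators $E^2, EH_{yz}, H_{yz}^2$ of $(\ker J_{2,1})_x$, and the defining identity in \cref{newgensS2} rearranges to $H_{yz}^2 = x\alpha_x - \frac{1}{(d-1)^2}\Delta_{xx} E^2$, placing $H_{yz}^2 \in (\im M_0(2))_x$. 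All constructions in \cref{subsec:gensS2,subsec:mfS2,sec:glossary} are equivariant under permutation of $(x,y,z)$, so the analogous local exactness statements after inverting $y$ or $z$ follow by rerunning the same proofs with the variables cyclically permuted. Since $V(x)\cap V(y)\cap V(z)=\{\m\}$ in $\Spec R$, both $C_1$ and $C_0$ are supported only at $\m$.

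Finally, I would kill $C_1$ by an associated-prime argument and $C_0$ by local cohomology. Set $N\coloneqq \coker M_1(2)$; because $M_1(2),M_2(2)$ is a matrix factorization of a nonzerodivisor over the regular ring $Q$, $N$ is maximal Cohen-Macaulay, and the relation $M_0(2)M_1(2)=0$ induces a morphism $\bar M_0(2)\colon N\to R^6$ whose kernel is $C_1$. Because $R$ is a two-dimensional domain, $\Ass_R N=\{(0)\}$, hence $\Ass_R C_1\subseteq\{(0)\}$; combined with the support condition, this forces $C_1=0$, and so $\im M_0(2)\cong N$ is MCM. On the other hand, by \cref{r:Jnotation}, $\ker J_{2,1}\cong \Hom_R(\mathrm{Sym}^2\Omega_R,R)$, which is MCM as the $R$-dual of a finitely generated module over the two-dimensional Gorenstein ring $R$ (equivalently, apply the depth lemma to $0\to \ker J_{2,1}\to R^6 \to \im J_{2,1} \to 0$ with $\im J_{2,1}$ torsion-free). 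Applying local cohomology to
\[
0 \to \im M_0(2) \to \ker J_{2,1} \to C_0 \to 0
\]
yields an injection $H^0_\m(C_0) \hookrightarrow H^1_\m(\im M_0(2))=0$; since $C_0$ is $\m$-supported it equals $H^0_\m(C_0)$, so $C_0=0$. The description of the generators of $\ker_R J_{2,1}$ then follows by reading off the columns of $M_0(2)$.

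The step I expect to be the main obstacle is the symmetry claim, since \cref{localgensS2,l:localexactM0M1(2)} invoke the Euler relation to solve for $f_x$ in terms of $f_y,f_z$ and so look $x$-asymmetric on the surface. I would handle this by explicitly rerunning the analogous arguments after inverting $y$ or $z$: the Euler identity $df=xf_x+yf_y+zf_z$, the generators listed in \cref{subsec:gensS2}, and the constructions defining the $M_i(2)$ are all invariant under the $S_3$-action permuting $(x,y,z)$ together with the matching permutation of the divided-power basis, so no essentially new input is needed.
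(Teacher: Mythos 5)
Your proof is correct and follows the same overall strategy as the paper: first establish the complex property and exactness of the matrix-factorization tail, then obtain exactness on the punctured spectrum via \cref{localgensS2,l:localexactM0M1(2)} and $(x,y,z)$-symmetry, then close the argument with a depth consideration. The difference is in how the final step is executed. The paper applies \cref{l:iso} twice, viewing both inclusions $\im M_1(2)\hookrightarrow \ker M_0(2)$ and $\im M_0(2)\hookrightarrow \ker J_{2,1}$ as morphisms between an $S_2$ module and an $S_1$ module that are isomorphisms in height at most one. You instead kill the cohomology $C_1$ by an associated-primes argument (a submodule of an MCM module over a domain can have no embedded associated primes, so $C_1$ cannot be $\m$-torsion and nonzero) and kill $C_0$ via the long exact sequence in local cohomology and the vanishing $H^1_\m$ of an MCM module of dimension two. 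Both are depth-counting in substance --- the paper's \cref{l:iso} is itself proved that way --- so the two finishes are essentially equivalent; yours just makes the mechanism explicit rather than packaging it in a black-box lemma. One small improvement you make along the way: you justify that $\ker J_{2,1}$ is MCM via the depth lemma applied to $0\to\ker J_{2,1}\to R^6\to\im J_{2,1}\to 0$, which is cleaner than the paper's passing remark that containment in a free module gives $S_2$ (containment in a free module only yields $S_1$; fortunately $S_1$ suffices for \cref{l:iso}, so the paper's argument still goes through). Your flagging of the symmetry step and the explicit rearrangement $H_{yz}^2 = x\alpha_x - \tfrac{1}{(d-1)^2}\Delta_{xx}E^2$ are both accurate and match what the paper relies on.
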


\begin{proof}
First note that since $M_1(2)$ and $M_2(2)$ are the images over $R$ of a matrix factorization of $f$ over $Q$ as proved in \ref{c:explicitMF(2)}, one obtains that the (infinite) portion of the sequence involving those matrices is exact; see \ref{c:explicitMF(2)}. 
Furthermore, the initial portion of the sequence is also a complex by \ref{l:complexM0M1(2)} and the facts that the columns of $M_0(2)$ correspond locally to compositions of lower order operators and that $x$, $y$, and $z$ are each nonzerodivisors over $R$. 

For the initial portion of the sequence we argue by localization. 
We begin by using the lemmas above to establish that the homologies of the sequence are supported at the homogeneous maximal ideal (that is, the sequence is exact on the punctured spectrum).
Let $\p \neq (x,y,z)$, so that $x, y,$ or $z$ is a unit after localizing at $\p$. 
By Lemma~\ref{l:localexactM0M1(2)} and symmetry, we have $(\ker M_0(2))_\p=(\im M_1(2))_\p$. 
Furthermore, to see $\ker(J_{2,1})_{\p}=\im M_0(2)$, note that, since $H_{yz}^2$ is generated by $\alpha_x$ and $E^2$ over $R_x$, $\ker(J_{2,1})_x$ is generated by $E^2$, $EH_{yz}$, and $\alpha_x$. By symmetry, we have that $\ker(J_{2,1})_y$ is generated by $E^2$, $EH_{zx}$, and $\alpha_y$; and $\ker(J_{2,1})_z$ is generated by $E^2$, $EH_{xy}$, and $\alpha_z$. Thus, if $\p \neq (x,y,z)$, we have that $\ker(J_{2,1})_{\p}$ is generated by $E^2,EH_{yz},EH_{zx},EH_{xy},\alpha_x,\alpha_y,\alpha_z$.

Next we use depth to argue that the initial portion of the sequence is, in fact, exact. 
For this, we use the lemma below in stages. 
First consider the inclusion 
\[
i\colon \im M_1(2) \hookrightarrow \ker M_0(2)
\]
From the exactness of the infinite periodic portion of the complex above, we see that $\im M_1(2)$ is an infinite syzygy, hence maximal Cohen-Macaulay and so satisfies $S_2$ as the ring $R$ is a 2-dimensional hypersurface. Furthermore, since $\ker M_0(2)$ is contained in $R^7$, it satisfies $S_1$. By exactness on the punctured spectrum as explained above, the inclusion $i$ is isomorphism when localized at each prime of height at most 1, and so by Lemma~\ref{l:iso} below it is an isomorphism. 

Next consider the inclusion 
\[
j\colon \im M_0(2) \hookrightarrow \ker J_{2,1}
\]
To see that $\im M_0(2)$ satisfies $S_2$, we note that by the previous step, one has an isomorphism $\coker M_1(2)\cong \im M_0(2)$, and again from the exactness of the infinite periodic portion of the complex above $\coker M_1(2)=\ker M_2(2)$ is an infinite syzygy and hence maximal Cohen-Macaulay. 
The module $\ker J_{2,1}$ satisfies $S_2$ as it is contained in a free $R$-module, and hence $j$ is an isomorphism by Lemma~\ref{l:iso}, below, and exactness on the punctured spectrum as explained above. 
\end{proof}

The following is a standard useful tool for proving that maps are isomorphisms. It can be verified by depth-counting arguments after localization at minimal primes in the support of the cokernel and the kernel. 

\begin{lemma}\label{l:iso}
Let $\varphi\colon M \to N$ be a homomorphism of finitely generated modules over a Noetherian ring. Suppose that $M$ and $N$ satisfy Serre's property $S_2$ and $S_1$, respectively. 
If the localization $\varphi_\p$ is an isomorphism for all primes of height at most 1, then $\varphi$ is an isomorphism.
\end{lemma}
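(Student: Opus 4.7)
The plan is to show separately that $K\coloneqq\ker\varphi$ and $C\coloneqq\coker\varphi$ vanish. By the hypothesis that $\varphi_\p$ is an isomorphism at all primes of height at most $1$, both $K$ and $C$ are supported only at primes of height at least $2$; it therefore suffices to prove that neither has any associated primes.

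First I would handle $K$. Since the inclusion $K\hookrightarrow M$ yields $\Ass(K)\subseteq \Ass(M)$, it suffices to check that every $\q\in\Ass(M)$ has height $0$. Indeed $\depth M_\q=0$, and Serre's condition $S_2$ on $M$ gives $0=\depth M_\q\geqslant \min(2,\mathrm{ht}\,\q)$, forcing $\mathrm{ht}\,\q=0$. Since $\supp(K)$ contains no prime of height $\leqslant 1$, we conclude $\Ass(K)=\varnothing$, whence $K=0$.

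With $K=0$ in hand, $\varphi$ is injective and fits into a short exact sequence
\[
0\to M\xra{\varphi} N\to C\to 0.
\]
If $C\neq 0$, pick $\q\in\Ass(C)$; by the first paragraph $\mathrm{ht}\,\q\geqslant 2$, so $\depth M_\q\geqslant 2$ by $S_2$ on $M$ and $\depth N_\q\geqslant 1$ by $S_1$ on $N$. The depth lemma then gives
\[
\depth C_\q\geqslant \min\bigl(\depth M_\q-1,\ \depth N_\q\bigr)\geqslant 1,
\]
contradicting $\depth C_\q=0$. Hence $C=0$ and $\varphi$ is an isomorphism.

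The only subtlety is pinning down the convention for $S_k$ so that both arguments run uniformly; in the intended application $M$ and $N$ are submodules of free modules over the two-dimensional normal domain $R$, so their supports equal $\Spec R$ and the condition $\depth M_\p\geqslant \min(k,\mathrm{ht}\,\p)$ is unambiguous. Once that is fixed, the proof is really two short depth computations: an $\Ass$-containment argument for $K$ using $S_2$ (in fact $S_1$) on $M$, and a depth-lemma argument for $C$ balancing $S_2$ on $M$ against $S_1$ on $N$. There is no substantial obstacle beyond verifying the depth bookkeeping.
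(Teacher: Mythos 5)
Your proof is correct and fills in precisely the depth-counting argument the paper hints at (``localization at minimal primes in the support of the cokernel and the kernel''): you kill $\ker\varphi$ via the containment $\Ass(\ker\varphi)\subseteq\Ass(M)$ together with the height restriction from $S_1$, and you kill $\coker\varphi$ via the depth lemma applied after localizing at one of its associated primes. The only loose end---which you flag yourself---is the convention for $S_n$ and, relatedly, the possibility that an associated prime of the cokernel fails to lie in $\supp M$ or $\supp N$; in the intended application $M$ and $N$ are torsion-free over a domain with full support, and more generally the argument still closes since $M_\q=0$ reduces to $S_1$ on $N$ alone, so there is no real gap.
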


\subsection{Resolution of $D^2_{R|k}$}\label{subsec:liftS2}
We now construct the minimal $R$-free resolution of $D^2_{R|k}.$
\begin{chunk}
\label{lifts2diagram}
Recall the $R$-free resolution of $R/(f_x,f_y,f_z)$ constructed in \cref{resD1}:
\[
G(1)=\cdots \xra{M_2(1)}R^4\xra{M_1(1)}R^4\xra{M_2(1)}R^4\xra{M_1(1)}R^4\xra{M_0(1)}R^3\xra{J}R\to 0\,.
\]
Also, consider the sequence 
\[
G(2)=
\cdots
\xrightarrow{M_2(2)}
R^7
\xrightarrow{M_1(2)}
R^7
\xrightarrow{M_2(2)}
R^7
\xrightarrow{M_1(2)}
R^7
\xrightarrow{M_0(2)} 
R^6 
\xrightarrow{-J_{2,1}}
R^3 \to 0\,,
\]
which is an $R$-free resolution of $\coker J_{2,1}$ by \cref{gensS2}. Define $\theta(2)\colon \shift^{-1}G(2)\to G(1)$ according to the diagram 
\begin{equation*}
\begin{aligned} 
 \xymatrix@R+.5pc@C+.7pc{
\cdots\ar[r]^{-M_1(2)}&R^7\ar[d]^{\theta_3(2)} \ar[r]^{-M_2(2)}&R^7\ar[r]^{-M_1(2)}\ar[d]^{\theta_2(2)}& R^7\ar[r]^{-M_0(2)} \ar[d]^{\theta_1(2)}&R^6 \ar[d]^{\theta_0(2)}\ar[r]^{-J_{2,1}}&R^3 \ar[r] &0\\
\cdots \ar[r]^{M_2(1)}&R^4 \ar[r]^{M_1(1)} &R^4 \ar[r]^{M_0(1)}&R^3\ar[r]^{J}&R\ar[r]&0&
}
\end{aligned} 
 \end{equation*} 
 where each $\theta_i(2)$ is defined in \cref{sec:glossary}. 
\end{chunk}

\begin{lemma}\label{l:liftS2}
The map $\theta(2)\colon \shift^{-1} G(2)\to G(1)$, defined in \cref{lifts2diagram}, is a morphism of complexes. In particular, its cone, which we denote by $(C,\partial^C)$, is the following nonnegatively graded complex of free $R$-modules: 
\[
C\coloneqq \cone(\theta(2))= \cdots \to \begin{matrix}R^4\\\oplus\\R^7 \end{matrix} \xra{\small \begin{bmatrix} M_1(1)&\theta_2(2)\\0&M_1(2) \end{bmatrix}} \begin{matrix}R^4\\\oplus\\R^7 \end{matrix} \xra{\small \begin{bmatrix} M_0(1)&\theta_1(2)\\0&M_0(2) \end{bmatrix}} \begin{matrix}R^3\\\oplus\\R^6 \end{matrix} \xra{\small \begin{bmatrix} D_1&\theta_0(2)\\0&J_{2,1} \end{bmatrix}} \begin{matrix}R\\\oplus\\R^3 \end{matrix} \to 0\,.
\]
\end{lemma}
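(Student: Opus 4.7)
The lemma splits into two parts: (I) that $\theta(2)$ is a chain map, and (II) the explicit description of the cone. For (II), the mapping cone of any chain map $f\colon X\to Y$ has in degree $i$ the module $Y_i\oplus X_{i-1}$ with differential in block form $\begin{bmatrix}\partial^Y & f\\ 0 & -\partial^X\end{bmatrix}$. Plugging in $f=\theta(2)$ with $X=\shift^{-1}G(2)$, and observing that the differentials of $\shift^{-1}G(2)$ displayed in the diagram already carry negative signs (i.e., $-M_i(2)$), the lower-right block becomes $-(-M_i(2))=M_i(2)$, reproducing the formula in the statement. Thus once (I) is settled, (II) is purely bookkeeping.

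The plan for (I) is to verify the commutativity of each square in the diagram of \cref{lifts2diagram}, which amounts to the matrix identities
\begin{align*}
J\,\theta_1(2) + \theta_0(2)\,M_0(2) &= 0,\\
M_0(1)\,\theta_2(2) + \theta_1(2)\,M_1(2) &= 0,\\
M_1(1)\,\theta_3(2) + \theta_2(2)\,M_2(2) &= 0,\\
M_2(1)\,\theta_4(2) + \theta_3(2)\,M_1(2) &= 0,
\end{align*}
together with the analogous identities in higher homological degrees. Because both resolutions $G(1)$ and $G(2)$ are $2$-periodic beyond their tails, and the formulas for $\theta_{2i}(2)$ and $\theta_{2i+1}(2)$ stabilize for $i\geqslant 1$, the higher squares reduce to the last two identities above by straight periodicity. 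This collapses the verification to four explicit matrix identities.

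To establish each identity I would decompose both sides according to the block structure exhibited in \cref{sec:glossary}---namely $\partial_i$, $D_i$, $q$, and $\alpha_1,\alpha_2,\alpha_3$---and multiply out block by block. The scalar entries are then reduced using the collection of polynomial identities in \cref{a:matrixidentities}: the Euler relations \eqref{c:basic-Eul} for $f$ and its derivatives handle products involving the $\partial_i$ and $D_i$ blocks; the Cramer-type relations \eqref{eqCram} of the form $f_x\Delta_{xx}+f_y\Delta_{xy}+f_z\Delta_{xz}=x\delta/(d-1)$ govern the interplay of $\alpha_3$, $\alpha_2$ and $D_i$; the product-to-minor identities \eqref{PD-prodaa} and \eqref{PD-prodab} account for the $\alpha_2$-blocks in $\theta_1(2)$ and $\theta_3(2)$; and \eqref{eq:MinDiff} handles the mixed Hessian-times-derivative terms.

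The main obstacle is sheer bookkeeping: several of the blocks in the relevant matrices have size up to $6\times 7$ or $7\times 7$ with non-scalar entries. What makes the check tractable is the upper-triangular block pattern of the $\theta_j(2)$'s and the numerous zero blocks on their left. Concretely, the first three columns of each square either vanish identically or amount to the relations among $\E,\H_{yz},\H_{zx},\H_{xy}$ already recorded in \eqref{e:basicrelations} (composed with $\E$ as in the proof of \cref{l:complexM0M1(2)}); the middle block reduces to identities encoded in the matrix factorization of $B$ from \cref{resD1}; only the rightmost columns, carrying the genuine $\alpha_i$ data, require the Cramer and product-to-minor identities listed above. Once the four identities are verified in this manner, part (I) is established and the cone description of part (II) is immediate.
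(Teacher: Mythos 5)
Your overall plan matches the paper's proof: verify each square in the diagram of \cref{lifts2diagram} as a matrix identity, reduce to finitely many squares via the eventual two-periodicity of $G(1)$, $G(2)$, and the stabilization of the $\theta_j(2)$, and then compute block-by-block using the appendix identities. The resulting cone bookkeeping is as you describe. The one genuine divergence is at the fourth square: you propose to check $M_2(1)\theta_2(2)+\theta_3(2)M_1(2)=0$ directly, whereas the paper only checks three squares explicitly and then observes that, once $-\theta_2(2)M_2(2)=M_1(1)\theta_3(2)$ is established and $(M_2(i),M_1(i))$ are both matrix factorizations of $d\cdot f$ over $Q$, the complementary square commutes automatically (the standard fact that a morphism of matrix factorizations commuting with one of the two square-completing maps commutes with the other). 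Your route requires one extra explicit computation but is perfectly valid and perhaps easier to trust without invoking that fact.

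Two small imprecisions worth flagging. First, the reference to \eqref{e:basicrelations} composed with $\E$ belongs to the proof of \cref{l:complexM0M1(2)} ($M_0(2)M_1(2)=0$), not to the lifting squares here; for the square $J\theta_1(2)=-\theta_0(2)M_0(2)$, the vanishing of the initial columns is instead the Euler-type computation $\theta_0(2)\varepsilon(2)=0$ from \cref{2-1deriv} and \cref{2nd order Euler id}, together with the zero blocks in $\theta_1(2)$. Second, the paper's toolkit for the two middle squares is somewhat more specific than you indicate: the key identities are $\del_i\alpha_i=\alpha_{i-1}D_i$ from \cref{c:basic-Eul}, the symmetry/antisymmetry of $\alpha_i$, $\del_2$, $D_2$, and the $q$-relations \cref{c:d-D-alpha-q}, while \cref{eqCram} and \cref{det-exp} enter only in the first square. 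None of this changes the viability of your outline, but a complete write-up would need those replacements. As submitted, the proposal is a sound sketch rather than a finished proof, since the block multiplications are not actually carried out.
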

\begin{proof}
Adopt the notation from \cref{M0(2)}. For the rightmost square in \cref{lifts2diagram}, observe that 
\begin{align*}
 -\theta_0(2)M_0(2)
&={\small\frac{1}{d-1}\begin{bmatrix}
 0_{1\times 4} & f_x\Delta_{xx}+f_y\Delta_{xy}+f_z\Delta_{xz} & f_x\Delta_{xy}+f_y\Delta_{yy}+f_z\Delta_{yz} & 
 f_x\Delta_{xz}+f_y\Delta_{yz}+f_z\Delta_{zz}
\end{bmatrix}}\\
&=\begin{bmatrix}
 0_{1\times 4} &J\alpha_2
 \end{bmatrix}=J\theta_1(2)
 \,;
\end{align*}
we justify the first equality, and the remaining are evident.

For the first equality, first use \cref{2-1deriv} and \cref{2nd order Euler id} from \cref{appendix} to deduce that the first four columns of $M_0(2)$ are in the kernel of $\theta_0(2)$. Now note that
\begin{align*}
 \theta_0(2)(\alpha_z)&=(d-1)(f_x\Delta_{xz}+f_y\Delta_{yz}+f_z\Delta_{zz})-z\theta_0(2)\begin{bmatrix}
 \Delta_{xx}\\
 \Delta_{xy}\\
 \Delta_{xz}\\
 \Delta_{yy}\\
 \Delta_{yz}\\
 \Delta_{zz}
 \end{bmatrix}
 \\
 &=(d-1)(f_x\Delta_{xz}+f_y\Delta_{yz}+f_z\Delta_{zz})-\frac{3}{2}z\delta\\
 &=(d-1)(f_x\Delta_{xz}+f_y\Delta_{yz}+f_z\Delta_{zz})-\frac{3}{2}(d-1)(f_x\Delta_{xz}+f_y\Delta_{yz}+f_z\Delta_{zz})\\
 &=-\frac{d-1}{2}(f_x\Delta_{xz}+f_y\Delta_{yz}+f_z\Delta_{zz})\,;
\end{align*}
the first equality follows from \cref{2-1deriv}, the second equality uses \cref{det-exp}, the third equality follows \cref{eqCram}.
This shows that the last entries are equal and the equality in the two remaining entries can be shown similarly.

The second square commutes as
\begin{align*}
 -\theta_1(2)M_1(2) &=(d-1)\begin{bmatrix}
 0_{3\times 3} & \alpha_2\del_2 & -\alpha_2D_3 
 \end{bmatrix}\\
 &=(d-1)\begin{bmatrix}
 0_{3\times 3} & D_2\alpha_1 & -\del_3\alpha_3
 \end{bmatrix}\\
 &=M_0(1)\theta_2(2)\,;
\end{align*}
where the second equality uses \cref{c:basic-Eul}. Also, to see that the third square commutes observe that 
\begin{align*}
 -\theta_2(2)M_2(2)&= -(d-1)\begin{bmatrix}
 0_{3\times 3} & 0_{3\times 3} & -\alpha_3 \del_1\\
 0_{1\times 3} & \frac{1}{2}\alpha_1 q & \alpha_1 D_2
 \end{bmatrix}\\
 &=-\frac{d-1}{2}\begin{bmatrix}
 0_{3\times 3} & 0_{3\times 3} & -2D_1\alpha_2\\
 0_{1\times 3} & D_3\del_1 & \del_2\alpha_2
 \end{bmatrix}\\
 &=M_1(1)\theta_3(2)\,;
\end{align*}
where the second equality uses symmetry of $\alpha_i$, 
\cref{c:basic-Eul}, and \cref{c:d-D-alpha-q}.

Finally, as $(M_2(i),M_1(i))$ for $i=1,2$ correspond to matrix factorizations of $d\cdot f$ over $Q$ and 
\[
-\theta_2(2)M_2(2)=M_1(1)\theta_3(2)\,,
\]
it is standard that 
\[
-\theta_3(2)M_2(1)=M_2(1)\theta_2(2)\,.
\]
Now using the two-periodicity of $G(1)$ and $\shift^{-1} G(2)$ after homological degree two, all the remaining squares in the diagram commute. Thus $\theta(2)$ is a map of complexes so taking its cone yields the desired complex $C$ defined above.
\end{proof}

\begin{theorem}
\label{resolutionD2}
Assume $R=k[x,y,z]/(f)$ is an isolated singularity hypersurface where $k$ is a field of characteristic zero. The augmented minimal $R$-free resolution of $D_{R|k}^2\subseteq R^3\oplus R^6\oplus R$ has the form 
\[
\cdots \xra{\small \begin{bmatrix} M_2(1)&\theta_3(2)\\0&M_2(2) \end{bmatrix}}
\begin{matrix}R^4\\\oplus\\R^7 \end{matrix} \xra{\small \begin{bmatrix} M_1(1)&\theta_2(2)\\0&M_1(2) \end{bmatrix}}\begin{matrix}R^4\\\oplus\\R^7 \end{matrix} \xra{\small \begin{bmatrix} M_2(1)&\theta_3(2)\\0&M_2(2) \end{bmatrix}} \begin{matrix}R^4\\\oplus\\R^7 \end{matrix} \xra{\small \begin{bmatrix} M_1(1)&\theta_2(2)\\0&M_1(2)\\ 0 & 0 \end{bmatrix}} \begin{matrix}R^4\\\oplus\\R^7\\\oplus \\R \end{matrix} \xra{\epsilon} D_{R|k}^2 \to 0
\]
where a minimal set of generators for $D_{R|k}^2$ is given by the columns of 
\[
\epsilon=\begin{bmatrix} M_0(1)&\theta_1(2)&0\\0&M_0(2) & 0\\
0 & 0 & 1\end{bmatrix}
\]
and the block matrices are defined in  \cref{M0(2)}, \cref{c:explicitMF(2)}, and \cref{sec:glossary}. 
\end{theorem}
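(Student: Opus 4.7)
The plan is to deduce the theorem from \cref{l:liftS2} by dimension shifting. The mapping cone $C=\cone(\theta(2))$ constructed there will turn out to be a free resolution of $\coker P_2$; since $D^2_{R|k}\cong R\oplus \ker P_2$ by \cref{c:composition}, a one-step shift of this resolution together with a free summand for the constant operator will yield the claimed resolution.

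First I would verify that $C$ is a free resolution of $\coker P_2$. The cone construction provides a short exact sequence of complexes $0\to G(1)\to C\to G(2)\to 0$. Since $G(1)$ is the augmented minimal free resolution of $R/(f_x,f_y,f_z)$ from \cref{resD1} and $G(2)$ is exact by \cref{gensS2}, both complexes have vanishing homology in every positive degree, and the associated long exact sequence will force $H_n(C)=0$ for $n\geqslant 1$. The last differential of the cone,
$\begin{bmatrix} D_1 & \theta_0(2) \\ 0 & J_{2,1} \end{bmatrix}\colon R^3\oplus R^6\to R\oplus R^3$,
coincides with the presentation matrix $P_2$ from \cref{c:presentationS2} under the identifications $D_1=J$ and $\theta_0(2)=J_{2,0}$, so $H_0(C)\cong \coker P_2$.

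Next, the exactness of $C$ at $C_1$ will yield
$\ker P_2 = \mathrm{im}\bigl(\begin{bmatrix} M_0(1) & \theta_1(2) \\ 0 & M_0(2) \end{bmatrix}\colon R^{11}\to R^9\bigr)$,
so the truncated complex with higher differentials inherited from $C$ becomes a free resolution of $\ker P_2$. Appending a free summand $R$ in degree zero, whose augmentation picks out the constant operator $1\in D^0_{R|k}$, then produces the desired resolution of $D^2_{R|k}\cong R\oplus \ker P_2$; the resulting augmentation matrix is precisely $\epsilon$, whose twelve columns furnish the claimed generating set inside $R^3\oplus R^6\oplus R$.

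Finally, minimality follows by inspection of the glossary: every entry of $M_i(1)$, $M_j(2)$, and $\theta_k(2)$ is a homogeneous polynomial in $x,y,z$ of positive degree and therefore lies in $\m=(x,y,z)R$, while the zero block in the degree-one differential prevents the free summand corresponding to the constant from being a boundary. I anticipate the main obstacle lies not in this mapping-cone bookkeeping but rather in the prerequisite technical work---namely, constructing and verifying the chain map $\theta(2)$ in \cref{l:liftS2} and proving exactness of $G(2)$ in \cref{gensS2}---which has already been carried out in the preceding subsections.
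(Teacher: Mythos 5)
Your proposal is correct and follows essentially the same route as the paper's proof: both pass through the cone exact sequence $0\to G(1)\to C\to G(2)\to 0$, invoke the long exact sequence in homology to conclude $C$ resolves $\coker\partial_1^C=\coker P_2$, identify $\partial_1^C$ with $P_2$ via $D_1=J_{1,0}$ and $\theta_0(2)=J_{2,0}$, and then use $D^2_{R|k}\cong R\oplus\ker P_2$ together with $\ker P_2=\im\partial_2^C$ to obtain the claimed resolution. Your additional remark on minimality (all entries of the differentials lie in $\m$) is left implicit in the paper but is a legitimate observation.
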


\begin{proof}
By \cref{l:liftS2}, $C$ is a bounded below complex of finite rank free $R$-modules. Moreover, there is the cone exact sequence of complexes
\begin{equation}
\label{e_exact-sequence}
0\to G(1)\to C\to G(2)\to 0\,.
\end{equation}
Since $G(1)$ and $G(2)$ have homology concentrated in degree zero by \cref{resD1,gensS2}, examining the induced long exact sequence in homology forces 
 $C$ to also have homology concentrated only in degree zero. Thus $C$ is a minimal free resolution of $\coker \del_1^C.$ The desired conclusion now follows from the $R$-module isomorphism
\[
D^2_{R|k}\cong \ker \del_1^C\oplus R = \im \del_2^C \oplus R = \im \epsilon \,,
\]
which follows from \cref{c:composition}, observing that $\del_1^C$ equals $P_2$ in \cref{c:presentationS2} and noting that $D_1=J_{1,0}$ and $\theta_0(2) = J_{2,0}$. 
\end{proof}

By truncating the exact sequence of complexes in \cref{e_exact-sequence} in homological degrees one and higher, the resulting long exact sequence in homology yields the following corollary to \cref{resolutionD2}. 

\begin{corollary}
\label{cor:exactsequenceD2}
There is a short exact sequence of $R$-modules 
\[
0\to D^1_{R|k}\to D^2_{R|k}\to \ker J_{2,1}\to 0
\]
where $D^1_{R|k}\to D^2_{R|k}$ is the inclusion.
\end{corollary}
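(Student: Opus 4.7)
The plan is to combine \cref{resolutionD2} with \cref{gensS2}. Left exactness of the sequence in question is precisely \eqref{eq:LES-ring} for $i=2$, namely
\[ 0 \to D^1_{R|k} \to D^2_{R|k} \xra{\nu_2} \ker J_{2,1}, \]
so the entire content of the corollary reduces to showing that $\nu_2$ is surjective.

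For this I would invoke the identification $D^2_{R|k}\subseteq R^3 \oplus R^6 \oplus R$ coming from \cref{c:composition}, under which the three summands record the coefficients of the first-order, the second-order, and the constant basis operators, respectively. Under this identification $\nu_2$ is simply the projection onto the middle $R^6$-factor; see \cref{chunk:diagram-dops-and-gr}. By \cref{resolutionD2}, the twelve columns of $\epsilon$ generate $D^2_{R|k}$, so in particular the seven columns appearing in its middle block
$\begin{bmatrix}\theta_1(2) \\ M_0(2) \\ 0\end{bmatrix}$
are bona fide differential operators in $D^2_{R|k}$. Projecting them onto the middle summand yields exactly the columns of $M_0(2)$, which by \cref{gensS2} generate $\ker J_{2,1}$; surjectivity of $\nu_2$ follows at once.

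I do not anticipate any real obstacle at this stage, since the substantive content has already been absorbed by the preceding results. The nontrivial input is the existence of the generators $\alpha_x,\alpha_y,\alpha_z$ of $\ker J_{2,1}$ established in \cref{newgensS2}, together with the commutativity verifications of \cref{l:liftS2} that guarantee the divisibilities defining those elements lift through the chain map $\theta(2)$ to honest operators in $D^2_{R|k}$. Once those ingredients are in hand, the corollary is a direct reading of the block structure of $\epsilon$.
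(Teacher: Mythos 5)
Your argument is correct and is precisely the implicit content of the paper's derivation: left exactness is \eqref{eq:LES-ring} with $i=2$, and surjectivity of $\nu_2$ follows because the middle block $\begin{bmatrix}\theta_1(2) \\ M_0(2) \\ 0\end{bmatrix}$ of the augmentation $\epsilon$ in \cref{resolutionD2} exhibits the generators $E^2,EH_{yz},EH_{zx},EH_{xy},\alpha_x,\alpha_y,\alpha_z$ of $\ker J_{2,1}$ (from \cref{gensS2}) as images under coordinate restriction of honest elements of $D^2_{R|k}$. This is the same reading of \cref{resolutionD2} the authors intend; no gap.
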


Similar to \cref{c:d1graded}, the differentials in the resolution of $D^2_{R|k}$ are homogeneous with respect to the internal grading of $R$ where 
\[
|\alpha_x|=|\alpha_y|=|\alpha_z|=2d-5\,.
\]
Hence, from \cref{resolutionD2} we can read off the graded Betti numbers of $D^2_{R|k}$.
\begin{corollary} 
\label{cor:d2graded}
The graded Betti numbers of $D^2=D^2_{R|k}$ are given by
\[
\beta_{0,j}^R(D^2)=\begin{cases}
3 & j=0\\
6 & j=d-2\\
3& j=2d-5\\
0 & \text{otherwise}
\end{cases}\,,
\]
and for $n\geqslant 1$, 
\[
\beta_{2n-1,j}^R(D^2)=\begin{cases}
6 & j=nd-1\\
1 & j=nd+d-3\\
3 & j= nd +d -4\\
1 & j = nd+2d-6\\
0 & \text{otherwise}
\end{cases}\quad \text{and}\quad \beta_{2n,j}^R(D^2)=\begin{cases}
2 & j=nd\\
6 & j=nd + d-2\\
3 & j=nd+2d-5\\
0 & \text{otherwise}
\end{cases}\,.
\]
\end{corollary}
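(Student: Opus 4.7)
The approach is to verify that the minimal free resolution of $D^2_{R|k}$ in \cref{resolutionD2} is graded with respect to the standard grading on $R$, then tabulate the internal degrees at each homological position. Every entry in each block matrix $M_i(1), M_i(2), \theta_i(2)$ listed in \cref{sec:glossary} is a homogeneous polynomial in $x,y,z$ and in the partials and Hessian entries of $f$. With the conventions $|x|=|y|=|z|=1$ and $|\del_x|=|\del_y|=|\del_z|=-1$, this equips the resolution with a canonical compatible $\mathbb{Z}$-grading, so that the generators carry the degrees $|E|=0$, $|H_{\bullet\bullet}|=d-2$, $|E^2|=0$, $|EH_{\bullet\bullet}|=d-2$, and $|\alpha_\bullet|=2d-5$. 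Since $d \geq 3$, every matrix entry lies in $\m$, so the minimality already established in \cref{resolutionD2} persists in the graded sense.

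Reading the position-zero portion of the augmentation $\epsilon$, the twelve generators partition by internal degree into $\{1, E, E^2\}$ in degree $0$, $\{H_{yz}, H_{zx}, H_{xy}, EH_{yz}, EH_{zx}, EH_{xy}\}$ in degree $d-2$, and $\{\alpha_x, \alpha_y, \alpha_z\}$ in degree $2d-5$, giving $\beta_{0,0}^R(D^2) = 3$, $\beta_{0, d-2}^R(D^2) = 6$, and $\beta_{0, 2d-5}^R(D^2) = 3$. For positions $i \geq 1$, I would exploit the graded short exact sequence of complexes $0 \to G(1) \to C \to G(2) \to 0$ underlying the mapping cone construction of \cref{l:liftS2}. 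Since $C$, augmented by $R$ at position zero for the constant operator, is a graded minimal resolution of $D^2_{R|k}$, the graded Betti numbers decompose as $\beta_{i,j}^R(D^2) = \beta_{i,j}^R(D^1) + \beta_{i,j}^R(\ker J_{2,1})$ for all $i \geq 1$. The first summand is recorded in \cref{c:d1graded}; the second is computed by running the identical analysis on $G(2)$, starting from the position-zero degrees $(0, d-2, d-2, d-2, 2d-5, 2d-5, 2d-5)$ read from the columns of $M_0(2)$ and then tracking the column degrees of the two-periodic differentials $M_1(2), M_2(2)$ of \cref{c:explicitMF(2)}, whose block entries $D_i, \del_i, q, \alpha_1, \alpha_2, \alpha_3$ have known homogeneous degrees $d-1, 1, 2, d-2, 2d-4, 3d-6$. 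Two-periodicity then propagates the syzygy degrees through all higher $n$ with a uniform shift by $d$ per period.

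The main obstacle is the bookkeeping of internal degrees—especially through the block maps $\theta_i(2)$ that glue $G(1)$ and $\shift^{-1}G(2)$—but since every entry has a prescribed homogeneous degree and two-periodicity reduces the iteration to a finite computation at positions $1$ and $2$, the remainder of the proof is a direct column-by-column verification matching the tabulated counts for $\beta_{2n-1, j}^R$ and $\beta_{2n, j}^R$.
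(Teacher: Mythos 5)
Your approach---verify that the resolution in \cref{resolutionD2} is homogeneous, record the generator degrees, and propagate via the two-periodic tail---is essentially the paper's own, which simply reads off the graded Betti numbers from the minimal graded resolution. Your decomposition $\beta_{i,j}^R(D^2) = \beta_{i,j}^R(D^1) + \beta_{i,j}^R(\ker J_{2,1})$ for all $i \geqslant 0$ is a clean way to organize the bookkeeping; it is built into the block structure $\begin{bmatrix} M_\bullet(1) & \theta_\bullet(2) \\ 0 & M_\bullet(2) \end{bmatrix}$ of the differentials, since the $\theta_\bullet(2)$-block imposes no degree constraints beyond those already forced by $M_\bullet(2)$, so the column degrees of each differential are exactly the union (with multiplicity) of the column degrees coming from the $G(1)$-block and the $G(2)$-block.

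One caution, though: you cite \cref{c:d1graded} for the $D^1$-summand, but as stated there the multiplicities $1$ and $3$ in the tail appear to be transposed. The three Euler-type syzygies $f_a\E + b\H_{ab} - c\H_{ca} = 0$ lie in internal degree $d-1$, while the single Hamiltonian syzygy $f_x\H_{yz} + f_y\H_{zx} + f_z\H_{xy} = 0$ lies in degree $2d-3$, giving $\beta_{1,d-1}^R(D^1) = 3$ and $\beta_{1,2d-3}^R(D^1) = 1$---not the other way around---and the same swap propagates through the even positions. If you feed the stated \cref{c:d1graded} into your decomposition, you will obtain negative entries (e.g.\ in position $(2n-1,\, nd+d-3)$), which flags the inconsistency. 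The numbers claimed in \cref{cor:d2graded} are nevertheless correct and agree with the direct column-degree tabulation from $M_1(1), M_2(1), M_1(2), M_2(2)$ together with $\theta_2(2), \theta_3(2)$, so your plan succeeds once you recompute the $G(1)$-contribution directly rather than quote it.
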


\begin{corollary}
\label{cor:levels2}
The following inequalities are both satisfied
\[\level_{\Dsg(R)}^k (\ker J_{2,1})\leqslant 2\quad \text{and} \quad\level_{\Dsg(R)}^k (D_{R|k}^2)\leqslant 3\,.\] 
\end{corollary}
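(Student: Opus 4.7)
The plan is to exploit two triangles in $\Dsg(R)$ together with the standard subadditivity of level along exact triangles: if $X\to Y\to Z\to \shift X$ is a triangle in a triangulated category $\mathsf{T}$, then $\level_{\mathsf{T}}^W Z\leqslant \level_{\mathsf{T}}^W X+\level_{\mathsf{T}}^W Y$ for any object $W$ (and $\level$ is invariant under suspension).

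First I would handle $\ker J_{2,1}$. From \cref{r:hz}, $D^1_{R|k}\simeq \shift k$ in $\Dsg(R)$, and since $B=R/(f_x,f_y,f_z)$ is quasi-isomorphic to the same Koszul data used in \cref{c:d1andB}, we have $B\simeq \shift k$ as well, so $\level_{\Dsg(R)}^k B=\level_{\Dsg(R)}^k D^1_{R|k}=1$. The matrix factorization for $\ker J_{2,1}$ in \cref{c:explicitMF(2)} was built as $\fold(L)$ where $L$ is obtained, up to a contractible direct summand, from the mapping cone of the dg $E$-module map $\alpha\colon \Kos^Q(f_x,f_y,f_z)\to \Kos^Q(x,y,z)$ from \cref{c:alphamap}. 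By \cref{r:koscxres}, these Koszul complexes resolve $B$ and $k$ respectively, so after tensoring to $R$ the map $\alpha$ represents a morphism $B\to k$ in $\Db(R)$, and hence in $\Dsg(R)$. Its mapping cone, which is (a shift of) $\ker J_{2,1}$ in $\Dsg(R)$ by the equivalence $\Dsg(R)\equiv [\mathsf{mf}(Q,f)]$ of \cref{c:equivalence}, therefore fits in an exact triangle
\[ B\longrightarrow k \longrightarrow \ker J_{2,1} \longrightarrow \shift B\,,\]
in $\Dsg(R)$. Subadditivity then gives $\level_{\Dsg(R)}^k(\ker J_{2,1})\leqslant \level^k_{\Dsg(R)} B+\level^k_{\Dsg(R)} k= 1+1=2$.

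For $D_{R|k}^2$, I would invoke \cref{cor:exactsequenceD2}, which provides the short exact sequence
\[ 0\to D^1_{R|k}\to D^2_{R|k}\to \ker J_{2,1}\to 0\,.\]
This yields an exact triangle in $\Db(R)$ and so in $\Dsg(R)$. Applying subadditivity with the bounds already established,
\[\level_{\Dsg(R)}^k D^2_{R|k}\leqslant \level_{\Dsg(R)}^k D^1_{R|k}+\level_{\Dsg(R)}^k (\ker J_{2,1})\leqslant 1+2=3\,,\]
as required.

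The only nontrivial point—essentially the single place where care is needed—is the identification of $\ker J_{2,1}$ in $\Dsg(R)$ with the cone of the morphism $B\to k$ induced by $\alpha$. This is where the construction of \cref{subsec:mfS2} must be read carefully: $L$ differs from $\shift^{-1}\cone(\alpha)$ only by contracting the summand $Q\xra{=}Q$, which is a quasi-isomorphism, and $\fold$ sends such Koszul dg modules to matrix factorizations representing the same object of $\Dsg(R)$, up to suspension. Once this translation is made, both inequalities are immediate applications of subadditivity of level along the two triangles above.
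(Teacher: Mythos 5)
Your argument is correct and follows the same route as the paper: for the first inequality, interpret the matrix factorization of $\ker J_{2,1}$ built in \cref{c:alphamap,c:explicitMF(2)} as the cone of a morphism between (shifts of) $k$ in $\Dsg(R)$ and apply subadditivity of level along that triangle; for the second, apply subadditivity to the triangle coming from the short exact sequence in \cref{cor:exactsequenceD2} together with $D^1_{R|k}\simeq\shift k$ from \cref{r:hz}. Your write-up simply makes explicit the translation from the dg $E$-module mapping cone to a triangle in $\Dsg(R)$, which the paper states more tersely.
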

\begin{proof}
The second inequality follows from the first as 
\[
\level_{\Dsg(R)}^k (D_{R|k}^2)\leqslant \level_{\Dsg(R)}^k (\ker J_{2,1})+\level_{\Dsg(R)}^k (k)
\] 
using \cref{cor:exactsequenceD2}, as well as \cref{r:hz}. For the first inequality, from \cref{c:alphamap} and \cref{c:explicitMF(2)} it follows that the matrix factorization modeling the tail of the minimal free resolution of $\ker J_{2,1}$ is a mapping cone on the matrix factorization of $k$ (up to a shift), and as a consequence the desired inequality is satisfied.
\end{proof}

\begin{remark}
As $R$ is an isolated singularity, \[A\coloneqq Q/(f_x,f_y,f_z)=R/(f_x,f_y,f_z)\]
is an artinian complete intersection. Its socle is $\delta$ as defined in \cref{sec:glossary}. It follows from \cite[Exercise~21.23]{Eisenbud:1995}, that the matrix factorization corresponding to the $R$-resolution of $A/(\delta)$ is the same as the matrix factorization constructed in \cref{c:explicitMF(2)}. Therefore there is an isomorphism $\ker J_{2,1}\simeq A/(\delta)$ in $\Dsg(R)$, see \cref{c:equivalence}. Hence, $D^2_{R|k}$ is a mapping cone of a morphism $\shift^{-1}A/(\delta)\to k$ in $\Dsg(R)$. The isomorphism above does not necessarily hold when $Q$ has more than three variables; indeed, even for straightforward examples in four variables their corresponding matrix factorizations have different ranks. 
\end{remark}

\subsection{A set of generators of $D^2_{R|k}$}\label{subsec:gensD2}

Now we write the generators of $D^2_{R|k}$ given in \cref{resolutionD2} as differential operators. By abuse of notation, we consider elements of $\ker(J_{2,1})$ as elements of the ambient free module $R^{10}$ of $D^2_{R|k}$ as in \cref{rmk:lifting-gens}. We begin by writing some relevant operators of order two 
in terms of lifts of their images in $\ker(J_{2,1})$ and operators of order one, as follows:
\begin{align}\label{eq:order2opsE}
 \E^2=E^2+\E 
\end{align}
\begin{align}\label{eq:order2opsEH}
 \E\H_{yz}=EH_{yz}+(d-1)\H_{yz},\quad \E\H_{zx}=EH_{zx}+(d-1)\H_{zx},\quad \E\H_{xy}=EH_{xy}+(d-1)\H_{xy}
\end{align}
\begin{equation}\label{eq:order2opsH}
\begin{split}
 \H^2_{yz}&=H^2_{yz}+\frac{1}{d-1}x\left(\Delta_{xx}\partial_x+\Delta_{xy}\partial_y+\Delta_{xz}\partial_z\right)-\frac{1}{d-1}\Delta_{xx}\E \\
 \H^2_{zx}&=H^2_{zx}+\frac{1}{d-1}y\left(\Delta_{xy}\partial_x+\Delta_{yy}\partial_y+\Delta_{yz}\partial_z\right)-\frac{1}{d-1}\Delta_{yy}\E \\
 \H^2_{xy}&=H^2_{xy}+\frac{1}{d-1}z\left(\Delta_{xz}\partial_x+\Delta_{yz}\partial_y+\Delta_{zz}\partial_z\right)-\frac{1}{d-1}\Delta_{zz}\E \,.
\end{split}
\end{equation}
Note that above, we are lifting the representative for $E^2$ in \cref{subsec:gensS2} from $\ker(J_{2,1})\cong D^2_{R|k}/D^1_{R|k}$ to $D^2_{R|k}$, and so 
\[
E^2=x^2\partial_x^2+2xy\partial_x\partial_y+2xz\partial_x\partial_z+y^2 \partial_x+2yz\partial_y\partial_z+z^2\partial_z^2\,;
\]
the same convention is adopted for $EH_{ij}$.

\begin{theorem}\label{genops2}
A minimal set of generators for $D^2_{R|k}$ is given by $G_0\cup G_1\cup G_2$, where $G_0=\{1 \}$, $G_1=\{\E,\H_{yz}, \H_{zx}, \H_{xy}\}$ and 
\begin{align*}
 G_2&=\{\E^2,\,\,\, \E\H_{yz},\,\,\, \E\H_{zx},\,\,\, \E\H_{xy},\,\,\, \A_x,\,\,\, \A_y,\,\,\, \A_z\},
\end{align*}
with 
\begin{align*}
\A_x &= \frac{1}{x} 
\left[
\H_{yz}^2+\frac{1}{(d-1)^2}\Delta_{xx}\E^2+\frac{d-2}{(d-1)^2}\Delta_{xx}\E
\right]
\\
\A_y &= \frac{1}{y} 
\left[
\H_{zx}^2+\frac{1}{(d-1)^2}\Delta_{yy}\E^2+\frac{d-2}{(d-1)^2}\Delta_{yy}\E
\right]
\\
\A_z &= \frac{1}{z} 
\left[
\H_{xy}^2+\frac{1}{(d-1)^2}\Delta_{zz}\E^2+\frac{d-2}{(d-1)^2}\Delta_{zz}\E
\right]\,.
\end{align*}
\end{theorem}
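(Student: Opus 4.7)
The strategy is to leverage Theorem \ref{resolutionD2}, which provides a minimal generating set for $D^2_{R|k}$ consisting of the twelve columns of the matrix $\epsilon$. My plan is to identify each proposed generator with one of these columns, up to $R$-linear combinations of generators already present in the list; since the cardinality matches the minimal number of generators, this will establish that $G_0\cup G_1\cup G_2$ is also a minimal generating set.

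The first four columns of $\epsilon$ (arising from $\begin{bmatrix} M_0(1)\\0\\0\end{bmatrix}$) recover the derivations $\E,\H_{yz},\H_{zx},\H_{xy}$ via \cref{resD1}, and the last column recovers the constant operator $1$. This handles $G_0\cup G_1$. Among the middle block, the four columns of $\epsilon$ whose $\theta_1(2)$-part is zero have middle components given by $\varepsilon(2)$, and hence correspond to the pure order-two operators $E^2,EH_{yz},EH_{zx},EH_{xy}$ of \cref{subsec:gensS2}. Using the identities \eqref{eq:order2opsE} and \eqref{eq:order2opsEH}, we have $\E^2=E^2+\E$ and $\E\H_{ij}=EH_{ij}+(d-1)\H_{ij}$, so replacing these columns by $\E^2,\E\H_{yz},\E\H_{zx},\E\H_{xy}$ amounts to an invertible $R$-linear change within our twelve proposed generators (absorbing corrections into $G_1$).

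The three remaining columns of $\epsilon$, arising from the $\alpha_2$-block of $\theta_1(2)=(d-1)[\,0_{3\times 4}\mid\alpha_2\,]$ combined with the $\frac{2}{(d-1)^2}A(2)$-block of $M_0(2)$, are operators of the form $\alpha_\bullet + \frac{1}{d-1}(\Delta_{\bullet x}\del_x+\Delta_{\bullet y}\del_y+\Delta_{\bullet z}\del_z)$ for $\bullet\in\{x,y,z\}$. I will verify that these agree with $\A_x,\A_y,\A_z$ as defined in the theorem statement. For $\A_x$, substitute the lifts from \eqref{eq:order2opsE} and \eqref{eq:order2opsH}, namely $\E^2=E^2+\E$ and
\[
\H_{yz}^2 = H_{yz}^2 + \tfrac{x}{d-1}\bigl(\Delta_{xx}\del_x+\Delta_{xy}\del_y+\Delta_{xz}\del_z\bigr) - \tfrac{1}{d-1}\Delta_{xx}\E,
\]
into the defining formula for $\A_x$ in the theorem. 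Grouping the $\Delta_{xx}\E$ terms yields the coefficient $-\tfrac{1}{d-1}+\tfrac{1}{(d-1)^2}+\tfrac{d-2}{(d-1)^2}=\tfrac{-(d-1)+1+(d-2)}{(d-1)^2}=0$, which is the crucial cancellation. What remains is
\[
\A_x = \tfrac{1}{x}\Bigl[H_{yz}^2+\tfrac{1}{(d-1)^2}\Delta_{xx}E^2\Bigr] + \tfrac{1}{d-1}\bigl(\Delta_{xx}\del_x+\Delta_{xy}\del_y+\Delta_{xz}\del_z\bigr) = \alpha_x + \tfrac{1}{d-1}\bigl(\Delta_{xx}\del_x+\Delta_{xy}\del_y+\Delta_{xz}\del_z\bigr),
\]
which is well-defined in $D^2_{R|k}$ precisely because $\alpha_x$ is well-defined in $\ker J_{2,1}$ by \cref{newgensS2}, and matches the ninth column of $\epsilon$ on the nose. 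The cases of $\A_y$ and $\A_z$ follow by cyclic symmetry in the variables.

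The only real obstacle is the algebraic identity in the last paragraph, and its essential content is the numerical cancellation of the $\Delta_{xx}\E$ coefficient; everything else is bookkeeping on which pieces of $\epsilon$ contribute which summands. Once the correspondences are matched, both generating sets have exactly twelve elements, so the minimality of the proposed set follows automatically from the minimality of $\epsilon$ established in Theorem \ref{resolutionD2}.
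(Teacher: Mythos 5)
Your proof is correct and follows essentially the same strategy as the paper's: both arguments identify $\A_x,\A_y,\A_z$ with the last three columns of the augmentation map $\epsilon$ via the substitutions \eqref{eq:order2opsE} and \eqref{eq:order2opsH} (with the same crucial cancellation of the $\Delta_{xx}\E$ coefficient) and then close via the cardinality count against $\beta_0^R(D^2_{R|k})=12$ from Theorem~\ref{resolutionD2}. The paper frames the middle block through the short exact sequence $0\to D^1_{R|k}\to D^2_{R|k}\to\ker J_{2,1}\to 0$ and ``lifts of generators,'' whereas you phrase the same matching as an invertible $R$-linear change of the generating set, but the underlying verifications are identical.
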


\begin{proof}

First note that a generating set for $D^2_{R|k}$ can be given by the union of a minimal set of generators for $D^1_{R|k}$ and any set of lifts to $D^2_{R|k}$ of a minimal set of generators for the quotient $D^2_{R|k}/D^1_{R|k}$. 
First, from \cref{subsec:d1}, especially \cref{E} and \cref{H}, the union $G_0 \cup G_1$ is a minimal generating set for $D^1_{R|k}$. 
Second, we claim that $G_2$ is the desired set of lifts from the quotient $D^2_{R|k}/D^1_{R|k}$. 
As compositions of lower order operators, the first 7 elements of $G_2$ are clearly in $D^2_{R|k}$. 
Furthermore, $\A_x,\A_y,\A_z$ are also well-defined operators, so elements of $D^2_{R|k}$: Indeed, using \cref{eq:order2opsE} and \cref{eq:order2opsH}, one can check that $\A_x$, $\A_y$, and $\A_z$ agree with the generators corresponding to last three columns of the augmentation map $\epsilon$ in \cref{resolutionD2}. 
Now, since the images of the elements of $G_2$ in the quotient $D^2_{R|k}/D^1_{R|k}$ are the minimal set of generators $\{ E^2, EH_{yz}, EH_{zx}, EH_{xy}, \alpha_x, \alpha_y, \alpha_z\ \}$
from \cref{localgensS2}, we get that $G_0 \cup G_1\cup G_2$ is a set of generators for $D^2_{R|k}$. This is in fact a minimal set of generators of $D^2_{R|k}$ since its cardinality agrees with $\beta_0^R(D_{R|k})$ calculated in \cref{resolutionD2}. 
\end{proof}

\begin{remark}
\label{r:agreement} 
Note that except for $\A_x$, $\A_y$, and $A_z$, the generators in \cref{genops2} do not agree with the lifts of the generators of $\ker J_{2,1}$, that is, the generators of $D^2_{R|k}$ given in the augmentation map $\epsilon$ in \cref{resolutionD2}. However, one can subtract lower order operators so that they do agree. For example, one could replace $\E^2$ in the generating set by $\E^2-\E$ and $\E\H_{yz}$ by $\E\H_{yz}-(d-1)\H_{yz}$ so that they agree with the corresponding generators from \cref{resolutionD2}. One can replace the others similarly using \cref{eq:order2opsEH}. 
\end{remark}

\section{Differential operators of order 3}\label{sec:D3}

We continue with \cref{notation:main}. In this section, we construct the minimal $R$-free resolution of $D^3_{R|k}$; cf.\@ \cref{resolutionD3}. Following the model in the previous section the majority of the work in the present section is contained in \cref{subsec:gensS3,subsec:mfS3,subsec:exactnessS3}.

\subsection{A set of generators of $\ker J_{3,2}$}\label{subsec:gensS3}

In this subsection we introduce a set of generators for $\ker J_{3,2}$; cf.\@ \cref{gensS3}.
 We begin by composing $\cE$ with each of the generators of $\ker J_{2,1}$ (see \cref{c:composition})
\[
\{E^2, EH_{yz}, EH_{zx}, EH_{xy}, \alpha_x, \alpha_y, \alpha_z\}
\]
from \cref{gensS2} as well as each Hamiltonian with itself three times and we compute their coefficients in the basis 
\[\{\partial_x^{(3)}, \partial_x^{(2)}\partial_y, \partial_x^{(2)}\partial_z, \partial_x\partial_y^{(2)}, \partial_x\partial_y\partial_z, \partial_x\partial_z^{(2)}, \partial_y^{(3)}, \partial_y^{(2)}\partial_z, \partial_y\partial_z^{(2)}, \partial_z^{(3)}\}\] 
as follows:
\[
E^3=6\begin{bmatrix}
x^3 \\
x^2y \\
x^2z \\
xy^2 \\
xyz \\
xz^2 \\
y^3 \\
y^2z \\
yz^2 \\
z^3
\end{bmatrix},
\hspace{0.25cm}
E^2H_{yz}=\begin{bmatrix}
0 \\
2x^2f_z \\
-2x^2f_y \\
4xyf_z \\
-2xyf_y+2xzf_z \\
-4xzf_y \\
6y^2f_z \\
-2y^2f_y+4yzf_z \\
-4yzf_y+2z^2f_z \\
-6z^2f_y
\end{bmatrix},
\hspace{0.25cm}
E^2H_{zx}=\begin{bmatrix}
-6x^2f_z \\
-4xyf_z \\
2x^2f_x-4xzf_z \\
-2y^2f_z \\
2xyf_x-2yzf_z \\
4xzf_x-2z^2f_z \\
0 \\
2y^2f_x \\
4yzf_x \\
6z^2f_x
\end{bmatrix},
\]

\[
E^2H_{xy}=\begin{bmatrix}
6x^2f_y \\
-2x^2f_x+4xyf_y \\
4xzf_y \\
-4xyf_x+2y^2f_y \\
-2xzf_x+2yzf_y \\
2z^2f_y \\
-6y^2f_x \\
-4yzf_x \\
-2z^2f_x \\
0
\end{bmatrix},
\hspace{0.25cm}
E\alpha_x=\frac{2}{(d-1)^2}\begin{bmatrix}
3x^2\Delta_{xx} \\
3xy\Delta_{xx} \\
3xz\Delta_{xx} \\
2xy\Delta_{xy}-x^2\Delta_{yy}+2y^2\Delta_{xx} \\
xy\Delta_{xz}+xz\Delta_{xy}-x^2\Delta_{yz}+2yz\Delta_{xx} \\
2xz\Delta_{xz}-x^2\Delta_{zz}+2z^2\Delta_{xx}\\
6y^2\Delta_{xy}-3xy\Delta_{yy} \\
2y^2\Delta_{xz}+4yz\Delta_{xy}-2xy\Delta_{yz}-xz\Delta_{yy} \\
4yz\Delta_{xz}-xy\Delta_{zz}+2z^2\Delta_{xy}-2xz\Delta_{yz} \\
6z^2\Delta_{xz}-3xz\Delta_{zz}
\end{bmatrix},
\]

\[
\hspace*{-0.5cm}
E\alpha_y=\frac{2}{(d-1)^2}\begin{bmatrix}
6x^2\Delta_{xy}-3xy\Delta_{xx} \\
2xy\Delta_{xy}-y^2\Delta_{xx}+2x^2\Delta_{yy} \\
4xz\Delta_{xy}-yz\Delta_{xx}+2x^2\Delta_{yz}-2xy\Delta_{xz} \\
3xy\Delta_{yy} \\
yz\Delta_{xy}+xy\Delta_{yz}-y^2\Delta_{xy}+2xz\Delta_{yy} \\
2z^2\Delta_{xy}+4xz\Delta_{yz}-2yz\Delta_{xz}-xy\Delta_{zz}\\
3y^2\Delta_{yy} \\
3yz\Delta_{yy} \\
2yz\Delta_{yz}-y^2\Delta_{zz}+2z^2\Delta_{yy} \\
6z^2\Delta_{yz}-3yz\Delta_{zz}
\end{bmatrix},
\hspace{0.25cm}
E\alpha_z=\frac{2}{(d-1)^2}\begin{bmatrix}
6x^2\Delta_{xz}-3xz\Delta_{xx} \\
2x^2\Delta_{yz}+4xy\Delta_{xz}-2xz\Delta_{xy}-yz\Delta_{xx} \\
2xz\Delta_{xz}-z^2\Delta_{xx}+2x^2\Delta_{zz} \\
4xy\Delta_{yz}-xz\Delta_{yy}+2y^2\Delta_{xz}-2yz\Delta_{xy} \\
xz\Delta_{yz}+yz\Delta_{xz}-z^2\Delta_{yz}+2xy\Delta_{zz} \\
3xz\Delta_{zz}\\
6y^2\Delta_{yz}-3yz\Delta_{yy} \\
2yz\Delta_{yz}-z^2\Delta_{yy}+2y^2\Delta_{zz} \\
3yz\Delta_{zz} \\
3z^2\Delta_{zz}
\end{bmatrix},
\]

\[
H_{yz}^3=6\begin{bmatrix}
 0 \\
 0 \\
 0 \\
 0 \\
 0 \\
 0 \\
 f_z^3\\
 -f_yf_z^2\\
 f_y^2f_z\\
 -f_y^3
\end{bmatrix},
\hspace{0.25cm}
H_{zx}^3=6\begin{bmatrix}
 -f_z^3 \\
 0 \\
 f_xf_z^2 \\
 0 \\
 0 \\
 -f_x^2f_z \\
 0 \\
 0 \\
 0 \\
 f_x^3
\end{bmatrix},
\hspace{0.25cm}
H_{xy}^3=6\begin{bmatrix}
 f_y^3 \\
 -f_xf_y^2 \\
 0 \\
 f_x^2f_y \\
 0 \\
 0 \\
 -f_x^3\\
 0 \\
 0 \\
 0
\end{bmatrix}\,.
\]

The rest of the subsection is spent constructing the remaining generators of a minimal generating set of $\ker J_{3,2}$; see Proposition~\ref{localgensS3}.

\begin{lemma}
\label{zetas}
The elements
\begin{align*}
\zeta_x &= \frac{1}{x^2} 
\left[
H_{yz}^3+\frac{3}{(d-1)^2}\Delta_{xx}E^2H_{yz}-\frac{1}{(d-1)^2(d-2)}H_{yz}(\Delta_{xx})E^3
\right]
\\
\zeta_y &= \frac{1}{y^2} 
\left[
H_{zx}^3+\frac{3}{(d-1)^2}\Delta_{yy}E^2H_{zx}-\frac{1}{(d-1)^2(d-2)}H_{zx}(\Delta_{yy})E^3
\right]
\\
\zeta_z &= \frac{1}{z^2} 
\left[
H_{xy}^3+\frac{3}{(d-1)^2}\Delta_{zz}E^2H_{xy}-\frac{1}{(d-1)^2(d-2)}H_{xy}(\Delta_{zz})E^3
\right]
\end{align*}
are well-defined in $R^{10}$.
\end{lemma}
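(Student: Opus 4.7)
The plan is to mirror the argument of \cref{newgensS2}. First, the cyclic symmetry of the three assertions in $(x,y,z)$ (which permutes the Hamiltonians and the Hessian minors accordingly) reduces the lemma to the single claim for $\zeta_z$. So I would write out
\[
V \;\coloneqq\; H_{xy}^3 \;+\; \frac{3}{(d-1)^2}\Delta_{zz}\,E^2H_{xy} \;-\; \frac{1}{(d-1)^2(d-2)}\,H_{xy}(\Delta_{zz})\,E^3
\]
as a column vector in the ordered basis $\partial_x^{(3)},\partial_x^{(2)}\partial_y,\dots,\partial_z^{(3)}$ of \cref{chunk:concrete1}, and verify that each of its ten entries lies in $z^2 R$.

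The ten coordinates split into three blocks. The three indexed by $\partial_x\partial_z^{(2)}$, $\partial_y\partial_z^{(2)}$, $\partial_z^{(3)}$ are free: $H_{xy}^3$ contributes zero in these slots, and the corresponding entries of $E^2H_{xy}$ and $E^3$ are manifestly divisible by $z^2$. The three indexed by $\partial_x^{(2)}\partial_z$, $\partial_x\partial_y\partial_z$, $\partial_y^{(2)}\partial_z$ exhibit only a single visible factor of $z$; the missing second factor should emerge after expanding $H_{xy}(\Delta_{zz})=f_y\,\partial_x(\Delta_{zz})-f_x\,\partial_y(\Delta_{zz})$ and substituting via the Euler identity $(d-2)f_{ij}=xf_{xij}+yf_{yij}+zf_{zij}$, which holds since each $f_{ij}$ is homogeneous of degree $d-2$. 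The coefficient $-\frac{1}{(d-1)^2(d-2)}$ in front of $H_{xy}(\Delta_{zz})$ is precisely what one predicts will make the residual $z^{1}$-parts cancel in this block.

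The real work lies in the four coordinates indexed by $\partial_x^{(3)}$, $\partial_x^{(2)}\partial_y$, $\partial_x\partial_y^{(2)}$, $\partial_y^{(3)}$, where $H_{xy}^3$ contributes $\pm 6f_x^{\,i}f_y^{\,j}$ with $i+j=3$ and no visible $z$-factor at all. The plan here is to multiply the quadratic polarized Euler identities \cref{PD-prodaa,PD-prodab}---which express $f_x^2,\,f_xf_y,\,f_y^2$ as explicit combinations of $x,y,z$ and the $\Delta_{ij}$---by $f_x$ or by $f_y$, and then to reduce once more using the second-order Euler identity, in order to rewrite each cubic $f_x^if_y^j$ modulo $z^2$ as a polynomial in $x,y$ whose coefficients involve $\Delta_{zz}$ and $H_{xy}(\Delta_{zz})$. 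After substitution and collection, the $H_{xy}^3$-contribution to each of these four coordinates should cancel modulo $z^2$ against the combined contribution of $\frac{3}{(d-1)^2}\Delta_{zz}\,E^2H_{xy}$ and $-\frac{1}{(d-1)^2(d-2)}\,H_{xy}(\Delta_{zz})\,E^3$.

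The main obstacle is bookkeeping in this last block: three independent correction types---a pure cubic in $f_x,f_y$; a $\Delta_{zz}$-multiple of a quadratic in $x,y,f_x,f_y$; and a multiple of $H_{xy}(\Delta_{zz})$ involving third-order partials of $f$---must interlock, and the coefficients appearing in the definition of $\zeta_z$ are dictated precisely by the requirement that the $z^0$- and $z^1$-parts cancel. Once the relevant cubic polarization identities are in hand (either added to the appendix or obtained by elementary manipulation from \cref{PD-prodaa,PD-prodab}), each of the four nontrivial coordinates reduces to a direct, if lengthy, polynomial verification modulo $z^2$.
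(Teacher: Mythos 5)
Your reduction by cyclic symmetry to a single case and your split of the ten coordinates into three blocks according to the visible power of the distinguished variable match the paper's proof (which treats $\zeta_x$ rather than $\zeta_z$). You are also right that the polarized Euler identities \cref{PD-prodaa,PD-prodab} are what handle the cubic $f$-terms, and that the coefficient $-\tfrac{1}{(d-1)^2(d-2)}$ is tuned to make the low-order pieces cancel.

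However, you underestimate what is needed in the hard block. After multiplying \cref{PD-prodaa} by (say) $f_y$ and applying the Euler identity for $\Delta_{ij}$, you are left with terms of the shape $f_y\,E(\Delta_{ij})$, and these do \emph{not} reduce further by ``elementary manipulation'' of the quadratic polarizations. The paper's crucial step is to invoke the fundamental derivation relation \cref{Rel-EHabHac}, in the form
\[
f_y\,E(\Delta_{ij}) \;=\; x\,H_{xy}(\Delta_{ij}) \;-\; z\,H_{yz}(\Delta_{ij})\,,
\]
which converts the Euler action into Hamiltonian actions; this is what produces the visible factor of $x$ (resp.\ $z$ in your indexing) and an extra term whose coefficient matches $H_{yz}(\Delta_{xx})$ in the $E^3$-correction. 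Even then the two Hamiltonian expressions must be identified, and this is done via the Hamiltonian identities \cref{HamRel1,HamRel2}, which replace differences such as $H_{xy}(\Delta_{xx})-H_{yz}(\Delta_{xz})$ by multiples of the partial derivatives $\delta_a$ of the Hessian determinant. Your outline mentions neither the derivation relation nor the Hamiltonian identities, and without them the ``direct, if lengthy, polynomial verification'' you anticipate does not close: the residual terms are genuinely of degree zero in $z$ and only become visibly divisible by $z^2$ after these two nontrivial substitutions. In short, the skeleton is correct, but the body of the argument --- the ladder $\text{PD-prodaa} \to$ Euler $\to$ \cref{Rel-EHabHac} $\to$ \cref{HamRel1,HamRel2} --- is missing its middle two rungs.
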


\begin{proof}
We verify that $\zeta_x$ is well-defined in $R^{10}$ and the other two calculations follow by symmetry. 
That is, we claim
\begin{align*}
 H_{yz}^3+\frac{3}{(d-1)^2}\Delta_{xx}E^2H_{yz}-\frac{1}{(d-1)^2(d-2)}H_{yz}(\Delta_{xx})E^3,
\end{align*}
which is given by the matrix 
\begin{align}\label{zetaxmatrix}
\kbordermatrix{
& \\
\partial_x^{(3)} & -\frac{6}{(d-1)^2(d-2)}H_{yz}(\Delta_{xx})x^3 \\
\partial_x^{(2)}\partial_y & -\frac{6}{(d-1)^2(d-2)}H_{yz}(\Delta_{xx})x^2y+\frac{6}{(d-1)^2}x^2f_z\Delta_{xx} \\
\partial_x^{(2)}\partial_z & -\frac{6}{(d-1)^2(d-2)}H_{yz}(\Delta_{xx})x^2z-\frac{6}{(d-1)^2}x^2f_y\Delta_{xx} \\
\partial_x\partial_y^{(2)} & -\frac{6}{(d-1)^2(d-2)}H_{yz}(\Delta_{xx})xy^2+\frac{12}{(d-1)^2}xyf_z\Delta_{xx} \\
\partial_x\partial_y\partial_z & -\frac{6}{(d-1)^2(d-2)}H_{yz}(\Delta_{xx})xyz-\frac{6}{(d-1)^2}(xyf_y-xzf_z)\Delta_{xx} \\
\partial_x\partial_z^{(2)} & -\frac{6}{(d-1)^2(d-2)}H_{yz}(\Delta_{xx})xz^2-\frac{12}{(d-1)^2}xzf_y\Delta_{xx} \\
\partial_y^{(3)} & -\frac{6}{(d-1)^2(d-2)}H_{yz}(\Delta_{xx})y^3+\frac{18}{(d-1)^2}y^2f_z\Delta_{xx}+6f_z^3\\
\partial_y^{(2)}\partial_z & -\frac{6}{(d-1)^2(d-2)}H_{yz}(\Delta_{xx})y^2z-\frac{6}{(d-1)^2}(y^2f_y-2yzf_z)\Delta_{xx}-6f_yf_z^2 \\
\partial_y\partial_z^{(2)} & -\frac{6}{(d-1)^2(d-2)}H_{yz}(\Delta_{xx})yz^2-\frac{6}{(d-1)^2}(2yzf_y-z^2f_z)\Delta_{xx}+6f_y^2f_z \\
\partial_z^{(3)} & -\frac{6}{(d-1)^2(d-2)}H_{yz}(\Delta_{xx})z^3-\frac{18}{(d-1)^2}z^2f_y\Delta_{xx}-6f_y^3
}
\end{align}
is divisible by $x^2$. We show that the $\partial_x\partial_z^{(2)}$ and $\partial_z^{(3)}$ entries are divisible by $x^2$; the calculations for the other entries are similar. 

For the $\partial_x\partial_z^{(2)}$ entry, we have
\begin{align*}
 -&\frac{6}{(d-1)^2(d-2)}H_{yz}(\Delta_{xx})xz^2-\frac{12}{(d-1)^2}xzf_y\Delta_{xx} \\
 &=-\frac{6}{(d-1)^2(d-2)}\left[(f_z\Delta_{xx,y}-f_y\Delta_{xx,z})xz^2+xzf_y(x\Delta_{xx,x}+y\Delta_{xx,y}+z\Delta_{xx,z})\right] \\
 &=-\frac{6}{(d-1)^2(d-2)}\left[xz^2f_z\Delta_{xx,y}+x^2zf_y\Delta_{xx,x}+xyzf_y\Delta_{xx,y}\right] \\
 &=-\frac{6}{(d-1)^2(d-2)}\left[x^2zf_y\Delta_{xx,x}-x^2zf_x\Delta_{xx,y}\right] \\
 &=-\frac{6x^2z}{(d-1)^2(d-2)}H_{xy}(\Delta_{xx})\,, 
\end{align*}
where the third equality follows from the Euler identity \cref{euler}.

For the $\partial_z^{(3)}$ entry, we have
\begin{align*}
-&\frac{6}{(d-1)^2(d-2)}H_{yz}(\Delta_{xx})z^3-\frac{18}{(d-1)^2}z^2f_y\Delta_{xx}-6f_y^3 \\
&=-\frac{6}{(d-1)^2(d-2)}\left[H_{yz}(\Delta_{xx})z^3+3(d-2)z^2f_y\Delta_{xx}+(d-2)f_y^3\right]\\
&=-\frac{6}{(d-1)^2(d-2)}\left[H_{yz}(\Delta_{xx})z^3+(d-2)f_y\left(3z^2\Delta_{xx}+2xz\Delta_{xz}-x^2\Delta_{zz}-z^2\Delta_{xx}\right)\right] \\
&=-\frac{6}{(d-1)^2(d-2)}\left[H_{yz}(\Delta_{xx})z^3+2(d-2)f_y\left(z^2\Delta_{xx}+xz\Delta_{xz}\right)-(d-2)x^2f_y\Delta_{zz}\right] \\
&=-\frac{6}{(d-1)^2(d-2)}\left[H_{yz}(\Delta_{xx})z^3+f_y\left(z^2E(\Delta_{xx})+xzE(\Delta_{xz})\right)-(d-2)x^2f_y\Delta_{zz}\right] 
\end{align*}
where the second equality follows from the identity \cref{PD-prodaa} in \cref{a:eulerappl}.

Applying the identity \cref{Rel-EHabHac} from \cref{a:subdetderiv} to rewrite $f_yE(\Delta_{xx})$ and $f_yE(\Delta_{xz})$, we have
\begin{align*}
f_y\left(z^2E(\Delta_{xx})+xzE(\Delta_{xz})\right)&=z^2\left(xH_{xy}(\Delta_{xx})-zH_{yz}(\Delta_{xx})\right)+xz\left(xH_{xy}(\Delta_{xz})-zH_{yz}(\Delta_{xz})\right) 
\end{align*}
so that the $\partial_z^{(3)}$ entry becomes
\begin{align*}
-&\frac{6}{(d-1)^2(d-2)}H_{yz}(\Delta_{xx})z^3-\frac{18}{(d-1)^2}z^2f_y\Delta_{xx}-6f_y^3 \\
&=-\frac{6}{(d-1)^2(d-2)}\left[xz^2\left(H_{xy}(\Delta_{xx})-H_{yz}(\Delta_{xz})\right)+x^2(zH_{xy}(\Delta_{xz})-(d-2)f_y\Delta_{zz})\right] \\
&=-\frac{6}{(d-1)^2(d-2)}\left[xz^2\left (-\frac{1}{d-1}x\delta_y\right )+x^2(zH_{xy}(\Delta_{xz})-(d-2)f_y\Delta_{zz})\right]\\
&=-\frac{6x^2}{(d-1)^2(d-2)}\left[zH_{xy}(\Delta_{xz})-(d-2)f_y\Delta_{zz}-\frac{1}{d-1}z^2\delta_y\right],
\end{align*}
where the second equality follows from the identity \cref{HamRel2} in \cref{a:Hrelns}. 

By similar calculations for the remaining entries, we find that the matrix \cref{zetaxmatrix} is given by 
\begin{align*}
-\frac{6x^2}{(d-1)^2(d-2)}\kbordermatrix{
& \\
\partial_x^{(3)} & xH_{yz}(\Delta_{xx}) \\
\partial_x^{(2)}\partial_y & yH_{yz}(\Delta_{xx})-(d-2)f_z\Delta_{xx} \\
\partial_x^{(2)}\partial_z & zH_{yz}(\Delta_{xx})+(d-2)f_y\Delta_{xx} \\
\partial_x\partial_y^{(2)} & yH_{zx}(\Delta_{xx}) \\
\partial_x\partial_y\partial_z & \frac{1}{2}\left(yH_{xy}(\Delta_{xx})+zH_{zx}(\Delta_{xx})\right) \\
\partial_x\partial_z^{(2)} & zH_{xy}(\Delta_{xx}) \\
\partial_y^{(3)} & yH_{zx}(\Delta_{xy})+\frac{1}{d-1}y^2\delta_z+(d-2)f_z\Delta_{yy} \\
\partial_y^{(2)}\partial_z & yH_{xy}(\Delta_{xy})-\frac{1}{d-1}yz\delta_z-(d-2)f_y\Delta_{yy} \\
\partial_y\partial_z^{(2)} & zH_{zx}(\Delta_{xz})+\frac{1}{d-1}yz\delta_y+(d-2)f_z\Delta_{zz} \\
\partial_z^{(3)} & zH_{xy}(\Delta_{xz})-\frac{1}{d-1}z^2\delta_y-(d-2)f_y\Delta_{zz}
}
\end{align*}
which is divisible by $x^2$, as desired. The corresponding statements about divisibility for $\zeta_y$ and $\zeta_z$ follow from $x,y,z$-symmetry.
\end{proof}

We will show that $\{E^3, E^2 H_{yz}, E^2 H_{zx}, E^2 H_{xy}, E\alpha_x, E\alpha_y, E\alpha_z, \zeta_x, \zeta_y, \zeta_z\}$ is a minimal generating set for $\ker J_{3,2}$ in \cref{subsec:exactnessS3}; see \cref{localgensS3}.

\begin{chunk}\label{M0(3)}
From the computations above, writing these generators in terms of the basis $\{\partial_x^{(2)} , \partial_x\partial_y , \partial_x\partial_z , \partial_y^{(2)} , \partial_y\partial_z , \partial_z^{(2)}\}$ gives the columns of the following matrix 
\[
M_0(3)\coloneqq
\begin{bmatrix}
E^3 & E^2H_{yz} & E^2H_{zx} & E^2H_{xy} & E\alpha_x & E\alpha_y & E\alpha_z & -\frac{6}{(d-1)^2(d-2)}Z 
\end{bmatrix},
\]
where
\begin{align*}
Z\coloneqq\scriptsize{\kbordermatrix{ 
& \zeta_x & \zeta_y & \zeta_z \\
& xH_{yz}(\Delta_{xx}) & xH_{yz}(\Delta_{xy})-\frac{1}{d-1}x^2\delta_z-(d-2)f_z\Delta_{xx} & xH_{yz}(\Delta_{xz})+\frac{1}{d-1}x^2\delta_y+(d-2)f_y\Delta_{xx} \\
& yH_{yz}(\Delta_{xx})-(d-2)f_z\Delta_{xx} & xH_{yz}(\Delta_{yy}) & xH_{zx}(\Delta_{xz})-\frac{1}{d-1}xy\delta_y-(d-2)f_x\Delta_{xx} \\
& zH_{yz}(\Delta_{xx})+(d-2)f_y\Delta_{xx} & xH_{xy}(\Delta_{xy})+\frac{1}{d-1}xz\delta_z+(d-2)f_x\Delta_{xx} & xH_{yz}(\Delta_{zz}) \\
& yH_{zx}(\Delta_{xx}) & xH_{zx}(\Delta_{yy})+(d-2)f_z\Delta_{yy} & yH_{yz}(\Delta_{yz})+\frac{1}{d-1}xy\delta_x+(d-2)f_y\Delta_{yy} \\
& \frac{1}{2}\left(yH_{xy}(\Delta_{xx})+zH_{zx}(\Delta_{xx})\right) & \frac{1}{2}\left(zH_{yz}(\Delta_{yy})+xH_{xy}(\Delta_{yy})\right) & \frac{1}{2}\left(xH_{zx}(\Delta_{zz})+yH_{yz}(\Delta_{zz})\right) \\
& zH_{xy}(\Delta_{xx}) & zH_{yz}(\Delta_{yz})-\frac{1}{d-1}xz\delta_x-(d-2)f_z\Delta_{zz} & xH_{xy}(\Delta_{zz})-(d-2)f_y\Delta_{zz} \\
& yH_{zx}(\Delta_{xy})+\frac{1}{d-1}y^2\delta_z+(d-2)f_z\Delta_{yy} & yH_{zx}(\Delta_{yy}) & yH_{zx}(\Delta_{yz})-\frac{1}{d-1}y^2\delta_x-(d-2)f_x\Delta_{yy} \\
& yH_{xy}(\Delta_{xy})-\frac{1}{d-1}yz\delta_z-(d-2)f_y\Delta_{yy} & zH_{zx}(\Delta_{yy})-(d-2)f_x\Delta_{yy} & yH_{zx}(\Delta_{zz}) \\
& zH_{zx}(\Delta_{xz})+\frac{1}{d-1}yz\delta_y+(d-2)f_z\Delta_{zz} & zH_{xy}(\Delta_{yy}) & yH_{xy}(\Delta_{zz})+(d-2)f_x\Delta_{zz} \\
& zH_{xy}(\Delta_{xz})-\frac{1}{d-1}z^2\delta_y-(d-2)f_y\Delta_{zz} & zH_{xy}(\Delta_{yz})+\frac{1}{d-1}z^2\delta_x+(d-2)f_x\Delta_{zz} & zH_{xy}(\Delta_{zz})} 
}.
\end{align*}
\end{chunk}

\subsection{Matrix factorization}\label{subsec:mfS3}

Throughout this subsection we recall the dg $E$-module $L$ constructed in \cref{c:alphamap}. In particular, its differentials $\partial_i^L$ are defined in \cref{e:Lcomplex}. 

\begin{chunk}
\label{c:betamap}
Consider the map $\beta\colon \Kos^Q(x,y,z)\to L$ given by 
\begin{equation*}
\begin{aligned} 
 \xymatrix@R+.5pc@C+.7pc{
 0 \ar[r] & Q \ar[r]^{\partial_3}\ar@{=}[d]^{\beta_3} &Q^3 \ar[r]^{\partial_2} \ar[d]^{\beta_2}&Q^3 \ar[r]^{\partial_1}\ar[d]^{\beta_1}& Q \ar[r]\ar[d]^{\beta_0}&0\\
 0 \ar[r] & Q \ar[r]^{\partial_3^L} &Q^4 \ar[r]^{\partial_2^L}&Q^6 \ar[r]^{\partial_1^L}& Q^3 \ar[r]&0
}
\end{aligned} 
 \end{equation*}
where 
\[
\beta_2=\begin{bmatrix}-\alpha_1\\ \frac{1}{3}\sigma_1 \end{bmatrix}\,, \quad
\beta_1=\begin{bmatrix}
\alpha_2\\ \frac{1}{3}\sigma_2
\end{bmatrix}\,, \quad
\beta_0=\frac{1}{3}\sigma_3\,,
\]
and the matrices $\sigma_i$ are defined in \cref{sec:glossary2}.
First note that this is a chain map: Using \cref{a:matrixidentities}, the rightmost square commutes by the matrix identity \cref{c:identity-RS-4-2-1}, the middle square commutes by matrix identities \cref{c:d-D-alpha-q} and \cref{c:basic-Eul}, and the leftmost square commutes by Euler \cref{euler} applied to $f$ and $\delta$.

Next note that $\beta$ is a morphism of dg $E$-modules. Indeed, the $E$-action commutes with $\beta_0,\beta_1$ using \cref{c:identity-RS-4-2-1}. The $E$-action commutes with $\beta_1,\beta_2$ using \cref{c:basic-Eul} and \cref{c:D-to-sigmas}. The $E$-action commutes with $\beta_2,\beta_3$ just using the Euler identity.
 Therefore $\beta$ is a dg $E$-module map and so $\cone(\beta)$ naturally inherits a dg $E$-module structure. Consider the projection of complexes 
\begin{equation*}
\begin{aligned} 
 \xymatrix@R+.5pc@C+.7pc{
 \cone(\beta):& 0 \ar[r] & Q \ar[r] \ar[d]&Q^4 \ar[r] \ar[d]^{\pi}&Q^7 \ar[r]\ar@{=}[d]& Q^7 \ar[r]\ar@{=}[d]&Q^3 \ar[r] \ar@{=}[d]&0\\
 & 0 \ar[r] & 0 \ar[r] &Q^3 \ar[r] &Q^7 \ar[r]& Q^7 \ar[r]&Q^3 \ar[r] &0\\
}
\end{aligned} 
 \end{equation*}
with $\pi=\begin{bmatrix}1& \del_3\end{bmatrix}$; this is a quasi-isomorphism that defines a dg $E$-module on the target. Namely, if we let $G$ denote the source of the projection, then $G$ is the dg $E$-module whose underlying complex is 
\[
G\colon\quad 0 \to Q^3
\xra{\begin{bmatrix}-\del_2 \\ -\alpha_1 \\ \frac{1}{3}\sigma_1 \end{bmatrix}}Q^7 
\xra{\begin{bmatrix} -\del_1 & 0 &0 \\
\alpha_2 & -D_2 & 0\\ \frac{1}{3}\sigma_2 & \alpha_2 &\del_3
\end{bmatrix}} Q^7 
\xra{\begin{bmatrix}\frac{1}{3}\sigma_3 & \alpha_1 & \del_2 \end{bmatrix}}Q^3 \to 0
\]
with $e\cdot$ given by 
\[
0 \leftarrow Q^3
\xla{\begin{bmatrix} D_2 & -q & 0 \end{bmatrix}}Q^7 
\xla{\begin{bmatrix} 
-D_3 & 0 & 0 \\
0 & \del_2 & 0 \\
0 & 0 &D_1
\end{bmatrix}} Q^7 
\xla{\begin{bmatrix}0 \\ q \\ -D_2 \end{bmatrix}}Q^3 \leftarrow 0\,.
\]
\end{chunk}

\begin{chunk}\label{c:explicitMF(3)}
Let $G$ be as constructed in \cref{c:betamap}. Applying \cref{c:fold} to it produces the matrix factorization $\fold(G)$ of $d\cdot f$ over $Q$: 
\[
Q^{10} \xra{\begin{bmatrix}
0 & -\del_1 & 0 & 0\\
q & \alpha_2 & -D_2 & 0 \\
-D_2 & \frac{1}{3}\sigma_2 & \alpha_2 & \del_3\\
0 & -D_2 & -q & 0 
\end{bmatrix}} Q^{10}\xra{\begin{bmatrix}
\frac{1}{3}\sigma_3 & \alpha_1 & \del_2 & 0 \\
-D_3 & 0 & 0 & -\del_2 \\
0 & \del_2 & 0 & -\alpha_1 \\
0 & 0 & D_1 & \frac{1}{3}\sigma_1
\end{bmatrix}}Q^{10}\,.
\] 
Changing bases yields the equivalent matrix factorization:
\[
Q^{10} \xra{\begin{bmatrix}
\del_3& D_2 & 2\alpha_2 & -2\sigma_2\\
0 & \frac{1}{2}q & D_2 & 3\alpha_2 \\
0 &0& \frac{1}{3}q & D_2\\
0 & 0 &0 & \del_1
\end{bmatrix}} Q^{10} \xra{\begin{bmatrix}
D_1 & 0 & -2\sigma_1 & 0\\
-\del_2 & 2\alpha_1 & 0&-2\sigma_3\\
0 & -\del_2 &3\alpha_1 & 0\\
0 & 0 &-\del_2 & D_3
\end{bmatrix}}Q^{10}\,.
\] Therefore, from \cref{c:hypersurface}, the complex
\[
\ldots \to R^{10}\xra{M_2(3)}R^{10}\xra{M_1(3)} R^{10}\xra{M_2(3)}R^{10}\xra{M_1(3)}R^{10}\to\ldots 
\]
is exact, where 
\[
M_2(3)\coloneqq \begin{bmatrix}
\del_3& D_2 & 2\alpha_2 & -2\sigma_2\\
0 & \frac{1}{2}q & D_2 & 3\alpha_2 \\
0 &0& \frac{1}{3}q & D_2\\
0 & 0 &0 & \del_1
\end{bmatrix}\quad \text{and}\quad M_1(3)\coloneqq
\begin{bmatrix}
D_1 & 0 & -2\sigma_1 & 0\\
-\del_2 & 2\alpha_1 & 0&-2\sigma_3\\
0 & -\del_2 &3\alpha_1 & 0\\
0 & 0 &-\del_2 & D_3
\end{bmatrix}\,.
\] 
\end{chunk}

\subsection{Resolution of $\ker J_{3,2}$}\label{subsec:exactnessS3}

In this subsection we establish exactness of the following sequence of $R$-modules 
\[
\cdots
\xrightarrow{M_1(3)}
R^{10}
\xrightarrow{M_2(3)}
R^{10}
\xrightarrow{M_1(3)}
R^{10}
\xrightarrow{M_0(3)} 
R^{10} 
\xrightarrow{J_{3,2}}
R^6 
\]
where the definitions of the matrices $M_i(3)$ can be found in \ref{M0(3)} and \ref{c:explicitMF(3)}. 

The proof of the following lemma is similar to \cref{localgensS2}.
\begin{lemma}
\label{localgensS3}
The $R_x$-module 
$(\ker_{R}(J_{3,2}))_x=\ker_{R_x}(J_{3,2})$ is freely generated by the operators $E^3$, $E^2H_{yz}$, $E\alpha_x$, and $H_{yz}^3$. 
\end{lemma}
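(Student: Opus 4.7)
The plan is to adapt the proof of Lemma~3.3.1 to this three-step setting. Exactly as in that argument, the isolated singularity hypothesis together with the Euler identity $df = xf_x+yf_y+zf_z$ makes $f_y,f_z$ an $R_x$-regular sequence: both are nonzerodivisors in the domain $R_x$, and $f_x\in (f_y,f_z)R_x$ forces $(f_y,f_z)R_x=R_x$ by the isolated singularity hypothesis, after which the Chinese Remainder Theorem gives regularity.

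Given $v=(a_{xxx},a_{xxy},\dots,a_{zzz})^{T}\in \ker_{R_x}(J_{3,2})$, I would substitute $f_x = -(y/x)f_y-(z/x)f_z$ into each of the six rows of $J_{3,2}$; each equation becomes a relation of the form $f_yA + f_zB = 0$, and regularity of $(f_y,f_z)$ introduces six parameters $b_1,\dots,b_6\in R_x$ expressing pairs of coordinates in terms of previously determined ones (for instance, the first row gives $a_{xxy}=b_1f_z+(y/x)a_{xxx}$ and $a_{xxz}=-b_1f_y+(z/x)a_{xxx}$). Equating the two resulting expressions for each of $a_{xyz}$, $a_{yyz}$, and $a_{yzz}$ yields three further applications of regularity, introducing $c_1,c_2,c_3\in R_x$; a final equating of the two expressions for $b_5$ introduces a last parameter $d\in R_x$. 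Iterating the substitutions and collecting terms expresses the general solution as
\[
v \;=\; aE^3 + bE^2H_{yz} + cEH_{yz}^2 + dH_{yz}^3
\]
for suitable $a,b,c,d\in R_x$ obtained from $a_{xxx}$, $b_1$, $c_1$, and $d$. This is a direct generalization of the cascade carried out in Lemma~3.3.1, the 3-equation, 6-variable system there being replaced by a 6-equation, 10-variable one organized in three layers.

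To obtain the claimed basis, I would verify by direct coordinate comparison (using identity~\cref{PD-prodaa}) that $xE\alpha_x = EH_{yz}^2 + \frac{\Delta_{xx}}{(d-1)^2}E^3$ holds in $R_x^{10}$; equivalently, this identity is obtained by applying $\cE$ to the relation $x\alpha_x = H_{yz}^2 + \frac{\Delta_{xx}}{(d-1)^2}E^2$ of Lemma~3.1.4. Since $x$ is a unit in $R_x$, this exhibits a change of basis from $\{E^3,E^2H_{yz},EH_{yz}^2,H_{yz}^3\}$ to $\{E^3,E^2H_{yz},E\alpha_x,H_{yz}^3\}$, so the latter also generates $\ker_{R_x}(J_{3,2})$. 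For $R_x$-linear independence of the claimed basis, this same change of basis reduces the question to independence of the former; given $aE^3+bE^2H_{yz}+cEH_{yz}^2+dH_{yz}^3 = 0$, the $\partial_x^{(3)}$, $\partial_x^{(2)}\partial_y$, $\partial_x\partial_y^{(2)}$, and $\partial_y^{(3)}$ coordinates become, after previous coefficients vanish, $6ax^3$, $2bx^2f_z$, $2cxf_z^2$, and $6df_z^3$, respectively; since $x$ and $f_z$ are nonzerodivisors in $R_x$, these successively force $a=b=c=d=0$.

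The main obstacle will be the bookkeeping in the three-layer cascade and the verification that the resulting parametric solution reassembles into the advertised linear combination of $E^3,E^2H_{yz},EH_{yz}^2,H_{yz}^3$. These computations are voluminous but mechanical, each step being dictated by the Euler identity, the regularity of $(f_y,f_z)$, and the explicit coordinate formulas recorded in Subsection~4.1.
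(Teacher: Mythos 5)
Your proof is correct and takes essentially the same approach as the paper: the paper's own proof of this lemma is a one-line remark that the argument is similar to that of \cref{localgensS2}, and your three-layer cascade is precisely the adaptation that remark calls for, with the concluding change of basis from $EH_{yz}^2$ to $E\alpha_x$ mirroring the same step carried out for the degree-two case inside the proof of \cref{gensS2}.
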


\begin{lemma}\label{l:complexM0M1(3)}
We have $M_0(3) M_1(3)=0_{10\times10}$.
\end{lemma}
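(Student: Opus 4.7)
The statement asserts that each of the ten columns of $M_1(3)$ encodes a syzygy among the ten generators $E^3, E^2H_{yz}, E^2H_{zx}, E^2H_{xy}, E\alpha_x, E\alpha_y, E\alpha_z, \zeta_x, \zeta_y, \zeta_z$ of $\ker J_{3,2}$ arranged as the columns of $M_0(3)$. My plan is to split $M_1(3)$ along its $4\times 4$ block form and handle the four column-blocks separately. For the first two blocks (columns 1--6), a direct inspection of the block forms of $M_1(2)$ and $M_1(3)$ shows that these six columns agree entry-wise with the six columns of $M_1(2)$ in the seven rows indexing $E^3, E^2H_{ij}, E\alpha_i$, and vanish in the three $\zeta$-rows. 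By \cref{cor:exactsequenceD2} the restriction $D^2_{R|k}\twoheadrightarrow\ker J_{2,1}$ is surjective, so the discussion in \cref{c:composition} produces a well-defined $R$-linear map $\phi\colon\ker J_{2,1}\to\ker J_{3,2}$ given by composition with the Euler derivation $\E$; by construction in \cref{subsec:gensS3}, $\phi$ sends $E^2, EH_{ij}, \alpha_i$ to $E^3, E^2H_{ij}, E\alpha_i$ respectively. Consequently, $[\,\varepsilon(3)\mid\tfrac{2}{(d-1)^2}A(3)\,]$ is the matrix of $\phi$ applied column-wise to $M_0(2)$, and $M_0(2)M_1(2)=0$ from \cref{l:complexM0M1(2)} immediately forces $M_0(3)$ to annihilate each of the first six columns of $M_1(3)$.

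For the remaining two blocks I would verify the new relations directly by coordinate-by-coordinate expansion in the basis $\{\partial_x^{(3)},\ldots,\partial_z^{(3)}\}$. By $x,y,z$-symmetry only one of columns 7--9, say column 7, together with column 10 needs to be checked; these encode
\[
-\tfrac{2\delta_x}{(d-1)^3(d-2)}\,E^3+\tfrac{3}{d-1}\bigl(f_{xx}E\alpha_x+f_{xy}E\alpha_y+f_{xz}E\alpha_z\bigr)-z\zeta_y+y\zeta_z=0
\]
and
\[
-\tfrac{2}{(d-1)^3(d-2)}\bigl(\delta_xE^2H_{yz}+\delta_yE^2H_{zx}+\delta_zE^2H_{xy}\bigr)+f_x\zeta_x+f_y\zeta_y+f_z\zeta_z=0
\]
respectively. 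In each of the ten coordinates I would substitute the explicit entries of $\varepsilon(3)$, $A(3)$, $Z$ given in \cref{M0(3)} and invoke the appendix: the Euler identities for $f$, $\Delta_{ij}$, and $\delta$; the Cramer-type identity \eqref{eqCram}; the cofactor expansion \cref{det-exp} of $\delta$; the subdeterminant-derivative identities of \cref{a:subdetderiv}; and the Hamiltonian-on-$\delta$ identities \cref{HamRel1}--\cref{HamRel2} of \cref{a:Hrelns}. Schematically, in column 7 the $\zeta_y,\zeta_z$-contributions split into $\delta$-gradient terms absorbed by $\delta_x E^3$, $H_{ij}(\Delta_{k\ell})$-terms cancelled by the $E\alpha$-combination via \cref{a:subdetderiv}, and $(d-2)f_{*}\Delta_{**}$-corrections that pair directly with the $f_{xk}$-coefficients; in column 10 the $f_i\zeta_i$-expansions produce Hessian-product pieces absorbed into $\sum_k\delta_k E^2H_{\ast\ast}$ via \cref{det-exp}, basis-linear pieces annihilated by $xf_x+yf_y+zf_z=df$, and residual Hamiltonian terms handled by \cref{HamRel1}--\cref{HamRel2}.

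The principal obstacle is bookkeeping. Each $\zeta_i$-column of $Z$ interlaces $H_{ij}(\Delta_{k\ell})$-terms, $\delta$-derivative corrections, and $(d-2)f_{*}\Delta_{**}$-pieces, and a few rows (notably $\partial_x\partial_y\partial_z$) carry symmetrised pairs of half-weighted entries that must be recombined before any appendix identity applies. Once this bookkeeping is untangled, each of the $10+10$ coordinate checks for columns 7 and 10 collapses to a single appendix identity, so the verification is entirely mechanical and only the key substitutions need be displayed in the written proof.
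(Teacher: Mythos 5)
Your decomposition of the ten columns of $M_1(3)$ matches the paper's proof architecture, and your treatment of columns 1--6 is the same argument the paper uses: columns 1--6 of $M_1(3)$ extend columns 1--6 of $M_1(2)$ by zeros, the map induced by composition with $\E$ on $\ker J_{2,1}$ is well-defined by the surjectivity in \cref{cor:exactsequenceD2} together with \cref{c:composition}, and applying that map columnwise to $M_0(2)M_1(2)=0$ gives the vanishing of the first six columns of $M_0(3)M_1(3)$. You are also right to count only six columns this way: the seventh relation obtained by precomposing with $\E$ (from the seventh column of $M_1(2)$) is true but is not itself a column of $M_1(3)$, so the split is $6+3+1$. (The paper's appendix text says ``first seven columns'' and ``columns 8--10,'' which appears to be an off-by-one slip; the actual content of \cref{c:OBLC-M1(3)} corresponds to columns 7--9 and \cref{lastrelnM1(3)} to column 10, exactly the relations you write down.)

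Where you diverge is in the method for columns 7--10. You propose a coordinate-by-coordinate verification in the degree-3 basis, substituting the explicit entries of $\varepsilon(3)$, $A(3)$, $Z$ and invoking the polynomial identities in the appendix. The paper instead works symbolically with the elements $E^3$, $E^2H_{ij}$, $E\alpha_i$, $\zeta_i$: it expands, say, $-z\zeta_y+y\zeta_z$ using the definition of $\zeta_i$ as a combination of $H^3_{ij}$, $E^2H_{ij}$, $E^3$, uses a cubical expansion in the symmetric algebra via $zH_{zx}=f_xE+yH_{xy}$ to rewrite $H^3_{zx}$, and then recycles the already-established lower-column relations (e.g.\ \cref{E2-EHca-EHab}, \cref{EHam-alphab-c}) together with a small auxiliary identity (\cref{Rel-Ealphas}). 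This is considerably shorter because it never descends to the basis level and reuses the module relations from the first six columns. Your brute-force approach is valid in principle --- these are polynomial identities, so a sufficiently careful coordinate expansion will verify them, and the identities you list are the right ones to deploy --- but you would be redoing a lot of work that the symbolic method gets for free, and your schematic description of how terms cancel, while plausible, is not carried out. Either route closes the argument; the paper's is the leaner of the two.
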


\begin{proof}
First note that the relations on the generators of $\ker J_{2,1}$ give 7 relations on the first 7 columns of $M_0(3)$. These are simply obtained by composing the generators of $\ker J_{2,1}$ by $E$ (and viewing the images in the filtration factor $D_3/D_2$); see \cref{SomeRelGen-3} for the details. To see that the remaining columns of $M_1(3)$ give relations, see \cref{c:OBLC-M1(3)} and \cref{lastrelnM1(3)}. 
\end{proof}

\begin{lemma}\label{l:localexactM0M1(3)}
We have $(\ker M_0(3))_x=(\im M_1(3))_x$.
\end{lemma}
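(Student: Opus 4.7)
The plan is to mirror the proof of \cref{l:localexactM0M1(2)}: take an arbitrary $v = (a, b_x, b_y, b_z, c_x, c_y, c_z, d_x, d_y, d_z) \in (\ker M_0(3))_x$ and reduce $v$ to zero by successively subtracting elements of $(\im M_1(3))_x$. The preparatory step is to use \cref{localgensS3}, which says that $\ker(J_{3,2})_x$ is $R_x$-freely generated by $\{E^3, E^2 H_{yz}, E\alpha_x, H_{yz}^3\}$, to express each of the remaining six generators $E^2H_{zx}, E^2H_{xy}, E\alpha_y, E\alpha_z, \zeta_x, \zeta_y, \zeta_z$ in this basis. For the Hamiltonian compositions this is immediate from the Euler syzygies \cref{e:basicrelations}, which over $R_x$ yield $\H_{zx} = \tfrac{y}{x}\H_{yz} - \tfrac{f_z}{x}\E$ and $\H_{xy} = \tfrac{z}{x}\H_{yz} + \tfrac{f_y}{x}\E$; composing with $\E^2$ then rewrites $E^2H_{zx}$ and $E^2H_{xy}$ in terms of $E^3$ and $E^2H_{yz}$. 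The expressions for $E\alpha_y$ and $E\alpha_z$ follow from the analogous order-$2$ identities in the proof of \cref{l:localexactM0M1(2)} composed with $\E$.

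With these expansions in hand, I will kill the coordinates of $v$ in stages, starting from the ``deepest'' block (the $\zeta$-coordinates) and working outward. Reading off the coefficient of $H_{yz}^3$ in the expanded relation $M_0(3)v = 0$ should yield exactly the linear constraint $xd_x + yd_y + zd_z = 0$, since only the $\zeta$-generators contribute $H_{yz}^3$ after expansion (with respective coefficients $1/x^2$, $y/x^3$, $z/x^3$ over $R_x$). Since the kernel of $\begin{bmatrix} x & y & z \end{bmatrix}$ over $R_x$ is the column span of $\del_2$ (as in the proof of \cref{l:localexactM0M1(2)}), the third column block of $M_1(3)$---whose $\zeta$-row entry is $-\del_2$---produces an element $v^{(1)} \in (\im M_1(3))_x$ whose $\zeta$-coordinates match those of $v$. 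Replacing $v$ by $v - v^{(1)}$ leaves $d_x = d_y = d_z = 0$. Stages $2$ and $3$ proceed analogously using the second and first column blocks of $M_1(3)$ (whose $c$- and $b$-row entries are also $-\del_2$) to reduce the $c$- and $b$-coordinates in turn; the block-triangular pattern of $M_1(3)$ (see \cref{c:explicitMF(3)}) ensures that each subtraction preserves the previously-killed coordinates. Finally, the remaining $aE^3 = 0$ forces $a = 0$ by freeness.

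The main obstacle will be Stage $1$, the $\zeta$-reduction. Unlike the $\alpha$-generators in the order-$2$ case, the $\zeta$-generators involve derivatives of $2 \times 2$ Hessian minors and of $\delta$, so expanding them into the free basis over $R_x$ requires the full arsenal of identities from the appendix already invoked in the proof of \cref{zetas}---in particular the product identity \cref{PD-prodaa}, the Hamiltonian exchange \cref{Rel-EHabHac}, and the Hamiltonian syzygy \cref{HamRel2}. Once this expansion is carried out and the $H_{yz}^3$-coefficient yields the expected constraint on $(d_x, d_y, d_z)$, the remaining reductions for the $c$- and $b$-coordinates follow the template of \cref{l:localexactM0M1(2)} with essentially the same calculations and routine bookkeeping to ensure that higher-block coordinates remain zero at each subsequent step.
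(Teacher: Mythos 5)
Your proposal is correct and follows essentially the same route as the paper: expand the columns of $M_0(3)$ over $R_x$ in the free basis $\{E^3, E^2H_{yz}, E\alpha_x, H_{yz}^3\}$ from \cref{localgensS3}, read off the $H_{yz}^3$-coefficient to conclude the $\zeta$-coefficients lie in $\ker\del_1 = \im\del_2$, subtract an element of the image of columns 7--9 of $M_1(3)$, then iterate with the $E\alpha_x$- and $E^2H_{yz}$-coefficients to successively kill the $E\alpha$- and $E^2H$-coordinates, leaving $aE^3 = 0$. Two cosmetic slips: you say ``remaining six generators'' but list seven ($E^2H_{zx}, E^2H_{xy}, E\alpha_y, E\alpha_z, \zeta_x, \zeta_y, \zeta_z$), and $M_1(3)$ is not literally block lower-triangular (it has $\sigma_1,\sigma_3$ blocks above the subdiagonal) --- what actually preserves the already-killed coordinates is that the $\zeta$-row of column blocks 1--2 and the $E\alpha$-row of column block 1 are zero.
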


\begin{proof}
The proof follows along the same lines as \cref{l:localexactM0M1(2)}. Namely, one can express the columns of $M_0(3)$ in terms of the free basis $E^3$, $E^2H_{yz}$, $E\alpha_x$, $H_{yz}^3$ above. Considering the $H_{yz}^3$ coefficients, the coefficients of $\zeta_x,\zeta_y,\zeta_z$ in the relation must be in the span of $\partial_2$. Subtracting off a suitable linear combination of columns 7--9 of $M_1(3)$, one can replace the given relation with one in which the $\zeta_x,\zeta_y,\zeta_z$ coefficients are zero. Repeating like so with the $\alpha_x,\alpha_y,\alpha_z$ coefficients, and then the $E^2 H_{yz}, E^2 H_{zx}, E^2 H_{xy}$ coefficients, one obtains a relation of the form $a E^3=0$, which must be the zero relation.
\end{proof}

We are now ready to prove the main result of this subsection. 

\begin{proposition}
\label{gensS3}
The sequence of $R$-modules 
\[
\cdots
\xrightarrow{M_1(3)}
R^{10}
\xrightarrow{M_2(3)}
R^{10}
\xrightarrow{M_1(3)}
R^{10}
\xrightarrow{M_0(3)} 
R^{10} 
\xrightarrow{J_{3,2}}
R^6 
\]
is exact. 
In particular, the $R$-module 
$\ker_{R}J_{3,2}$ is generated by the elements 
\[
\{E^3, E^2H_{yz}, E^2H_{zx}, E^2H_{xy}, E\alpha_x, E\alpha_y, E\alpha_z, \zeta_x, \zeta_y, \zeta_z\}\,.
\]
\end{proposition}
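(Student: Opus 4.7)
The plan is to mirror the proof of \cref{gensS2} essentially verbatim, since all the preparatory pieces for order $3$ have already been assembled.

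First, observe that the periodic tail
\[
\cdots \xrightarrow{M_1(3)} R^{10} \xrightarrow{M_2(3)} R^{10} \xrightarrow{M_1(3)} R^{10}
\]
is exact because $(M_2(3), M_1(3))$ comes from a matrix factorization of $d\cdot f$ over $Q$ by \cref{c:explicitMF(3)}, and reduction mod $f$ of a matrix factorization always yields an exact two-periodic complex by \cref{c:hypersurface}. The initial portion is a complex: $M_0(3)\,M_1(3)=0$ is \cref{l:complexM0M1(3)}, and $J_{3,2}\,M_0(3)=0$ holds because the columns of $M_0(3)$ are the coordinate vectors of bona fide elements of $\ker J_{3,2}$; for the columns $E^3, E^2H_{yz}, \dots, E\alpha_z$ this is clear from their description as images of compositions under $\nu_3$, and for $\zeta_x,\zeta_y,\zeta_z$ it follows from the divisibility verified in \cref{zetas} (together with the fact that $x,y,z$ are nonzerodivisors of $R$).

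Next, I would establish exactness on the punctured spectrum. Let $\p\neq(x,y,z)$, so that one of $x,y,z$ is a unit in $R_\p$. The equality $(\ker M_0(3))_\p=(\im M_1(3))_\p$ is \cref{l:localexactM0M1(3)} together with $x,y,z$-symmetry. For surjectivity of $M_0(3)$ onto $\ker J_{3,2}$ locally, \cref{localgensS3} gives that $\ker(J_{3,2})_x$ is freely generated by $E^3, E^2H_{yz}, E\alpha_x, H_{yz}^3$; but the very definition of $\zeta_x$ allows us to solve
\[
H_{yz}^3 = x^2\zeta_x - \tfrac{3}{(d-1)^2}\Delta_{xx}\,E^2H_{yz} + \tfrac{1}{(d-1)^2(d-2)}H_{yz}(\Delta_{xx})\,E^3
\]
over $R_x$, so $\ker(J_{3,2})_x$ is generated by the ten listed operators. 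By $x,y,z$-symmetry the analogous statement holds after inverting $y$ or $z$, and hence the cokernel of $M_0(3)$ restricted to $\ker J_{3,2}$ is supported at the homogeneous maximal ideal.

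Finally, I would promote punctured-spectrum exactness to global exactness in two stages using \cref{l:iso}. For the inclusion $i\colon \im M_1(3)\hookrightarrow \ker M_0(3)$: since $\im M_1(3)$ is an infinite syzygy in the exact two-periodic tail it is maximal Cohen--Macaulay over the two-dimensional hypersurface $R$, hence satisfies $S_2$; meanwhile $\ker M_0(3)\subseteq R^{10}$ satisfies $S_1$. Exactness at primes of height $\le 1$ was established above, so \cref{l:iso} forces $i$ to be an isomorphism. For the inclusion $j\colon \im M_0(3)\hookrightarrow \ker J_{3,2}$: the first stage identifies $\im M_0(3)\cong \coker M_1(3)=\ker M_2(3)$, which is again an infinite syzygy and therefore MCM, so satisfies $S_2$; and $\ker J_{3,2}\subseteq R^{10}$ satisfies $S_1$. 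A second application of \cref{l:iso} finishes the argument.

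There is no real obstacle: all the genuine computational content has been isolated in \cref{l:complexM0M1(3)}, \cref{l:localexactM0M1(3)}, and \cref{localgensS3}, and what remains is a standard Serre-condition argument. If anything merits extra care, it is verifying that the localization step really does handle all primes of height one (i.e.\@ that $(x,y,z)$ is the unique prime of height $\ge 2$ away from which we must check), which is immediate here because $R$ is a two-dimensional normal domain and the punctured spectrum consists precisely of primes of height $\le 1$.
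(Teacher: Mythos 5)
Your proposal is correct and follows exactly the strategy the paper intends: the paper's own proof is literally ``In view of \cref{c:explicitMF(3)} and Lemmas~\ref{localgensS3}, \ref{l:complexM0M1(3)}, and \ref{l:localexactM0M1(3)}, a proof parallel to that of Proposition~\ref{gensS2} can be used.'' You have simply written out that parallel argument in detail, including the one order-$3$ wrinkle the paper leaves implicit (expressing $H_{yz}^3$ in terms of $\zeta_x$, $E^2H_{yz}$, $E^3$ over $R_x$ to pass from \cref{localgensS3} to generation by the ten listed elements), and every step matches the template of \cref{gensS2}.
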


\begin{proof}
In view of \ref{c:explicitMF(3)} and Lemmas~\ref{localgensS3}, \ref{l:complexM0M1(3)}, and \ref{l:localexactM0M1(3)}, a proof parallel to that of Proposition~\ref{gensS2} can be used to prove the statement.
\end{proof}
\subsection{Resolution of $D^3_{R|k}$}\label{subsec:liftS3}

In this subsection we construct the resolution of $D^3_{R|k}$ by defining a ``lift" of $\theta_0(3)$; here it is interpreted as a map from the degree one part of the resolution of $\coker J_{3,2}$, constructed in \cref{subsec:exactnessS3}, to the complex $C=\cone(\theta(2))$ defined in \cref{l:liftS2}. 

\begin{chunk}
\label{lifts3diagram}
Consider the diagram 
\begin{equation*}
\begin{aligned} 
 \xymatrix@R+.5pc@C+.7pc{
\cdots\ar[r]&R^{10}\ar[d]^{\theta_3(3)} \ar[r]^{-M_2(3)}&R^{10}\ar[r]^{-M_1(3)}\ar[d]^{\theta_2(2)}& R^{10}\ar[r]^{-M_0(3)} \ar[d]^{\theta_1(3)}&R^{10} \ar[r]^{-J_{3,2}}\ar[d]^{\theta_0(3)}&R^6 \ar[r] &0\\
\cdots \ar[r]&R^{11} \ar[r]^{\partial_3^C} &R^{11} \ar[r]^{\partial_2^C}&R^9\ar[r]^{\partial_1^C}&R^4\ar[r]&0&
}
\end{aligned} 
 \end{equation*}
where $M_i(3)$ are defined in \cref{M0(3)} and \cref{c:explicitMF(3)}, and the $\theta_i(3)$ are declared in \cref{sec:glossary}.
\end{chunk}

The proof of the next lemma is similar to that of \cref{l:liftS2}. In light of this, and the lengthy computations involved, its proof is \cref{a:theta3chain}.

\begin{lemma}
\label{l:liftd3}
The diagram in \cref{lifts3diagram} commutes. That is, $\theta(3)$ is a morphism of complexes.
\end{lemma}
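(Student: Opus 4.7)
The plan is to parallel the proof of \cref{l:liftS2}: verify commutativity of the three non-periodic squares of \cref{lifts3diagram} directly, and then appeal to the two-periodicity of the matrix factorizations $(M_2(i),M_1(i))$ for $i=1,2,3$ to propagate commutativity to all higher squares. Explicitly, the three commutativity identities I would check are
\[
 -\theta_0(3)\,M_0(3) = \partial_1^C\,\theta_1(3), \qquad
 -\theta_1(3)\,M_1(3) = \partial_2^C\,\theta_2(3), \qquad
 -\theta_2(3)\,M_2(3) = \partial_3^C\,\theta_3(3),
\]
where $\partial_1^C$, $\partial_2^C$, $\partial_3^C$ are the block differentials of $C$ from \cref{l:liftS2}.

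For the rightmost square, I would split $M_0(3)$ into its three column blocks $\varepsilon(3)$, $\tfrac{2}{(d-1)^2}A(3)$, and $-\tfrac{6}{(d-1)^2(d-2)}Z$ from \cref{M0(3)}, and verify each block separately. The $\varepsilon(3)$-block, whose columns lift the compositions $E \circ (E^2, EH_{yz}, EH_{zx}, EH_{xy})$, should collapse via Euler-type identities on the second partials (\cref{euler} applied after one differentiation) together with the relations \cref{e:basicrelations} already used in the $\theta(2)$ case; the top entry will come from $J_{3,0}$ being the first prolongation of $J_{2,0}=\theta_0(2)$. The $A(3)$-block reduces to computations of the same flavor as in \cref{l:complexM0M1(2)}, using \cref{PD-prodaa,PD-prodab} and the Cramer identity \cref{eqCram}. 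The $Z$-block is the most delicate, since it involves $\zeta_x, \zeta_y, \zeta_z$; here I would lean on the subdeterminant-derivative identities from \cref{a:subdetderiv} (notably \cref{Rel-EHabHac}) and the Hamiltonian identities \cref{HamRel2} from \cref{a:Hrelns}, reusing the same manipulations that made $\zeta_x$ well-defined in \cref{zetas}.

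For the second and third squares, I would unpack the block forms of $M_1(3), M_2(3)$ from \cref{c:explicitMF(3)} against the block forms of $\partial_2^C, \partial_3^C$ from \cref{l:liftS2} and $\theta_i(3)$ from \cref{sec:glossary}. Once this is done, the key matrix identities are precisely those that made $\alpha$ and $\beta$ into dg $E$-module maps in \cref{c:alphamap,c:betamap}: the relations \cref{c:identity-RS-4-2-1}, \cref{c:d-D-alpha-q}, \cref{c:basic-Eul}, and \cref{c:D-to-sigmas} from \cref{a:matrixidentities}, together with the symmetries of $\alpha_1, \alpha_2, \sigma_2$ and the basic Koszul identities $\partial_1\partial_2=\partial_2\partial_3=0$, $D_1D_2=D_2D_3=0$. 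Once the degree-three identity $-\theta_2(3)M_2(3)=\partial_3^C\theta_3(3)$ is verified, the two-periodicity of the matrix factorizations forming the tails of $G(1)$, $\Sigma^{-1}G(2)$, $\Sigma^{-1}G(3)$, and $C$ (together with the block diagonal structure of $\theta_i(3)$ for $i\geqslant 2$) automatically gives commutativity of all higher squares.

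The main obstacle is not conceptual but combinatorial: the matrices $\theta_i(3)$, $M_j(3)$, $\partial_k^C$ have large block structure, and one must keep careful track of the normalization scalars $\tfrac{1}{d-1}$, $\tfrac{1}{(d-1)^2}$, $\tfrac{1}{(d-1)^2(d-2)}$, $\tfrac{d^2-1}{2}$, $-(d-1)(d-2)$, $-3(d-1)$, $\tfrac{9}{2}$, etc., along with the many sign conventions inherited from the cone construction. For this reason I would defer the actual block-by-block verification to \cref{a:theta3chain}, organized square by square and (within each square) column-block by column-block, invoking the identities collected in \cref{a:matrixidentities,a:eulerappl,a:subdetderiv,a:Hrelns}.
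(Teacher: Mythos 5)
Your proposal follows the paper's own strategy exactly: the paper likewise reduces the claim to verifying the three identities $-\theta_0(3)M_0(3)=\partial_1^C\theta_1(3)$, $-\theta_1(3)M_1(3)=\partial_2^C\theta_2(3)$, and $-\theta_2(3)M_2(3)=\partial_3^C\theta_3(3)$ in \cref{a:theta3chain}, organized column-block by column-block against the block decomposition $M_0(3)=[\varepsilon(3)\ |\ \tfrac{2}{(d-1)^2}A(3)\ |\ -\tfrac{6}{(d-1)^2(d-2)}Z]$ and leaning on precisely the identities you cite from \cref{a:matrixidentities,a:eulerappl,a:subdetderiv,a:Hrelns}, with two-periodicity of the matrix factorizations handling the remaining squares. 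Your outline is a faithful blueprint of that computation.
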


\begin{theorem}
\label{resolutionD3}
Assume $R=k[x,y,z]/(f)$ is an isolated singularity hypersurface where $k$ is a field of characteristic zero. The augmented minimal $R$-free resolution of $D_{R|k}^3\subseteq R^9\oplus R^6\oplus R$ has the form 
\begin{align*}
 \cdots \to \begin{matrix}R^{11}\\\oplus\\R^{10} \end{matrix} \xra{\small \begin{bmatrix} \partial_4^C &\theta_3(3)\\0&M_2(3) \end{bmatrix}} 
 \begin{matrix}R^{11}\\\oplus\\R^{10} \end{matrix} \xra{\small \begin{bmatrix} \partial_3^C &\theta_2(3)\\0&M_1(3) \end{bmatrix}}\begin{matrix}R^{11}\\\oplus\\R^{10} \end{matrix} \xra{\small \begin{bmatrix} \partial_4^C &\theta_3(3)\\0&M_2(3) \end{bmatrix}} \begin{matrix}R^{11}\\\oplus\\R^{10} \end{matrix} \xra{\small \begin{bmatrix} \partial_3^C &\theta_2(3)\\0&M_1(3) \\ 0 & 0 \end{bmatrix}} \begin{matrix}R^{11}\\\oplus\\R^{10}\\\oplus \\R \end{matrix} \xra{\epsilon} D_{R|k}^3 \to 0
 \end{align*}
where a minimal set of generators for $D_{R|k}^3$ is given by the columns of 
 \[
 \epsilon=\begin{bmatrix} \partial_2^C &\theta_1(3)&0\\0&M_0(3)&0\\0&0&1 \end{bmatrix}
 \]
 and the block matrices making up the differentials are defined in \cref{l:liftS2}, \cref{M0(3)}, and \cref{c:explicitMF(3)}; see also \cref{sec:glossary}.
\end{theorem}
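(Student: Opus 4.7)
The plan is to adapt the argument of \cref{resolutionD2} virtually verbatim, using \cref{l:liftd3} to form a cone one rung higher. Let $G(3)$ denote the sequence
\[
\cdots \xrightarrow{M_2(3)} R^{10} \xrightarrow{M_1(3)} R^{10} \xrightarrow{M_0(3)} R^{10} \xrightarrow{-J_{3,2}} R^{6} \to 0,
\]
which is a minimal $R$-free resolution of $\coker J_{3,2}$ by \cref{gensS3}, and let $C=\cone(\theta(2))$ be the complex of \cref{l:liftS2}, which is a minimal $R$-free resolution of its zeroth cokernel by \cref{resolutionD2}. By \cref{l:liftd3}, $\theta(3)\colon \shift^{-1}G(3)\to C$ is a morphism of complexes, so $C'\coloneqq \cone(\theta(3))$ is a bounded below complex of finite rank free $R$-modules whose underlying graded module matches the one displayed in the statement of the theorem.

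To verify that $C'$ is a resolution, invoke the short exact sequence of complexes
\[
0 \to C \to C' \to G(3) \to 0
\]
arising from the cone construction. Since both $C$ and $G(3)$ have homology concentrated in degree zero, the associated long exact sequence forces $H_i(C')=0$ for all $i\geqslant 1$. Minimality is read off the block structure: every nonzero entry of each differential lies in $\m$, since the partial derivatives of $f$, the $2\times 2$ minors of the Hessian, and the entries of $q$, $\del_i$, $D_i$, $\alpha_i$, $\sigma_i$, $B_1$, $B_2$ are all in $\m$ once $d\geqslant 3$.

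It then remains to identify $D^3_{R|k}$ with $\ker \partial_1^{C'} \oplus R$, where the extra free summand accounts for the constant operators and the trivial column in $\epsilon$. By construction, $\partial_1^{C'}=\begin{bmatrix}\partial_1^C & \theta_0(3) \\ 0 & -J_{3,2}\end{bmatrix}$, which under the identifications $\partial_1^C = P_2$ and $\theta_0(3)=\begin{bmatrix}J_{3,0}\\ J_{3,1}\end{bmatrix}$ from \cref{sec:glossary} coincides up to sign with the block matrix $P_3$ of \cref{c:presentationS2}. By \cref{c:composition}, $D^3_{R|k}\cong R\oplus \ker P_3$, with the summand $R$ arising from the inclusion $D^0_{R|k}\hookrightarrow D^3_{R|k}$. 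Adjoining this free cyclic summand to the one-step shifted tail of $C'$ produces the augmented resolution displayed in the theorem, and the columns of $\epsilon$ furnish a generating set for $D^3_{R|k}$.

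The principal obstacle in this argument has already been handled in \cref{l:liftd3} (that $\theta(3)$ is a chain map), which relies on the substantial matrix identities deferred to \cref{a:theta3chain}. Given that lemma, the present theorem follows formally from the two cone exact sequences (for $\theta(2)$ and $\theta(3)$) and the homology computations carried out in \cref{resolutionD2,gensS3}.
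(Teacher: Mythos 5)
Your proof is correct and follows the paper's approach exactly: the paper's own proof of \cref{resolutionD3} merely states that it is "essentially the same as that of \cref{resolutionD2}" with \cref{gensS3} and \cref{l:liftd3} substituted for \cref{gensS2} and \cref{l:liftS2}, and your write-up spells out precisely that argument---forming $C'=\cone(\theta(3))$, using the cone short exact sequence of complexes and the resulting long exact sequence in homology to conclude acyclicity, and identifying $D^3_{R|k}\cong R\oplus\ker P_3$ via \cref{c:composition}.
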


\begin{proof}
The proof is essentially the same as that of \cref{resolutionD2}; one uses \cref{gensS3,l:liftd3} in lieu of \cref{gensS2,l:liftS2}, respectively. 
\end{proof}

The theorem yields the following corollary which is analogous to \cref{cor:exactsequenceD2}.

\begin{corollary}
\label{cor:exactsequenced3}
There is a short exact sequence of $R$-modules 
\[
0\to D^2_{R|k}\to D^3_{R|k}\to \ker J_{3,2}\to 0
\]
where $D^2_{R|k}\to D^3_{R|k}$ is the inclusion.
\end{corollary}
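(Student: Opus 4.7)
The plan is to establish surjectivity of the symbol map $\nu_3\colon D^3_{R|k}\to \ker J_{3,2}$; the left exact sequence is automatic from \cref{c:compositions&kerS}, so nothing more is needed on that side.

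I will deduce surjectivity directly from \cref{resolutionD3}, without constructing new operators from scratch. Using the identification $D^3_{R|k}\cong R\oplus \ker P_3$ of \cref{c:composition}, the map $\nu_3$ is projection onto the degree-three coordinate, i.e., onto $\ker J_{3,2}$. The augmentation
\[
\epsilon = \begin{bmatrix} \partial_2^C & \theta_1(3) & 0\\ 0 & M_0(3) & 0\\ 0 & 0 & 1\end{bmatrix}
\]
from \cref{resolutionD3} is surjective onto $D^3_{R|k}$ by construction, and every column of its middle block has the form $(v,\, w,\, 0)^T$ with $w$ a column of $M_0(3)$. Each such column is therefore a genuine element of $D^3_{R|k}$ whose image under $\nu_3$ is the corresponding column of $M_0(3)$. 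Since those columns generate $\ker J_{3,2}$ by \cref{gensS3}, every generator of $\ker J_{3,2}$ has a preimage in $D^3_{R|k}$, and the map surjects.

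The hard part of the whole setup---already handled by \cref{resolutionD3}---is constructing the lower-order corrections $\theta_1(3)$ that make the symbols in $\ker J_{3,2}$ lift to genuine operators. From the differential-operator viewpoint, seven of the ten generators lift trivially via composition ($\mathcal{E}^3$, $\mathcal{E}^2 \mathcal{H}_{yz}$, $\mathcal{E}^2\mathcal{H}_{zx}$, $\mathcal{E}^2\mathcal{H}_{xy}$, and $\mathcal{E}\mathcal{A}_x,\mathcal{E}\mathcal{A}_y,\mathcal{E}\mathcal{A}_z$, using the operators $\mathcal{A}_\bullet$ of \cref{genops2}), but lifting $\zeta_x,\zeta_y,\zeta_z$ to honest operators $\mathcal{Z}_x,\mathcal{Z}_y,\mathcal{Z}_z$ is genuinely new; this is the divisibility-with-corrections argument mirroring \cref{zetas} that is the technical heart of \cref{genops3}, and is precisely what the columns of $\theta_1(3)$ encode at the level of syzygies. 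For the present corollary we can fortunately take that construction as already done.
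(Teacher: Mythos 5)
Your proof is correct and matches the paper's implicit argument: left-exactness comes from the general formalism of \cref{c:compositions&kerS}, and surjectivity of $\nu_3$ is read off directly from \cref{resolutionD3}, since the middle block of $\epsilon$ supplies operators in $D^3_{R|k}$ whose order-$3$ parts are precisely the columns of $M_0(3)$, which generate $\ker J_{3,2}$ by \cref{gensS3}. The concluding remarks about which generators lift by composition versus which require the $\theta_1(3)$ correction are accurate but not needed once \cref{resolutionD3} is in hand.
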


\begin{question}
Is there a short exact sequence of $R$-modules 
\[
0\to D^{i-1}_{R|k}\to D^{i}_{R|k}\to \ker J_{i,i-1}\to 0
\]
where $D^{i-1}_{R|k}\to D^i_{R|k}$ is the inclusion for all $i>0$?
\end{question}

In contrast, the reader can find short exact sequences relating the cokernel of the inclusion $D^{i-1}_{R|k} \to D^{i}_{R|k}$ to global sections of certain sheaves in \cite{Vigue:1975}. 

Similarly to \cref{c:d1graded,cor:d2graded}, the differentials in the resolution of $D^3_{R|k}$ are homogeneous with respect to the internal grading of $R$ where 
\[
|\zeta_x|=|\zeta_y|=|\zeta_z|=3d-8\,.
\]
Hence, from \cref{resolutionD3} we can read off the graded Betti numbers of $D^3_{R|k}$.

\begin{corollary}
\label{cor:d3graded}
The graded Betti numbers of $D^3=D^3_{R|k}$ are given by
\[
\beta_{0,j}^R(D^3)=\begin{cases}
4 & j=0\\
9 & j= d-2\\
6 & j=2d-5\\
3 & j=3d -8\\
0 & \text{otherwise}
\end{cases}
\]
and for $n\geqslant 1$
\[
\beta_{2n-1,j}^R(D^3)=\begin{cases}
9 & j=nd -1\\
1 & j=nd+d-3\\
6 & j= nd +d -4\\
1 & j = nd+2d-6\\
3 & j= nd + 2d-7\\
1 & j =nd + 3d -9\\
0 & \text{otherwise}
\end{cases}
\quad \text{and}\quad \beta_{2n,j}^R(D^3)=\begin{cases}
3 & j=nd\\
9 & j=nd + d-2\\
6 & j=nd+2d-5\\
3 & j=nd + 3d -8\\
0 & \text{otherwise}
\end{cases}
\,.
\]
\end{corollary}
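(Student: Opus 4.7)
The plan is to verify that the explicit minimal free resolution of $D^3_{R|k}$ constructed in \cref{resolutionD3} respects the internal grading of $R$, and then to read off the graded Betti numbers from its block structure. The first task is homogeneity: tracking the definitions in \cref{sec:glossary}, the entries of $\del_i$, $q$, $D_i$, $\alpha_1$, $\alpha_2$, $\alpha_3$, $\sigma_1$, $\sigma_2$, $\sigma_3$, $B_1$ and $B_2$ lie in internal degrees $1$, $2$, $d-1$, $d-2$, $2d-4$, $3d-6$, $3d-7$, $3d-7$, $3d-7$, $2d-3$ and $3d-6$ respectively (for the $\sigma$'s, each is a normalized first partial of $\delta$, which has degree $3d-6$; for $B_2$, each entry is a combination of $H_{\bullet\bullet}(\Delta_{\bullet\bullet})$ and $\frac{1}{d-1}x_i \delta_{x_j}$, both of degree $3d-6$). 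Once the summands of each free module are given appropriate internal degree shifts, every column of the block matrices $\theta_i(3)$, $M_0(3)$, $M_1(3)$, $M_2(3)$ and $\partial_i^C$ is homogeneous of degree $0$, so the resolution is a graded minimal free resolution.

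The Betti numbers in position $0$ are then read off from the minimal generators of $D^3_{R|k}$ listed in \cref{genops3}. The four operators $1, \E, \E^2, \E^3$ each have internal degree $0$, giving $\beta^R_{0,0}=4$; the nine operators $\H_{yz}, \H_{zx}, \H_{xy}, \E\H_{yz}, \E\H_{zx}, \E\H_{xy}, \E^2\H_{yz}, \E^2\H_{zx}, \E^2\H_{xy}$ each have degree $d-2$, giving $\beta^R_{0,d-2}=9$; the six operators $\A_x, \A_y, \A_z, \E\A_x, \E\A_y, \E\A_z$ each have degree $2d-5$, giving $\beta^R_{0,2d-5}=6$; and $\Z_x, \Z_y, \Z_z$ each have degree $3d-8$, giving $\beta^R_{0,3d-8}=3$.

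For homological degrees $n \geq 1$, the short exact sequence of \cref{cor:exactsequenced3} together with the minimality of the mapping cone in \cref{resolutionD3} yields the additivity
\[
\beta^R_{n,j}(D^3_{R|k}) = \beta^R_{n,j}(D^2_{R|k}) + \beta^R_{n,j}(\ker J_{3,2}).
\]
The first summand is already known from \cref{cor:d2graded}. For the second, I would trace degree shifts column-by-column through the resolution in \cref{gensS3}: matching source and target internal degrees through the four column blocks of $M_1(3)$ forces source degrees $d-1, 2d-4, 3d-7, 4d-9$ with multiplicities $3, 3, 3, 1$, and similarly the four column blocks of $M_2(3)$ yield source degrees $d, 2d-2, 3d-5, 4d-8$ with multiplicities $1, 3, 3, 3$. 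Beyond position $2$, two-periodicity of the matrix factorization tail from \cref{c:explicitMF(3)} shifts all degrees by $d$ upon incrementing the homological index by $2$. Summing these contributions with the ones from \cref{cor:d2graded} reproduces the Betti numbers in the statement.

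The principal obstacle is purely bookkeeping: matching source and target internal degree shifts through the many block matrices with columns of different internal degree is tedious, but each individual degree check is routine once the homogeneity of the constituent blocks has been established and the additivity formula is in place.
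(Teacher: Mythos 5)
Your proposal is correct and takes essentially the same route as the paper: the paper's own proof is simply to observe that the differentials in the minimal resolution of $D^3_{R|k}$ from \cref{resolutionD3} are homogeneous (recording $|\zeta_\bullet|=3d-8$) and then to read off the graded Betti numbers from the internal degree shifts on the free modules. Your reformulation via the short exact sequence of \cref{cor:exactsequenced3} and additivity of graded Betti numbers is a valid equivalent phrasing, since the resolution in \cref{resolutionD3} is literally a mapping cone whose minimality makes the connecting maps in the Tor long exact sequence vanish degree by degree; the degree shifts you list for the columns of $M_1(3)$ and $M_2(3)$ and the $d$-periodicity of the matrix factorization tail all check against the stated table.
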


\begin{corollary}
\label{cor:levels3}
The following inequalities are satisfied
\[\level_{\Dsg(R)}^k (\ker J_{3,2})\leqslant 3\quad\text{and}\quad \level_{\Dsg(R)}^k (D_{R|k}^3)\leqslant 6\,.\] 
\end{corollary}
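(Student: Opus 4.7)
The plan is to mirror the strategy used for \cref{cor:levels2}: first establish the bound on $\ker J_{3,2}$ by reading off a mapping cone structure from the matrix factorization construction, and then deduce the bound on $D^3_{R|k}$ from the short exact sequence in \cref{cor:exactsequenced3}.

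For the first inequality, I would invoke \cref{c:betamap} and \cref{c:explicitMF(3)}. The dg $E$-module $G$ whose fold gives the matrix factorization modeling the tail of the minimal free resolution of $\ker J_{3,2}$ is constructed (up to shift) as the mapping cone of a morphism of dg $E$-modules $\beta\colon \Kos^Q(x,y,z) \to L$, where $L$ is the dg $E$-module from \cref{c:alphamap} whose fold models $\ker J_{2,1}$. Under the equivalence $\Dsg(R) \equiv [\mathsf{mf}(Q, d\cdot f)]$ of \cref{c:equivalence}, this mapping cone translates into an exact triangle in $\Dsg(R)$ whose outer terms are (shifts of) $k$ and $\ker J_{2,1}$: the fold of $\Kos^Q(x,y,z)$ models the residue field $k$ by \cref{c:koszulresolutions} and \cref{resD1}, while the fold of $L$ models $\ker J_{2,1}$ by \cref{c:explicitMF(2)}. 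Combined with $\level_{\Dsg(R)}^k(\ker J_{2,1}) \leqslant 2$ from \cref{cor:levels2} and the fact that $\level_{\Dsg(R)}^k(k) = 1$, subadditivity of level on triangles then yields $\level_{\Dsg(R)}^k(\ker J_{3,2}) \leqslant 3$.

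For the second inequality, the short exact sequence of $R$-modules in \cref{cor:exactsequenced3} lifts to an exact triangle in $\Dsg(R)$. Subadditivity of level on exact triangles, together with $\level_{\Dsg(R)}^k(D^2_{R|k}) \leqslant 3$ from \cref{cor:levels2} and the bound on $\ker J_{3,2}$ just established, gives
\[
\level_{\Dsg(R)}^k(D^3_{R|k}) \leqslant \level_{\Dsg(R)}^k(D^2_{R|k}) + \level_{\Dsg(R)}^k(\ker J_{3,2}) \leqslant 3 + 3 = 6.
\]
I do not anticipate significant obstacles. The delicate work of identifying the matrix factorization of $\ker J_{3,2}$ as a mapping cone built from $\Kos^Q(x,y,z)$ and $L$ (and in turn $L$ as a mapping cone involving the matrix factorization of $k$) has already been carried out in \cref{subsec:mfS2} and \cref{subsec:mfS3}; the remainder is routine bookkeeping with the level function and its behavior on exact triangles, in complete parallel with the proof of \cref{cor:levels2}.
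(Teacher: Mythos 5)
Your proof is correct and follows essentially the same approach as the paper: the second inequality is deduced from the first via the short exact sequence of \cref{cor:exactsequenced3} and the bound $\level_{\Dsg(R)}^k(D^2_{R|k}) \leqslant 3$ from \cref{cor:levels2}, while the first inequality comes from reading the mapping cone structure of \cref{c:betamap}, which exhibits (the matrix factorization of) $\ker J_{3,2}$ as the cone of a morphism in $\Dsg(R)$ from a shift of $k$ (level $1$) to $\ker J_{2,1}$ (level $\leqslant 2$).
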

\begin{proof}
Using \cref{cor:exactsequenced3} and \cref{cor:levels2} we have the following 
\[
\level_{\Dsg(R)}^k (D_{R|k}^3)\leqslant\level_{\Dsg(R)}^k (\ker J_{3,2})+\level_{\Dsg(R)}^k (D_{R|k}^2)\leqslant \level_{\Dsg(R)}^k (\ker J_{3,2})+3\,,
\]
and so the first inequality implies the second. 
The first inequality is from \cref{c:betamap} and \cref{gensS3}.
\end{proof}
Based on the (partial) evidence in \cref{cor:levels2,cor:levels3}, as well as \cref{r:hz}, we ask the following. 
\begin{question}
\label{q:levels}
Let $R$ be an isolated singularity hypersurface ring with residue field $k$ of characteristic zero. For every positive integer $i$, do the following inequalities hold 
\[
\level_{\Dsg(R)}^k (\ker J_{i,i-1})\leqslant i\quad \text{and}\quad\level_{\Dsg(R)}^k (D_{R|k}^i)\leqslant i(i+1)/2\,?
\]
\end{question}

\begin{remark}
By \cite[Proposition~4.11]{Ballard/Favero/Katzarkov:2012}, for any $N$ in $\Dsg(R)$ the following inequality is satisfied
\[
\level_{\Dsg(R)}^k(N)\leqslant 2\ell\ell(R/(f_x,f_y,f_z))\,;
\]
here $\ell\ell(-)$ denotes the Loewy length of an $R$-module. 
So \cref{q:levels} has a positive answer whenever $n$ is at least $2\ell\ell(R/(f_x,f_y,f_z)).$ However it would still be enlightening if $\ker J_{i,i-1} $ can be built from $k$ using $i$ explicitly described mapping cones as was the case for $i=2,3$ in \cref{cor:levels2,cor:levels3}, respectively.
 \end{remark}
 
\subsection{A set of generators of $D^3_{R|k}$}\label{subsec:gensD3}

Now we write the generators of $D^3_{R|k}$ given in \cref{resolutionD3} as differential operators. In what follows, as in \cref{subsec:gensD2}, we write $E^2$, $E^3$, $EH_{ij}$, $E^2H_{ij}$, $H^3_{ij}$ for the representatives in $D^3_{R|k}$ specified in \cref{subsec:gensS2,subsec:gensS3}. For example, 
\[
E^3=x^3\del_x^3+
3x^2y\del_x^2\del_y+
3x^2z \del_x^2\del_z+
3xy^2\del_x\del_y^2 +
6xyz\del_x\del_y\del_z +
3xz^2\del_x\del_z^2 +
y^3\del_z^3 +
3y^2z\del_y^2\del_z +
3yz^2 \del_y\del_z^2+
z^3\del_z^3\,.
\]
Finally, we also adopt the notation from \cref{subsec:gensD2}.

We begin by writing some relevant operators of order at most three in terms of their images in the $\ker(J_{3,2})\cong D^3_{R|k}/D^2_{R|k}$ and lower order operators. Composing $\E$ with \cref{eq:order2opsE} and \cref{eq:order2opsEH} we have
\begin{equation}\label{eq:order3opsE}
 \E^3=E^3+3E^2+E
 =E^3+3\E^2-2\E
\end{equation}
\begin{equation}\label{eq:order3opsEH}
\begin{split}
 \E^2\H&=E^2H+dEH+(d-1)\E\H \\
 &=E^2H+(2d-1)EH+(d-1)^2\H \\
 &=E^2H+(2d-1)\E\H-d(d-1)\H,
\end{split}
\end{equation}
where here $\H$ denotes any one of the Hamiltonians, $\H_{yz}$, $\H_{zx}$, or $\H_{xy}$. Composing $\H$ with \cref{eq:order2opsH} we have
\begin{align}\label{eq:H3A}
 \H^3_{yz}=H^3_{yz}&+\H_{yz}(f_z^2)\partial_y^2-2\H_{yz}(f_yf_z)\partial_y\partial_z+\H_{yz}(f_y^2)\partial_z^2 \\ \label{eq:H3B}
 &-\frac{1}{d-1}\H_{yz}(\Delta_{xx})\E-\frac{1}{d-1}\Delta_{xx}\H_{yz}\E \\ \label{eq:H3C}
 &+\frac{1}{d-1}\left(\H_{yz}(x\Delta_{xx})\partial_x+\H_{yz}(x\Delta_{xy})\partial_y+\H_{yz}(x\Delta_{xz})\partial_z\right) \\ \label{eq:H3D}
 &+\frac{1}{d-1}\left(x\Delta_{xx}\H_{yz}\partial_x+x\Delta_{xy}\H_{yz}\partial_y+x\Delta_{xz}\H_{yz}\partial_z\right).
\end{align}
Noting that, by \cref{eq:order2opsEH}, we have $
 \H_{yz}\E=EH_{yz}+H_{yz}=\E\H_{yz}-(d-2)\H_{yz},$
simplifying \cref{eq:H3C} and \cref{eq:H3D}, and using the identities \cref{eq:MinDiff} to rewrite \cref{eq:H3A}, we find that 
\begin{align*}
 \H^3_{yz}=H^3_{yz}&+\frac{2}{d-1}\left((d-1)\Delta_{xx}\H_{yz}-\Delta_{xx}\E\H_{yz}+x\D_x\H_{yz}\right) \\ 
 &-\frac{1}{d-1}\H_{yz}(\Delta_{xx})\E-\frac{1}{d-1}\Delta_{xx}\left(\E\H_{yz}-(d-2)\H_{yz}\right) \\ 
 &+\frac{1}{d-1}x\left(\H_{yz}(\Delta_{xx})\partial_x+\H_{yz}(\Delta_{xy})\partial_y+\H_{yz}(\Delta_{xz})\partial_z\right) +\frac{1}{d-1}x\D_x\H_{yz}, 
\end{align*}
where $\D_x=\Delta_{xx}\partial_x+\Delta_{xy}\partial_y+\Delta_{xz}\partial_z$. Simplifying once more we obtain the following expression for $H^3_{yz}$:
\begin{equation}\label{eq:H3final}
\begin{split}
 \H^3_{yz}=H^3_{yz}&-\frac{1}{d-1}\big[3\Delta_{xx}\E\H_{yz}+\H_{yz}(\Delta_{xx})\E-(3d-4)\Delta_{xx}\H_{yz}-3x\D_x\H_{yz} \\ 
 &-x\left(\H_{yz}(\Delta_{xx})\partial_x+\H_{yz}(\Delta_{xy})\partial_y+\H_{yz}(\Delta_{xz})\partial_z\right)\big].
 \end{split}
\end{equation}
We obtain similar expressions for the cubes of the other Hamiltonians below.
\begin{align}\label{eq:H3finaly}
 \H^3_{zx}&=H^3_{zx}-\frac{1}{d-1}\big[3\Delta_{yy}\E\H_{zx}+\H_{zx}(\Delta_{yy})\E-(3d-4)\Delta_{yy}\H_{zx}-3y\D_yH_{zx} \\ 
 &\quad -y\left(\H_{zx}(\Delta_{xy})\partial_x+\H_{zx}(\Delta_{yy})\partial_y+\H_{zx}(\Delta_{yz})\partial_z\right)\big] \notag
\end{align}
\begin{align}\label{eq:H3finalz}
 \H^3_{xy}&=H^3_{xy}-\frac{1}{d-1}\big[3\Delta_{zz}\E\H_{xy}+\H_{xy}(\Delta_{zz})\E-(3d-4)\Delta_{zz}\H_{xy}-3z\D_z\H_{xy} \\ 
 &\quad -z\left(\H_{xy}(\Delta_{xz})\partial_x+\H_{xy}(\Delta_{yz})\partial_y+\H_{xy}(\Delta_{zz})\partial_z\right)\big] \notag
\end{align}

\begin{theorem}
\label{genops3}
A minimal set of generators for $D^3_{R|k}$ is given by $G_0\cup G_1\cup G_2\cup G_3$ where $G_0$, $G_1$, and $G_2$ are defined in \cref{genops2} and 
\begin{equation*}
G_3=\{\E^3,\quad\E^2\H_{yz},\quad\E^2\H_{zx},\quad\E^2\H_{xy},\quad \E\A_x,\quad \E\A_y\quad \E\A_z,\quad\Z_x,\quad\Z_y,\quad\Z_z\},
\end{equation*}
with 
\begin{align*}
\Z_x = \frac{1}{x^2} 
\Bigg[
\H_{yz}^3&+\frac{1}{2(d-1)^2(d-2)}\Bigg(6(d-2)\Delta_{xx}\E^2\H_{yz}-2\H_{yz}(\Delta_{xx})\E^3-6(d-2)\Delta_{xx}\E\H_{yz} \\
&-3(d-3)\H_{yz}(\Delta_{xx})\E^2-2(d-1)(d-2)(d-3)\Delta_{xx}\H_{yz}+(3d-7)\H_{yz}(\Delta_{xx})\E
\Bigg)\Bigg]
\\
\Z_y =\frac{1}{y^2} 
\Bigg[
\H_{zx}^3&+\frac{1}{2(d-1)^2(d-2)}\Bigg(6(d-2)\Delta_{yy}\E^2\H_{zx}-2\H_{zx}(\Delta_{yy})\E^3-6(d-2)\Delta_{yy}\E\H_{zx} \\
&-3(d-3)\H_{zx}(\Delta_{yy})\E^2-2(d-1)(d-2)(d-3)\Delta_{yy}\H_{zx}+(3d-7)\H_{zx}(\Delta_{yy})\E
\Bigg)\Bigg]
\\
\Z_z =\frac{1}{z^2} 
\Bigg[
\H_{xy}^3&+\frac{1}{2(d-1)^2(d-2)}\Bigg(6(d-2)\Delta_{zz}\E^2\H_{xy}-2\H_{xy}(\Delta_{zz})\E^3-6(d-2)\Delta_{zz}\E\H_{xy} \\
&-3(d-3)\H_{xy}(\Delta_{zz})\E^2-2(d-1)(d-2)(d-3)\Delta_{zz}\H_{xy}+(3d-7)\H_{xy}(\Delta_{zz})\E
\Bigg)\Bigg]
\end{align*}
\end{theorem}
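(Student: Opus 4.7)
The strategy parallels that of \cref{genops2}. The starting point is the short exact sequence from \cref{cor:exactsequenced3},
\[
0\to D^2_{R|k}\to D^3_{R|k}\to \ker J_{3,2}\to 0,
\]
which reduces the problem to two tasks: (i) exhibiting $G_0\cup G_1\cup G_2$ as a minimal generating set of $D^2_{R|k}$, and (ii) exhibiting the images of the elements of $G_3$ under the surjection $D^3_{R|k}\to \ker J_{3,2}$ as the minimal generating set of $\ker J_{3,2}$ produced in \cref{gensS3}. Part (i) is exactly \cref{genops2}, so the substance of the proof is entirely in (ii). By $x,y,z$-symmetry, within (ii) it suffices to handle the first seven members of $G_3$ together with $\Z_x$.

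The first seven elements $\E^3, \E^2\H_{yz}, \E^2\H_{zx}, \E^2\H_{xy}, \E\A_x, \E\A_y, \E\A_z$ are each the composition of a derivation with an element of $G_2\subset D^2_{R|k}$, so a priori lie in $D^3_{R|k}$; by \cref{eq:order3opsE,eq:order3opsEH} their images in the quotient are respectively $E^3, E^2H_{yz}, E^2H_{zx}, E^2H_{xy}, E\alpha_x, E\alpha_y, E\alpha_z$, as required. For $\Z_x$ two claims must be verified: that $\Z_x$ is a well-defined element of $D^3_{R|k}$, and that its image in $\ker J_{3,2}$ equals $\zeta_x$ from \cref{zetas}. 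The image computation is short: using \cref{eq:H3final} one has $\H_{yz}^3\equiv H_{yz}^3\pmod{D^2_{R|k}}$, and likewise $\E^3\equiv E^3$ and $\E^2\H_{yz}\equiv E^2H_{yz}$, while the remaining terms $\E\H_{yz},\E^2,\H_{yz},\E$ lie in $D^2_{R|k}$ and contribute nothing in the quotient; what survives then matches $\zeta_x$ from \cref{zetas} term by term, after noting that $\tfrac{6(d-2)}{2(d-1)^2(d-2)}=\tfrac{3}{(d-1)^2}$ and $\tfrac{-2}{2(d-1)^2(d-2)}=\tfrac{-1}{(d-1)^2(d-2)}$.

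The main obstacle is the well-definedness of $\Z_x$: the operator inside the square brackets must be divisible by $x^2$ when expanded in the basis $\{\partial_x^{(a)}\partial_y^{(b)}\partial_z^{(c)}\}$ of divided-power monomials. My plan is to substitute \cref{eq:H3final,eq:order3opsE,eq:order3opsEH,eq:order2opsE,eq:order2opsEH} to rewrite each composite operator in the bracket as an explicit $R$-linear combination of divided-power partials, regroup by differential-operator order, and check divisibility by $x^2$ coefficient by coefficient. The order-three contributions reassemble precisely into the vector $x^2\zeta_x$, whose divisibility by $x^2$ is \cref{zetas} itself. The order-two, order-one, and order-zero contributions arise from the low-order corrections in \cref{eq:H3final} combined with the lower-order parts of $\E^3,\E^2\H_{yz},\E\H_{yz},\E^2$, and the rational coefficients $6(d-2)$, $-2$, $-6(d-2)$, $-3(d-3)$, $-2(d-1)(d-2)(d-3)$, $(3d-7)$ appearing in the definition of $\Z_x$ are pinned down precisely to force these lower-order coefficients into $(x^2)$. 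The verification is lengthy but routine, using the partial-derivative identities for the subminors $\Delta_{ij}$ collected in the appendices (in particular the ones powering \cref{zetas}) together with the Euler identity for $f$.

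Once $\Z_x,\Z_y,\Z_z\in D^3_{R|k}$ are in hand, the union $G_0\cup G_1\cup G_2\cup G_3$ generates $D^3_{R|k}$ by (i) and (ii), and minimality follows from the cardinality count $1+4+7+10=22=\beta_0^R(D^3_{R|k})$, where the Betti number is read off from \cref{cor:d3graded}.
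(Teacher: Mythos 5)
Your outline is logically sound and would succeed if fully carried out, but for the central and hardest step you take a genuinely different route from the paper. You propose to \emph{define} $\Z_x$ by its formula and then verify well-definedness directly: expand the bracketed expression in the divided-power basis, split by order, and check divisibility by $x^2$ of each coefficient (with the order-$3$ part handled by \cref{zetas} and the rest asserted to fall into $(x^2)$ ``because the coefficients are pinned down precisely''). The paper instead runs the argument in the opposite direction: it reads off the last three columns of the augmentation $\epsilon$ from \cref{resolutionD3}, which are \emph{automatically} operators in $D^3_{R|k}$ (the whole content of that theorem), and then rewrites the column lifting $\zeta_x$ — an explicit combination of $\zeta_x$ with the lower-order correction terms supplied by $\theta_1(3)$ — via \cref{eq:order3opsE,eq:order3opsEH,eq:H3final} and the Hamiltonian identities until it lands on the displayed formula for $\Z_x$. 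The payoff of the paper's route is that divisibility by $x^2$ in the lower orders never needs to be re-established; that is exactly what the resolution supplies. Your route is valid but trades that for a substantial coefficient-by-coefficient divisibility check (an analogue of \cref{zetas} extended to orders $\leqslant 2$) which you do not carry out and whose feasibility with your proposed coefficients is precisely the nontrivial content; ``lengthy but routine'' is where the real work lives. The remaining pieces of your argument — using the short exact sequence from \cref{cor:exactsequenced3} with \cref{genops2} and \cref{gensS3} to get generation, the check that the first seven elements of $G_3$ are lifts of the corresponding generators of $\ker J_{3,2}$, the image computation for $\Z_x$, and the cardinality count $22=\beta_0^R(D^3_{R|k})$ against \cref{cor:d3graded} for minimality — agree with the paper's strategy.
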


\begin{proof}
The proof is similar to \cref{genops2}, so it suffices to show that $\Z_x$, $\Z_y$, and $\Z_z$ agree with the generators corresponding to the last three columns of $\epsilon$ in \cref{resolutionD3}, and thus they are well defined operators in $D^3_{R|k}$. 

Analyzing the column of the augmentation map $\epsilon$ in \cref{resolutionD3} that lifts $\zeta_x$, we see that the corresponding operator is given by 
\begin{align}\label{e:ZxopA}
 \Z_x&= \frac{1}{x^2}\Bigg[H_{yz}^3+\frac{3}{(d-1)^2}\Delta_{xx}E^2H_{yz}-\frac{1}{(d-1)^2(d-2)}H_{yz}(\Delta_{xx})E^3\Bigg] \\ \label{e:ZxopB}
 &+\frac{1}{(d-1)^2}(\delta_y\partial_z-\delta_z\partial_y) \\ \label{e:ZxopC}
 &-\frac{3}{(d-1)(d-2)}\Bigg[H_{yz}(\Delta_{xx})\partial_x^{(2)}+H_{zx}(\Delta_{xx})\partial_x\partial_y+H_{xy}(\Delta_{xx})\partial_x\partial_z+\left(H_{zx}(\Delta_{xy})+\frac{y\delta_z}{d-1}\right)\partial_y^{(2)} \\ \label{e:ZxopD}
 &+\frac{1}{2}\left( H_{zx}(\Delta_{xz})+H_{xy}(\Delta_{xy})+\frac{z\delta_z-y\delta_y}{d-1}\right)\partial_y\partial_z+\left(H_{xy}(\Delta_{xz})-\frac{z\delta_y}{d-1}\right)\partial_z^{(2)}\Bigg],
\end{align}
where \cref{e:ZxopA} comes from the formula for $\zeta_x$ in \cref{zetas}, and \cref{e:ZxopB}, \cref{e:ZxopC}, and \cref{e:ZxopD} come from the lift $\theta_1(3)$ defined in \cref{sec:glossary}. 

Next we use \cref{eq:order3opsE}, \cref{eq:order3opsEH}, and \cref{eq:H3final} to rewrite \cref{e:ZxopA}, as follows
\begin{align}\label{eq:Zxoporder1A}
 \Z_x= \frac{1}{x^2}\Bigg[\H_{yz}^3&+\frac{1}{d-1}\big[3\Delta_{xx}\E\H_{yz}-(3d-4)\Delta_{xx}\H_{yz} \\ \label{eq:Zxoporder1A2}
 &-3x\D_x\H_{yz} \\ \label{eq:Zxoporder1B}
 &+\H_{yz}(\Delta_{xx})\E-x\left(\H_{yz}(\Delta_{xx})\partial_x+\H_{yz}(\Delta_{xy})\partial_y+\H_{yz}(\Delta_{xz})\partial_z\right)\big] \\ \label{eq:Zxoporder1C}
 &+\frac{3}{(d-1)^2}\Delta_{xx}\left(\E^2\H_{yz}-(2d-1)\E\H_{yz}+d(d-1)\H_{yz}\right) \\ \label{eq:Zxoporder1D}
 &-\frac{1}{(d-1)^2(d-2)}\H_{yz}(\Delta_{xx})\left(\E^3-3\E^2+2\E\right)\Bigg] \\ \label{eq:Zxoporder1E}
 &+\frac{1}{(d-1)^2}(\delta_y\partial_z-\delta_z\partial_y) \\ \label{eq:Zxoporder1F}
 &-\frac{3}{(d-1)(d-2)}\Bigg[\H_{yz}(\Delta_{xx})\partial_x^{(2)}+\H_{zx}(\Delta_{xx})\partial_x\partial_y+\H_{xy}(\Delta_{xx})\partial_x\partial_z+\left(H_{zx}(\Delta_{xy})+\frac{y\delta_z}{d-1}\right)\partial_y^{(2)} \\ \label{eq:Zxoporder1G}
 &+\frac{1}{2}\left( \H_{zx}(\Delta_{xz})+\H_{xy}(\Delta_{xy})+\frac{z\delta_z-y\delta_y}{d-1}\right)\partial_y\partial_z+\left(\H_{xy}(\Delta_{xz})-\frac{z\delta_y}{d-1}\right)\partial_z^{(2)}\Bigg].
\end{align}
One can check that using the identities \cref{HamRel1} and \cref{HamRel2} to rewrite \cref{eq:Zxoporder1B}, combining it with \cref{eq:Zxoporder1E}, and simplifying using the Euler identity \cref{euler} yields
\begin{equation}\label{eq:Zxorder1terms}
\begin{split}
\frac{1}{x^2}&\Bigg[\frac{x^2}{(d-1)^2}(\delta_y\partial_z-\delta_z\partial_y)+\frac{1}{d-1}\Big(\H_{yz}(\Delta_{xx})\E-x\big(\H_{yz}(\Delta_{xx})\partial_x+\H_{yz}(\Delta_{xy})\partial_y+\H_{yz}(\Delta_{xz})\partial_z\big)\Big)\Bigg] \\
 &=\frac{2(d-2)}{x^2(d-1)}\Delta_{xx}\H_{yz}
\end{split}
\end{equation}
One can also check that combining \cref{eq:Zxoporder1A2} with \cref{eq:Zxoporder1F} and \cref{eq:Zxoporder1G} and using the Euler identity \cref{euler} to simplify yields
\begin{equation}\label{eq:Zxorder2terms}
\begin{split}
 -\frac{3}{x^2(d-1)}&\Bigg[x\D_x\H_{yz}+\frac{x^2}{d-2}\Big[\H_{yz}(\Delta_{xx})\partial_x^{(2)}+\H_{zx}(\Delta_{xx})\partial_x\partial_y+\H_{xy}(\Delta_{xx})\partial_x\partial_z+\left(H_{zx}(\Delta_{xy})+\frac{y\delta_z}{d-1}\right)\partial_y^{(2)} \\ 
 &+\frac{1}{2}\left( \H_{zx}(\Delta_{xz})+\H_{xy}(\Delta_{xy})+\frac{z\delta_z-y\delta_y}{d-1}\right)\partial_y\partial_z+\left(\H_{xy}(\Delta_{xz})-\frac{z\delta_y}{d-1}\right)\partial_z^{(2)}\Big]\Bigg] \\
 &=\frac{3}{2x^2(d-1)(d-2)}\Big(2(d-2)\Delta_{xx}EH_{yz}-\H_{yz}(\Delta_{xx})E^2\Big) \\
 &=\frac{3}{2x^2(d-1)(d-2)}\Big(2(d-2)\Delta_{xx}\big(\E\H_{yz}-(d-1)\H_{yz}\big)-\H_{yz}(\Delta_{xx})(\E^2-\E)\Big),
 \end{split}
\end{equation}
where the last equality follows from \cref{eq:order2opsE} and \cref{eq:order2opsEH}. Finally, using \cref{eq:Zxorder1terms} and \cref{eq:Zxorder2terms} to rewrite the formula for $\Z_x$ in \cref{eq:Zxoporder1A} through \cref{eq:Zxoporder1G} gives the desired formula for $\Z_x$. The proofs for $\Z_y$ and $\Z_z$ are similar. 
\end{proof}

\begin{remark}
\label{r:agreement3} 
Similar to \cref{r:agreement}, except for $\Z_x$, $\Z_y$, and $\Z_z$, the generators in \cref{genops3} do not agree with the generators of $D^3_{R|k}$ given in the augmentation map $\epsilon$ in \cref{resolutionD3}. However, one can subtract lower order operators to obtain generators that do agree. For the first four generators, such formulas follow directly from \cref{eq:order3opsE} and \cref{eq:order3opsEH}. One can check that the operator given by subtracting $\frac{5d-7}{2}\A_x$ from $\E\A_x$ agrees with the corresponding generator in \cref{resolutionD3}, and similarly for $\E\A_y$ and $\E\A_z$.
\end{remark}

\appendix
\section{Identities}\label{appendix}

In this appendix we collect identities that are used extensively throughout the paper. Let $f\in k[x,y,z]$ be homogeneous of degree
$d\geqslant 3$, where $k$ is a field of characteristic zero. 
Recall that
\begin{align}\label{euler}
 E(f)=xf_x+yf_y+zf_z=d\cdot f
\end{align}
in $k[x,y,z]$ (and $0$ in $R$), where $E=x\partial_x+y\partial_y+z\partial_z$ is the \emph{Euler operator}. Differentiating \cref{euler} with respect to $x,y,z$ produces the following system where $\{x,y,z\}=\{a,b,c\}$: 
\begin{equation}
 \label{2-1deriv}
 af_{aa}+bf_{ab}+cf_{ac}=(d-1) f_a\,.
\end{equation}

\subsection{Identities involving second order derivatives} Multiplying the first equation in the system above by $x$, the second one by $y$ and the third by $z$ and then adding them all together we obtain a second order Eulerian identity: 

\begin{equation}
\label{2nd order Euler id}
x^2 f_{xx}+y^2 f_{yy}+z^2 f_{zz}+2xy f_{xy}+2xzf_{xz}+2yz f_{yz}=0.\end{equation}

Importing $\Delta$ from \cref{sec:glossary}, we have
\[
\text{adj}(\Delta)=\begin{bmatrix} \Delta_{xx}& \Delta_{xy} &\Delta_{xz}\\ 
\Delta_{xy} &\Delta_{yy}& \Delta_{yz}\\
\Delta_{xz}&\Delta_{yz}& \Delta_{zz}
\end{bmatrix}\,, 
\]
and from the matrix identity $ \Delta \;\text{adj}(\Delta)=\delta I_{3\times3}$, where $I_{3\times 3}$ is the identity $3\times3$ matrix, we obtain the following identities with $\{x,y,z\}=\{a,b,c\}$:
\begin{subequations}
\label{det-exp}
\begin{align}
\delta&=f_{aa} \Delta_{aa}+f_{ab}\Delta_{ab}+f_{ac}\Delta_{ac}\label{det-exp1}\\
0&=f_{aa}\Delta_{ab}+f_{ab}\Delta_{bb}+f_{ac}\Delta_{bc}\label{det-exp4}.
\end{align}
\end{subequations}
An application of Cramer's rule to the system \cref{2-1deriv} yields for $\{x,y,z\}=\{a,b,c\}$: 
\begin{equation}
 \label{eqCram}
 a \delta=(d-1) (f_a \Delta_{aa}+f_b\Delta_{ab}+f_c \Delta_{ac})\,.
\end{equation}

Using equations \cref{2-1deriv} one can easily obtain the following useful identities: 
\begin{gather}
\begin{aligned}\label{eq:MinDiff}
x\Delta_{xy}-y\Delta_{xx}&= (d-1) ( f_z f_{yz}-f_y f_{zz}) & y\Delta_{xy}-x\Delta_{yy}&=(d-1) (f_z f_{xz}-f_x f_{zz}) \\
 x\Delta_{yz}-y\Delta_{xz}&= (d-1) ( f_y f_{xz}-f_x f_{yz}) &z\Delta_{yy}-y\Delta_{zy}&=(d-1) (f_z f_{xx}-f_x f_{xz})\\
 y\Delta_{zz}-z\Delta_{zy}&=(d-1) (f_y f_{xx}-f_x f_{xy}) & z\Delta_{xx}-x\Delta_{xz}&=(d-1) (f_z f_{yy}-f_y f_{zy})\\
 z\Delta_{xy}-x\Delta_{yz}&=(d-1) (f_x f_{zy}-f_z f_{xy})& z\Delta_{xz}-x\Delta_{zz}&=(d-1)(f_y f_{xy}-f_x f_{yy})\\
 z\Delta_{xy}-y\Delta_{xz}&=(d-1) (f_y f_{xz}-f_z f_{xy})\,.
\end{aligned}
\end{gather}
To prove these, one starts by the right hand side of the equations above and replaces $(d-1)f_x$, $(d-1) f_y$ and $(d-1)f_z$ 
by \cref{2-1deriv} respectively and then simplifies the resulting equations.

\subsection{Identities involving third order derivatives} 
Differentiating \cref{2-1deriv} in various manners yields the following system of equations for $\{a,b,c\}=\{x,y,z\}$:
\begin{align}
 af_{aaa}+bf_{aab}+cf_{aac}&=(d-2) f_{aa}\label{eq3-2aa}\\
af_{aab}+bf_{abb}+cf_{abc}&=(d-2) f_{ab}\label{eq3-2ab}\,.
\end{align}

From \cref{eq3-2aa} and \cref{eq3-2ab}, one obtains the following identities with $\{a,b,c\}=\{x,y,z\}$: 
\begin{equation}
 \label{eq2-3}
 a^2 f_{aaa}+b^2 f_{abb}+c^2 f_{acc}+2ab f_{aab}+2ac f_{acc}+2bc f_{abc}=(d-2)(d-1)f_a\,.
\end{equation}

From \eqref{eq2-3} 
we obtain the following \emph{third order} Eulerian identity:
\begin{equation}
\label{3rd-Euler}
x^3 f_{xxx}+y^3 f_{yyy}+z^3 f_{zzz}+3( x^2 y f_{xxy}+xy^2 f_{xyy}+ y^2 zf_{yyz}
+ yz^2 f_{yzz}+x^2 zf_{xxz}+xz^2 f_{xzz})+6xyz f_{xyz}=0\,.
\end{equation}


\subsection{Applications of the Eulerian identities}\label{a:eulerappl} We can express $f_x f_y, \;f_x f_z, \;f_y f_z$ as linear combinations of the cofactors of $\Delta$ with coefficients certain monomials in $x,y,z$. More precisely we have for $\{a,b,c\}=\{x,y,z\}$: 

\begin{align}
 f_a f_b&=\frac{1}{(d-1)^2} \left\{ c^2 \Delta_{ab}+ab \Delta_{cc}-ac \Delta_{bc}-bc\Delta_{ac}\right\}+\frac{d}{(d-1)} f_{ab} f\label{PD-prodab}\\ 
 f_a^2&=\frac{1}{(d-1)^2}\left\{-b^2\Delta_{cc}-c^2\Delta_{bb}+2bc\Delta_{bc}\right\}+\frac{d}{(d-1)} f_{aa}f\label{PD-prodaa}.
\end{align}

We shall prove \eqref{PD-prodab} with $a=y$, $b=z$ and $c=x$. The others follow similarly. 

\begin{proof}[Proof of \eqref{PD-prodab}] 
From \cref{2-1deriv} we obtain
\begin{align*}
f_y f_z&=\frac{1}{(d-1)^2} [ (xf_{xy}+yf_{yy}+zf_{yz}) (xf_{xz}+yf_{yz}+zf_{zz} )] \\
&=\frac{1}{(d-1)^2} [ x^2 f_{xz}f_{xy}+xy f_{yz}f_{xy}+xz f_{zz} f_{xy}+xy f_{xz} f_{yy}\\
&\quad+y^2 f_{yz}f_{yy}+yz f_{zz}f_{yy}+xz f_{yz} f_{xz}+yz f^2_{yz}+z^2 f_{zz}f_{yz} ] \\
&=\frac{1}{(d-1)^2} [ f_{yz} (y^2 f_{yy}+z^2 f_{zz}+yz f_{yz}+xz f_{xz}+ xy f_{xy})\\
&\quad+f_{xy}(x^2 f_{xz}+xz f_{zz})+ xy f_{xz}f_{yy}+yz f_{zz}f_{yy} ]\\
&=\frac{1}{(d-1)^2} [ f_{yz} (-x^2 f_{xx}-yz f_{yz}-xz f_{xz}-xyf_{xy}+d(d-1) f)\\
&\quad+f_{xy}(x^2 f_{xz}+xz f_{zz})+ xy f_{xz}f_{yy}+yz f_{zz}f_{yy}]\,. 
\end{align*}

Using \eqref{2nd order Euler id} to replace $(y^2 f_{yy}+z^2 f_{zz}+yz f_{yz}+xz f_{xz}+ xy f_{xy})$ with $-x^2 f_{xx}-yz f_{yz}-xz f_{xz}-xyf_{xy}+d(d-1)f$, we obtain:
\[
f_y f_z=\frac{1}{(d-1)^2} [x^2 \Delta_{yz}-xy \Delta_{xz}-xz\Delta_{xy}+yz\Delta_{xx}+d(d-1) f_{yz} f ]\,.
\]
\end{proof}

\subsection{Some facts on partial derivatives}
Now, if we differentiate \eqref{eqCram} with respect to $y$ and $z$, \eqref{eqCram} with respect to $x$ and $z$, \eqref{eqCram} with respect to $x$ and $y$ and use the identities (d-i) in \cref{det-exp}, we obtain the following with $\{x,y,z\}=\{a,b,c\}$:
\begin{equation}
 a \delta_b=(d-1) (f_a \Delta_{aa,b}+f_b \Delta_{ab,b}+f_c \Delta_{ac,b})\,.
 \label{eqDiffCram}
\end{equation}

Here $\Delta_{ab,c}$ denotes the derivative of $\Delta_{ab}$ with respect to the variable $c$. Moreover, if we differentiate \eqref{eqCram} with respect to $x,$ \eqref{eqCram} with respect to $y,$ \eqref{eqCram} with respect to $z,$ use the identities (a-c) in \cref{det-exp} and then simplify, we obtain for $\{x,y,z\}=\{a,b,c\}$:
\begin{equation}
 a \delta_a=(d-2) \delta +(d-1) (f_a \Delta_{aa,a}+f_b \Delta_{ab,a}+f_c \Delta_{ac,a})\label{eqDiff-Cram-same}\,.
\end{equation}

Let \[F\coloneqq \begin{bmatrix} f_{xxx}&f_{xyx}& f_{xzx}\\
f_{xyx}&f_{yyx}&f_{yzx}\\
f_{xzx}& f_{yzx}& f_{zzx}\\
\end{bmatrix}\,, \phantom{sfh} G\coloneqq \begin{bmatrix} f_{xxy}&f_{xyy}& f_{xzy}\\
f_{xyy}&f_{yyy}&f_{yzy}\\
f_{xzy}& f_{yzy}& f_{zzy}\\
\end{bmatrix}\,,\phantom{sjd}H\coloneqq \begin{bmatrix} f_{xxz}&f_{xyz}& f_{xzz}\\
f_{xyz}&f_{yyz}&f_{yzz}\\
f_{xzz}& f_{yzz}& f_{zzz}\\
\end{bmatrix}
\]
denote the matrices obtained by differentiating the columns of the matrix $\Delta$ with respect to $x, y, z$ respectively. 
If we differentiate (a-c) in \cref{det-exp} with respect to the variables $x,y,z$ respectively and simplify we can see that: 
\begin{equation}
\label{eq:Der-delt}
\delta_x=\text{tr}(\text{adj}(\Delta) F)\quad 
\delta_y=\text{tr}(\text{adj}(\Delta) G) \quad 
\delta_z=\text{tr}(\text{adj}(\Delta) H)\,,
\end{equation}
where $\text{tr}A$ denotes the trace of a matrix $A$. We will prove the first equality in \eqref{eq:Der-delt}. The others follow similarly. 

To see this, recall $\delta=f_{xx} \Delta_{xx}+f_{xy} \Delta_{xy}+f_{xz} \Delta_{xz}$. If we differentiate with respect to $x$ we obtain
\begin{align}
\delta_x&=(f_{xxx}\Delta_{xx}+f_{xyx}\Delta_{xy}+f_{xzx}\Delta_{xz})+f_{xx}\Delta_{xx,x}+f_{xy}\Delta_{xy,x}+f_{xz}\Delta_{xz,x}\label{eqComp}.
\end{align}
Differentiating the following subdeterminants
\[
\Delta_{xx}=f_{yy} f_{zz}-f^2_{yz}\,, \phantom{sds}
\Delta_{xy}=f_{xz}f_{yz}-f_{xy}f_{zz}\,, \phantom{sdsd}\Delta_{xz}=f_{xy} f_{yz}-f_{xz}f_{yy}
\] 
with respect to $x$  we obtain the following equations:
\begin{subequations}
\label{eq:MinDeri}
\begin{align}
\Delta_{xx,x}&=f_{yyx}f_{zz}+f_{yy}f_{zzx}-2f_{yz}f_{yzx}\label{eqMinDerxx,x}\\
\Delta_{xy,x}&=f_{xzx}f_{yz}+f_{zx}f_{yzx}-f_{xyx}f_{zz}-f_{xy}f_{zzx}\label{eqMinDerxy,x}\\
\Delta_{xz,x}&=f_{xyx} f_{yz}+f_{xy}f_{yzx}-f_{xzx}f_{yy}-f_{xz}f_{yyx}\,.\label{eqMinDerxz,x}
\end{align}
\end{subequations}
Now we see that 
\begin{align*}
f_{xx}\Delta_{xx,x}+f_{xy}\Delta_{xy,x}+f_{xx}\Delta_{xz,x}&=f_{xx}(f_{yyx}f_{zz}+f_{yy}f_{zzx}-2f_{yz}f_{yzx})\\
&\qquad+f_{xy} (f_{xzx}f_{yz}+f_{zx}f_{yzx}-f_{xyx}f_{zz}-f_{xy}f_{zzx})\\
&\qquad+f_{xz} (f_{xyx} f_{yz}+f_{xy}f_{yzx}-f_{xzx}f_{yy}-f_{xz}f_{yyx})\\
&=f_{xyx}(f_{xz}f_{yz}-f_{xy}f_{zz})+f_{yyx}(f_{xx}f_{zz}-f^2_{xy})+f_{xyz}(f_{zx}f_{xy}-f_{yz}f_{xx})\\
&\qquad+ f_{xyz}(f_{zx}f_{xy}-f_{yz}f_{xx})+f_{xzx}(f_{xy}f_{yz}-f_{yy}f_{xz})+f_{xzz}(f_{xx}f_{yy}-f^2_{xy})\\
&=(\Delta_{xy}f_{xyx}+\Delta_{yy}f_{yyx}+\Delta_{yz}f_{yzx})+(\Delta_{xz}f_{xzx}+\Delta_{yz}f_{yzx}+\Delta_{zz}f_{zzx})\,.
\end{align*}
So we have: 
\[
f_{xx}\Delta_{xx,x}+f_{xy}\Delta_{xy,x}+f_{xx}\Delta_{xz,x}=(\Delta_{xy}f_{xyx}+\Delta_{yy}f_{yyx}+\Delta_{yz}f_{yzx})+(\Delta_{xz}f_{xzx}+\Delta_{yz}f_{yzx}+\Delta_{zz}f_{zzx})\,.
\]
Putting this together with \eqref{eqComp} we see that $\delta_x=\text{tr}(\mathrm{adj}( \Delta) F)$, as claimed.

\subsection{Identities involving the derivatives of the subdeterminants $\Delta_{ab}$}\label{a:subdetderiv} We collect some useful identities involving the derivatives of the $(i,j)$
cofactors of $\Delta$:
\begin{equation}
\label{RelDerMin}
\Delta_{aa,a}+\Delta_{ab,b}+\Delta_{ac,c}=0\quad\text{with}\quad \{x,y,z\}=\{a,b,c\}\,.
\end{equation}

\begin{proof}[Proof of \eqref{RelDerMin}] 
We show this for one choice of $a,b,c,$ but the others follow from symmetry.  Differentiating the equation 
\[\Delta_{xy}=f_{xz}f_{yz}-f_{xy}f_{zz}\] 
with respect to $x$, we obtain the following equation
\[
\Delta_{xy,x}=f_{xzx}f_{yz}+f_{xz}f_{yzx}-f_{xyx}f_{zz}-f_{xy}f_{zzx}\,.
\]
Now, we have that $\Delta_{yy,y}=f_{xxy}f_{zz}+f_{xx}f_{zzy}-2f_{xz}f_{xzy}$ and $\Delta_{yz,z}=f_{xyz}f_{xz}+f_{xy}f_{xzz}-f_{xxz}f_{yz}-f_{xx}f_{yzz}$. 
Adding the last two equalities together we obtain 
\[
\Delta_{yy,y}+\Delta_{yz,z}=f_{xxy}f_{zz}-f_{xyz}f_{xz}-f_{xxz}f_{yz}+f_{xzz}f_{xy}=-\Delta_{xy,x}\,.\qedhere
\]
\end{proof}

\section{Hamiltonian identities}\label{a:hamiltonian}

In this appendix, we collect some identities involving the Hamiltonians acting on various cofactors of 
the matrix $\Delta$. These identities are used in the calculation of the entries of the matrix product $\theta_1(3) M_1(3)$ and thus in construction of the lift $\theta_2(3)$ in \cref{subsec:liftS3}. Recall
\begin{equation*}
H_{yz}=f_z\partial_y-f_y\partial_z, 
 \phantom{sdfs}
H_{zx}=f_x\partial_z-f_z\partial_x,
 \phantom{sdfs}
H_{xy}=f_y\partial_x-f_x\partial_y
\end{equation*}
and we define $H_{zy}=-H_{yz}$, $H_{xz}=-H_{zx}$, $H_{yx}=-H_{xy}$. 
There are some fundamental relations among all the Hamiltonians and the Euler derivations:
\begin{subequations}
\begin{align}
&f_x H_{yz}+f_y H_{zx}+f_z H_{xy}=0\label{eq:FDR-Ham}\\
&f_a E-c H_{ca}+ b H_{ab}=0\quad\text{with}\quad\{a,b,c\}=\{x,y,z\}\,.\label{Rel-EHabHac}
\end{align}
\end{subequations} 

\subsection{Relations among $H_{ij}(\Delta_{kl})$.}\label{a:Hrelns} We have the following identities with $\{a,b,c\}=\{x,y,z\}$: 

\begin{align}
 H_{ca}(\Delta_{aa}) &-H_{bc}(\Delta_{ab})=\frac{a \delta_c}{(d-1)}\label{HamRel1}\\ 
 H_{bc}(\Delta_{bc})&-H_{ab}(\Delta_{ab})=\frac{b\delta_b-(d-2)\delta}{(d-1)}\label{HamRel2}\,.
\end{align}
Using \cref{HamRel1} we obtain
\begin{align}
\label{HamRel4+5Comb}
2\,H_{yz}(\Delta_{yz})=H_{zx}(\Delta_{xz})+H_{xy}(\Delta_{xy})+\frac{y\delta_y-z\delta_z}{(d-1)}\\
\label{HamRel4-5Comb}
0=H_{xy}(\Delta_{xy})-H_{zx}(\Delta_{xz})+\frac{y\delta_y+z\delta_z}{(d-1)}-\frac{2(d-2)\delta}{(d-1)}\,.
\end{align}
Additionally, if we replace $y\delta_y+z\delta_z$ by its equal $3(d-2)\delta-x\delta_x,$ then 
\eqref{HamRel4-5Comb} becomes
\begin{equation}
\label{HamRel4-5Comb2}
0=H_{xy}(\Delta_{xy})-H_{zx}(\Delta_{xz})+\frac{(d-2)\delta-x\delta_x}{(d-1)}\,.
\end{equation}
We present the proofs \eqref{HamRel1} with $a=x$, $b=y$ and $c=z$ and the remaining follow along the same lines. 

\begin{proof}
We use the identities in \cref{eqDiffCram} from \cref{appendix} to verify the identity:

\begin{align*}
 H_{yz}(\Delta_{xy})&=f_z\Delta_{xy,y}-f_y\Delta_{xy,z}\\
 &=-f_z\Delta_{xx,x}-f_z\Delta_{xz,z}-f_y\Delta_{xy,z}\\
&=H_{zx}(\Delta_{xx})-\frac{x\delta_z}{d-1}\,;
\end{align*}
the second equality uses \cref{RelDerMin} while the third uses \cref{eqDiffCram}.
\end{proof}

\subsection{Identities on how $H_{ij}$ act on some elements in $k[x,y,z]$} Here we collect some useful identities on how $H_{ij}$ act on \cref{2-1deriv},
\eqref{eqCram}. 
 Note that $H_{yz}(x)=0$, $H_{yz}(y)=f_z$, and $H_{yz}(z)=-f_y$. Moreover, using the equalities in \eqref{eq:MinDiff} we have:
\begin{subequations}
\label{H-on-partial f}
\begin{align}
H_{yz}(f_x)&=f_z f_{yx}-f_y f_{zx}=\frac{1}{(d-1)} (y\Delta_{xz}-z\Delta_{xy})\label{H-on-fx}\\
H_{yz}(f_y)&=f_z f_{yy}-f_y f_{zy}=\frac{1}{(d-1)} (z\Delta_{xx}-x\Delta_{xz})\label{H-on-fy}\\
H_{yz}(f_z)&=f_z f_{yz}-f_y f_{zz}=\frac{1}{(d-1)} (x\Delta_{xy}-y\Delta_{xx})\,.\label{H-on-fz}
\end{align}
\end{subequations}
If we apply the various Hamiltonians to \eqref{eqCram} we obtain the following with $\{a,b,c\}=\{x,y,z\}$:
\begin{subequations}
\label{eq:H-on-Cram}
\begin{align}
f_a H_{bc}(\Delta_{aa})+f_b H_{bc}(\Delta_{ab})+f_c H_{bc}(\Delta_{bc})&=\frac{a(f_c\delta_b-f_b\delta_c)}{d-1}\label{H-on-Cram1}
\\
f_a H_{bc}(\Delta_{ab})+f_b H_{bc}(\Delta_{bb})+f_c H_{bc}(\Delta_{bc})&=\frac{b(f_c\delta_b-f_b\delta_c)}{d-1}-\frac{(d-2)\delta}{d-1} f_c\label{H-on-Cram2}
\\
f_a H_{bc}(\Delta_{ac})+f_b H_{bc}(\Delta_{bc})+f_c H_{bc}(\Delta_{cc})&=\frac{c(f_c\delta_b-f_b\delta_c)}{d-1}+\frac{(d-2)\delta}{d-1} f_b\label{H-on-Cram3}
\\
H_{bc}(f_a)\Delta_{aa}+H_{bc}(f_b)\Delta_{ab}+H_{bc}(f_c)\Delta_{ac}&=0\,.\label{H-on-Cram4}
\end{align}
\end{subequations}

\begin{remark} 
Some of the equations above can be rewritten in different ways. For example if we look at the equation \eqref{H-on-Cram1}
using the basic identity $x\delta_x+y\delta_y+z\delta_z=3(d-2)\delta$ and rewriting $xf_x=-yf_y-zf_z$ (over $R$) or $xf_x=d\cdot f-yf_y-zf_z$ on $k[x,y,z]$ 
we can write

\begin{align*}
\frac{x(f_x\delta_z-f_z\delta_x)}{d-1}+\frac{(d-2)\delta}{d-1}f_z
&=\frac{\delta_z (d\cdot f-yf_y-zf_z)-x f_z \delta_x+(d-2)\delta f_z}{d-1}
\\
&=\frac{-f_z(x\delta_x+z\delta_z)-yf_y\delta_z+(d-2)\delta f_z}{d-1}+d\frac{d \delta_z}{d-1}f
\\
&=\frac{-f_z(3(d-2)\delta-y\delta_y)-yf_y\delta_z+(d-2)\delta f_z}{d-1}+d\frac{d \delta_z}{d-1}f
\\
&=\frac{y(f_z\delta_y-f_y\delta_z)}{d-1}-\frac{2(d-2)\delta}{d-1} f_z+d\frac{d \delta_z}{d-1}f\,.
\end{align*}
\end{remark} 

We prove the identities in \eqref{eq:H-on-Cram} for $ a=x$, $b=y$, and $c=z$. The remaining identities follow similarly. 

\begin{proof}[Proof of \eqref{eq:H-on-Cram}] 
We first prove $H_{yz}(f_x)\Delta_{xx}+H_{yz}(f_y)\Delta_{xy}+H_{yz}(f_z)\Delta_{xz}=0.$ We have:
\begin{align*}
H_{yz}(f_x)\Delta_{xx}+H_{yz}(f_y)\Delta_{xy}+H_{yz}(f_z)\Delta_{xz}
&=(f_z f_{xy}-f_y f_{zx})\Delta_{xx}+(f_z f_{yy}-f_y f_{yz})\Delta_{xy}+(f_z f_{yz}-f_y f_{zz})\Delta_{xz}
\\
&=f_z(f_{xy}\Delta_{xx}+f_{yy}\Delta_{xy}+f_{yz}\Delta_{xz})-f_y(f_{xz}\Delta_{xx}+f_{yz}\Delta_{xy}+f_{zz}\Delta_{zz})
\\
&=0\,,
\end{align*}
where the last equality uses \cref{det-exp}.
Applying $H_{yz}$ to \eqref{eqCram} yields 
\begin{align*}
xH_{yz}(\delta)
&=(d-1) [ f_x H_{yz}(\Delta_{xx})+f_y H_{yz}(\Delta_{xy})+f_z H_{yz}(\Delta_{xz})
\\
&\qquad +H_{yz}(f_x)\Delta_{xx}+H_{yz}(f_y)\Delta_{xy}+H_{yz}(f_z)\Delta_{xz}]
\\
&=
(d-1) [ f_x H_{yz}(\Delta_{xx})+f_y H_{yz}(\Delta_{xy})+f_z H_{yz}(\Delta_{xz})]\,,
\end{align*}
where the second equality is from \cref{H-on-Cram4}.

Hence \[f_x H_{yz}(\Delta_{xx})+f_y H_{yz}(\Delta_{xy})+f_z H_{yz}(\Delta_{xz})=\frac{x H_{yz}(\delta)}{d-1}\,,\] and thus \eqref{H-on-Cram1}
is proved. 

For \eqref{H-on-Cram2} we use \eqref{HamRel1} and \eqref{HamRel2}, rewrite $H_{yz}(\Delta_{yy})$ in terms of $H_{zx}(\Delta_{xy})$, rewrite
$H_{yz}(\Delta_{yz})$ in terms of $H_{xy}(\Delta_{xy})$, and then use the fundamental relation among the Hamiltonians \eqref{eq:FDR-Ham} applied to $\Delta_{xy}$. 
So we have:

\begin{align*}
f_x H_{yz}(\Delta_{xy})&+f_y H_{yz}(\Delta_{yy})+f_z H_{yz}(\Delta_{yz})\\
&=f_x H_{yz}(\Delta_{xy})+f_y\left(H_{zx}(\Delta_{xy})-\frac{y\delta_z}{d-1}\right)+f_z\left(H_{xy}(\Delta_{xy})+\frac{y\delta_y-(d-2)\delta}{d-1}\right)\\
&=f_x H_{yz}(\Delta_{xy})+f_y H_{zx}(\Delta_{xy})+f_z H_{xy}(\Delta_{xy})+\frac{y(f_z\delta_y-f_y\delta_z)}{d-1}-\frac{(d-2)\delta}{d-1} f_z\\
&=\frac{y(f_z\delta_y-f_y\delta_z)}{d-1}-\frac{(d-2)\delta}{d-1} f_z\,,
\end{align*}
where the last equality uses \cref{eq:FDR-Ham}.
Finally for \eqref{H-on-Cram3} we will use \eqref{HamRel1} and \eqref{HamRel2}, rewrite $H_{yz}(\Delta_{yz})$ in terms of $H_{zx}(\Delta_{xz})$, rewrite
$H_{yz}(\Delta_{zz})$ in terms of $H_{xy}(\Delta_{xz})$, and then use the fundamental relation among the Hamiltonians \eqref{eq:FDR-Ham} applied to $\Delta_{xz}$. 

So we have:
\begin{align*}
f_x H_{yz}(\Delta_{xz})&+f_y H_{yz}(\Delta_{yz})+f_z H_{yz}(\Delta_{zz})\\
&=f_x H_{yz}(\Delta_{xz})+f_y\left(H_{zx}(\Delta_{xz})-\frac{z\delta_z-(d-2)\delta}{d-1}\right)+f_z\left(H_{xy}(\Delta_{xz})+\frac{z\delta_y}{d-1}\right)\\
&=f_x H_{yz}(\Delta_{xz})+f_y H_{zx}(\Delta_{xz})+f_z H_{xy}(\Delta_{xz})+\frac{z(f_z\delta_y-f_y\delta_z)}{d-1}+\frac{(d-2)\delta}{d-1} f_y\\
&\overset{\eqref{eq:FDR-Ham}}=\frac{z(f_z\delta_y-f_y\delta_z)}{d-1}+\frac{(d-2)\delta}{d-1} f_z\,.\qedhere
\end{align*}
\end{proof}

The remaining identities are used in \cref{a:theta3chain}.

By direct calculation one can show: 
\begin{equation}
\label{eq:Hij-on-2der}
\begin{split}
&H_{yz}(f_{xx})+H_{zx}(f_{xy})+H_{xy}(f_{xz})=0\\
&H_{yz}(f_{xy})+H_{zx}(f_{yy})+H_{xy}(f_{yz})=0\\
&H_{yz}(f_{xz})+H_{zx}(f_{yz})+H_{xy}(f_{zz})=0\,.
\end{split}
\end{equation}
We also see how the Hamiltonians $H_{ij}$ act on \cref{det-exp1} and \cref{det-exp4}.
Applying $H_{yz}$ to \eqref{det-exp1} yields
\[
H_{yz}(\delta)=H_{yz}(f_{xx}) \Delta_{xx}+H_{yz}(f_{xy})\Delta_{xy}+H_{yz}(f_{xz})\Delta_{xz}
+[f_{xx} H_{yz}(\Delta_{xx})+f_{xy}H_{yz}(\Delta_{xy})+f_{xz}H_{yz}(\Delta_{xz})]
\]
and hence 
\begin{equation}
\label{H-on-detexp1}
\begin{split}
f_{xx} H_{yz}(\Delta_{xx})+f_{xy}H_{yz}(\Delta_{xy})+f_{xz}H_{yz}(\Delta_{xz})&=\hspace{1in}\\
&H_{yz}(\delta)-[H_{yz}(f_{xx}) \Delta_{xx}+H_{yz}(f_{xy})\Delta_{xy}+H_{yz}(f_{xz})\Delta_{xz}]\,.
\end{split}
\end{equation}
Similarly if we apply $H_{zx}$ to 
\cref{det-exp4} we have:
\begin{align*}
0=&H_{zx}(f_{xy}) \Delta_{xx}+H_{zx}(f_{yy})\Delta_{xy}+H_{zx}(f_{yz})\Delta_{xz}+ [f_{xy} H_{zx}(\Delta_{xx})+f_{yy}H_{zx}(\Delta_{xy})+f_{yz}H_{zx}(\Delta_{xz})]
\end{align*}
and hence 
\begin{equation}
\label{Hzx-on-detexp6}
\begin{split}
f_{xy} H_{zx}(\Delta_{xx})+f_{yy}H_{zx}(\Delta_{xy})+f_{yz}H_{zx}(\Delta_{xz})=
&-[H_{zx}(f_{xy}) \Delta_{xx}+H_{zx}(f_{yy})\Delta_{xy}+H_{zx}(f_{yz})\Delta_{xz}]\,.
\end{split}
\end{equation}
And finally if we apply $H_{xy}$ to 
\cref{det-exp4} we have:
\begin{align*}
0=&H_{xy}(f_{xz}) \Delta_{xx}+H_{xy}(f_{yz})\Delta_{xy}+H_{xy}(f_{zz})\Delta_{xz}+[f_{xz} H_{xy}(\Delta_{xx})+f_{yy}H_{xy}(\Delta_{xy})+f_{zz}H_{xy}(\Delta_{xz})]
\end{align*}
and hence
\begin{equation}
\label{Hxy-on-detexp8}
\begin{split}
f_{xz} H_{xy}(\Delta_{xx})+f_{yz}H_{xy}(\Delta_{xy})+f_{zz}H_{xy}(\Delta_{xz})=
&-[H_{xy}(f_{xz}) \Delta_{xx}+H_{xy}(f_{yz})\Delta_{xy}+H_{xy}(f_{zz})\Delta_{xz}]\,.
\end{split}
\end{equation}

\begin{remark}
 Analogous equations hold if we apply the other Hamiltonians to the equations in \eqref{det-exp}. 
\end{remark}

\section{Computations for {$M_0(3) M_1(3)=0$}}\label{a:genrelforS3}

\subsection{From relations among generators of $\ker(J_{2,1})$ to relations among generators of $\ker(J_{3,2})$.} 
Recall from \cref{l:complexM0M1(2)} that the following relations hold for the generators of $\ker(J_{2,1})$: 
\begin{align}
&f_a E^2 -c EH_{ca}+b EH_{ab}=0,~ \text{for}~ \{a,b,c\}=\{x,y,z\}\label{E2-EHca-EHab}\\
&\frac{2}{d-1} \left( f_{aa} EH_{bc}+f_{ab} EH_{ca}+f_{ac}EH_{ab}\right)-c\alpha_b+b\alpha_c=0~ \text{for}~ \{a,b,c\}=\{x,y,z\}\label{EHam-alphab-c}\\
&-\frac{2\delta}{(d-1)^3} E^2+f_x \alpha_x+f_y \alpha_y+f_z\alpha_z=0\label{E2-alphas}\,.
\end{align}

Precomposing with $E$ we obtain seven relations amongst the generators of $\ker(J_{3,2})$, corresponding to the first seven columns of $M_1(3)$. \label{SomeRelGen-3}

We shall now describe what will turn out to be the remaining relations among the generators. 
They correspond to columns 8--10 of $M_1(3)$. 

\begin{proposition}\label{c:OBLC-M1(3)} We have over $R$ that 
\begin{subequations}
\begin{align}
-\frac{2\delta_x}{d'^3 (d-2)} E^3+\frac{3}{d-1} \left( f_{xx} E\alpha_x+f_{xy}E\alpha_y+f_{xz}E\alpha_z\right)-z\, \zeta_y+y\, \zeta_z&=0\label{c:eq1OBLC-M1(3)}\\
-\frac{2\delta_y}{d'^3 (d-2)} E^3+\frac{3}{d-1} \left( f_{xy} E\alpha_x+f_{yy}E\alpha_y+f_{yz}E\alpha_z\right)+z \,\zeta_x-x\, \zeta_z&=0\label{c:eq2OBLC-M1(3)}\\
-\frac{2\delta_z}{d'^3 (d-2)} E^3+\frac{3}{d-1} \left( f_{xz} E\alpha_x+f_{yz}E\alpha_y+f_{zz}E\alpha_z\right)-y\, \zeta_x+x\, \zeta_y&=0\label{c:eq3OBLC-M1(3)}\,.
\end{align}
\end{subequations}
\end{proposition}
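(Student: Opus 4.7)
The three equations in \cref{c:OBLC-M1(3)} are obtained from one another by cyclically permuting the roles of $x,y,z$ (and accordingly the indices of $\zeta_i$, $E\alpha_i$, and of the Hessian entries $f_{ij}$, $\Delta_{ij}$), so it suffices to prove one of them; I would focus on \eqref{c:eq1OBLC-M1(3)}. (I read the notation $d'^3$ as $(d-1)^3$, by analogy with the relation \eqref{E2-alphas} of which this is the order-3 analogue.) The plan is a direct, coordinate-by-coordinate verification: expand the left-hand side as a vector in $R^{10}$ using the explicit formulas for the columns of $M_0(3)$ recorded in \ref{M0(3)}, and show that each of the ten coordinates vanishes in $R$.

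For a fixed divided-power basis vector $\partial_x^{(a)}\partial_y^{(b)}\partial_z^{(c)}$ with $a+b+c=3$, the corresponding coordinate of the LHS naturally splits into three parts: an $E^3$-piece (a monomial in $x,y,z$ times $\delta_x$); an $E\alpha$-piece (monomials times products of the form $f_{xj}\Delta_{kl}$); and a $z\zeta_y - y\zeta_z$-piece, which involves $H_{yz}(\Delta_{ij})$ and the symmetrized Hamiltonian entries appearing in the matrix $Z$ of \ref{M0(3)}, together with residual terms $f_a\Delta_{bc}$ and $\delta_b$ multiplied by monomials in $x,y,z$. The strategy for collapsing these to zero is as follows. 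First, I would use the Hamiltonian relations of \cref{a:Hrelns}---principally \eqref{HamRel1} and \eqref{HamRel2}, together with their cyclic variants---to rewrite combinations of the form $zH_{yz}(\Delta_{ij}) - yH_{yz}(\Delta_{kl})$ in terms of $H_{zx}$ and $H_{xy}$ acting on cofactors of $\Delta$. Second, I would combine these with the $E\alpha$-piece via \eqref{eq:H-on-Cram} and the ``Hamiltonians on the Laplace expansion'' identities \eqref{H-on-detexp1}, \eqref{Hzx-on-detexp6}, \eqref{Hxy-on-detexp8}, which produce terms proportional to $H_{yz}(\delta)$; the latter matches the $\delta_x$-coefficient from the $E^3$-piece via the Cramer derivative identities \eqref{eqDiffCram} and \eqref{eqDiff-Cram-same} (together with $H_{yz}(\delta) = f_z\delta_y - f_y\delta_z$). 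Finally, the residual $f_a\Delta_{bc}$ contributions are eliminated using the Laplace expansions \eqref{det-exp} together with the Euler identities \eqref{euler} and \eqref{2-1deriv}.

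The bulk of the work lies in the coordinate arithmetic, but the $y \leftrightarrow z$ symmetry internal to \eqref{c:eq1OBLC-M1(3)} pairs up many coordinates (for instance $\partial_x^{(2)}\partial_y \leftrightarrow \partial_x^{(2)}\partial_z$, $\partial_x\partial_y^{(2)} \leftrightarrow \partial_x\partial_z^{(2)}$, and $\partial_y^{(3)} \leftrightarrow \partial_z^{(3)}$), so the ten coordinates reduce to roughly five essentially distinct computations. The main obstacle is the bookkeeping: each coordinate requires tracking signs and factors of $d-1$ and $d-2$ while applying up to half a dozen identities from \cref{appendix,a:hamiltonian}. I expect the most delicate coordinate to be $\partial_x^{(3)}$, where the $\delta_x$ from $E^3$, the ``diagonal Cramer'' identity $f_{xx}\Delta_{xx}+f_{xy}\Delta_{xy}+f_{xz}\Delta_{xz}=\delta$, and the mixed-derivative identity \eqref{eqDiff-Cram-same} must all be balanced against the Hamiltonian-on-cofactor terms coming from $\zeta_y$ and $\zeta_z$; once this coordinate is in hand, the remaining coordinates follow by more direct applications of the same stable of identities.
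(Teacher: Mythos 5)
Your plan is sound in principle---the statement is an equality of vectors in $R^{10}$, so a coordinate-by-coordinate verification using the explicit columns of $M_0(3)$ and the identities in \cref{appendix,a:hamiltonian} would establish it, and your observations that the three identities are cyclic images of one another and that the internal $y\leftrightarrow z$ symmetry of \eqref{c:eq1OBLC-M1(3)} pairs up coordinates (reducing to about five distinct checks) are both correct. However, this is a genuinely different and more computational route than the one the paper takes. The paper avoids coordinate bookkeeping entirely and instead manipulates the generators as module elements. Working in the localization $R_{yz}$ (legitimate since $y,z$ are nonzerodivisors and both sides already live in $R^{10}$), it expands $-z\zeta_y + y\zeta_z$ from the definitions in \cref{zetas}, rewrites $z^3H_{zx}^3$ via $zH_{zx}=f_xE+yH_{xy}$ (from \eqref{Rel-EHabHac}), eliminates the $EH^2_{xy}$ contribution using $EH^2_{xy}=zE\alpha_z-\tfrac{\Delta_{zz}}{(d-1)^2}E^3$, uses $zE^2H_{zx}=f_xE^3+yE^2H_{xy}$ (the composition of \eqref{E2-EHca-EHab} with $E$), and then invokes a separate lemma (\cref{Rel-Ealphas}) to express $\tfrac{3}{d-1}(f_{xx}E\alpha_x+f_{xy}E\alpha_y+f_{xz}E\alpha_z)$ in terms of $E\alpha_z$, $E^2H_{xy}$, and $E^3$ via the order-two relations of \cref{l:complexM0M1(2)}. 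After cancellation, everything reduces to the scalar Hamiltonian identities \eqref{HamRel1} and \eqref{HamRel2} applied once, rather than to ten polynomial identities. The paper's route buys substantial brevity and keeps the argument at the level of the relations already established for $\ker J_{2,1}$; your route is more elementary and self-contained (no detour through a localization or a free-basis argument) but would require tracking more terms, and without actually executing the arithmetic there is residual risk of a sign or coefficient error in one of the five coordinate checks.
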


\begin{proof} We shall prove \eqref{c:eq1OBLC-M1(3)}. The rest follow similarly. We will use $d'\coloneqq d-1$.

Recall that $H^2_{xy}+\frac{\Delta_{zz}}{d'^2} E^2=z\alpha_z$. If we apply $E$ to it and look at the ``cubic order terms'' we obtain 
\label{EH2xy-equiv}
\begin{equation}
EH^2_{xy}=zE\alpha_z-\frac{\Delta_{zz}}{d'^2} E^3\,.
\end{equation}
We have:
\begin{align*}\label{Rel-2-zetas}
-z\zeta_y+y\zeta_z&=-\frac{z}{y^2 }\left(\frac{1}{z^3} (z H_{zx})^3 +\frac{3\,\Delta_{yy}}{d'^2} E^2H_{zx}-\frac{H_{zx}(\Delta_{yy})}{d'^2 (d-2)}E^3\right)\\
 & \qquad +\frac{y}{z^2}\left( H^3_{xy}
+\frac{3\,\Delta_{zz}}{d'^2} E^2 H_{xy}-\frac{H_{xy}(\Delta_{zz})}{d'^2 (d-2)}E^3\right)\\
 & =
-\frac{z}{y^2} \left [ \frac{1}{z^3} \left( f_x^3 E^3+3y f_x^2 E^2H_{xy}+3y^2 f_x EH^2_{xy}+y^3 H^3_{xy} \right)+\frac{3\,\Delta_{yy}}{d'^2} E^2H_{zx} \right .\\
 &\quad \quad \left . -\frac{H_{zx}(\Delta_{yy})}{d'^2 (d-2)}E^3 \right ]
+\frac{y}{z^2}\left( H^3_{xy}+\frac{3\,\Delta_{zz}}{d'^2} E^2 H_{xy}-\frac{H_{xy}(\Delta_{zz})}{d'^2 (d-2)}E^3\right)\\
&=-\frac{z}{y^2} \left [ \frac{1}{z^3} \left( f_x^3 E^3+3y f_x^2 E^2H_{xy}+3y^2 f_x (zE\alpha_z-\frac{\Delta_{zz}}{d'^2}E^3)+ {y^3 H^3_{xy}} \right) \right .\\
 &\quad \quad \left . +\frac{3\,\Delta_{yy}}{d'^2} E^2H_{zx}-\frac{H_{zx}(\Delta_{yy})}{d'^2 (d-2)}E^3 \right ]
+\frac{y}{z^2}\left( { H^3_{xy}}+\frac{3\,\Delta_{zz}}{d'^2} E^2 H_{xy}-\frac{H_{xy}(\Delta_{zz})}{d'^2 (d-2)}E^3\right)\\
 &=\left [-\frac{f_x^3}{y^2 z^2}+\frac{3 f_x\Delta_{zz}}{d'^2 z^2}+\frac{z H_{zx}(\Delta_{yy})}{d'^2 (d-2) y^2}-\frac{y H_{xy}(\Delta_{zz})}{d'^2 (d-2) z^2}\right ] E^3 \\
 &\quad\quad +
 \left( -\frac{3 f_x^2}{y z^2}+\frac{3 y \Delta_{zz}}{d'^2 z^2}\right)E^2H_{xy}-\frac{3\Delta_{yy}}{d'^2 y^2} { zE^2 H_{zx}}-\frac{3f_x}{z}E\alpha_z\\
 & =\left [-\frac{f_x^3}{y^2 z^2}+\frac{3f_x\Delta_{zz}}{d'^2 z^2}-\frac{3 f_x\Delta_{yy}}{d'^2 y^2}+\frac{1}{d'^2 (d-2)}\left( \frac{z H_{zx}(\Delta_{yy})}{y^2}-\frac{y H_{xy}(\Delta_{zz})}{z^2}\right) \right] E^3\\
 & \quad \quad +\left(-\frac{3 f_x^2}{yz^2}+\frac{3 y \Delta_{zz}}{d'^2 z^2}-\frac{3\Delta_{yy}}{d'^2 y}\right) E^2 H_{xy}-\frac{3 f_x}{z} E\alpha_z\,,
\end{align*}
where the second equality uses $zH_{zx}=f_x E+yH_{xy}$ and in order to get the last equality we use \eqref{E2-EHca-EHab} and rewrite $z E^2H_{zx}$ in terms of $E^3, E^2H_{xy}$ via the formula $zE^2 H_{zx}=f_x E^3+y E^2H_{xy}.$
To conclude the proof of \eqref{c:eq1OBLC-M1(3)} we need the following lemma. 

\begin{lemma}\label{Rel-Ealphas} The following equality holds:
\begin{equation}
\frac{3}{d'} \left (f_{xx} E\alpha_x+f_{xy}E\alpha_y+f_{xz} E\alpha_z\right)=\frac{3\,f_x}{z} E\alpha_z+\frac{6}{d'^2\,z^2} \left( z\Delta_{yz} -y\Delta_{zz}\right) E^2H_{xy}-\frac{6\,f_x}{d'^2 z^2} \Delta_{zz} E^3.
\end{equation}
\end{lemma}

\begin{proof} 
We have
\begin{align*}
\frac{3}{d'} \left (f_{xx} E\alpha_x+f_{xy}E\alpha_y+f_{xz} E\alpha_z\right)&=
\frac{3}{d'\,z} [f_{xx} (zE\alpha_x)+f_{xy} (zE\alpha_y)+f_{xz} (zE\alpha_z) ]\\
&=
\frac{3}{d'\,z} \left[ (xf_{xx}+yf_{xy}+zf_{xz})E\alpha_z+\frac{2}{d'}\left ( \Delta_{yz} E^2 H_{xy}-\Delta_{zz} E^2 H_{zx}\right )\right]\\
 &=
 \frac{3\,f_x}{z} E\alpha_z+(\frac{6}{d'^2 z}\Delta_{yz}-\frac{6\,y}{d'^2 z^2}\Delta_{zz}) E^2H_{xy}-\frac{6\,f_x}{d'^2 z^2} \Delta_{zz} E^3\\
&=\frac{3\,f_x}{z} E\alpha_z+\frac{6}{d'^2\,z^2} \left( z\Delta_{yz} -y\Delta_{zz}\right) E^2H_{xy}-\frac{6\,f_x}{d'^2 z^2} \Delta_{zz} E^3\,,
\end{align*}
where the second equality follows from \eqref{EHam-alphab-c}, the third one follows from \eqref{2-1deriv}, and the last one holds by \eqref{E2-EHca-EHab}.
\end{proof}

To return to the proof of \cref{c:eq1OBLC-M1(3)} we combine the equalities in the proof of \cref{c:OBLC-M1(3)} and \cref{Rel-Ealphas} to obtain
\begin{align*}
\frac{3}{d'} (
f_{xx} E\alpha_x &+f_{xy}E\alpha_y+f_{xz} E\alpha_z)
-z\zeta_y+y\zeta_z
\\
&=\left [-\frac{f_x^3}{y^2 z^2}+\frac{3f_x\Delta_{zz}}{d'^2 z^2}-\frac{3 f_x\Delta_{yy}}{d'^2 y^2}+\frac{1}{d'^2 (d-2)}\left( \frac{z H_{zx}(\Delta_{yy})}{y^2}-\frac{y H_{xy}(\Delta_{zz})}{z^2}\right) \right ] E^3
\\
&\qquad +\left(-\frac{3 f_x^2}{yz^2}+\frac{3 y \Delta_{zz}}{d'^2 z^2}-\frac{3\Delta_{yy}}{d'^2 y}\right) E^2 H_{xy}- {\frac{3 f_x}{z} E\alpha_z}
\\
&\qquad + {\frac{3\,f_x}{z} E\alpha_z}+\frac{6}{d'^2\,z^2} \left( z\Delta_{yz} -y\Delta_{zz}\right) E^2H_{xy}-\frac{6\,f_x}{d'^2 z^2} \Delta_{zz} E^3
\\
&=\left [\frac{f_x}{d'^2 y^2 z^2} \left(y^2 \Delta_{zz}+z^2\Delta_{yy}-2yz\Delta_{yz}+3 y^2 \Delta_{zz}-3z^2\Delta_{yy}-6y^2 \Delta_{zz}\right) \right .
\\
&\qquad \left . +\frac{1}{d'^2 (d-2) y^2 z^2}(z^3 H_{zx}(\Delta_{yy})-y^3 H_{xy}(\Delta_{zz})\right ]E^3
\\
&\qquad +\left [\frac{3}{d'^2 yz^2} \left(y^2\Delta_{zz}+z^2\Delta_{yy}-2yz\Delta_{yz}\right)+\frac{3y}{d'^2 z^2} \Delta_{zz}-\frac{3\Delta_{yy}}{d'^2 y}+\frac{6 \Delta_{yz}}{d'^2 z}-\frac{6y\Delta_{zz}}{d'^2 z^2} \right ]E^2 H_{xy}
\\
&=\frac{1}{d'^2(d-2) y^2 z^2} \left [-y^2 {f_x E (\Delta_{zz})}-z^2 {f_x E (\Delta_{yy})}-yz {f_x E(\Delta_{yz})} \right .
\\
&\qquad \left .+z^3 H_{zx}(\Delta_{yy})-y^3 H_{xy}(\Delta_{zz}) \right ] E^3+{0\,E^2 H_{xy}}
\\
&= \frac{1}{d'^2(d-2) y^2 z^2} \left [-y^2 \left(z H_{zx}(\Delta_{zz})-yH_{xy}(\Delta_{zz})\right)-z^2 \left(z H_{zx}(\Delta_{yy})-yH_{xy}(\Delta_{yy})\right) \right .
\\
&\qquad \left . -yz \left(zH_{zx}(\Delta_{yz}-yH_{xy}(\Delta_{yz}))\right)+z^3 H_{zx}(\Delta_yy)-y^3 H_{xy}(\Delta_{zz})\right ] E^3
\\
&=\frac{1}{d'^2(d-2) y^2 z^2} \times \left [y^2z \left(H_{xy}(\Delta_{yz})-H_{zx}(\Delta_{zz})\right)+yz^2 \left(H_{xy}(\Delta_{yy})-H_{zx}(\Delta_{yz})\right) \right ] E^3 
\\
&=
\frac{2}{d'^3 (d-2)}\delta_x E^3,
\end{align*}
where the second equality follows from \eqref{PD-prodaa}; to obtain the fifth equality we repeatedly apply the fundamental identity $f_x E-z H_{zx}+y H_{xy}=0$ to each of $\Delta_{zz}$, $\Delta_{yy}$ and $\Delta_{yz}$ and use the fact that $E (\Delta_{ij})=2(d-2) \Delta_{ij}$; and the last equality follows from \eqref{HamRel1} and \eqref{HamRel2}.
We have thus proven \eqref{c:eq1OBLC-M1(3)}.
\end{proof}

\subsection{Relation among $\zeta_x, \zeta_y,\zeta_z$ and the various $E^2 H_{ij}$} In this subsection we establish a relation that corresponds to the last column of $M_1(3)$. We use the following notation: 
 $d'\coloneqq (d-1), \;d''\coloneqq (d-1)^3 (d-2)=d'^3 (d-2)$. 

\begin{lemma}\label{lastrelnM1(3)} We have
\begin{equation}
\label{LCM1(3)}
f_x\zeta_x+f_y\zeta_y+f_z \zeta_z-\frac{2\,\delta_x}{d''} E^2H_{yz}-\frac{2\,\delta_y}{d''} E^2H_{zx}-\frac{2\,\delta_z}{d''} E^2H_{xy}=0\,.
\end{equation}
\end{lemma}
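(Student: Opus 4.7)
The plan is to verify the identity directly in the ambient free module $R^{10}$, using the explicit formulas for the generators given in \cref{subsec:gensS3} and the identities collected in \cref{appendix} and \cref{a:hamiltonian}. First I would clear denominators by multiplying the asserted equation through by $x^2 y^2 z^2$ and substituting the definitions of $x^2\zeta_x,y^2\zeta_y,z^2\zeta_z$ from \cref{zetas}. The left-hand side then decomposes naturally into three pieces:
\begin{itemize}
\item a ``purely cubic'' Hamiltonian piece $y^2 z^2 f_x H_{yz}^3 + x^2 z^2 f_y H_{zx}^3 + x^2 y^2 f_z H_{xy}^3$,
\item a mixed piece $\tfrac{3}{(d-1)^2}(y^2 z^2 \Delta_{xx} f_x E^2 H_{yz} + x^2 z^2 \Delta_{yy} f_y E^2 H_{zx} + x^2 y^2 \Delta_{zz} f_z E^2 H_{xy})$,
\item a cubic-$E$ piece $-\tfrac{1}{d''}(y^2 z^2 H_{yz}(\Delta_{xx}) f_x + x^2 z^2 H_{zx}(\Delta_{yy}) f_y + x^2 y^2 H_{xy}(\Delta_{zz}) f_z)E^3$.
\end{itemize}

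Next, in the ``cubic Hamiltonian'' piece I would use the fundamental Hamiltonian identity \eqref{eq:FDR-Ham} $f_x H_{yz}+f_y H_{zx}+f_z H_{xy}=0$ together with \eqref{Rel-EHabHac}, which allows one to rewrite each $f_a H_{ij}^2$ in terms of $H_{ij}\circ E$ and the other Hamiltonians. The key observation is that the $y^2 z^2$ factor in front of $f_x H_{yz}^3$ is exactly suited to match the expression $(yH_{xy}-zH_{zx})^2$ appearing upon expanding $(f_x E)^2$; more precisely, iterating \eqref{Rel-EHabHac} converts each $y^2 z^2 f_a H_{ij}^3$-type term into a combination of $E^2 H_{ij}$ terms, modulo relations that vanish on $\ker(J_{3,2})$ by \cref{E2-EHca-EHab}. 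This should mirror (and lift to one higher order) the argument in \cref{c:OBLC-M1(3)}, where the analogous cubic terms $zH_{zx}^3, yH_{xy}^3$ were rewritten in terms of $f_x^j H_{ij}^{3-j}$.

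For the mixed piece, I would apply \eqref{H-on-Cram1} to recognize the coefficients: pulling the $f_a$ inside the Hamiltonian via the product rule produces terms proportional to $H_{ij}(\delta)$, after using \eqref{det-exp}. For the cubic-$E$ piece, I would invoke \eqref{eqCram} and \eqref{eq:Der-delt} to identify $f_a H_{ij}(\Delta_{aa})$-combinations with multiples of $\delta_x,\delta_y,\delta_z$ (this is exactly the calculation executed at the end of the proof of \cref{c:OBLC-M1(3)}, where $y^2z\left(H_{xy}(\Delta_{yz})-H_{zx}(\Delta_{zz})\right)+yz^2\left(H_{xy}(\Delta_{yy})-H_{zx}(\Delta_{yz})\right)$ is shown to equal $\tfrac{2y^2z^2}{(d-1)}\delta_x$ using \eqref{HamRel1}--\eqref{HamRel2}).

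The main obstacle is the bookkeeping required to show that these three pieces combine cleanly to produce the right-hand side. In particular, the cubic Hamiltonian piece and the $E^2 H_{ij}$ piece will each generate ``extra'' terms of the form $\Delta_{ab} f_c E^2 H_{ij}$ that must cancel against one another using the symmetric identity \eqref{H-on-Cram1} and its analogues; meanwhile, the $E^3$ coefficient must collapse to $-\tfrac{2}{d''}(x^2\delta_x E^2 H_{yz}+y^2\delta_y E^2 H_{zx}+z^2\delta_z E^2 H_{xy})/(x^2 y^2 z^2)$-type terms upon using \eqref{eq:Der-delt} to identify $f_a H_{ij}(\Delta_{aa})$-sums with derivatives of $\delta$. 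Once these cancellations are carried out symmetrically in $x,y,z$, the identity \eqref{LCM1(3)} follows. As in the previous lemma, by $(x,y,z)$-symmetry it suffices to track one of the three symmetric families of terms in detail.
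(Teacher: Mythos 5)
Your proposal takes a fundamentally different route from the paper's, and it has a real gap.

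The paper does not attempt a direct verification in $R^{10}$. Instead it observes (working over $R_x$, which is harmless since $R$ is a domain) that $f_x\zeta_x=\tfrac{1}{x}(-yf_y-zf_z)\zeta_x$, and then substitutes the \emph{already-proved} relations \eqref{c:eq2OBLC-M1(3)} and \eqref{c:eq3OBLC-M1(3)} to replace $-y\zeta_x$ and $-z\zeta_x$ by expressions in $E^3$, $E\alpha_\bullet$, $\zeta_y$, and $\zeta_z$. The terms $-f_y\zeta_y-f_z\zeta_z$ then cancel on the nose, and what remains is handled with \eqref{eq:MinDiff}, \eqref{E2-EHca-EHab}, \eqref{EHam-alphab-c}, and \eqref{det-exp}. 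The cubic Hamiltonian terms $H_{yz}^3$, $H_{zx}^3$, $H_{xy}^3$ are never unpacked because the $\zeta$'s are manipulated as black boxes via the prior relations. This ``bootstrap through the previous columns'' is exactly what makes the computation tractable.

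Your plan, by contrast, clears denominators and attacks the ten coordinate identities head-on. The step where you claim that ``iterating \eqref{Rel-EHabHac} converts each $y^2z^2f_aH_{ij}^3$-type term into a combination of $E^2H_{ij}$ terms, modulo relations that vanish on $\ker(J_{3,2})$ by \eqref{E2-EHca-EHab}'' is not justified, and it is not clear it can be made to work. The identity \eqref{Rel-EHabHac}, namely $f_aE-cH_{ca}+bH_{ab}=0$, is a linear relation among first-order derivations; it does not act on the vector $H_{ij}^3\in R^{10}$ in a way that produces $E^2H_{ij}$ terms, since $H_{ij}^3$ is the class of the composed operator $\H_{ij}^3$ modulo $D^2_{R|k}$, not a cube of a first-order vector. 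Also, the phrase ``modulo relations that vanish on $\ker(J_{3,2})$'' is confused: the asserted identity lives in the free module $R^{10}$, so there is nothing to quotient by. Your plan acknowledges that the real difficulty is bookkeeping but does not resolve it; without a concrete mechanism to convert the cubic Hamiltonian piece, the cancellations you predict remain speculative. If you want to pursue a direct approach you would likely need something closer to the computations in \cref{theta0(3)-all-zetas}, where $\theta_0(3)H_{ij}^3$ is evaluated coordinate-by-coordinate, but that is both far longer and conceptually less illuminating than the paper's reduction to the previously established columns.
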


\begin{proof} 
Observe that
\begin{align*} 
f_x\zeta_x&= \frac{1}{x} (-yf_y-zf_z) \zeta_x
\\
&=\frac{f_y}{x} (-y\zeta_x)+\frac{f_z}{x} (-z\zeta_x)
\\
&=\frac{f_y}{x} \left[ \frac{2\delta_z}{d''} E^3-\frac{3}{d'}\left(f_{xz} E\alpha_x+f_{yz}E\alpha_y+f_{zz} E\alpha_z\right)-x\zeta_y\right]
\\
&\qquad+\frac{f_z}{x} \left[ -\frac{2\delta_y}{d''} E^3+\frac{3}{d'}\left(f_{xy} E\alpha_x+f_{yy}E\alpha_y+f_{yz} E\alpha_z\right)-x\zeta_z\right]
\\
&=-f_y\zeta_y-f_z\zeta_z+\frac{2\,(f_y\delta_z-f_z\delta_y)}{d''\,x} E^3+\frac{3}{d'\,x} [ (f_zf_{xy}-f_yf_{xz}) E\alpha_x
\\
&\qquad+(f_zf_{yy}-f_yf_{yz}) E\alpha_y+
(f_zf_{yz}-f_yf_{zz}) E\alpha_z]
\\
&=-f_y\zeta_y-f_z\zeta_z+\frac{2\,(f_y\delta_z-f_z\delta_y)}{d''\,x} E^3
\\
&\qquad+\frac{3}{d'^2\,x} \left[ (y\Delta_{xz}-z\Delta_{xy}) E\alpha_x+(z\Delta_{xx}-x\Delta_{xz})E\alpha_y+(x\Delta_{xy}-y\Delta_{xx})E\alpha_z)\right]
\\
&=-f_y\zeta_y-f_z\zeta_z+\frac{2}{d''\,x} \left(\delta_z {f_y E^3}-\delta_y {f_z E^3}\right)
\\
&\qquad+\frac{3}{d'^2\,x} \left[ \Delta_{xx} (z E\alpha_y-y E\alpha_x)+\Delta_{xy} (xE\alpha_z-zE\alpha_x)+\Delta_{xz} (yE\alpha_x-xE\alpha_y)\right]
\\
&=-f_y\zeta_y-f_z\zeta_z+\frac{2}{d''\,x} [ \delta_z \left(-zE^2H_{yz}+x E^2H_{xy}\right)-\delta_y\left( yE^2H_{yz}-xE^2H_{zx}\right) ]
\\
&\qquad +\frac{3}{d'^2\,x} \left[ \Delta_{xx} {(z E\alpha_y-y E\alpha_x)}+\Delta_{xy} {(xE\alpha_z-zE\alpha_x)}+\Delta_{xz} {(yE\alpha_x-xE\alpha_y)}\right]
\\
&=-f_y\zeta_y-f_z\zeta_z+\frac{2}{d''\,x} [(-3(d-2)\delta+x\delta_x) E^2 H_{yz}+x\delta_y E^2H_{zx}+x\delta_z E^2H_{xy}]
\\
&\qquad +\frac{6}{d'^3\,x} [\Delta_{xx} (f_{xx}E^2H_{yz}+f_{xy}E^2H_{zx}+f_{xz}E^2H_{xy})+ \Delta_{xy} (f_{xy}E^2H_{yz}+f_{yy}E^2H_{zx}+f_{yz}E^2H_{xy})
\\
&\qquad + \Delta_{xz} (f_{xz}E^2H_{yz}+f_{yz}E^2H_{zx}+f_{zz}E^2H_{xy})]
\\
&= -f_y\zeta_y-f_z\zeta_z-\frac{6\delta}{d'^3\,x}E^2H_{yz} +\frac{2}{d''} (\delta_x E^2H_{yz}+\delta_y E^2H_{zx}+\delta_z E^2H_{xy})
\\
&\qquad+ \frac{6}{d'^3\,x}[ (f_{xx}\Delta_{xx}+f_{xy}\Delta_{xy}
+f_{xz}\Delta_{xz}) E^2H_{yz}
\\
&\qquad + (f_{xy}\Delta_{xx}+f_{yy}\Delta_{xy}+f_{yz}\Delta_{xz})E^2H_{zx}+(f_{xz}\Delta_{xx}+f_{yz}\Delta_{xy}+f_{zz}\Delta_{xz})E^2H_{xy} ]
\\
&=
-f_y\zeta_y-f_z\zeta_z+\frac{2}{d''} (\delta_x E^2H_{yz}+\delta_y E^2H_{zx}+\delta_z E^2H_{xy})- {\frac{6\delta}{d'^3\,x}E^2H_{yz}}+ {\frac{6\delta}{d'^3\,x}E^2H_{yz}}\,,
\end{align*}
where the first equality follows from \cref{euler}, the third from \cref{c:eq3OBLC-M1(3)} and \cref{c:eq2OBLC-M1(3)}, the fifth from \cref{eq:MinDiff}, the seventh from \cref{E2-EHca-EHab}, the eighth from \cref{EHam-alphab-c}, and the ninth from \cref{det-exp}. 
Hence we have shown 
\[
f_x\zeta_x+f_y\zeta_y+f_z\zeta_z-\frac{2}{d''} \delta_x E^2H_{yz}-\frac{2}{d''} \delta_y E^2H_{zx}-\frac{2}{d''} \delta_z E^2H_{xy}=0\,.\qedhere
\]
\end{proof}

\section{Matrix identities}\label{a:matrixidentities} 

 In this appendix we collect matrix identities involving the matrices, with entries in $R$, from \cref{sec:glossary}.

\begin{equation}
\label{c:q-d-D-sigma-interp}
\del_1=\del_3^T\quad\quad \partial^T _2=-\partial_2\quad\quad D_1=D_3^T \quad\quad D^T _2=-D_2\quad\quad \sigma_1=\sigma_3^T \quad \quad\sigma^T _2=-\sigma_2 
\end{equation}
\begin{equation}
\label{c:basic-Eul}
\partial_1 D_3=0_{3\times 3}\quad \quad \partial_3 D_1=D_2\partial_2 \quad\quad\del_i\alpha_i=\alpha_{i-1}D_i
\end{equation}
\begin{equation}
\label{c:identity-Del-adjDel}
\alpha_1\alpha_2=\alpha_2\alpha_1=\frac{\delta}{(d-1)^3} I_{3\times 3}
\end{equation}
\begin{equation}
\label{c:identity-RS-4-2-1}
\frac{1}{3}\sigma_3 \partial_1
=\alpha_1\alpha_2+\frac{1}{3}\partial_2\sigma_2 \quad\quad q\sigma_3=3 \alpha_2 D_3 
\end{equation}
\begin{equation}\label{c:d-D-alpha-q}
q=\partial_3 \partial_1 \quad\quad \alpha_1 q=\partial_2 D_2 \quad\quad\alpha_1 q=
D_3\partial_1 \quad\quad q\,\partial_2=0
\end{equation}

\begin{equation}
\label{c:d-sigmas-1}
\sigma_3\partial_1-\partial_2 \sigma_2=\frac{3\delta}{(d-1)^3} I_{3\times 3} 
\end{equation}
\begin{equation}
\label{c:D-to-sigmas} 
D_2\sigma_3+\sigma_2 D_3=0 \quad\quad D_1\sigma_2=-\sigma_1D_2
\end{equation}

\section{Proof of \cref{l:liftd3}}\label{a:theta3chain}

\subsection{First square commutes} \label{first lifting for 3rd ord}
 
Write
{\footnotesize
\setlength{\arraycolsep}{3pt}
\medmuskip=1mu
\[
\theta_0(3)=
\begin{bmatrix*}[r]
\frac{1}{6} f_{xxx}&\frac{1}{2} f_{xxy}&\frac{1}{2} f_{xxz}&\frac{1}{2} f_{xyy}&f_{xyz}&\frac{1}{2} f_{xzz}&\frac{1}{6} f_{yyy}&\frac{1}{2} f_{yyz}&\frac{1}{2} f_{yzz}&\frac{1}{6} f_{zzz}\\
\frac{1}{2} f_{xx}& f_{xy}\;&f_{xz}\;&\frac{1}{2} f_{yy}&f_{yz}\;&\frac{1}{2} f_{zz}&0\;\;&0\;\;&0\;\;&0\;\\
0\;\;&\frac{1}{2}f_{xx}&0\;\;&f_{xy}\;&f_{xz}\;&0\;\;& \frac{1}{2} f_{yy}\;&f_{yz}\;&\frac{1}{2}f_{zz}&0\;\\ 
0\;\;&0\;\;&\frac{1}{2}f_{xx}&0\;\;&f_{xy}\;&f_{xz}\;&0\;\;&\frac{1}{2}f_{yy}&f_{yz}&\frac{1}{2}f_{zz}\\
\end{bmatrix*}=\begin{bmatrix*}<-r_1->\\<-r_2->\\<-r_3->\\<-r_4->\\ \end{bmatrix*}
\]}
{\normalsize where $r_i$ denote the rows of the matrix $\theta_0(3).$}

\begin{proposition}\label{first lift for 3rd ord} 
The following equality is satisfied: $-\theta_0(3)M_0(3)=\del_1^C\theta_1(3)$.
\end{proposition}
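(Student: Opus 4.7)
The plan is to verify the matrix identity block-by-block, exploiting the natural column-block decomposition $M_0(3) = \bigl[\varepsilon(3)\mid \tfrac{2}{(d-1)^2}A(3)\mid -\tfrac{6}{(d-1)^2(d-2)}Z\bigr]$ together with the parallel decomposition of $\theta_1(3)$ (whose first four columns are zero). This splits the identity $-\theta_0(3)M_0(3) = \partial_1^C\theta_1(3)$ into three independent matrix identities, of shapes $4\times 4$, $4\times 3$, and $4\times 3$.

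For the first block, the aim is to show $\theta_0(3)\,\varepsilon(3) = 0$. The columns of $\varepsilon(3)$ are, by design, the naive lifts to $R\otimes_Q D^3_{Q|k}$ of the compositions $\E^3,\ \E^2\H_{yz},\ \E^2\H_{zx},\ \E^2\H_{xy}$; because these are honest differential operators on $R$, applying the whole matrix $P_3$ (whose top two block-rows make up $\theta_0(3)$) annihilates them. Concretely, $J_{3,0}\!\cdot\!E^3$ is, up to scalar, the third-order Eulerian identity \eqref{3rd-Euler}, and the remaining dot products reduce either to \eqref{eq3-2aa}, \eqref{eq3-2ab}, or to Hamiltonian-weighted combinations of these. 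I would therefore dispatch this block by a short row-by-row check.

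For the second block I would verify
\[ -\tfrac{2}{(d-1)^2}\theta_0(3)\,A(3) \;=\; \begin{bmatrix} -\tfrac{d^2-1}{2}\,D_1\alpha_2 + \theta_0(2)\,B_1 \\[2pt] J_{2,1}\,B_1 \end{bmatrix}. \]
Column by column, the bottom-block identity reduces to the Cramer-type formula \eqref{eqCram}, $a\delta = (d-1)(f_x\Delta_{xa}+f_y\Delta_{ya}+f_z\Delta_{za})$ for $a\in\{x,y,z\}$, while the top-block identity collapses after combining \eqref{det-exp1} with the Euler relation \eqref{2-1deriv} applied to the second partials. This block should be manageable by direct, though somewhat lengthy, calculation.

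The third block is where the real work concentrates: the target is
\[ \tfrac{6}{(d-1)^2(d-2)}\,\theta_0(3)\,Z \;=\; \begin{bmatrix} -(d-1)(d-2)\,D_1\sigma_2 - \tfrac{3}{(d-1)(d-2)}\,\theta_0(2)\,B_2 \\[2pt] -\tfrac{3}{(d-1)(d-2)}\,J_{2,1}\,B_2 \end{bmatrix}. \]
The entries of $Z$ are built from Hamiltonian derivatives $H_{ij}(\Delta_{kl})$ twisted by partials of $\delta$, and so are the entries of $B_2$, so the proof amounts to reconciling two intricate expressions via the identities of Appendix B. My approach would be: first reduce to a single column, say the $\zeta_x$ column, using $x,y,z$-symmetry, and then verify the four row entries separately. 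The $J_{3,0}$-row will require \eqref{eq:Der-delt} together with \eqref{HamRel1}--\eqref{HamRel2} to produce the $\delta_x,\delta_y,\delta_z$-terms that match the $\sigma_2$-block; the three $J_{3,1}$-rows will need \eqref{eq:H-on-Cram}, \eqref{H-on-Cram4}, and \eqref{HamRel4+5Comb}--\eqref{HamRel4-5Comb2} to match the $B_2$-block. The main obstacle is the combinatorial volume of this third block and the delicate cancellations needed to reconcile the two sides; I would likely factor the verification into a sequence of per-row sublemmas rather than a single linear computation, in the same spirit as the appendices the paper already uses for the analogous order-two lift.
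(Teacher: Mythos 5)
Your block decomposition and column-by-column verification using the Appendix~\ref{appendix} and Appendix~\ref{a:hamiltonian} identities mirror the paper's proof exactly: \cref{E2Hvanish,theta0(3) Ealphas,theta0(3)-all-zetas} compute $\theta_0(3)M_0(3)$ column by column, \cref{d1CV} handles $\del_1^C$ applied to the last columns of $\theta_1(3)$, and $x,y,z$-symmetry is invoked to reduce the $\zeta$-block to a single column, just as you propose. One small caution on your first-block gloss: the columns of $\varepsilon(3)$ are only the order-3 components of $\E^3$, $\E^2\H_{ij}$ and differ from those honest operators by lower-order corrections (cf.\ \cref{eq:order3opsE,eq:order3opsEH}), so their annihilation by $\theta_0(3)$ is not a formal consequence of $\E^3$, $\E^2\H_{ij}$ being operators---one would need to observe that the padded-with-zeros lifts equal $\E^3-3\E^2+2\E$, etc.\ and are therefore operators themselves---but since you also propose the direct row-by-row Euler-identity check, which is precisely what \cref{E2Hvanish} carries out, the argument is sound.
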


The proof is given at the end of the subsection after a series of preparatory lemmas. 
We make use of the description of $M_0(3)$ as 
\[
M_0(3)=
\begin{bmatrix*} E^3 & E^2 H_{yz}&E^2 H_{zx}&E^2 H_{xy}&E\alpha_x&E\alpha_y&E\alpha_z&\zeta_x&\zeta_y&\zeta_z\\
\end{bmatrix*}\phantom{sjdhfjsdfsdfsfa}
\] discussed in \cref{M0(3)}.
\begin{lemma}
\label{E2Hvanish}
The products $\theta_0(3) E^3$, $\theta_0(3) E^2 H_{yz}$, $\theta_0(3) E^2 H_{zx}$, and $\theta_0(3) E^2 H_{xy}$ all vanish.
\end{lemma}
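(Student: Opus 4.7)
The plan is a four-part direct verification, carried out column by column, with the vanishings for the $x$--$z$ and $x$--$y$ Hamiltonians reduced to the $y$--$z$ case by symmetry. The key observation is that $\theta_0(3)$ is stacked from $J_{3,0}$ (one row, pairing a divided-power monomial $\partial_x^{(a)}\partial_y^{(b)}\partial_z^{(c)}$ with its action on $f$) and $J_{3,1}$ (three rows, pairing the same monomials with the actions of one-step-reduced divided powers on the three partials of $f$). Thus each entry of $\theta_0(3)\cdot v$ is a polynomial expression in partials of $f$ that one hopes to collapse via the Eulerian identities in \cref{appendix}.

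First I would handle $\theta_0(3) E^3 = 0$. Since the entries of $E^3$ are simply $6 x^a y^b z^c$ with $a+b+c=3$, the first row is literally the left-hand side of the third-order Eulerian identity \eqref{3rd-Euler}, which vanishes in $R$. Each of the bottom three rows factors as $3a$ (for $a=x,y,z$) times the second-order Eulerian identity \eqref{2nd order Euler id}, which also vanishes in $R$.

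Next I would tackle $\theta_0(3) E^2 H_{yz} = 0$. Each entry naturally groups as $f_z\cdot A_i - f_y \cdot B_i$ once one sorts the ten terms by which factor of $H_{yz}=f_z\partial_y - f_y\partial_z$ produced them. For the first row, $A_1 = x^2 f_{xxy}+2xy f_{xyy}+2xz f_{xyz}+y^2 f_{yyy}+2yz f_{yyz}+z^2 f_{yzz}$ and $B_1$ is the same expression with the last index shifted from $y$ to $z$; these are exactly the left-hand sides of \eqref{eq2-3} with $a=y$ and $a=z$, giving $A_1=(d-1)(d-2)f_y$ and $B_1=(d-1)(d-2)f_z$, so the row collapses to $(d-1)(d-2)(f_zf_y-f_yf_z)=0$. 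For rows two through four one first factors an appropriate variable out of the sum, which reduces the resulting $A_i,B_i$ to quantities appearing in \eqref{2-1deriv} applied to $f_y$ and $f_z$, modulo a correction that is precisely a multiple of \eqref{2nd order Euler id}; in each case the two halves contribute equal and opposite multiples of $(d-1)f_y f_z$, so the entry vanishes.

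The remaining two statements follow by the $(x,y,z)$--cyclic symmetry of $\theta_0(3)$, of the vectors $E^2H_{yz}, E^2H_{zx}, E^2H_{xy}$, and of every Eulerian identity invoked. The main obstacle is the clerical one of recognizing, row by row for $E^2H_{yz}$, the correct grouping of the ten terms so that \eqref{2-1deriv} and \eqref{2nd order Euler id} apply; once the terms are organized by powers of the variable being factored out, each entry telescopes to zero in $R$.
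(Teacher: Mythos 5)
Your argument is correct and follows essentially the same route as the paper: first row of $\theta_0(3)E^3$ via the third-order Euler identity \eqref{3rd-Euler}, bottom three rows via $3x,3y,3z$ times \eqref{2nd order Euler id}; then the first row of $\theta_0(3)E^2H_{yz}$ via two applications of \eqref{eq2-3} collapsing to $(d-1)(d-2)(f_zf_y-f_yf_z)=0$, the remaining rows via factoring and \eqref{2-1deriv}/\eqref{2nd order Euler id}, and the other two Hamiltonians by symmetry. The paper's proof is slightly terser (it leaves rows $2$–$4$ and the $H_{zx},H_{xy}$ cases to the reader) but uses the identical decomposition and the same identities.
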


\begin{proof}
Using \eqref{3rd-Euler}, one shows that $r_1 E^3=0$. Also, 
\begin{align*}
r_2 E^3&=3x(x^2 f_{xx}+y^2 f_{yy}+z^2 f_{zz}+2xy f_{xy}+2xzf_{xz}+2yz f_{yz})\\
r_3 E^3&=3y(x^2 f_{xx}+y^2 f_{yy}+z^2 f_{zz}+2xy f_{xy}+2xzf_{xz}+2yz f_{yz})\\
r_4 E^3&=3z(x^2 f_{xx}+y^2 f_{yy}+z^2 f_{zz}+2xy f_{xy}+2xzf_{xz}+2yz f_{yz})\,,
\end{align*}
and note the right hand side of the equations above are zero in $R$ by \eqref{2nd order Euler id}, and hence $\theta_0(3) E^3=0_{4\times 1}$. 
Next we show that $\theta_0(3) E^2 H_{yz}=0_{4\times 1}$. Similarly one can show that $\theta_0(3) E^2 H_{zx}=0_{4\times 1},\;\theta_0(3) E^2 H_{xy}=0_{4\times 1}$.
We have
\begin{align*}
r_1 E^2 H_{yz}&=f_z \left(x^2 f_{xxy}+2xyf_{xyy}+2xz f_{xyz}+y^2 f_{yyy}+2yz f_{yyz}+z^2 f_{yzz}\right)\\
&\qquad -f_y\left(x^2 f_{xxz}+2xyf_{xyz}+2xz f_{xzz}+y^2 f_{yyz}+2yz f_{yzz}+z^2 f_{zzz}\right)\\ &=
(d-2)(d-1) (f_z f_y-f_y f_z)\\
&=0\,,
\end{align*}
where the second equality uses \eqref{eq2-3}. The calculations that $r_i E^2 H_{yz}={0}$, for $i=2,3,4$, follow similarly using \eqref{2-1deriv} and \eqref{2nd order Euler id}.
Hence $\theta_0(3) E^2 H_{yz}=0_{4\times 1}$, as claimed. 
\end{proof}

\begin{lemma}\label{theta0(3) Ealphas}
The following equalities are satisfied
\[
\theta_0(3) E\alpha_x=\frac{-x}{(d-1)^2} \begin{bmatrix} (d-2)\delta\\ x\delta\\ y\delta\\ z\delta\\ \end{bmatrix},\; \theta_0(3) E\alpha_y=\frac{-y}{(d-1)^2}\begin{bmatrix} (d-2)\delta\\x\delta\\y\delta\\z\delta \end{bmatrix}, \phantom{s}\theta_0(3) E\alpha_z=\frac{-z}{(d-1)^2}\begin{bmatrix} (d-2)\delta\\x\delta\\y\delta\\z\delta \end{bmatrix}\,.
\]
\end{lemma}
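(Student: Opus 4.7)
The plan is a direct row-by-row calculation, with a symmetry reduction that trims the work to the single case of $E\alpha_x$. Under the cyclic substitution $\sigma\colon x\mapsto y\mapsto z\mapsto x$, every object in the statement transforms consistently: $\alpha_x\mapsto\alpha_y$, the vector $E\alpha_x$ cycles into $E\alpha_y$, the matrix $\theta_0(3)$ undergoes a cyclic permutation of its last three rows (those from $J_{3,1}$, indexed by $e_x,e_y,e_z$) while the first row (from $J_{3,0}$) is fixed, $\delta$ is invariant, and the scalars $x,y,z$ permute. A direct index check shows that the three claimed identities are $\sigma$-images of one another, so it suffices to verify the identity for $E\alpha_x$.

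Writing $r_1,r_2,r_3,r_4$ for the rows of $\theta_0(3)$, I would compute each $r_j\cdot E\alpha_x$ by expanding the inner product and grouping the ten resulting terms by their $\Delta_{kl}$ coefficient. For rows $r_2,r_3,r_4$ (carrying only second-order derivatives), the six $\Delta_{kl}$-buckets are polynomials in $\{x,y,z\}$ and $\{f_{ij}\}$; substitutions based on the first-order Euler identity \eqref{2-1deriv} and its quadratic consequence \eqref{2nd order Euler id} (valid in $R$) normalize each bucket so that, after regrouping across buckets, the sum reduces to a single instance of the universal pattern $(d-1)(f_a\Delta_{aa}+f_b\Delta_{ab}+f_c\Delta_{ac})-a(f_{aa}\Delta_{aa}+f_{ab}\Delta_{ab}+f_{ac}\Delta_{ac})$ for an appropriate variable $a$. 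This difference is $a\delta-a\delta=0$ by Cramer's identity \eqref{eqCram} combined with the cofactor expansion \eqref{det-exp1}; what survives after the ``$a\delta$ from \eqref{eqCram}'' piece is extracted produces the target entries $\tfrac{-x^2\delta}{(d-1)^2}$, $\tfrac{-xy\delta}{(d-1)^2}$, $\tfrac{-xz\delta}{(d-1)^2}$ in positions 2, 3, and 4 of $\theta_0(3)E\alpha_x$.

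The calculation for $r_1$ is structurally identical but starts with third-order derivatives: each $\Delta_{kl}$-bucket is a polynomial combination of third partials of $f$, which collapses to a second-order polynomial via the third-order Eulerian identities \eqref{eq3-2aa}, \eqref{eq3-2ab}, \eqref{eq2-3}, each contributing a universal factor of $(d-2)$. Once this reduction is performed, the remainder of the argument is formally the same as the $r_2$ computation and yields the first entry $\tfrac{-x(d-2)\delta}{(d-1)^2}$. The principal difficulty in executing the proof is purely combinatorial bookkeeping: each inner product expands into ten products, each of the six $\Delta_{kl}$-buckets is then a nontrivial polynomial, and several layers of Euler-type identities must be chained in the correct order before the terminal collapse $(d-1)\sum_j f_j\Delta_{ij}-x_i\sum_j f_{x_ij}\Delta_{ij}=0$ appears. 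Recognizing this collapse as the universal reduction step, and organizing the manipulations so that each row computation converges to it, is what makes the verification tractable.
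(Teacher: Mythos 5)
Your overall strategy is essentially the paper's --- a direct row-by-row expansion of $\theta_0(3)\cdot E\alpha_\bullet$ organized by $\Delta_{kl}$-buckets, normalized via the Euler identities \eqref{2-1deriv}, \eqref{2nd order Euler id}, \eqref{eq3-2aa}, \eqref{eq3-2ab}, \eqref{eq2-3}, and finally evaluated by Cramer's identity \eqref{eqCram} and the cofactor expansions \cref{det-exp} --- but you add a genuinely useful structural improvement: making explicit the cyclic $x\mapsto y\mapsto z\mapsto x$ symmetry that reduces the three vector identities to the single case $E\alpha_x$. The paper merely computes $r_1 E\alpha_x$ and $r_3 E\alpha_x$ and declares the rest ``similar and left to the reader.'' Your symmetry observation is correct: the first row of $\theta_0(3)$ is a cyclic permutation of itself (since the degree-3 monomial column basis permutes and third partials permute compatibly), the last three rows permute with $e_x, e_y, e_z$, and the right-hand side vector $[(d-2)\delta, x\delta, y\delta, z\delta]^T$ is fixed because the $e_x, e_y, e_z$ positions permute in step with the entries $x\delta, y\delta, z\delta$, so only the overall scalar $-x/(d-1)^2$ cycles. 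That is a cleaner framing than the paper's.

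One point needs sharpening, however. You claim that after the Euler reductions the row products collapse to ``a single instance of the universal pattern $(d-1)(f_a\Delta_{aa}+f_b\Delta_{ab}+f_c\Delta_{ac}) - a(f_{aa}\Delta_{aa}+f_{ab}\Delta_{ab}+f_{ac}\Delta_{ac})$'' which equals $a\delta - a\delta = 0$, with the nonzero entries being ``what survives.'' That is not how the computation actually resolves. In the paper's $r_3 E\alpha_x$ calculation, the terminal form is a \emph{linear combination} of several distinct Cramer-type sums $f_a\Delta_{ab}+\cdots$ and several distinct cofactor sums $f_{ab}\Delta_{ab}+\cdots$ with noncancelling coefficients: concretely $\tfrac{(d-1)^2}{2}r_3 E\alpha_x = 2(d-1)y\cdot\tfrac{x\delta}{d-1} - (d-1)x\cdot\tfrac{y\delta}{d-1} - \tfrac{xy}{2}\cdot 3\delta = -\tfrac{xy\delta}{2}$. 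The nonzero answer comes from the mismatch in those coefficients, not from a vanishing difference plus a remainder term. The $r_1$ computation is analogous, with the $(d-2)$ factor entering from the third-order identities before the same Cramer/cofactor evaluation. If you write this up, organize the final step as an explicit weighted sum of Cramer and cofactor evaluations; the ``cancel-then-survive'' framing would not produce the claimed entries.
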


\begin{proof} 
We verify that the first equality is satisfied in the first and third entry; the rest are similar and left to the reader.

Observe that 
\begin{align*}
\frac{(d-1)^2}{2} r_1 E\alpha_x
&=
\Delta_{xx}\left((d-2)(d-1)f_x-\frac{x}{2} (xf_{xxx}+yf_{xxy}+zf_{xxz})\right)\\
&\qquad+\Delta_{xy}(xyf_{xyy}+xz f_{xyz}+y^2 f_{yyy}+2yz f_{yyz}+z^2 f_{yzz})\\
&\qquad+\Delta_{xz}(xy f_{xyz}+xz f_{xzz}+y^2 f_{yyz}+2yz f_{yzz}+z^2 f_{zzz})+\Delta_{yy}(-\frac{x^2}{2} f_{xyy}-\frac{xy}{2} f_{yyy}-\frac{xz}{2} f_{yyz})\\
&\qquad+\Delta_{yz}(-x^2 f_{xyz}-xy f_{yyz}-xz f_{yzz})+\Delta_{zz}\left(-\frac{x^2}{2} f_{xzz}-\frac{xy}{2} f_{yzz}-\frac{xz}{2} f_{zzz}\right)\\ &=
(d-2)(d-1) f_x \Delta_{xx}-\frac{(d-2) x f_{xx}\Delta_{xx}}{2}+\Delta_{xy} \left((d-2)(d-1)f_y-x (x f_{xxy}+y f_{xyy}+z f_{xyz})\right)\\
&\qquad+\Delta_{xz} ((d-2)(d-1) f_z-xy f_{xyz}-xz f_{xzz})-\frac{x}{2} \Delta_{yy} (x f_{xyy}+yf_{yyy}+zf_{yyz})\\
&\qquad-x\Delta_{yz}(xf_{xyz}+y f_{yyz}+z f_{yzz})-\frac{x}{2} \Delta_{zz}(xf_{xzz}+yf_{yzz}+zf_{zzz}) \\
&=
(d-2)(d-1) (f_x\Delta_{xx}+f_y\Delta_{xy}+f_z\Delta_{xz})-\frac{(d-2)x}{2}(f_{xx}\Delta_{xx}+f_{xy}\Delta_{xy}+f_{xz}\Delta_{xz})\\
&\qquad-\frac{(d-2)x}{2}(f_{xy}\Delta_{xy}+f_{yy}\Delta_{yy}+f_{yz}\Delta_{yz})-\frac{(d-2)x}{2}(f_{xz}\Delta_{xz}+f_{yz}\Delta_{yz}+f_{zz}\Delta_{zz})\\
&=-\frac{d-2}{2} x\delta\,.
\end{align*}
The first equality uses \cref{eq2-3}, the second follows from \eqref{eq3-2aa} and \eqref{eq2-3}, the third equality is from \eqref{eq3-2aa} and \eqref{eq3-2ab}, and the fourth equality is from \cref{det-exp} and \eqref{eqCram}.
Hence $r_1 E\alpha_x=-d \frac{(d-2)}{(d-1)^2} x \delta$, as needed.

Next consider
\begin{align*}
\frac{(d-1)^2}{2} r_3 E\alpha_x
&=2y \Delta_{xx} (xf_{xx}+yf_{xy}+zf_{xz})-\frac{xy}{2} f_{xx}\Delta_{xx}+\Delta_{xy}\left(2xyf_{xy}+xz f_{xz}+3y^2 f_{yy}+4yzf_{yz}+z^2 f_{zz}\right)\\
&\qquad+2y\Delta_{xz}( xf_{xz}+yf_{yz}+zf_{zz})
-xyf_{xz}\Delta_{xz}-x\Delta_{yy}(xf_{xy}+yf_{yy}+zf_{yz})\\
& \qquad -\frac{xy}{2}f_{yy}\Delta_{yy}-x\Delta_{yz} (xf_{xz}+yf_{xz}+zf_{zz})-\frac{xy}{2} f_{zz}\Delta_{zz}\\
&= 
2 (d-1)y f_x \Delta_{xx}-\frac{xy}{2} f_{xx}\Delta_{xx}+\Delta_{xy} (-x^2 f_{xx}-xzf_{xz}+2y^2 f_{yy}+2yz f_{yz})+2(d-1) yf_z\Delta_{xz}\\
&\qquad-xyf_{xz}\Delta_{xz}-(d-1)x f_y\Delta_{yy}-\frac{xy}{2}\Delta_{yy}-(d-1)xf_z\Delta_{yz}-\frac{xy}{2}f_{zz}\Delta_{zz}\\
&=2 (d-1)y f_x \Delta_{xx}-\frac{xy}{2} f_{xx}\Delta_{xx}+2(d-1)y f_y\Delta_{xy}-(d-1)x f_x\Delta_{xy}-xyf_{xy}\Delta_{xy}\\
&\qquad +2(d-1) yf_z\Delta_{xz}-xyf_{xz}\Delta_{xz}-(d-1)x f_y\Delta_{yy}-\frac{xy}{2}\Delta_{yy}-(d-1)xf_z\Delta_{yz}-\frac{xy}{2}f_{zz}\Delta_{zz}\\
&=2(d-1)y (f_x \Delta_{xx}+f_y\Delta_{xy}+f_z\Delta_{xz})-(d-1) x (f_x\Delta_{xy}+f_{y}\Delta_{yy}+f_z\Delta_{yz})\\
&\qquad-\frac{xy}{2}(f_{xx}\Delta_{xx}+f_{xy}\Delta_{xy}+f_{xz}\Delta_{xz})-\frac{xy}{2}(f_{xy}\Delta_{xy}+f_{yy}\Delta_{yy}+f_{yz}\Delta_{yz})\\
&\qquad-\frac{xy}{2} (f_{xz}\Delta_{xz}+f_{yz}\Delta_{yz}+f_{zz}\Delta_{zz})\\
&=
2yx\delta-xy\delta-\frac{3xy}{2}\delta\,,
\end{align*}
where the second equality uses \eqref{2-1deriv} and \eqref{2nd order Euler id}, and the last equality uses \cref{det-exp} and \eqref{eqCram}.
Hence $r_3 E\alpha_x=-\frac{x\delta}{(d-1)^2} y$, as claimed.
\end{proof} 

\begin{lemma}\label{theta0(3)-all-zetas} 
There is an equality 
\[
\theta_0(3)\begin{bmatrix} 
\zeta_z & \zeta_y & \zeta_z
\end{bmatrix}
=\frac{1}{(d-1)^2}\begin{bmatrix}
\delta_z f_y-\delta_y f_z & \delta_x f_z-\delta_z f_x& \delta_y f_x-\delta_x f_y\\
0 & 3\delta f_z & -3\delta f_y\\
-3\delta f_z& 0 &3\delta f_x\\
3\delta f_y& -3\delta f_x & 0
\end{bmatrix}\,.
\]
\end{lemma}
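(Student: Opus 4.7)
By the cyclic $x\to y\to z\to x$ symmetry of the rows of $\theta_0(3)$, of the Hamiltonians, and of the generators $\zeta_x, \zeta_y, \zeta_z$, it is enough to verify the column corresponding to $\zeta_x$. The starting observation is the defining formula from \cref{zetas},
\[
x^2\,\zeta_x \;=\; H_{yz}^3 \;+\; \tfrac{3\,\Delta_{xx}}{(d-1)^2}\,E^2H_{yz} \;-\; \tfrac{H_{yz}(\Delta_{xx})}{(d-1)^2(d-2)}\,E^3,
\]
combined with \cref{E2Hvanish}, which gives $\theta_0(3)E^3=0$ and $\theta_0(3)E^2H_{yz}=0$ in $R^4$. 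Thus $x^2\,\theta_0(3)\zeta_x = \theta_0(3) H_{yz}^3$, and since $R$ is a domain, it suffices to compute the four entries of $\theta_0(3)H_{yz}^3$ and divide by $x^2$.

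For the rows $r_2,r_3,r_4$ coming from $J_{3,1}$: the row $r_2$ has zeros in positions $7$--$10$ while the first $6$ coordinates of $H_{yz}^3$ vanish, so $r_2H_{yz}^3=0$ without any work. Direct expansion gives $r_3 H_{yz}^3 = 3f_z(f_z^2 f_{yy} - 2 f_y f_z f_{yz} + f_y^2 f_{zz})$ and $r_4 H_{yz}^3 = -3f_y(f_z^2 f_{yy} - 2 f_y f_z f_{yz} + f_y^2 f_{zz})$. Applying \cref{PD-prodaa} and \cref{PD-prodab} to replace the products $f_y^2, f_z^2, f_yf_z$ by their expressions in $\Delta_{xx}, \Delta_{xy}, \Delta_{xz}, \Delta_{yy}, \Delta_{yz}, \Delta_{zz}$ modulo $f$, then collapsing the resulting sums via \cref{2nd order Euler id} and the off-diagonal orthogonalities of $\Delta\cdot\mathrm{adj}(\Delta)=\delta I$ from \cref{det-exp}, yields the identity
\[
f_z^2 f_{yy} - 2f_yf_z f_{yz} + f_y^2 f_{zz} \;=\; -\tfrac{x^2\delta}{(d-1)^2} \quad \text{in } R,
\]
from which $r_3\zeta_x = -\tfrac{3\delta f_z}{(d-1)^2}$ and $r_4\zeta_x = \tfrac{3\delta f_y}{(d-1)^2}$ follow after dividing by $x^2$.

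For the top row, the direct expansion gives
\[
r_1 H_{yz}^3 \;=\; f_z^3 f_{yyy} - 3 f_y f_z^2 f_{yyz} + 3 f_y^2 f_z f_{yzz} - f_y^3 f_{zzz},
\]
and the target is $-\tfrac{x^2\,H_{yz}(\delta)}{(d-1)^2}$, using the fact that $\delta_z f_y - \delta_y f_z = -H_{yz}(\delta)$. The plan is to expand $H_{yz}(\delta)$ via the trace formula \cref{eq:Der-delt} as a sum of products $f_{ij}f_{ijk}$ and then use the identity \cref{H-on-Cram1}, which rewrites $xH_{yz}(\delta)=(d-1)[f_x H_{yz}(\Delta_{xx})+f_y H_{yz}(\Delta_{xy})+f_z H_{yz}(\Delta_{xz})]$, to repackage the target. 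On the left hand side, the strategy is to apply the third-order Euler identities \cref{eq3-2aa}, \cref{eq3-2ab}, and \cref{eq2-3} twice to each factor of $(d-1)f_y$ and $(d-1)f_z$: the first substitution converts powers of $f_y,f_z$ into $x,y,z$ times second partials, and the second contracts the third partials against these second partials using \cref{eq3-2aa} and \cref{eq3-2ab}. Regrouped by the surviving third-order derivative, the result will exhibit the factor of $x^2$, leaving $\tfrac{x^2}{(d-1)^2}$ times expressions identifiable with $-H_{yz}(\delta)$ via the Cramer-type identity \cref{eqCram} and its Hamiltonian counterparts from \cref{a:Hrelns}.

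The main obstacle is the combinatorial bookkeeping in this last step: the naive expansion has roughly $10\cdot 6^3$ terms after full substitution. The way to tame it is to organize the calculation by the target third-order derivative (first the block contributing to $f_{yyy},f_{yyz},f_{yzz},f_{zzz}$, then the mixed $f_{xij}$ arising from Euler substitutions), and to recognize intermediate cancellations by invoking \cref{eq:Hij-on-2der} and the off-diagonal entries of $\Delta\cdot\mathrm{adj}(\Delta)=\delta I$ before attempting the full reduction to $H_{yz}(\delta)$.
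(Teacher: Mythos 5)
Your reduction to $\theta_0(3) H_{yz}^3$ via \cref{E2Hvanish}, the trivial observation for $r_2$, and the $r_3, r_4$ computation are all sound. In fact, your route for $r_3$ and $r_4$ is a genuine alternative to the paper's: you derive the single identity
\[
f_z^2 f_{yy} - 2f_yf_z f_{yz} + f_y^2 f_{zz} = -\tfrac{x^2\delta}{(d-1)^2} \quad\text{in } R
\]
by substituting \cref{PD-prodaa} and \cref{PD-prodab} and collapsing via \cref{2nd order Euler id} and \cref{det-exp}, whereas the paper instead regroups the expression using \cref{eq:MinDiff} and then applies \cref{eqCram}. Both work; yours is arguably cleaner since it yields both $r_3$ and $r_4$ entries at once.

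The gap is in $r_1$. You propose to expand $H_{yz}(\delta)$ via the trace formula \cref{eq:Der-delt} and to brute-force the left side by applying the third-order Euler identities \cref{eq3-2aa}, \cref{eq3-2ab}, \cref{eq2-3} twice, regrouping by surviving third-order partials. You yourself concede this produces on the order of $10\cdot 6^3$ terms with no guarantee of a manageable reduction, and indeed the plan does not identify any structure that would force the factor $x^2$ to appear. The paper's actual argument avoids this entirely by a structural factorization: write
\[
r_1 H_{yz}^3 = f_z\bigl[f_z H_{yz}(f_{yy}) - f_y H_{yz}(f_{yz})\bigr] - f_y\bigl[f_z H_{yz}(f_{yz}) - f_y H_{yz}(f_{zz})\bigr],
\]
then observe that the inner brackets are exactly what one gets by applying $H_{yz}$ to the two identities $f_z f_{yy} - f_y f_{yz} = \frac{1}{d-1}(z\Delta_{xx} - x\Delta_{xz})$ and $f_z f_{yz} - f_y f_{zz} = \frac{1}{d-1}(x\Delta_{xy} - y\Delta_{xx})$ from \cref{eq:MinDiff}. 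After the $f_z$ and $f_y$ weights are applied, the correction terms cancel, and the whole thing collapses to $\frac{-x}{d-1}\bigl(f_x H_{yz}(\Delta_{xx}) + f_y H_{yz}(\Delta_{xy}) + f_z H_{yz}(\Delta_{xz})\bigr)$, which a single application of \cref{H-on-Cram1} finishes. Without spotting this factorization, or some substitute for it, the $r_1$ entry is not established, so as written the proof is incomplete.
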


\begin{proof}
We compute first $\theta_0(3)\zeta_x$. Recall that 
\[
\zeta_x=\frac{1}{x^2}\left[ H^3_{yz}+ \frac{3}{(d-1)^2}\Delta_{xx}E^2 H_{yz}-\frac{1}{(d-2)(d-1)^2} H_{yz}(\Delta_{xx}) E^3\right]\,,
\]
and since by \cref{E2Hvanish} \[\theta_0(3) E^3=0_{4\times 1}=\theta_0(3) E^2 H_{yz},\]
it suffices to compute $\theta_0(3) H^3_{yz}$. 
We claim that 
\begin{equation}
\label{theta0-zetax} 
\theta_0(3) H^3_{yz}=\frac{x^2}{(d-1)^2}\begin{bmatrix} \delta_z f_y-\delta_y f_z\\ 0\\ -3\delta f_z\\ 3\delta f_y\\ \end{bmatrix}\,.
\end{equation}
Indeed, as the first six entries of $H^3_{yz}$ are zero we see that $r_2 H^3_{yz}=0$. Next, observe
\begin{align*}
r_3 H^3_{yz}&=3f_{yy}f^3_z-6f_{yz}f^2_zf_y+3f_{zz} f_z f^2_y\\
&=3f^2_z (f_{yy}f_z-f_{yz}f_y)-3f_z f_y (f_{yz}f_z-f_{zz}f_y)\\
&=
\frac{3\,f_z}{(d-1)} \left[f_z (z\Delta_{xx}-x\Delta_{xz})-f_y(x\Delta_{xy}-y\Delta_{xx})\right]\\
&=\frac{3\,f_z}{(d-1)} \left[ \Delta_{xx}(zf_z+yf_y)-x(f_y\Delta_{xy}+f_z\Delta_{xz})\right]\\
&=\frac{3\,f_z}{(d-1)}\left[-x(f_x\Delta_{xx}+f_y\Delta_{xy}+f_z\Delta_{xz})\right]\\
&= x^2 \frac{-3\delta\,f_z}{(d-1)^2}\,,
\end{align*}
where the third equality uses \cref{eq:MinDiff} and the last equality is from \cref{eqCram}.
Similarly one can verify that $r_4 H^3_{yz}=x^2\frac{3\delta\,f_y}{(d-1)^2}$. 

To compute $r_1 H^3_{yz}$ notice that
\[
H_{yz}(f_{yy})=f_z f_{yyy}-f_yf_{yyz},\phantom{part}H_{yz}(f_{yz})=f_z f_{yyz}-f_y f_{yzz},\phantom{par}H_{yz}(f_{zz})=f_z f_{yzz}-f_y f_{zzz}\,.
\]
As a consequence, with $H=H_{yz}$, the following equalities hold
\label{prel-r1H3}
\begin{align*} 
r_1H^3&=f_z^3 f_{yyy}-3f^2_z f_y f_{yyz}+3f_z f^2_y f_{yzz}-f^3_y f_{zzz}\\
&=f^2_z (f_z f_{yyy}-f_y f_{yyz})-2f_zf_y(f_z f_{yyz}-f_y f_{yzz})+f^2_y(f_z f_{yzz}-f_y f_{zzz})\\
&=f^2_z H(f_{yy})-2f_z f_y H (f_{yz})+f^2_y H(f_{zz})\\
&= f_z \left[ f_z H(f_{yy})-f_y H(f_{yz})\right]-f_y \left[ f_z H(f_{yz})-f_y H(f_{zz})\right]\,.
\end{align*}

From \eqref{eq:MinDiff} we have: 
\[
H(f_y)=f_z f_{yy}-f_y f_{yz}=\frac{1}{(d-1)}( z\Delta_{xx}-x\Delta_{xz})\quad \text{and}\quad H(f_z)=f_z f_{yz}-f_y f_{zz}=\frac{1}{d-1} (x\Delta_{xy}-y\Delta_{xx})\,.
\]
Applying $H$ to the first and second equation, respectively, yield 
\begin{align}
\label{prelH-almHyy}
f_z H(f_{yy})-f_y H(f_{yz})&=\frac{1}{d-1}\left[ -f_y \Delta_{xx}+zH(\Delta_{xx})-xH(\Delta_{xz})\right]-f_{yy}H(f_z)+f_{yz}H(f_y)\\
\label{prelH-almHyz}
f_z H(f_{yz})-f_y H(f_{zz})&=\frac{1}{d-1}\left[ x H(\Delta_{xy})-f_z \Delta_{xx}-yH(\Delta_{xx})\right]-f_{yz}H(f_z)+f_{zz}H(f_y)\,.
\end{align}
Multiplying \eqref{prelH-almHyy} with $f_z$, \eqref{prelH-almHyz} with $f_y$ and subtracting establishes the following 

\begin{align*}
r_1H^3&=f_z \left[ f_z H(f_{yy})-f_y H(f_{yz})\right]-f_y \left[ f_z H(f_{yz})-f_y H(f_{zz})\right]\\
&=\frac{1}{d-1} \left[ H(\Delta_{xx}) (zf_z+yf_y)-x (f_z H(\Delta_{xz})+f_yH(\Delta_{xy})\right]\\
&\qquad+H(f_z) (f_y f_{yz}-f_{yy}f_z) +H(f_y)(f_z f_{yz}-f_y f_{zz})\\
&=\frac{-x}{d-1} (f_x H(\Delta_{xx})+f_y H(\Delta_{xy})+f_z H(\Delta_{xz}))\\
&=
x^2\;\;\frac{\delta_z f_y-\delta_y f_z}{(d-1)^2}\,,
\end{align*}
where the first equality was established above and the last equality is from \eqref{H-on-Cram1}.

Similarly one can check that
\[
\theta_0(3) H^3_{zx}=\frac{y^2}{(d-1)^2}\begin{bmatrix} \delta_x f_z-\delta_z f_x\\ 3\delta f_z\\ 0\\ -3\delta f_x\\ \end{bmatrix}\quad \text{and }\quad
\theta_0(3) H^3_{xy}=\frac{z^2}{(d-1)^2}\begin{bmatrix} \delta_y f_x-\delta_x f_y\\ -3\delta f_y\\ 3\delta f_x\\0 \end{bmatrix}
\]
from which 
\begin{align*}
\theta_0(3)\zeta_y=\frac{1}{(d-1)^2}\begin{bmatrix} \delta_x f_z-\delta_z f_x\\ 3\delta f_z\\ 0\\ -3\delta f_x \end{bmatrix}\quad\text{and }\quad
\theta_0(3)\zeta_z=\frac{1}{(d-1)^2}\begin{bmatrix} \delta_y f_x-\delta_x f_y\\ -3\delta f_y\\ 3\delta f_x\\0 \end{bmatrix}\,
\end{align*}
follow, respectively.
\end{proof} 

\begin{chunk}
Next consider the vectors
\[ 
V_x=\left[ \begin{array} {c} 
0\\
\dfrac{\delta_z}{(d-1)^2} \\
-\dfrac{\delta_y}{(d-1)^2} \\ \hline
A_x
\end{array}\right]\quad\text{
where}\quad
A_x\coloneqq 
\frac{3}{(d-1)(d-2)} 
\begin{bmatrix} H_{yz}(\Delta_{xx})\\ 
H_{zx}(\Delta_{xx})\\
H_{xy}(\Delta_{xx})\\
H_{zx}(\Delta_{xy})+\dfrac{y\delta_z}{d-1}\\
\dfrac{1}{2}\left( H_{zx}(\Delta_{xz})+H_{xy}(\Delta_{xy})+\dfrac{z\delta_z-y\delta_y}{d-1}\right)\\
H_{xy}(\Delta_{xz})-\dfrac{z\delta_y}{d-1}\\
\end{bmatrix}\,,
\]
\[ 
V_y=\left[ \begin{array} {c} 
-\dfrac{\delta_z}{(d-1)^2} \\
0\\
\dfrac{\delta_x}{(d-1)^2}\\ \hline
A_y
\end{array}\right]\quad\text{where}\quad
A_y\coloneqq 
\frac{3}{(d-1)(d-2)} 
\begin{bmatrix} H_{yz}(\Delta_{xy})-\dfrac{x\delta_z}{d-1}\\ 
H_{yz}(\Delta_{yy})\\
\dfrac{1}{2}\left( H_{yz}(\Delta_{yz})+H_{xy}(\Delta_{xy})+\dfrac{x\delta_x-z\delta_z}{d-1}\right)\\
H_{zx}(\Delta_{yy})\\
H_{xy}(\Delta_{yy})\\
H_{xy}(\Delta_{yz})+\dfrac{z\delta_x}{d-1}
\end{bmatrix}\,,
\]
\[ 
V_z=\left[ \begin{array} {c} 
\dfrac{ \delta_y}{(d-1)^2}\\
-\dfrac{\delta_x}{(d-1)^2} \\ 
0\\ \hline
A_z
\end{array}\right]\quad\text{where}\quad
A_z\coloneqq 
\dfrac{3}{(d-1)(d-2)} 
\begin{bmatrix} H_{yz}(\Delta_{xz})+\dfrac{x\delta_y}{d-1}\\ 
\dfrac{1}{2}\left( H_{yz}(\Delta_{yz})+H_{zx}(\Delta_{xz})+\frac{y\delta_y-x\delta_x}{d-1}\right)\\
H_{yz}(\Delta_{zz})\\
H_{zx}(\Delta_{yz})-\dfrac{y\delta_x}{d-1}\\
H_{zx}(\Delta_{zz})\\
H_{xy}(\Delta_{zz})
\end{bmatrix}
\,.\]
\end{chunk}

\begin{lemma}
\label{d1CV}
The following equality is satisfied 
\[
\del_1^C\begin{bmatrix} V_x & V_y & V_z\end{bmatrix}=
 \frac{1}{(d-1)^2} \begin{bmatrix} \delta_z f_y-\delta_y f_z & \delta_x f_z-\delta_z f_x & \delta_y f_x-\delta_x f_y \\ 
0 & 3\, \delta f_z & -3\,\delta f_y \\
-3\, \delta f_z & 0& 3\, \delta f_x\\
 3\, \delta f_y & -3\, \delta f_x&0
\end{bmatrix}\,.
\]
\end{lemma}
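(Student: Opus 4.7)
My proof strategy is to exploit the block form $\partial_1^C = \begin{bmatrix}D_1 & \theta_0(2)\\ 0 & J_{2,1}\end{bmatrix}$ (see \cref{l:liftS2}), which splits the desired matrix identity into a scalar equation $D_1 v + \theta_0(2) A = (\text{first row of the RHS})$ and a three-vector equation $J_{2,1} A = (\text{last three rows of the RHS})$ for each column $V_\bullet = \begin{bmatrix}v_\bullet \\ A_\bullet\end{bmatrix}$. Since the vectors $V_x, V_y, V_z$ and the claimed right-hand side are symmetric under cyclic permutation of the variables $x,y,z$, I would verify only the column for $V_x$ in detail and leave the other two to relabeling.

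For the scalar entry $D_1 v_x + \theta_0(2) A_x$, the contribution $D_1 v_x$ is immediately $(f_y\delta_z - f_z\delta_y)/(d-1)^2$, i.e., the entry claimed in the top row of the first column. Thus the scalar claim reduces to $\theta_0(2) A_x = 0$. I would expand $\theta_0(2) A_x$ term-by-term, collect those contributions that involve $H_{yz}$, $H_{zx}$, $H_{xy}$ acting on a cofactor $\Delta_{xa}$, and invoke \eqref{H-on-detexp1}, \eqref{Hzx-on-detexp6}, and \eqref{Hxy-on-detexp8} to rewrite each sum as a multiple of $H_{ij}(\delta)$ plus a piece of the form $\sum_a H_{ij}(f_{xa})\Delta_{xa}$. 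The latter pieces vanish after applying \eqref{eq:Hij-on-2der} and the cofactor identities \eqref{det-exp}, while the $H_{ij}(\delta)$ contributions combine with the explicit correction terms $y\delta_z/(d-1)$, $(z\delta_z-y\delta_y)/(d-1)$, $-z\delta_y/(d-1)$ appearing in $A_x$ and cancel by means of the homogeneity relation $x\delta_x+y\delta_y+z\delta_z = 3(d-2)\delta$ (applied in $Q$) and \eqref{euler} (applied in $R$).

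For the vector entry $J_{2,1}A_x$, the first coordinate vanishes at once by \eqref{eq:FDR-Ham} applied to $\Delta_{xx}$. To handle the second coordinate, I would first use \eqref{HamRel4-5Comb} to rewrite $H_{xy}(\Delta_{xy})$ in terms of $H_{zx}(\Delta_{xz})$ plus $\delta$-corrections, so that the row collapses to $\sum_a f_a H_{zx}(\Delta_{xa}) + y(f_y\delta_z - f_z\delta_y)/(d-1) + (d-2)f_z\delta/(d-1)$. Applying $H_{zx}$ to \eqref{eqCram} with $a=x$—while carefully tracking that $H_{zx}(x) = -f_z$ rather than $0$—and combining with $\sum_a H_{zx}(f_a)\Delta_{xa} = -f_z\delta$ (derived exactly as in the proof of \eqref{H-on-Cram4}) gives $\sum_a f_a H_{zx}(\Delta_{xa}) = [(d-2)f_z\delta + x(f_x\delta_z - f_z\delta_x)]/(d-1)$. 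Substituting back and simplifying by means of $xf_x+yf_y+zf_z=0$ in $R$ and the homogeneity relation for $\delta$ produces $-(d-2)f_z\delta/(d-1)$; multiplying by the overall prefactor $3/((d-1)(d-2))$ built into $A_x$ yields the expected $-3f_z\delta/(d-1)^2$. The third coordinate is parallel with the roles of $y$ and $z$ interchanged. The main obstacle is precisely this bookkeeping in the vector piece: the identities \eqref{H-on-Cram1}--\eqref{H-on-Cram3} in \cref{a:hamiltonian} are proved only for $H_{yz}$ relative to the variable $x$ (where $H_{yz}(x)=0$), and one must carefully re-derive the analogue for $H_{zx}$ acting on the $\Delta_{xa}$ column—where the nonvanishing $H_{zx}(x) = -f_z$ injects an extra $f_z\delta$ term into the right-hand side that ultimately conspires with the $(d-2)f_z\delta/(d-1)$ correction already present in $A_x$ to yield the correct coefficient.
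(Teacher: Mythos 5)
Your proof is correct and takes essentially the same route as the paper, which also decomposes $\del_1^C$ into the block form $\begin{bmatrix} D_1 & \theta_0(2) \\ 0 & J_{2,1}\end{bmatrix}$, reduces the scalar row to showing $A_x \in \ker\theta_0(2)$ via \eqref{H-on-detexp1}, \eqref{Hzx-on-detexp6}, \eqref{Hxy-on-detexp8}, \eqref{eq:Hij-on-2der}, \eqref{HamRel1}--\eqref{HamRel2}, and handles the $J_{2,1}A_x$ rows via \eqref{eq:FDR-Ham}, \eqref{H-on-Cram2}, and \eqref{HamRel4-5Comb}.

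One caveat on the closing paragraph of your proposal: the identity you re-derive for $\sum_a f_a H_{zx}(\Delta_{xa})$ is not actually absent from \cref{a:hamiltonian}. The identities \eqref{H-on-Cram1}--\eqref{H-on-Cram3} are stated for every bijection $\{a,b,c\}=\{x,y,z\}$ (the proofs are only displayed for $(a,b,c)=(x,y,z)$, with ``the remaining identities follow similarly''), and choosing $(a,b,c)=(y,x,z)$ in \eqref{H-on-Cram2} --- so that $H_{bc}=H_{xz}=-H_{zx}$ and the row is the $x$-row of $\mathrm{adj}(\Delta)$ --- yields exactly $\sum_a f_a H_{zx}(\Delta_{xa}) = \bigl[(d-2)f_z\delta + x(f_x\delta_z - f_z\delta_x)\bigr]/(d-1)$. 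The $(d-2)\delta f_c/(d-1)$ correction term built into \eqref{H-on-Cram2} (and \eqref{H-on-Cram3}) is precisely the footprint of the nonvanishing $H_{bc}(b)$ and $H_{bc}(c)$ that you flag, so your ``re-derivation'' is simply the omitted proof of \eqref{H-on-Cram2} for that permutation rather than a new ingredient. This is why the paper cites \eqref{H-on-Cram2} directly in its proof of this lemma.
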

\begin{proof}
Using \eqref{HamRel1}, \eqref{HamRel2}, \eqref{H-on-detexp1}, \eqref{Hzx-on-detexp6}, \eqref{Hxy-on-detexp8}, and \eqref{eq:Hij-on-2der} it follows that $A_x\in \ker\theta_0(2)$. Now applying \eqref{eq:FDR-Ham}, \eqref{H-on-Cram2}, and \eqref{HamRel4-5Comb}, one can verify that $\del_1^CV_x$ is the first column of the matrix on the right hand side. Similar computations show the other columns agree. 
\end{proof}

\begin{proof}[Proof of \cref{first lift for 3rd ord}]
Using the identities \cref{det-exp} and
\eqref{eqCram}, and \cref{d1CV} it is straightforward to compute the entries of $\del_1^C \theta_1(3)$. Comparing these with the entries computed in \cref{E2Hvanish,theta0(3) Ealphas,theta0(3)-all-zetas} one concludes
\[
\del_1^C \theta_1(3)=-\theta_0(3) M_0(3)\,.\qedhere
\]
\end{proof}

\subsection{The second square commutes} \label{2nd lift for 3rd ord}

Next, we show the following. 

\begin{proposition} 
The equality $-\theta_1(3)M_1(3)=\del_2^C\theta_2(3)$ holds.
\end{proposition}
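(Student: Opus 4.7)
The plan is to verify the matrix identity $-\theta_1(3)M_1(3)=\partial_2^C \theta_2(3)$ by a direct block computation, in the same spirit as the proof of Proposition~\ref{first lift for 3rd ord} for the first square. First I would expand the right-hand side using the block decomposition $\partial_2^C = \bigl[\begin{smallmatrix} M_1(1) & \theta_2(2) \\ 0 & M_1(2) \end{smallmatrix}\bigr]$ and partition $\theta_2(3)$ into blocks compatible with its row structure (one $1$-row block, one $3$-row block, one $1$-row block, and two $3$-row blocks). This produces an explicit $9\times 11$ matrix whose entries are visibly built from $\Delta$, $\alpha_i$, $\sigma_i$, $\partial_i$, $D_i$, and $q$.

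Next I would compute $\theta_1(3)M_1(3)$ column by column, exploiting the fact that $\theta_1(3) = \bigl[\begin{smallmatrix} 0_{3\times 4} & -\tfrac{d^2-1}{2}\alpha_2 & -(d-1)(d-2)\sigma_2 \\ 0_{6\times 4} & B_1 & -\tfrac{3}{(d-1)(d-2)}B_2 \end{smallmatrix}\bigr]$ has four zero columns. Writing the columns of $M_1(3)$ in turn, most block products collapse immediately, and the surviving entries reduce to expressions of the form $\alpha_2 \partial_2$, $\alpha_2 \alpha_1$, $\sigma_2 \partial_2$, $\sigma_2 D_3$, $B_1\partial_2$, $3B_1\alpha_1$, $B_2\partial_2$, $B_2 D_3$, together with the boundary contribution $\alpha_2 \sigma_3$ from the rightmost column.

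I would then match the two sides entry-by-entry. The columns whose only nontrivial piece comes from the Koszul-type blocks of $M_1(3)$ (involving $D_1$, $-\partial_2$, $D_3$, or $\alpha_1$) should follow routinely from the matrix identities in Appendix~\ref{a:matrixidentities}, specifically \cref{c:basic-Eul}, \cref{c:identity-Del-adjDel}, \cref{c:d-D-alpha-q}, \cref{c:identity-RS-4-2-1}, and \cref{c:D-to-sigmas}, together with the Euler identity \cref{euler}. The columns involving $B_1$ and $B_2$ will require the Hamiltonian identities of Appendix~\ref{a:hamiltonian} — notably \cref{HamRel1}, \cref{HamRel2}, \cref{HamRel4+5Comb}, \cref{HamRel4-5Comb2}, \cref{eq:H-on-Cram}, and the expressions \cref{eqDiffCram}, \cref{eqDiff-Cram-same}, \cref{eq:Der-delt} for the partial derivatives of $\delta$ — in order to compress Hamiltonians applied to cofactors of the Hessian back into $\delta$, $\delta_a$, and entries of $\alpha_2$.

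The main obstacle I anticipate is the rightmost column, where $B_2\cdot(-2\sigma_3)$ and $\alpha_2 \cdot(-2\sigma_3)$ from $-\theta_1(3)M_1(3)$ must be matched against the $-(d-1)(d-2)\sigma_3$ and $-3(d-1)\sigma_3$ blocks of $\theta_2(3)$ hit by $M_1(1)$, $\theta_2(2)$, and $M_1(2)$, as well as against the $\tfrac{9}{2}\Delta$ block of $\theta_2(3)$ contracted with the $-\partial_2$ in $M_1(2)$. The delicate cancellations here are exactly those governed by \cref{c:d-sigmas-1} (which relates $\sigma_3\partial_1-\partial_2\sigma_2$ to $\tfrac{3\delta}{(d-1)^3}I$) together with simultaneous use of \cref{eq:H-on-Cram} to rewrite sums $f_aH_{bc}(\Delta_{ij})$ in the $B_2$-entries; once these are in hand, the equality of the columns will follow. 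Having verified the second square, the commutativity of all higher squares propagates automatically from the two-periodicity of the matrix factorizations $(M_1(i),M_2(i))$ for $i=1,2,3$, exactly as in the final paragraph of the proof of \cref{l:liftS2}.
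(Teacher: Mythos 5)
Your overall plan is the same as the paper's: compute $-\theta_1(3)M_1(3)$ column by column, exploiting the four zero columns of $\theta_1(3)$, express $\partial_2^C\theta_2(3)$ in blocks, and reduce the entry-by-entry comparison to the identities in the appendices. That is the right template and the right toolkit. However, there is a concrete error that would derail the computation: you have written the wrong matrix for $\partial_2^C$. From \cref{l:liftS2}, the complex $C=\cone(\theta(2))$ has
\[
\partial_2^C=\begin{bmatrix} M_0(1)&\theta_1(2)\\0&M_0(2)\end{bmatrix}\colon R^4\oplus R^7\longrightarrow R^3\oplus R^6\,,
\]
which is $9\times 11$. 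The matrix you wrote, $\bigl[\begin{smallmatrix} M_1(1)&\theta_2(2)\\0&M_1(2)\end{smallmatrix}\bigr]$, is $11\times 11$ and is in fact $\partial_3^C$, not $\partial_2^C$. This is not a cosmetic slip: your anticipated ``delicate cancellations'' for the rightmost column are described in terms of contractions with $M_1(1)$, $\theta_2(2)$, $M_1(2)$ and the $-\partial_2$ block of $M_1(2)$, none of which appear in $\partial_2^C$. The relevant blocks are $M_0(1)=[\del_3\mid D_2]$, $\theta_1(2)=(d-1)[0\mid\alpha_2]$, and $M_0(2)=[\varepsilon(2)\mid\tfrac{2}{(d-1)^2}A(2)]$; the paper's proof works precisely by expressing the columns of $-\theta_1(3)M_1(3)$ (notably the block $W$ in the last column) as explicit $R$-linear combinations of the eleven columns of $\partial_2^C$, using combinations of the $\alpha_2$-part of $\theta_1(2)$ and the $\alpha_x,\alpha_y,\alpha_z$ columns of $M_0(2)$. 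So the bookkeeping you sketch would need to be redone with the correct $\partial_2^C$.

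A second, smaller gap: your closing claim that once the second square commutes, ``all higher squares propagate automatically'' from two-periodicity is premature. Periodicity in the tails of $G(i)$ and $C$ involves the \emph{pair} of matrices $(M_1(\cdot),M_2(\cdot))$, so one must separately verify the third square $-\theta_2(3)M_2(3)=\partial_3^C\theta_3(3)$ (as the paper does in \cref{3rd lift for 3rd ord}) before the standard periodicity argument — the one used at the end of \cref{l:liftS2} — can take over. Verifying only the square involving $M_1(3)$ is not enough.
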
 

\begin{proof}
Let $d'=d-1$. Using the identities in \cref{appendix,a:hamiltonian}, one can show that $-\theta_1(3) M_1(3)$ is of the form: 
\[
\left[\begin{array}{@{}c c c | c @{} }
 0_{3\times3}&\phantom{pa} \dfrac{d+1}{2}K&\phantom{par}\Lambda \phantom{par}& \\
 {}&{}&{}&W\\
0_{6\times3}&\phantom{pa}0_{6\times3}&\Pi&{}
\end{array}\right]
\]
where 
\begin{align*}
K&=\begin{bmatrix} f_z f_{xy}-f_y f_{xz}& f_z f_{yy}-f_y f_{yz}& f_z f_{yz}-f_y f_{zz}\\
f_x f_{xz}-f_z f_{xx}& f_x f_{yz}-f_z f_{xy}& f_x f_{zz}-f_z f_{xz}\\
f_y f_{xx}-f_x f_{xy}& f_y f_{xy}-f_x f_{yy}& f_y f_{xz}-f_x f_{yz}
\end{bmatrix}
\end{align*}

\begin{align*}
&\phantom{askjfsf}\Lambda=\dfrac{9\,\delta}{2d'} I_{3\times 3}-\dfrac{1}{d'^2}\begin{bmatrix} x\delta_x& x\delta_y & x\delta_z\\
y\delta_x& y\delta_y& y\delta_z\\
z\delta_x&z\delta_y&z\delta_z 
\end{bmatrix} \phantom{sajshfajshfajshfafhj}
\end{align*}

\begin{align*}
W=\dfrac{1}{d'^2} \begin{bmatrix} 
H_{yz}(\delta)\\ 
 H_{zx}(\delta)\\ 
 H_{xy}(\delta)\\
\dfrac{6x}{d-2} \,H_{yz} (\delta)\\
\dfrac{3}{d-2} \left(x\,H_{zx}(\delta)+y H(\delta)\right)\\
\dfrac{3}{d-2}\left(x\,H _{xy}(\delta)+z H(\delta)\right)\\
\dfrac{6y}{d-2}\,H_{zx}(\delta)\\
\dfrac{3}{d-2}\left(y\,H _{xy}(\delta)+z H_{zx}(\delta)\right)\\
 \dfrac{6z}{d-2}\,H _{xy}(\delta)
\end{bmatrix}
\end{align*}
and where $\Pi$ is the matrix described as 
\begin{align*}
\label{def-Pi-Mat}
{}\\
\Pi=-\dfrac{9}{d'} \begin{bmatrix} \Delta_{xx} &\\
\Delta_{xy} \\
 \Delta_{xz} \\
 \Delta_{yy} \\
\Delta_{yz} \\
 \Delta_{zz} \\
\end{bmatrix} \begin{bmatrix} f_x&f_y&f_z \end{bmatrix} +\dfrac{9}{d'^2} \begin{bmatrix} 2x&0&0\\
y&x&0\\
z&0&x\\
0&2y&0\\
0&z&y\\
0&0&2z\\
\end{bmatrix}-\dfrac{3}{d'^2 (d-2)}\begin{bmatrix} 2x^2\\
2xy\\
2xz\\
2y^2\\
2yz\\
2z^2\\
\end{bmatrix} \begin{bmatrix} \delta_x&\delta_y&\delta_z\\ \end{bmatrix}.
\end{align*}
Note that the first three columns of $-\theta_1(3) M_1(3)$ are zero. As the first three columns of $\theta_2(3)$ are also zero, one has
\[
\mathrm{Col}_i(-\theta_1(3) M_1(3))=\mathrm{Col}_i(\del^C_2\theta_2(3)) \quad\text{for}\quad i=1,2,3\,.
\]
Next the fourth column of $-\theta_1(3)M_1(3)$ is of the form 
\begin{align*}
\begingroup
\renewcommand{\arraystretch}{1.5}
 \begin{bmatrix} 
&\dfrac{d+1}{2} (f_z f_{xy}-f_y f_{xz})\\ 
&\dfrac{d+1}{2}(f_x f_{xz}-f_z f_{xx}) \\
&\dfrac{d+1}{2}(f_y f_{xx}-f_x f_{xy})\\ \hline
&0_{6\times 1}
\end{bmatrix}
\endgroup
\end{align*}
and we have the following equality

\begin{align*}
\frac{d+1}{2} \begin{bmatrix} f_z f_{xy}-f_y f_{xz}\\ 
f_x f_{xz}-f_z f_{xx} \\
f_y f_{xx}-f_x f_{xy}
\end{bmatrix}
&=-\frac{d+1}{2} \left( f_{xx}\begin{bmatrix} 0\\f_z\\-f_y\\ \end{bmatrix}+f_{xy} \begin{bmatrix} -f_z\\0\\ f_x\\ \end{bmatrix}+f_{xz} \begin{bmatrix}
f_y\\ -f_x\\ 0\\ \end{bmatrix}\right)\,.
\end{align*}

Therefore, 
\[
\begingroup
\renewcommand{\arraystretch}{1.5}
 \begin{bmatrix} 
&\dfrac{d+1}{2} (f_z f_{xy}-f_y f_{xz})\\ 
&\dfrac{d+1}{2}(f_x f_{xz}-f_z f_{xx}) \\
&\dfrac{d+1}{2}(f_y f_{xx}-f_x f_{xy})\\ \hline
&0_{6\times 1}
\end{bmatrix}=\mathrm{Col}_4(\del_2^C\theta_2(3))\,.
\endgroup
\]
Similar calculations show that  the fifth and sixth columns of $-\theta_1(3)M_1(3)$ are the same as those of $\del_2^C\theta_2(3)$, respectively. The last column of $-\theta_1(3)M_1(3)$ is given by 
\[
W=\begin{bmatrix} \dfrac{1}{d'^2} (f_z\delta_y-f_y \delta_z)\\ 
\dfrac{1}{d'^2} (f_x\delta_z-f_z\delta_x)\\ 
\dfrac{1}{d'^2} (f_y\delta_x-f_x \delta_y)\\
\dfrac{3}{d'^2(d-2)} \phantom{p}2x\, (f_z\delta_y-f_y\delta_z)\\
\dfrac{3}{d'^2(d-2)} \phantom{p}\left(x\,(f_x\delta_z-f_z\delta_x)+y (f_z\delta_y-f_y\delta_z)\right)\\
\dfrac{3}{d'^2(d-2)} \phantom{p}\left(x\,(f_y\delta_x-f_x\delta_y)+z (f_z\delta_y-f_y\delta_z)\right)\\
\dfrac{3}{d'^2(d-2)} \phantom{p} 2y\,(f_x\delta_z-f_z\delta_x)\\
\dfrac{3}{d'^2(d-2)} \phantom{p}\left(y\,(f_y\delta_x-f_x\delta_y)+z (f_x\delta_z-f_z\delta_x)\right)\\
\dfrac{3}{d'^2(d-2)} \phantom{p} 2z\,(f_y\delta_x-f_x\delta_y)
\end{bmatrix}
=\dfrac{\delta_x}{d'^2} \begin{bmatrix} 0\\-f_z\\f_y\\ \hline 
0_{6\times1}\\ \end{bmatrix}+\dfrac{\delta_y}{d'^2} \begin{bmatrix} f_z\\0\\-f_x\\ \hline 
0_{6\times1}\\ \end{bmatrix}+\dfrac{\delta_z}{d'^2} \begin{bmatrix} -f_y\\f_x\\0\\ \hline 
0_{6\times1}\\ \end{bmatrix}+\\
\]

\[
+\dfrac{3\delta_x}{d'^2 (d-2)} \begin{bmatrix} 0_{3\times 1} \\ \hline
0\\ -xf_z\\ xf_y\\ -2yf_z\\ (yf_y-zf_z)\\ 2zf_y\\
\end{bmatrix}+\dfrac{3\delta_y}{d'^2 (d-2)} \begin{bmatrix} 0_{3\times 1} \\ \hline
2xf_z\\ yf_z\\ zf_z-xf_x\\ 0\\ -yf_x\\ -2zf_x\\ \end{bmatrix}+\dfrac{3\delta_z}{d'^2 (d-2)} \begin{bmatrix} 0_{3\times 1} \\ \hline
-2xf_y\\ xf_x-yf_y\\ -zf_y\\ 2yf_x\\ zf_x\\ 0\\
\end{bmatrix}.
\]

Hence 
\[W=\frac{-1}{d'^2}\left(\delta_x \text{Col}_2(\del_2^C )+\delta_y\text{Col}_3(\del_2^C )+\delta_z \text{Col}_4(\del_2^C )+\frac{3}{d-2}[\delta_x \text{Col}_6(\del_2^C )+\delta_y \text{Col}_7(\del_2^C )+\delta_z\text{Col}_8(\del_2^C )]\right)\,.\]
It then follows that 
\[\mathrm{Col}_{10}(-\theta_1(3) M_1(3))=\mathrm{Col}_{10}(\del^C_2\theta_2(3))\,.\]

In order to verify equalities in columns 7-9, we need the following observations: 

\medskip
\noindent {\it{Observation 1.}}
The following equalities are satisfied:
\[
\frac 92 f_{xx}\alpha_x+ \frac 92 f_{xy}\alpha_y+\frac 92 f_{xz}\alpha_z=\frac{9}{d'^2} \begin{bmatrix}
2x\delta-(d-1) f_x\Delta_{xx}\\ y\delta-(d-1) f_x\Delta_{xy}\\
z\delta-(d-1) f_x \Delta_{xz}\\-(d-1) f_x\Delta_{yy}\\ -(d-1) f_x \Delta_{yz}\\-(d-1) f_x\Delta_{zz}
\end{bmatrix}=\frac{9\delta}{d'^2} \begin{bmatrix} 2x\\y\\z\\0\\0\\0\end{bmatrix}-\frac{9\,f_x}{d'} 
\begin{bmatrix} \Delta_{xx} \\
\Delta_{xy} \\
 \Delta_{xz} \\
 \Delta_{yy} \\
\Delta_{yz} \\
 \Delta_{zz} 
\end{bmatrix}\,.
\]
Similarly, one can show 
\[
\frac{9}{2} f_{xy}\alpha_x+\frac{9}{2} f_{yy}\alpha_y+\frac{9}{2} f_{yz}\alpha_z=\frac{9\delta}{d'^2 } \begin{bmatrix} 0\\x\\0\\2y\\z\\0\end{bmatrix}-\frac{9\,f_y}{d'} 
\begin{bmatrix} \Delta_{xx} \\
\Delta_{xy} \\
 \Delta_{xz} \\
 \Delta_{yy} \\
\Delta_{yz} \\
 \Delta_{zz} 
\end{bmatrix} 
\]
\[
\frac{9}{2} f_{xz}\alpha_x+\frac{9}{2} f_{yz}\alpha_y+\frac{9}{2} f_{zz}\alpha_z=\frac{9\delta}{d'^2 } \begin{bmatrix} 0\\0\\x\\0\\y\\2z\end{bmatrix}-\frac{9\,f_z}{d'} 
\begin{bmatrix} \Delta_{xx} \\
\Delta_{xy} \\
 \Delta_{xz} \\
 \Delta_{yy} \\
\Delta_{yz} \\
 \Delta_{zz} 
\end{bmatrix}\,.
\]

\medskip
\noindent {\it{Observation 2.}} 
One has an equality 
\[ 
\frac{9}{2} f_{xx} \text{Col}_9 (\del_2^C )+\frac{9}{2} f_{xy} \text{Col}_{10} (\del_2^C )+\frac{9}{2} f_{xz} \text{Col}_{11} (\del_2^C )=\frac{9\delta}{d'^2 }
\begin{bmatrix}0\\ 
0\\ 
0\\ 
 2x\\y\\z\\0\\0\\0
\end{bmatrix}-
\dfrac{9}{d'}
\begin{bmatrix} \frac{\delta}{2}\\ 0 \\0\\ f_x\Delta_{xx} \\
f_x\Delta_{xy} \\
 f_x\Delta_{xz} \\
 f_x\Delta_{yy} \\
f_x\Delta_{yz} \\
 f_x\Delta_{zz}
\end{bmatrix}\,.
\]

Using the observations above, we conclude that the seventh column of $-\theta_1(3)M_1(3)$ can be written as 
\[
-\frac{\delta_x}{d'^2}\text{Col}_1(\del_2^C )-\frac{3\delta_x}{d'^2(d-2)} \text{Col}_5(\del_2^C )
+\frac{9}{2}\left( f_{xx} \text{Col}_9 (\del_2^C )
+ f_{xy} \text{Col}_{10} (\del_2^C )+ f_{xz} \text{Col}_{11} (\del_2^C )\right)\,,
\]
which is the same as the seventh column of $\del^C_2\theta_2(3)$.
 Similar arguments take care of the eighth and ninth columns. 
\end{proof}

\subsection{The third square commutes} \label{3rd lift for 3rd ord}

\begin{proposition} 
The following equality is satisfied: $-\theta_2(3)M_2(3)=\del_3^C\theta_3(3).$
\end{proposition}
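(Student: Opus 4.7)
The plan is to verify the matrix identity by a direct block-matrix computation, in the same spirit as the analogous last step of \cref{l:liftS2} and the two preceding propositions of this subsection. Both sides are $11 \times 10$ matrices whose block decompositions are fixed by the definitions in \cref{sec:glossary} and \cref{c:explicitMF(3)}; the verification reduces to a family of equalities among products of the fundamental matrices $\del_i$, $D_i$, $\alpha_i$, $\sigma_i$, $q$, and $\Delta$ that appear in \cref{a:matrixidentities}.

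Using the block form $\del_3^C = \begin{bmatrix} M_1(1) & \theta_2(2) \\ 0 & M_1(2) \end{bmatrix}$ recalled from \cref{l:liftS2}, the nonzero blocks of $\del_3^C \theta_3(3)$ involve only the products $D_1\alpha_2$, $\alpha_3\del_1$, $D_1\sigma_2$, $D_3\del_1$, $\del_2\alpha_2$, $\del_2\sigma_2$, and $\alpha_1\alpha_2$, while those of $-\theta_2(3) M_2(3)$ involve only $\sigma_1 q$, $\sigma_1 D_2$, $\sigma_3\del_1$, $\Delta q$, $\Delta D_2$, and $\Delta \alpha_2$. I would then compare the two matrices block by block. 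Most comparisons are immediate from \cref{a:matrixidentities}: for example, the $(2,2)$-block reduces to the Eulerian relation $\Delta \del_3 = (d-1)D_3$ (from \eqref{2-1deriv}) combined with $q = \del_3\del_1$ in \eqref{c:d-D-alpha-q}; the $(2,3)$-block reduces to $\del_2\alpha_2 = \alpha_1 D_2$ from \eqref{c:basic-Eul} together with $\Delta = (d-1)\alpha_1$; the $(1,4)$- and $(3,4)$-blocks both collapse to $D_1\sigma_2 = -\sigma_1 D_2$ from \eqref{c:D-to-sigmas}; the $(4,4)$-block is \eqref{c:d-sigmas-1}; and the $(2,4)$-block reduces to a combination of \eqref{c:identity-Del-adjDel} and \eqref{c:d-sigmas-1}.

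The main obstacle will be the $(1,3)$-block identity
\[
\tfrac{-(d^2-1)}{2}\,D_1\alpha_2 \;+\; \tfrac{3(d-1)^2}{2}\,\alpha_3\del_1 \;=\; \tfrac{(d-1)(d-2)}{3}\,\sigma_1 q,
\]
which does not appear verbatim in the glossary. To handle it I will first derive the auxiliary identities $D_1\alpha_2 = \alpha_3\del_1 = \tfrac{\delta}{(d-1)^3}\del_1$, which follow from Cramer's rule \eqref{eqCram} together with the symmetry of $\alpha_i$ and the transpose of the last identity in \eqref{c:basic-Eul}, and $\sigma_1 q = \tfrac{3\delta}{(d-1)^3}\del_1$, obtained by writing $q = \del_3\del_1$ and applying the Euler identity to $\delta$ (which is homogeneous of degree $3(d-2)$). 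Both sides of the block identity then collapse to $\tfrac{(d-2)\delta}{(d-1)^2}\,\del_1$.

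Finally, once this third square is verified, the remaining squares of the diagram of \cref{lifts3diagram} concern only the two-periodic tails of $G(3)$ and $C$, so they follow automatically from the matrix factorization relations $M_1(3)M_2(3) = M_2(3)M_1(3) = d\cdot f \cdot I$ and their analogues for the differentials of $G(1)$ and $G(2)$, in exactly the same manner as the closing remark in the proof of \cref{l:liftS2}.
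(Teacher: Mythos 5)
Your proposal is correct and takes the block-matrix route that the paper explicitly offers as an alternative ("expressing each matrix as a block matrix one can use the identities from \cref{a:matrixidentities}") but does not carry out. The paper's primary argument writes out $-\theta_2(3)M_2(3)$ in full, in terms of the auxiliary $3\times 3$ matrices $K=\Delta D_2$, $P$, and $\tau$, and declares a ``direct check''; you instead compare the two sides block by block against the glossary identities, which is a cleaner presentation of the same verification. Your handling of the $(1,3)$-block is the genuinely useful addition: the identity $D_1\alpha_2=\alpha_3\del_1$ does follow by transposing the $i=3$ case of $\del_i\alpha_i=\alpha_{i-1}D_i$ in \eqref{c:basic-Eul} (equivalently, from \eqref{eqCram} together with the symmetry of $\alpha_2$), and $\sigma_1 q=\tfrac{3\delta}{(d-1)^3}\del_1$ follows as you say from $q=\del_3\del_1$ and the Euler relation $x\delta_x+y\delta_y+z\delta_z=3(d-2)\delta$; note this last identity can also be read off by transposing $q\sigma_3=3\alpha_2 D_3$ from \eqref{c:identity-RS-4-2-1}. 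The arithmetic $-\tfrac{d^2-1}{2}+\tfrac{3(d-1)^2}{2}=(d-1)(d-2)$ closes that block. Two small points: your statement that the $(4,4)$-block ``is \eqref{c:d-sigmas-1}'' should also cite \eqref{c:identity-Del-adjDel}, since the comparison there involves an $\alpha_1\alpha_2$ term exactly as in the $(2,4)$-block; and the closing paragraph about the two-periodic tails is accurate but lies outside the scope of this proposition---it belongs to the wrap-up of \cref{l:liftd3} rather than to the verification of this one square.
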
 

\begin{proof} 
Let $K$ and $P$ denote the $3\times 3$ matrices defined by 
\[
K\coloneqq\Delta D_2 \quad\text{and}\quad P\coloneqq D_1\del_1\,.
\]
Let $\tau$ be the $3\times 3$ matrix defined by 
\begin{equation*}
\tau\coloneqq \begin{bmatrix} \delta_x\\ \delta_y\\ \delta_z \end{bmatrix} \begin{bmatrix} x& y&z\\ \end{bmatrix}.
\end{equation*}
The matrix $-\theta_2(3)M_2(3)$ is of the form:
{\footnotesize
\setlength{\arraycolsep}{2.5pt}
\renewcommand{\arraystretch}{1.5}
\medmuskip=1mu
\[
\left[ \begin{array}{r| rrr| rrr| rrr}
0&0\phantom{oar}&0\phantom{oaraa}&0\phantom{o}&\frac{(d-2)\,}{d'^2} x\delta&\frac{(d-2)\,}{d'^2} y\delta\phantom{pr}&\frac{(d-2)\,}{d'^2} z\delta& \frac{H(\delta)}{d'^2}\phantom{par}&\frac{H_{zx}(\delta)}{d'^2}\phantom{parta}& \frac{H_{xy}(\delta)}{d'^2}\\ \hline
0&{}\phantom{oar}&{}\phantom{oar}&{}\phantom{o}&{}&{}&{}&{} &{}&{}\\ 
0&{}&\frac{d^2-1}{4} \;P&{}&{}&\frac{(d+1)}{2} \;K &{}&{}&\frac{3(d+1)\delta}{2d'^2} I_{3\times 3}+\frac{1}{d'^2}\;\tau&{}\\
0&{}&{}&{}&{}&{}&{}&{}&{}&{}\\ \hline
0&0\phantom{par}&0\phantom{partt}&0\phantom{p}&\frac{3 x\delta}{d'^2}&\frac{3 y\delta}{d'^2}&\frac{3 z\delta}{d'^2}&\frac{3 H(\delta)}{d'^2 (d-2)}&\frac{3 H_{zx}(\delta)}{d'^2 (d-2)}\phantom{par}&\frac{3 H_{xy}(\delta)}{d'^2 (d-2)}\\ \hline
0&0\phantom{oar}&0\phantom{oartt}&0\phantom{p}&0\phantom{par}&0\phantom{par}&0\phantom{oar}&{}&{}&{}\\ 
0&0\phantom{oar}&0\phantom{oartt}&0\phantom{p}&0\phantom{par}&0\phantom{par}&0\phantom{par}&{}&\frac{3}{d'^2(d-2) }\,\tau\phantom{parta}&{}\\ 
0&0\phantom{oar}&0\phantom{oartt}&0\phantom{p}&0\phantom{par}&0\phantom{par}&0\phantom{par}&{}&{}&{}\\ \hline
0&0\phantom{oar}&0\phantom{oattr}&0\phantom{p}&{}&{}&{}&{}&{}& {}\\ 
0&0\phantom{oar}&0\phantom{oartt}&0\phantom{p}&{}&\frac{-3d'}{2}\,P &{}&{}& \frac{-9}{2}\,K\phantom{ppart}& {}\\
0&0\phantom{oar}&0\phantom{oartt}&0\phantom{p}&{}&{}&{}&{}&{}& {}\\ 
\end{array}\right].
\]}\\
Using the identities in \cref{appendix}, 
it is a direct check, similar to those in \cref{first lift for 3rd ord,2nd lift for 3rd ord}, that $\del_3^C\theta_3(3)$ agrees with the matrix above. Alternatively, expressing each matrix as a block matrix one can use the identities from \cref{a:matrixidentities}. 
 \end{proof}

\begin{ack}
    We are grateful to the referee for the careful reading of the paper and the excellent comments and suggestions. 
\end{ack}
\bibliographystyle{amsplain}
\bibliography{diffops}
\end{document}